\numberwithin{equation}{section}							
\def\csname ver@etex.sty\endcsname{3000/12/31}
\let\originalleft\left
\let\originalright\right
\renewcommand{\left}{\mathopen{}\mathclose\bgroup\originalleft}
\renewcommand{\right}{\aftergroup\egroup\originalright}
\renewcommand*{\eqref}[1]{\hyperref[{#1}]{\textup{\tagform@{\ref*{#1}}}}}		
\newcommand{\rd}{\rm{d}}
\newcommand{\rG}{\rm{G}}
\newcommand{\rU}{\rm{U}}
\newcommand{\cC}{\mathcal{C}}
\newcommand{\cE}{\mathcal{E}}
\newcommand{\cG}{\mathcal{G}}
\newcommand{\cL}{\mathcal{L}}
\newcommand{\cS}{\mathcal{S}}
\newcommand{\fg}{{\mathfrak g}}
\newcommand{\Spin}{\rm{Spin}}
\newcommand{\Ad}{\mathrm{Ad}}
\newcommand{\Aut}{\mathrm{Aut}}
\newcommand{\End}{{\mathrm{End}}}
\newcommand{\Lie}{\mathrm{Lie}}
\newcommand{\ad}{\mathrm{ad}}
\newcommand{\coker}{\mathop{\mathrm{coker}}}
\newcommand{\del}{\partial}
\newcommand{\id}{\mathrm{id}}
\newcommand{\im}{\mathop{\mathrm{im}}}
\renewcommand{\Im}{\mathop{\mathrm{Im}}}
\newcommand{\loc}{\rm{loc}}
\renewcommand{\Re}{\mathop{\mathrm{Re}}}
\newcommand{\tr}{\mathop{\mathrm{tr}}\nolimits}
\newcommand{\ev}{\mathrm{ev}}
\newcommand{\qandq}{\quad\text{and}\quad}
\def\Id{\mathbbm{1}}
\def\N{\mathbb{N}}
\def\ad{\mathrm{ad}}
\def\Aut{\mathrm{Aut}}
\def\rd{\mathrm{d}\hspace{-.1em}}
\def\coker{\mathrm{coker}}
\def\dist{\mathrm{dist}}
\def\End{\mathrm{End}}
\def\Im{\mathrm{Im}}
\def\Lie{\mathrm{Lie}}
\def\Re{\mathrm{Re}}
\def\Spin{\mathrm{Spin}}
\def\SU{\mathrm{SU}}
\def\tr{\mathrm{tr}}
\def\vol{\mathrm{vol}}
\def\Vol{\mathrm{Vol}}
\def\D{\slashed{D}}
\def\g{\mathfrak{g}}
\def\mon{\mathrm{mon}}
\def\<{\mathopen{}\left<}
\def\>{\right>\mathclose{}}
\def\({\mathopen{}\left(}
\def\){\right)\mathclose{}}
\renewcommand\epsilon{\varepsilon}
\newtheorem{theorem}{Theorem}[section]
\newtheorem{Mtheorem}{Main Theorem}
\newtheorem*{acknowledgment}{Acknowledgment}
\newtheorem{corollary}[theorem]{Corollary}
\newtheorem{definition}[theorem]{Definition}
\newtheorem{lemma}[theorem]{Lemma}
\newtheorem{proposition}[theorem]{Proposition}
\newtheorem{remark}[theorem]{Remark}
\newtheorem{hypoth}[theorem]{Hypothesis}
\crefname{theorem}{Theorem}{Theorems}						
\crefname{Mtheorem}{Main Theorem}{Main Theorems}			
\crefname{lemma}{Lemma}{Lemmata}							
\crefname{corollary}{Corollary}{Corollaries}				
\crefname{proposition}{Proposition}{Propositions}			
\crefname{ineq}{inequality}{inequalities}					
\crefname{cond}{condition}{conditions}						
\crefname{hypoth}{Hypothesis}{Hypotheses}					
\crefname{def}{Definition}{Definitions}						
\crefname{appsec}{Appendix}{Appendices}
\crefname{sec}{Section}{Sections}
\title{The asymptotic geometry of $\rG_2$-monopoles}
\author{Daniel Fadel}
\address[Daniel Fadel]{Universidade Estadual de Campinas, Campinas, Brazil / Université de Bretagne Occidentale, Brest, France}
\urladdr{\href{https://sites.google.com/view/daniel-fadel-math-homepage/home}{sites.google.com/view/daniel-fadel-math-homepage/home}}
\email{\href{mailto:fadel.daniel@gmail.com}{fadel.daniel@gmail.com}}
\author{\'Akos Nagy}
\address[\'Akos Nagy]{University of California, Santa Barbara, CA, USA}
\urladdr{\href{https://akosnagy.com}{akosnagy.com}}
\email{\href{mailto:contact@akosnagy.com}{contact@akosnagy.com}}
\author{Gon\c{c}alo Oliveira}
\address[Gon\c{c}alo Oliveira]{IST Austria}
\urladdr{\href{https://sites.google.com/view/goncalo-oliveira-math-webpage/home}{sites.google.com/view/goncalo-oliveira-math-webpage/home}}
\email{\href{mailto:galato97@gmail.com}{galato97@gmail.com}}
\date{\today}
\keywords{$\rG_2$-monopoles}
\subjclass[2020]{53C07,58D27,58E15,70S15}
\begin{document}

\begin{abstract}
	This article investigates the asymptotics of $\rG_2$-monopoles.

	First, we prove that when the underlying $\rG_2$-manifold is nonparabolic (i.e. admits a positive Green's function), finite intermediate energy monopoles with bounded curvature have finite mass. The second main result restricts to the case when the underlying $\rG_2$-manifold is asymptotically conical. In this situation, we deduce sharp decay estimates and that the connection converges, along the end, to a pseudo-Hermitian--Yang--Mills connection over the asymptotic cone.
	
	Finally, our last result exhibits a Fredholm setup describing the moduli space of finite intermediate energy monopoles on an asymptotically conical $\rG_2$-manifold.
\end{abstract}

\maketitle

\tableofcontents

\section{Introduction}\label{sec:Intro}

\subsection{Context}

An important problem in $\rG_2$ geometry is to develop methods to distinguish $\rG_2$-manifolds. This problem can be put in several ways, and recent advances produced invariants able to detect connected components of the moduli space of $\rG_2$-holonomy metrics \cites{Crowley2014,Crowley2015,Crowley2015_2}.

Other approaches intended at producing invariants of $\rG_2$-manifolds aim to produce enumerative theories counting special submanifolds and gauge fields. For example, in \cite{J1} Joyce alluded to the possibility of constructing such an enumerative invariant of $\rG_2$-manifolds by ``counting'' rigid, compact, and coassociative submanifolds (see also \cite{J2}). On the other hand, Donaldson and Segal, in \cite{Donaldson2009}, proposed an enumerative invariant of certain (noncompact) $\rG_2$-manifolds by considering $\rG_2$-monopoles instead. They further suggest that this might be easier to define and possibly related to a more direct coassociative ``count''.

The underlying idea behind this proposal is inspired by Taubes' $Gr = SW$ Theorem in \cite{Taubes1999} for $4$-dimensional symplectic manifolds. The similarities stem from the fact that the Seiberg--Witten (SW) invariant is obtained from gauge theory while the Gromov-Witten (Gr) invariant is obtained from holomorphic curves, which in a symplectic manifold are calibrated, just like coassociatives in a $\rG_2$-holonomy manifold are.

The study of $\rG_2$-monopoles was initiated in \cites{Oliveira2014_thesis,Cherkis2015}. In \cite{Oliveira2014}, the third author gave evidence supporting the Donaldson--Segal program by finding families of $\rG_2$-monopoles parametrized by a positive real number $m$, called the \emph{mass}, and showed that in the large mass limit these monopoles concentrate along a compact coassociative submanifold.

This paper is the first installment of a series of papers aimed to study the Donaldson and Segal \cite{Donaldson2009} program, that is the relation between $\rG_2$-monopoles and coassociative submanifolds.

The goal of this article is to show that several of the asymptotic features satisfied by these examples are in fact general phenomena which follow from natural assumptions such as finiteness of the relevant energy. This is a very much needed development in order to justify the choice of function spaces to be used in a satisfactory moduli theory. In the next article in this series the first and third author will be dealing with investigating the bubbling of $\rG_2$-monopoles along coassociatives \cite{Fadel}.

More about the other gauge theoretical approaches for producing invariants of $\rG_2$-manifolds can, for example, be found in \cites{Donaldson2009,SaEarp2009,Walpuski2013,Hay3,Doan,Doan_Walpuski}.

\smallskip

\subsection{Summary}\label{par:Summary}

Let $(X^7, \varphi)$ be a noncompact, complete, and irreducible $\rG_2$-manifold. We respectively denote by $g$ and $\ast$ the metric and Hodge star operator induced by the $\rG_2$-structure $\varphi \in \Omega^3(X)$. We also let $\psi = \ast \varphi \in \Omega^4 (X)$. Given a compact Lie group $\rG$ with Lie algebra $\mathfrak{g}$ and a principal $\rG$-bundle $P$ over $X$, we consider pairs $(\nabla, \Phi)$, where $\nabla$ is a smooth connection on $P$ with curvature $F_\nabla \in \Omega^2 (X, \mathfrak{g}_P)$ and $\Phi$ a smooth section of $\mathfrak{g}_P = P \times_\Ad \mathfrak{g}$, called the \emph{Higgs field}. Such a pair $(\nabla, \Phi)$ is said to be a \emph{$\rG_2$-monopole} if 
\begin{equation}
	\ast (F_\nabla \wedge \psi) - \nabla \Phi = 0. \label{eq:Monopole}
\end{equation}
Furthermore, $\rG_2$-monopoles can be seen as (at least formally\footnote{In fact, formally, $\rG_2$-monopoles are also critical points for the \emph{Yang--Mills--Higgs (YMH) energy}:
\begin{equation}
	\mathcal{E} (\nabla, \Phi) = \int\limits_X \left( |F_\nabla|^2 + |\nabla \Phi|^2 \right) \ \vol_X.
\end{equation}
Here, we say formally because this energy need not be finite. Indeed, in contrast with the intermediate energy, the YMH energy is infinite for all known irreducible examples.}) critical points of the \emph{intermediate energy}:
\begin{equation}\label{eq:Intermediate_Energy}
	\mathcal{E}^\psi (\nabla, \Phi) = \int\limits_X \left( |F_\nabla \wedge \psi|^2 + |\nabla \Phi|^2 \right) \ \vol_X.
\end{equation} 
Note that $F_\nabla \wedge \psi$ only contains certain components of $F_\nabla$ and so the intermediate energy only controls part of the curvature of $\nabla$. Indeed, the 2-forms on $(X,\varphi)$ split into irreducible $\rG_2$-representations as $\Lambda^2 = \Lambda^2_7 \oplus \Lambda^2_{14}$, with the subscripts accounting for the dimension of the representation. Using this decomposition we can uniquely write $F_\nabla = F_\nabla^7 + F_\nabla^{14}$ and we find that $F_\nabla \wedge \psi = F_\nabla^7 \wedge \psi$. Thus, the intermediate energy only accounts for the ``smaller'' $F_\nabla^7$ component of the curvature. Furthermore, under certain technical and refined assumptions on the asymptotic behavior (see Section 1.4 in \cite{Oliveira2014_thesis}) it is in fact possible to prove that $\rG_2$-monopoles minimize $\cE^\psi$. In this article we drop such technical hypothesis and replace them by simpler more natural ones such as finiteness of the intermediate energy.

A word must be said about the reason for restricting to noncompact $\rG_2$-manifolds. Indeed, a short computation resulting from applying $\nabla^*$ to \cref{eq:Monopole} and using the Bianchi identity, shows that $\nabla^* \nabla \Phi = 0$ which in turns implies that $|\Phi|^2$ is subharmonic. Thus, if $X$ was to be compact then $|\Phi|$ would be constant and thus $\nabla \Phi = 0 = F_\nabla \wedge \psi$. In particular, $\nabla$ is a so-called $\rG_2$-instanton, which is a very interesting equation in itself. However, in this article we focus on ``pure'' $\rG_2$-monopoles and so we regard the case when $\nabla \Phi \neq 0$ as being more interesting. 

\smallskip

\subsection*{Notational warnings}

\begin{enumerate}

	\item Throughout this paper, we use
	\begin{equation}
		n = 7,
	\end{equation}
	the dimension of $X$, to emphasize how the dimension of the underlying manifold comes into play in the analysis. This is because many of our results hold for more general setups, in particular, for Yang--Mills--Higgs fields in different dimensions.

	\item For comparable quantities $\alpha$ and $\beta$, $\alpha \lesssim \beta$ means that there exists a real number $c > 0$, that is independent of the variable relevant in the given context, such that $\alpha \leqslant c \beta$.

\end{enumerate}

\smallskip

\subsection*{Main results}

Recall that a $\rG_2$-holonomy Riemannian manifold $(X, \varphi)$ is Ricci-flat. Therefore, by the Cheeger--Gromoll splitting theorem \cite{cheeger1971splitting}, any complete, noncompact and irreducible $(X,\varphi)$ has only one end, meaning that $X - B_r (x)$ has only one connected component for large $r\gg 1$. Our first result gives conditions under which monopoles $(\nabla, \Phi)$ have $|\Phi|$ converging uniformly to a constant along this end. When this is the case, $(\nabla, \Phi)$ is said to have finite mass and the value of the constant to which $|\Phi|$ converges is called the mass.

\begin{Mtheorem}[Finite intermediate energy and bounded curvature implies finite mass]\label{thm:Main_Theorem_1}
	Let $(X, \varphi)$ be a complete, noncompact and irreducible $\rG_2$-manifold of bounded geometry which furthermore is nonparabolic\footnote{See \Cref{sec:finite_mass} for definitions and a discussion on the necessity of this last hypothesis.} (i.e. admits a positive Green's function). Suppose that $(\nabla,\Phi)$ is a solution\footnote{Here and in what follows, we only consider \emph{smooth} solutions.} to the $\rm G_2$-monopole \cref{eq:Monopole} with finite intermediate energy \eqref{eq:Intermediate_Energy} and bounded\footnote{Under these conditions, $|F_\nabla^{14}|\in L^{\infty}(X)$ actually implies that the whole curvature $|F_{\nabla}|\in L^{\infty}(X)$, cf. \Cref{cor:Lpbounds}.} $|F_{\nabla}^{14}|$, i.e. $|F_\nabla^{14}|\in L^{\infty}(X)$. Then $(\nabla,\Phi)$ has finite mass, i.e. there is a constant $m \in [0, \infty)$, called the mass of $(\nabla,\Phi)$, such that for any choice of reference point $o \in X$ one has
	\begin{equation}\label{eq:Finite_Mass}
		\lim_{\dist(x,o) \to \infty} |\Phi (x)| = m.
	\end{equation}
\end{Mtheorem}

Notice that, as previously mentioned, for a $\rm G_2$-monopole $(\nabla,\Phi)$ the function $|\Phi|^2$ is subharmonic. Hence, if $m = 0$ in the previous theorem then we have $\Phi = 0$ everywhere and so $\nabla$ is a $\rG_2$-instanton. Hence, we are primarily interested in the case where $m$ is positive (see \Cref{rem:Finite_Mass}).

Our second main result gives the asymptotic structure of $\rG_2$-monopoles on the so called asymptotically conical (AC) $\rG_2$-manifolds. This is a very interesting class of complete, noncompact and nonparabolic $\rG_2$-manifolds for which explicit examples are known \cites{Bryant1989,Foscolo2018}, and on which $\rG_2$-monopoles have already been constructed \cites{Oliveira2014,Oliveira2016}. A $\rG_2$-manifold $(X, \varphi)$ is AC if its end is asymptotically isometric to a metric cone $(C = (1, \infty)_r \times \Sigma, g_C = \rd r^2 + r^2 g_\Sigma)$, see \Cref{def:AC} for the precise definition. In this case, the cross section of the asymptotic cone $(\Sigma, g_\Sigma)$ comes equipped with a nearly K\"ahler structure $(\omega,J)$ as defined in \Cref{def:NK}. In this situation, a connection $\nabla$ on a principal $\rG$-bundle over $(\Sigma,\omega,J)$ is said to be \emph{pseudo-Hermitian--Yang--Mills connection}, if
\begin{align}
	F_\nabla^{0,2} 		&= 0, \\
	\Lambda F_\nabla	&= 0,
\end{align}
where $F_\nabla^{0,2}$ denotes the $(0,2)$-component of the curvature with respect to the almost complex structure $J$, determined by the nearly K\"ahler structure and $\Lambda F_\nabla$ the contraction of the curvature with the fundamental 2-form $\omega$. In the next theorem, we restrict to the case of $\rG = \SU (2)$.

\begin{Mtheorem}[Asymptotics of $\rG_2$-monopoles on AC manifolds]\label{thm:Main_Theorem_2}
	Let $(X, \varphi)$ be an irreducible AC $\rG_2$-manifold, with radius function $r$, and let $(\nabla, \Phi)$ be a solution to the $\rG_2$-monopole \cref{eq:Monopole} with structure group $\rG = \SU (2)$. Suppose that $(\nabla,\Phi)$ has finite intermediate energy \eqref{eq:Intermediate_Energy} and is such that $|F_\nabla^{14}|$ decays to zero uniformly along the end, i.e. $|F_\nabla^{14}(x)|\to 0$ as $r(x)\to\infty$. Then, along the end of $(X, \varphi)$,
	\begin{equation}
		|\nabla \Phi| \lesssim r^{-(n-1)},
	\end{equation}
	and $|[\Phi, \nabla\Phi]|+ |[\Phi,F_\nabla]|$ decays exponentially.

	Furthermore, if $r^2 |F_\nabla^{14}|$ is bounded, i.e. $|F_\nabla^{14}|$ decays (at least) quadratically, then there is a principal $\rG$-bundle $P_\infty$ over $\Sigma$, together with a pair $(\nabla_\infty, \Phi_\infty)$ such that:
	\begin{enumerate}
		\item[(a)] $\Phi_\infty$ is a $\nabla_\infty$-parallel section of the Adjoint bundle $\mathfrak{g}_{P_\infty}$ over $\Sigma$, and
		\item[(b)] $\nabla_\infty$ is a pseudo-Hermitian--Yang--Mills connection with respect to the nearly K\"ahler structure on $\Sigma$;
	\end{enumerate}
	and 
	\begin{equation}
		(\nabla, \Phi) |_{\lbrace R \rbrace \times \Sigma} \to (\nabla_\infty, \Phi_\infty),
	\end{equation}
	uniformly as $R \to \infty$.
\end{Mtheorem}

\begin{remark}
Some remarks are now in place.
	\begin{itemize}
		\item $\rG_2$-monopoles solve the second order \cref{eq:2nd_Order_Eq_1,eq:2nd_Order_Eq_2} (see \Cref{lem:Second_Order_Eqs}). These are the Euler--Lagrange equations for both the intermediate energy $\mathcal{E}^{\psi}$ and the YMH energy. We also prove analogues of the above main results for general solutions of these equations, see \Cref{thm:Finite_Mass,thm:First_part_of_Main_Theorem_2,thm:main3}.
		
		\item The decay estimate for $|\nabla\Phi|$ given above is sharp as proven in \Cref{rem:Sharp} and as exemplified by the examples in \cite{Oliveira2014} which satisfy all the conditions in \cref{thm:Main_Fredholm_Theorem_2}.
	\end{itemize}
\end{remark}
This article also contains several other interesting results on the asymptotic behavior of $\rG_2$-monopoles. For example, in the conditions of \Cref{thm:Main_Theorem_1}, \Cref{cor:Lpbounds} gives uniform decay of all derivatives of both $F_\nabla$ and $\nabla\Phi$ and \Cref{cor:Asymptotics_Phi_Square} gives a further general refinement on the asymptotics of $m^2-|\Phi|^2$, and in the conditions of \Cref{thm:Main_Theorem_2}, \Cref{cor:refined_AC_asymp_Phi} gives that $m^2-|\Phi|^2\sim r^{2-n}$ as $r\to\infty$ and \Cref{cor:higher_order_der_bounds} gives $\nabla^{j+1}\Phi\in L_1^p$ for all $p\in [2,2n]$ and $j\in\mathbb{N}$. Also, as a consequence of these results we use a $\rG_2$-version of the Bogomolny trick to obtain a topological formula for the intermediate energy of a monopole on an asymptotically conical $\rG_2$-manifold. Given the importance of this result we state it here.

Along the end of an asymptotically conical $\rG_2$-manifold $(X^7,\varphi)$, the cohomology class $[\psi|_{\Sigma_R}]$ obtained by restricting the $4$-form $\psi$ to the links $\Sigma_R \cong \lbrace R \rbrace \times \Sigma$ of the asymptotic cone determine, for $R \gg 1$, a class $\Psi_\infty \in H^4(\Sigma, \mathbb{R})$ called the asymptotic cohomology class, see \Cref{def:AC_Cohomology}.\\
Given a solution $(\nabla,\Phi)$ to the $\rG_2$-monopole \cref{eq:Monopole} with structure group $\rG = \SU (2)$ on an asymptotically conical manifold, having nonzero intermediate energy and bounded $r^2 |F_\nabla^{14}|$, it follows from \Cref{thm:Main_Theorem_2} that $\nabla_\infty$ is reducible, since $\nabla_\infty \Phi_\infty = 0$ and $\Phi_\infty \neq 0$. Then, $\nabla_\infty$ reduces to a connection on a $\rU (1) \subseteq \SU (2)$-bundle. Such a $\rU (1)$-bundle determines a complex line bundle $L$ through the standard representation and its first Chern class $\beta = c_1 (L) \in H^2(\Sigma, \mathbb{Z})$ is called the monopole class of $(\nabla,\Phi)$, see \Cref{def:Monopole_Class}. The energy formula which in the $\rG_2$-setting replaces the Bogomolny trick is the following.

\begin{corollary}[$\rG_2$-analogue of the Bogomolny trick]\label{thm:Bogomolny_Trick}
	Let $(X^7,\varphi)$ be an irreducible asymptotically conical $\rG_2$-manifold, with asymptotic cohomology class $\Psi_\infty \in H^4(\Sigma, \mathbb{R})$. Suppose that $(\nabla,\Phi)$ is a solution to the $\rG_2$-monopole \cref{eq:Monopole} with structure group $\rG = \SU (2)$, finite intermediate energy and bounded $r^2 |F_\nabla^{14}|$, such that it has mass $m$ and monopole class $\beta \in H^2(\Sigma, \mathbb{Z})$. Then
	\begin{equation}
		\cE^{\psi}(\nabla, \Phi) = 4 \pi m \langle \beta \cup \Psi_\infty , [\Sigma] \rangle.
	\end{equation}
\end{corollary}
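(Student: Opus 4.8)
The plan is to run a $\rG_2$-analogue of the classical Bogomolny argument: complete the square in the intermediate energy, recognise the cross term as an exact form via the Bianchi identity, and evaluate the resulting boundary integral over the links $\Sigma_R$ of the asymptotic cone as $R\to\infty$, feeding in the asymptotic description supplied by \Cref{thm:Main_Theorem_2}.

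First I would use the monopole equation \cref{eq:Monopole} in the form $F_\nabla\wedge\psi=\ast\nabla\Phi$: this gives $|F_\nabla\wedge\psi|^2=|\nabla\Phi|^2$ pointwise, hence $\cE^\psi(\nabla,\Phi)=2\int_X|\nabla\Phi|^2\,\vol_X$, which in particular is finite. Since $\rd\psi=0$ and $\rd_\nabla F_\nabla=0$, one computes
\[
	\rd\langle\Phi,F_\nabla\wedge\psi\rangle=\langle\nabla\Phi\wedge F_\nabla\wedge\psi\rangle=\langle\nabla\Phi\wedge\ast\nabla\Phi\rangle=|\nabla\Phi|^2\,\vol_X.
\]
Writing the end of $X$ as $(1,\infty)_r\times\Sigma$, exhausting $X$ by domains $X_R$ with $\partial X_R=\Sigma_R$, and applying Stokes' theorem (the interior integral tending to $\tfrac12\cE^\psi$ by the finiteness just noted), I obtain the $\rG_2$-Bogomolny identity
\[
	\cE^\psi(\nabla,\Phi)=2\lim_{R\to\infty}\int_{\Sigma_R}\langle\Phi,F_\nabla\wedge\psi\rangle=2\lim_{R\to\infty}\int_{\Sigma_R}\langle\Phi,F_\nabla\rangle\wedge\psi,
\]
the last step because $\psi$ is scalar-valued.

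The substance is the evaluation of this boundary limit, which I would carry out after pulling everything back to $\Sigma$ via the asymptotic identification $\Sigma\cong\Sigma_R$. By \Cref{thm:Main_Theorem_2} (and its decay estimates), on the end $F_\nabla|_{\Sigma_R}\to F_{\nabla_\infty}$ and $\Phi|_{\Sigma_R}\to\Phi_\infty$ in $C^0$, so $\langle\Phi,F_\nabla\rangle|_{\Sigma_R}\to\mu:=\langle\Phi_\infty,F_{\nabla_\infty}\rangle$ in $C^0$; this $\mu$ is closed because $\nabla_\infty\Phi_\infty=0$, and since $|\Phi_\infty|=m$ and $\nabla_\infty$ reduces to the $\rU(1)$ stabilising $\Phi_\infty$, the definition of the monopole class (\Cref{def:Monopole_Class}) gives $[\mu]=2\pi m\,\beta$ in $H^2(\Sigma;\mathbb{R})$. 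On the other hand $[\psi|_{\Sigma_R}]=\Psi_\infty$ is independent of $R$ (since $\rd\psi=0$), but the form $\psi|_{\Sigma_R}$ itself does not stabilise: restricted to $\Sigma$ its leading part is a constant multiple of $r^{n-3}\omega^2$, and $\omega^2$ is exact on the nearly K\"ahler $\Sigma$ — an exactness forced by $\rd\psi=0$ on the asymptotic cone. I would therefore fix a closed $4$-form $\eta$ on $\Sigma$ with $[\eta]=\Psi_\infty$, use an explicit primitive of the $r^{n-3}\omega^2$ term together with a primitive (controlled by an elliptic estimate) of the lower-order exact remainder to write $\psi|_{\Sigma_R}=\eta+\rd\gamma_R$ with $\|\gamma_R\|_{C^0(g_\Sigma)}\lesssim r^{n-3}$, and then split
\[
	\int_{\Sigma_R}\langle\Phi,F_\nabla\rangle\wedge\psi=\int_{\Sigma}\langle\Phi,F_\nabla\rangle\wedge\eta+\int_{\Sigma}\langle\Phi,F_\nabla\rangle\wedge\rd\gamma_R.
\]
The first integral tends to $\int_\Sigma\mu\wedge\eta=\langle[\mu]\cup\Psi_\infty,[\Sigma]\rangle=2\pi m\,\langle\beta\cup\Psi_\infty,[\Sigma]\rangle$. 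For the second I would integrate by parts and use $\rd\langle\Phi,F_\nabla\rangle=\langle\nabla^{\Sigma_R}\Phi\wedge F_\nabla\rangle$ (Bianchi again); the sharp bound $|\nabla\Phi|\lesssim r^{-(n-1)}$ from \Cref{thm:Main_Theorem_2} and the bound $|F_\nabla|\lesssim r^{-2}$ (from the quadratic decay of $F_\nabla^{14}$ and the monopole equation), after accounting for the conformal rescaling between $g_C|_{\Sigma_R}$ and $g_\Sigma$, show that $\rd\langle\Phi,F_\nabla\rangle$ has $C^0(g_\Sigma)$-norm $\lesssim r^{-(n-2)}$, so this term is $O(r^{-1})\to0$. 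Combining, $\cE^\psi(\nabla,\Phi)=2\cdot 2\pi m\,\langle\beta\cup\Psi_\infty,[\Sigma]\rangle=4\pi m\,\langle\beta\cup\Psi_\infty,[\Sigma]\rangle$.

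The hard part is this last step. The integrand $\langle\Phi,F_\nabla\rangle\wedge\psi$ does not converge termwise on $\Sigma$, because the factor $\psi|_{\Sigma_R}$ blows up at rate $r^{n-3}$; the finite answer survives only because the blowing-up part is exact and is killed, after the integration by parts, by the \emph{sharp} decay $|\nabla\Phi|\lesssim r^{-(n-1)}$ — a slower rate would not close the estimate. The care required is in the bookkeeping: the conformal rescalings between $g_\Sigma$ and $g_C|_{\Sigma_R}$, the bundle identifications $P|_{\Sigma_R}\cong P_\infty$ implicit in \Cref{thm:Main_Theorem_2}, and the primitive estimate for $\gamma_R$. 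The completion of the square and the Stokes step are routine given the results already established.
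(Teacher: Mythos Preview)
Your proposal is correct and follows essentially the same route as the paper: reduce the intermediate energy to the boundary integral $\int_{\Sigma_R}\langle\Phi,F_\nabla\rangle\wedge\psi$ via Stokes and the Bianchi identity, split $\psi|_{\Sigma_R}$ as a fixed representative of $\Psi_\infty$ plus an exact piece with primitive of size $O(R)$ in the conical metric, integrate by parts and kill the exact contribution using $|\nabla\Phi|\lesssim r^{-(n-1)}$ and $|F_\nabla|\lesssim r^{-2}$, then evaluate the surviving term via the asymptotic data $(\nabla_\infty,\Phi_\infty)$. The paper packages this as the monopole case of the slightly more general \Cref{thm:Energy_Formula} (where the square is completed rather than the monopole equation used directly), but the boundary analysis is identical to yours.
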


For a more general formulation of this result which applies to solutions of the second order \cref{eq:2nd_Order_Eq_1,eq:2nd_Order_Eq_2}, see \Cref{thm:Energy_Formula}.

This energy formula can be applied in specific cases to find vanishing theorems for $\rG_2$-monopoles. An example of such an application is given in \Cref{cor:Vanishing} stating that when $\Psi_\infty = 0$---which happens for instance when $H^2 (\Sigma, \mathbb{Z})$ is trivial\footnote{For example the Bryant--Salamon metric on $\mathbb{R}^4 \times S^3$.}---then there are no $\rG_2$-monopoles $(\nabla,\Phi)$ with $|F_\nabla^{14}|$ quadratically decaying and finite nonzero intermediate energy.

\Cref{sec:Linearized_Solutions} is entirely dedicated to proving decay properties of solutions to the linearized equation. It provides the foundations for the moduli theory developed in the subsequent sections. Namely, it establishes that all the $L^2$ solutions of the linearized equation decay at rate which is compatible with the appropriate Sobolev spaces to be used in \Cref{sec:Sobolev_Fredholm,sec:Sobolev_2,sec:Moduli}.

All $\rG_2$-monopoles with finite intermediate energy, bounded $r^2 |F_\nabla^{14}|$, fixed monopole class and mass $m > 0$ determines (modulo gauge) the same same asymptotic pair, $(\nabla_\infty, \Phi_\infty)$, at infinity; see \Cref{rem:Unique_nabla}. Moreover, for any such monopole $(\nabla,\Phi)$ we have that (in the right gauge) $|\Phi - \Phi_\infty|$ and $|r(\nabla-\nabla_\infty)|$ both decay along the conical end. Furthermore, the asymptotic configuration $(\nabla_\infty, \Phi_\infty)$ has a group of automorphisms isomorphic to $\rU (1)$ which we call $\Gamma_\infty$. Based on this, in \Cref{sec:Sobolev_Fredholm,sec:Sobolev_2,sec:Moduli} we develop a moduli theory describing such monopoles. The main result of these sections is stated as \Cref{thm:Moduli} which we restate, informally, here as follows:

\begin{Mtheorem}
	Let $(\nabla,\Phi)$ be a $\rG_2$-monopole with finite intermediate energy and $|F_\nabla^{14}|$ decaying quadratically as before. Then, there are Banach manifolds $\tilde{\mathcal{B}}^p_{1,\alpha}$, $\mathcal{F}^p_{1, \alpha}$ defined in \Cref{sec:Moduli} and a $\Gamma_\infty$-invariant (nonlinear) Fredholm map 
	\begin{equation}
		\mon : \tilde{\mathcal{B}}^p_{1,\alpha} \to \mathcal{F}^p_{1, \alpha},
	\end{equation}
	with the following significance. The moduli space of $\rG_2$-monopoles with finite intermediate energy, $|F^{14}|$ quadratically decaying, and the same monopole class and mass as $(\nabla,\Phi)$ is in bijection with
	\begin{equation}
		\mon^{-1} (0) / \Gamma_\infty \subseteq \mathcal{B}^p_{1,\alpha}.
	\end{equation}
\end{Mtheorem}

\smallskip

\subsection*{Comparison with previous work}

In \cite{Oliveira2014_thesis} the third author worked under much stronger hypothesis in order to deduce similar results to those of \Cref{thm:Main_Theorem_2}. In that reference it is already assumed that: (1) $(\nabla,\Phi)$ has finite mass, i.e. \cref{eq:Finite_Mass} holds; and (2) the connection $\nabla$ is asymptotic to a connection $\nabla_\infty$, pulled back from the link $\Sigma$ of the asymptotic cone, with $|\nabla - \nabla_\infty| \lesssim r^{-1-\epsilon}$ for some $\epsilon>0$. Under these hypothesis, the existence of $\Phi_\infty$ as in (a) of \Cref{thm:Main_Theorem_2} was then deduced. However, the proof of part (b) and of \Cref{thm:Bogomolny_Trick} in \cite{Oliveira2014_thesis} uses the additional hypothesis that (3) $|[ \Phi_\infty , \nabla - \nabla_\infty ]| \lesssim r^{- 6 - \epsilon}$ for some $\epsilon > 0$.\\
The moduli theory developed here is the same as that appearing in \cite{Oliveira2014_thesis} which to date had not yet been published in a journal. The work done in the preceding chapters lays different foundations for the development of this moduli theory than that appearing in \cite{Oliveira2014_thesis}.

\smallskip

\subsection*{Organization}

In \Cref{sec:Pre} we fix some nomenclature and notations, and derive preliminary identities satisfied by $\rG_2$-monopoles. Most notably a Bochner--Weitzenb\"ock formula for $\Delta |\nabla\Phi|^2$. Next, in \Cref{sec:Moser_e-reg}, we derive very useful consequences of the previous identities via Moser iteration and $\varepsilon$-regularity results, under the hypothesis of finite intermediate energy and bounded curvature. These yield that $|\nabla\Phi|^2$ decays, is in $L^p$ for all $p\in [1,\infty]$, and in case $|F_\nabla|$ decays, we get that $|\nabla^j F_\nabla|$ and $|\nabla^{j+1}\Phi|$ decay for all $j\in\mathbb{N}$. 

\Cref{sec:finite_mass} is mainly concerned with a proof of our first main theorem, but in fact proves a considerably stronger result, stated as \Cref{thm:Finite_Mass}, and further partial refinements. The main tools here are the integrability and decay properties of the previous section, and classical results on harmonic function theory of complete manifolds with nonnegative Ricci curvature, including Green's function asymptotics and Yau's gradient inequality, all combined through a strategy inspired by the original work of Taubes in the classical $3$-dimensional monopole equation in \cite{Jaffe1980}*{Chapter IV}. 

In \Cref{sec:BW_formulas} we prove refined Bochner and Weitzenb\"ock type formulas for finite mass monopoles away from the zero set of the Higgs field when the gauge group is $\rG = \SU (2)$. Using decay hypothesis, we get in particular strong Bochner inequalities sufficiently far along the end of our irreducible $\rG_2$-manifold, cf. \Cref{cor:Elliptic_Inequality_For_Exponential_Decay,lem:Improved_Bochner_Delta_Nabla_Phi}. We then restrict to the AC $\rG_2$-manifold case in \Cref{sec:AC}. The first striking consequence of the Bochner inequalities, together with the maximum principle, is the exponential decay of the $\Phi$-transversal components of $F_\nabla$ and $\nabla\Phi$ in this context, proved in \Cref{prop:Exponential_Decay_Transverse}. We then move to use a combination of the Agmon identity, Hardy's inequality and Moser iteration in \Cref{subsec:Hardy1,sec:final_est} to get a sharp polynomial decay rate of $|\nabla\Phi|$, completing the proof of the first part of our second main result, restated as \Cref{thm:First_part_of_Main_Theorem_2}. Then, in \Cref{sec:Boundary_data} we use the previous results, together with Uhlenbeck compactness and related techniques to prove the convergence result of the second part of our second main result. 

As an application of our second main result, in \Cref{sec:Bogomolny_Trick} we develop the $\rm G_2$-analogue of the Bogomolny trick which results in the energy formula stated as \Cref{thm:Energy_Formula}.

We devote \Cref{sec:Linearized_Solutions} to the study of the linearized $\rG_2$-monopole equation and using the same techniques of the previous sections we prove analogous decay results for its solutions.

Finally, in \Cref{sec:Sobolev_Fredholm,sec:Sobolev_2,sec:Moduli} we develop the moduli theory for $\rG_2$-monopoles with finite intermediate energy, quadratically decaying curvature, fixed monopole class and mass. The first of these sections defines the relevant Sobolev norms and proves that the linearized monopole equation is Fredholm. The second settles some useful technical results such as multiplication maps which are needed in order to handle the nonlinearities of the monopole equation. Finally, in the third and last of these sections we topologize the relevant moduli spaces using the Sobolev norms previously defined and prove that the monopole equation yields such a nonlinear Fredholm map. The main result is stated as \Cref{thm:Moduli}.

\smallskip

\begin{acknowledgment}

	The authors are extremely grateful to Mark Stern from whom they learned a lot of the techniques employed in this paper. We also thank Detang Zhou for informing us of the reference \cite{Li1995}. The second named author is thankful to Lorenzo Foscolo and Thomas Walpuski for helpful discussions, and to Universidade Federal Fluminense and IMPA for their hospitality during the early stage of this project.

	The first named author was financed in part by the Coordenação de Aperfeiçoamento de Pessoal de Nível Superior - Brasil (CAPES) - Finance Code 001, via the postdoctoral grant [88887.495008/2020-00] of the INCTMat-CAPES program (Universidade Federal Fluminense), and the postdoctoral grant [88887.643728/2021-00] of the CAPES/COFECUB bilateral collaboration project Ma 898/18 [88887.143014/2017-00] (Universidade Estadual de Campinas/Université de Bretagne Occidentale). He was also supported in part by the School of Mathematical Sciences of Peking University. The third named author was supported by the NOMIS Foundation, and while at Universidade Federal Fluminense by Funda\c{c}\~ao Serrapilheira 1812-27395, by CNPq grants 428959/2018-0 and 307475/2018-2, and FAPERJ through the program Jovem Cientista do Nosso Estado E-26/202.793/2019.

	Finally, the authors are thankful to the anonymous referee for their helpful comments.

\end{acknowledgment}

\bigskip

\section{Preliminaries}\label{sec:Pre}

\subsection{Notation and conventions}

In this article $n = 7$. We prefer to keep the $n$ explicit as this allows us to more easily read the use of several analytic results such as scaling, Moser iteration arguments, Hardy's inequality etc. Keeping $n$ instead of $7$ is also convenient for more easily compared with other monopole theories. 

Throughout the text, unless otherwise stated, we assume that $(X^7,\varphi)$ is a complete, noncompact and irreducible $\rm G_2$-manifold. Moreover, given a principal $G$-bundle $P$ over $X$, we always consider \emph{smooth} configurations $(\nabla,\Phi)$ on $P\to X$. We assume $G$ to be a compact Lie group, and we fix some Ad-invariant metric on the Lie algebra $\mathfrak{g}$ of $G$, which in turn induces a metric on the adjoint bundle $\mathfrak{g}_P$. In particular, when $G=\rm SU(2)$ we fix the metric on $\mathfrak{g}_P$ to be the one induced by the inner product $(a,b)\mapsto -2\tr(ab)$ on $\mathfrak{g}=\mathfrak{su}(2)$.

We let $\Delta := \rd^\ast \rd$ be the \emph{Hodge--Laplacian} operator on functions of $X$, and $\Delta_\nabla := \rd_\nabla \rd_\nabla^\ast + \rd_\nabla^\ast\rd_\nabla$ be the \emph{covariant Hodge--Laplacian}, induced by $\nabla$, acting on $\Omega^k(X,\mathfrak{g}_P)$. We note that $\Delta_\nabla = \rd_\nabla^\ast\rd_\nabla$ and coincides with the \emph{rough Laplacian} $\nabla^\ast\nabla$ on $\Omega^0 (X, \mathfrak{g}_P)$.

For any $\alpha \in \Omega^k (X,\mathfrak{g}_P)$ and $\beta \in \Omega^l (X,\mathfrak{g}_P)$, we define (locally)
\begin{equation}
	[ \alpha \wedge \beta ] := \sum\limits_{\substack{I, J \\ |I| = k, |J| = l}} [\alpha_I, \beta_J] \rd x^I \wedge \rd x^J \in \Omega^{k + l} (X,\mathfrak{g}_P).
\end{equation}
When $k$ or $l$ is zero (one of them is a "scalar"), then drop the wedge from the notation, that is, we write
\begin{equation}
	[ \alpha \wedge \beta ] = [\alpha, \beta].
\end{equation}

We denote by $c>0$ a generic constant and we write $\alpha\lesssim\beta$ to mean that $\alpha\leqslant c\beta$.

\smallskip

\subsection{Bounded geometry and Moser iteration}

We say that $(X,g)$ has \emph{bounded geometry} if its global injectivity radius, $\mathrm{inj}(X,g) = \inf_{x\in X} \mathrm{inj}_x (X, g)$, is positive (in particular this implies completeness), and the Riemann curvature tensor, together with all of its derivatives, is bounded, that is for each $j\in\mathbb{N}$, there is $c_j>0$ such that $|\nabla^j\mathrm{Riem}|\leqslant c_j$. 

We now cite a standard Moser iteration type result in the exact manner we need it in this article.
\begin{proposition}[Moser iteration, cf. \cite{W17}*{Lemma 10}]\label{prop:Moser}
	Let $B_r (x) \subseteq (X^n,g)$ be a convex geodesic ball and $u : B_r (x) \to \mathbb{R}$ be a smooth nonnegative function satisfying $\Delta u \leqslant c_0 u$, for some constant $c_0 \geqslant 0$. Then, there is a constant $c>0$ depending only on the geometry of $B_r (x)$ such that 
	\begin{equation}
	    \sup_{ y \in B_{\frac{r}{2}}(x)} u(y)  \leqslant \ c\left( c_0^{n/2} + r^{-n} \right) \int\limits_{B_r(x)} u \vol_X.
	\end{equation}
\end{proposition}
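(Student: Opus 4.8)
The plan is to run the classical De Giorgi--Nash--Moser iteration: I would combine a local Sobolev inequality on $B_r(x)$ with a Caccioppoli estimate coming from $\Delta u \leqslant c_0 u$, iterate to get an $L^2 \to L^\infty$ bound, and finally upgrade the right-hand side from $L^2$ to $L^1$ via a standard absorption argument. Throughout I would let $c$ denote a constant depending only on the geometry of $B_r(x)$ --- in particular on its Dirichlet Sobolev constant, which exists since $B_r(x)$ is a convex geodesic ball and $n = 7 \geqslant 3$ --- possibly growing from line to line but never depending on $u$ or $c_0$. I would assume $\int_{B_r(x)} u\,\vol_X < \infty$ (else nothing to prove) and, after replacing $u$ by $u + \epsilon$ --- which still satisfies $\Delta(u+\epsilon) \leqslant c_0(u+\epsilon)$ --- and letting $\epsilon \downarrow 0$ only at the very end, assume $u > 0$ so that all real powers $u^\beta$ are smooth.

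\emph{Caccioppoli and reverse Hölder.} For $\beta \geqslant 1$ and radii $r/2 \leqslant \sigma < \sigma' < r$, I would pick $\eta \in C^\infty_c(B_{\sigma'})$ with $\eta \equiv 1$ on $B_\sigma$, $0 \leqslant \eta \leqslant 1$, $|\nabla\eta| \leqslant 2(\sigma'-\sigma)^{-1}$, multiply $\Delta u \leqslant c_0 u$ by $\eta^2 u^{2\beta-1}$, integrate by parts, absorb the cross term by Young's inequality, and write $v := u^\beta$ (so $|\nabla v|^2 = \beta^2 u^{2\beta-2}|\nabla u|^2$ and $\beta^2/(2\beta-1) \leqslant \beta$). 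This should give
\[
	\int_{B_{\sigma'}} |\nabla(\eta v)|^2\,\vol_X \;\leqslant\; c\,\beta\bigl(c_0 + (\sigma'-\sigma)^{-2}\bigr)\int_{B_{\sigma'}} v^2\,\vol_X ,
\]
and then feeding $\eta v \in C^\infty_c(B_{\sigma'})$ into the Sobolev inequality --- with $\chi := \tfrac{n}{n-2} > 1$ and $p := 2\beta$ --- would turn this into the reverse Hölder inequality
\[
	\|u\|_{L^{p\chi}(B_\sigma)} \;\leqslant\; \bigl(c\,p\,(c_0 + (\sigma'-\sigma)^{-2})\bigr)^{1/p}\,\|u\|_{L^p(B_{\sigma'})} .
\]

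\emph{Iteration.} Fixing $r/2 \leqslant \tau < \tau' < r$, I would set $p_k := 2\chi^k$ and $\tau_k := \tau + (\tau'-\tau)2^{-k}$, so that $\tau_0 = \tau'$, $\tau_k \downarrow \tau$, $(\tau_k - \tau_{k+1})^{-2} = 4^{k+1}(\tau'-\tau)^{-2}$, apply the reverse Hölder inequality with $p = p_k$, $\sigma = \tau_{k+1}$, $\sigma' = \tau_k$ (so $p_k\chi = p_{k+1}$), and compose over $k$. The prefactors multiply to $\prod_{k\geqslant0}\bigl(c\,(4\chi)^k(c_0 + (\tau'-\tau)^{-2})\bigr)^{1/p_k}$, which converges because $\sum_{k\geqslant0} p_k^{-1} = \tfrac n4$ and $\sum_{k\geqslant0} k\,p_k^{-1} < \infty$; letting $k \to \infty$ and using $\|u\|_{L^{p_k}(B_{\tau_k})} \geqslant \|u\|_{L^{p_k}(B_\tau)} \to \sup_{B_\tau} u$ should yield
\[
	\sup_{B_\tau} u \;\leqslant\; c\bigl(c_0 + (\tau'-\tau)^{-2}\bigr)^{n/4}\|u\|_{L^2(B_{\tau'})} \;\leqslant\; c\bigl(c_0^{n/4} + (\tau'-\tau)^{-n/2}\bigr)\|u\|_{L^2(B_{\tau'})} .
\]

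\emph{From $L^2$ to $L^1$, and conclusion.} Writing $\phi(s) := \sup_{B_s} u$ --- finite for $s < r$ since $\overline{B_s}$ is compact --- and $M := \int_{B_r(x)} u\,\vol_X$, I would use $\|u\|_{L^2(B_{\tau'})}^2 = \int_{B_{\tau'}} u^2 \leqslant \phi(\tau')\,M$ in the previous bound and absorb $\tfrac12\phi(\tau')$ by Young's inequality, obtaining
\[
	\phi(\tau) \;\leqslant\; \tfrac12\,\phi(\tau') + c\bigl(c_0^{n/2} + (\tau'-\tau)^{-n}\bigr)M , \qquad r/2 \leqslant \tau < \tau' < r .
\]
Then I would invoke the standard iteration lemma --- a bounded $\phi \geqslant 0$ on $[a,b]$ with $\phi(\tau) \leqslant \theta\phi(\tau') + A(\tau'-\tau)^{-\alpha} + B$ for all $a \leqslant \tau < \tau' \leqslant b$ and $\theta \in [0,1)$ obeys $\phi(a) \leqslant c(\alpha,\theta)(A(b-a)^{-\alpha} + B)$ --- on $[r/2,\tau']$ with $\theta = \tfrac12$, $\alpha = n$, $A = cM$, $B = c\,c_0^{n/2}M$, and let $\tau' \uparrow r$ (so $(\tau'-r/2)^{-n} \to 2^n r^{-n}$), arriving at $\sup_{B_{r/2}(x)} u \leqslant c\bigl(c_0^{n/2} + r^{-n}\bigr)M$, the claim. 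I expect the only steps needing real care to be the bookkeeping of the $p$- and $c_0$-dependence in the reverse Hölder constant (so that the infinite product converges and the correct exponents $n/4$ and $n/2$ emerge), and carrying out this last $L^2 \to L^1$ step on balls of radius strictly less than $r$, where $\sup u$ is a priori finite, before taking $\tau' \uparrow r$; the one ingredient that is not a formal manipulation is the Sobolev inequality on the convex geodesic ball $B_r(x)$, whose constant I absorb into $c$.
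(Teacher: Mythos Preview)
The paper does not give its own proof of this proposition: it is stated with a citation to \cite{W17}*{Lemma 10} and used as a black box. Your argument is the standard De Giorgi--Nash--Moser iteration (Caccioppoli plus Sobolev to get a reverse H\"older inequality, geometric iteration to pass from $L^2$ to $L^\infty$, then the absorption/iteration lemma to upgrade the right-hand side to $L^1$), and it is correct as written; the bookkeeping of exponents and the handling of the $\epsilon$-shift and of radii $\tau'<r$ are all in order. This is exactly the approach one expects the cited reference to take, so there is no meaningful divergence to compare.
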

If $(X,g)$ has bounded geometry then the constant $c$ above can be taken to be universal in a way that it does not depend on $x$. In fact, there is $r_0\in (0,\mathrm{inj}(X,g))$ such that for every $r\in (0, r_0]$, $x\in X$, and any smooth nonnegative function $u:X\to\mathbb{R}$ satisfying $\Delta u \leqslant c_0 u$ on all of $X$, then
\begin{equation}
    \sup_{ y \in B_{\frac{r}{2}}(x)} u(y)  \lesssim \ \left( c_0^{n/2} + r^{-n} \right) \int\limits_{B_r(x)} u \vol_X.
\end{equation}

\smallskip

\subsection{Asymptotically conical $\rG_2$-manifolds}

Now we give some definitions and notations concerning AC $\rG_2$-manifolds.
\begin{definition}\label{def:NK}
	Given a 6-manifold $\Sigma$, a pair of forms $(\omega,\Omega_1)\in\Omega^2\oplus\Omega^3(\Sigma,\mathbb{R})$ determine a $\SU (3)$-\emph{structure} on $\Sigma$ if:
	\begin{itemize}
		\item The $\text{GL}(6,\mathbb{R})$ orbit of $\Omega_1$ is open, with stabilizer a covering of $\text{SL}(3,\mathbb{C})$;
		\item The following compatibility relations hold
		\begin{equation}
			\omega\wedge\Omega_1 = \omega\wedge\Omega_2 = 0,\quad\frac{\omega^3}{3!} = \frac{1}{4}\Omega_1\wedge\Omega_2,
		\end{equation} where $\Omega_2 = J\Omega_1$ and $J$ denotes the almost complex structure determined by $\Omega_1$.
		\item $g_\Sigma = \omega(\cdot{},J\cdot{})$ determines a Riemannian metric on $\Sigma$.
	\end{itemize} We let $\Omega$ be the complex volume form on $(\Sigma, g_\Sigma)$ such that $\text{Re}(\Omega) = \Omega_1$ and $\text{Im}(\Omega) = \Omega_2$. Furthermore, if the forms $(\omega,\Omega)$ satisfy
	\begin{equation}
		\rd\Omega_2 = -2\omega^2\quad\text{and}\quad\rd\omega = 3\Omega_1,
	\end{equation}
	then $(\Sigma,g_\Sigma)$ is said to be \emph{nearly K\"ahler}. 
\end{definition}

\begin{lemma}\label{lem:cone_G2 = linkNK}
	Suppose that $\Sigma$ is endowed with an $\SU (3)$-structure determined by $(\omega,\Omega_1)$. Then the Riemannian cone $(C(\Sigma) = (1,\infty)_r\times\Sigma, g_C = \rd r^2+ r^2g_\Sigma)$ with the $\rG_2$-structure 
	\begin{equation}
		\varphi_C = r^2 \rd r\wedge\omega + r^3\Omega_1,\quad\psi_C = r^4\frac{\omega^2}{2} - r^3 \rd r\wedge\Omega_2,
	\end{equation}
	is a $\rG_2$-manifold if and only if $(\Sigma^6,g_\Sigma)$ is nearly K\"ahler.
\end{lemma}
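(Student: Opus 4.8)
The plan is to reduce the statement to the Fern\'andez--Gray criterion: a $\rG_2$-structure $\varphi$ on a $7$-manifold is torsion-free --- equivalently, the induced metric has holonomy contained in $\rG_2$, which is what ``$\rG_2$-manifold'' means here --- if and only if $\rd\varphi = 0$ and $\rd\psi = 0$, where $\psi = \ast_\varphi\varphi$. Thus there are two things to do: \emph{(i)} verify that the pair $(\varphi_C,\psi_C)$ written in the statement is a genuine compatible pair, i.e.\ that $\varphi_C$ is pointwise a $\rG_2$ three-form which induces the cone metric $g_C = \rd r^2 + r^2 g_\Sigma$ and that $\ast_{g_C}\varphi_C = \psi_C$; and \emph{(ii)} show that the two closedness conditions $\rd\varphi_C = 0$ and $\rd\psi_C = 0$ hold simultaneously if and only if the nearly K\"ahler equations $\rd\omega = 3\Omega_1$ and $\rd\Omega_2 = -2\omega^2$ of \Cref{def:NK} hold on $\Sigma$.

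For step \emph{(i)} I would work pointwise and choose, near any point of $\Sigma$, a $g_\Sigma$-orthonormal coframe $(e^1,\dots,e^6)$ adapted to the $\SU(3)$-structure, so that $(\omega,\Omega_1,\Omega_2)$ take their standard $\SU(3)$ normal forms; setting $e^0 = \rd r$ and rescaling to the cone coframe $f^0 = \rd r$, $f^i = r\,e^i$ for $i = 1,\dots,6$, a direct comparison shows that $\varphi_C$ becomes the standard flat $\rG_2$ three-form in $(f^0,\dots,f^6)$. Hence $\varphi_C$ induces $g_C = \sum_{\mu=0}^{6}(f^\mu)^2 = \rd r^2 + r^2 g_\Sigma$, and the Hodge dual of this standard form is precisely $\psi_C$. (This is the pointwise linear-algebra identity underlying the statement that the Euclidean cone on the round nearly K\"ahler $S^6$ is flat $\mathbb{R}^7$.)

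For step \emph{(ii)} I would use that, for a form pulled back from $\Sigma$, its exterior derivative on $C(\Sigma)$ is again the pullback of the exterior derivative on $\Sigma$ (there is no radial term), together with $\rd(r^k\,\rd r) = 0$. A short computation then gives
\begin{align}
	\rd\varphi_C &= r^2\,\rd r\wedge\bigl(3\Omega_1 - \rd\omega\bigr) + r^3\,\rd\Omega_1, \\
	\rd\psi_C &= r^3\,\rd r\wedge\bigl(2\omega^2 + \rd\Omega_2\bigr) + r^4\,\omega\wedge\rd\omega .
\end{align}
Since distinct powers of $r$ are linearly independent and since a form pulled back from $\Sigma$ and its wedge with $\rd r$ lie in complementary summands of $\Omega^\bullet(C(\Sigma))$, the vanishing of $\rd\varphi_C$ is equivalent to $\rd\omega = 3\Omega_1$, which then also forces $\rd\Omega_1 = 0$ since $\rd\circ\rd = 0$; and, granted $\rd\omega = 3\Omega_1$, the vanishing of $\rd\psi_C$ is equivalent to $\rd\Omega_2 = -2\omega^2$, because the remaining term $\omega\wedge\rd\omega = 3\,\omega\wedge\Omega_1$ vanishes by the $\SU(3)$ compatibility relation $\omega\wedge\Omega_1 = 0$ in \Cref{def:NK}. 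Conversely, if $(\Sigma,g_\Sigma)$ is nearly K\"ahler then both right-hand sides above vanish. Combining this with step \emph{(i)} and the Fern\'andez--Gray criterion yields that $\varphi_C$ is torsion-free precisely when $(\Sigma,g_\Sigma)$ is nearly K\"ahler, which is the claim.

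The main obstacle is step \emph{(i)}: it is the only part that is not a formal manipulation, and one must fix the orientation and normalisation conventions carefully so that $\ast_{g_C}\varphi_C$ really equals $\psi_C$ and so that the structure constants $3$ and $-2$ in the nearly K\"ahler equations are exactly the ones produced by differentiating the $r$-weighted forms $\varphi_C$ and $\psi_C$. Once the adapted-coframe identity of \emph{(i)} is in hand, step \emph{(ii)} is routine.
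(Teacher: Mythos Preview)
Your proposal is correct and is the standard argument for this well-known fact. Note that the paper states this lemma without proof (it proceeds directly from the statement to \Cref{def:AC}), so there is no paper proof to compare against; your computation of $\rd\varphi_C$ and $\rd\psi_C$ and the reduction to the Fern\'andez--Gray criterion is exactly the expected route.
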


\begin{definition}\label{def:AC}
	We say that a noncompact, complete, $\rm G_2$-manifold $(X^7,\varphi)$ is \emph{asymptotically conical (AC) with rate $\nu<0$} when there exists a compact subset $K\subseteq X$, a closed nearly K\"ahler 6-manifold $(\Sigma, g_\Sigma)$ and a diffeomorphism $\Upsilon:C(\Sigma)\to X - K$ such that the cone metric $g_C$ on $C(\Sigma)$ and its Levi-Civita connection $\nabla_C$ satisfy:
	\begin{equation}
		\left|\nabla_C^j(\Upsilon^\ast g - g_C)\right|_{g_C} = O(r^{\nu - j})\quad\text{as $r\to\infty$,\quad for all $j\in\mathbb{N}$.}
	\end{equation} The connected components of $X - K$ are called the \emph{ends} of $X$ and $\Sigma$ is called the \emph{link} of the asymptotic cone. By a slight abuse of notation we let $r$ be any positive smooth extension of $r\circ\Upsilon^{-1}|_{X - K}$ to $X$ and call $r$ a \emph{radius function}. For each $R>0$, we let $B_R = \{ x \in X : r (x) \leqslant R \}$, which, for large enough $R$, is a smooth manifold-with-boundary, with a fixed diffeomorphism type. We also let $\Sigma_R = \partial B_R$, which is a closed Riemannian 6-manifold.
\end{definition}
\begin{remark}\label{rem:AC_vol_growth}
    Notice that any AC $\rm G_2$-manifold has bounded geometry. Moreover, they have maximal (Euclidean) volume growth, i.e. $\text{Vol}(B_r(x))\gtrsim r^n$ (see \cite{van2009regularity}*{Corollary 2.18}). Here the word ``maximal'' is used because Ricci-flatness (implied by $\rG_2$-holonomy) together with Bishop's absolute volume comparison theorem gives $\text{Vol}(B_r(x))\lesssim r^n$. In particular, AC $\rG_2$-manifolds are nonparabolic; indeed, they satisfy \cref{ineq:vol_growth_nonparabolic} (see \Cref{sec:finite_mass} for more details on nonparabolicity).
\end{remark}

Given an asymptotically conical $\rG_2$-manifold $(X^7,\varphi)$ as in \Cref{def:AC}, it has the property that along the conical end $|\Upsilon^*\psi-\psi_C|_{g_C} = O(r^{\nu})$ with derivatives. As a consequence, along the conical end there is a $4$-form $\eta$ with $|\eta| = O(r^{\nu})$ such that $\psi = (\Upsilon^{-1})^*\psi_C+\eta$. Furthermore, as $\psi_C = -\frac{1}{4}\rd (r^{4}\Omega_2)$ we find that the cohomology class in $H^4(\Sigma, \mathbb{R})$ determined by $\eta|_{\Sigma_R}$ and $\psi|_{\Sigma_R}$ agree, i.e.
\begin{equation}
    [\eta|_{\Sigma_R}] = [\psi|_{\Sigma_R}].
\end{equation}
By construction and the homotopy invariance, $[\psi|_{\Sigma_R}]$ is constant for sufficiently large $R$ and for convenience we now name the class in $H^4(\Sigma, \mathbb{R})$ which it represents.

\begin{definition}\label{def:AC_Cohomology}
	In case of an AC $\rG_2$-manifold, as in \Cref{def:AC}, a class $\Psi_\infty \in H^4(\Sigma, \mathbb{R})$ is said to be an \emph{asymptotic cohomology class} if 
	\begin{equation}
	    \Psi_\infty \coloneqq [\psi|_{\Sigma_R}],
	\end{equation}
	for all sufficiently large $R$.
\end{definition}

\smallskip

\subsection{A Bochner--Weitzenb\"ock formula}

Here we derive some basic but crucial equations satisfied by $\rG_2$-monopoles.

\begin{lemma}\label{lem:Second_Order_Eqs}
	Let $(\nabla, \Phi)$ be any solution of the $\rG_2$-monopole \cref{eq:Monopole} on $P \to X$. Then the pair $(\nabla, \Phi)$ satisfies
	\begin{subequations}
	\begin{align}
		\Delta_\nabla \Phi 		&= 0, \label{eq:2nd_Order_Eq_1} \\
		\rd_\nabla^* F_\nabla	&= [\nabla \Phi, \Phi]. \label{eq:2nd_Order_Eq_2}
	\end{align}
	\end{subequations}
	In particular, $\Delta_\nabla F_\nabla = [[F_\nabla , \Phi], \Phi] - [\nabla \Phi \wedge \nabla \Phi]$.
\end{lemma}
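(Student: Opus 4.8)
The plan is to deduce all three formulas from the first-order equation \cref{eq:Monopole}, rewritten as $\nabla\Phi=\ast(F_\nabla\wedge\psi)$, using only three formal facts: the Bianchi identity $\rd_\nabla F_\nabla=0$; the closedness $\rd\varphi=\rd\psi=0$ (valid since $\rG_2$-holonomy makes $\varphi$, hence $\psi=\ast\varphi$, parallel); and $\rd_\nabla\rd_\nabla\Phi=[F_\nabla,\Phi]$, the $\g_P$-valued $2$-form obtained by bracketing coefficients against the curvature.

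For \cref{eq:2nd_Order_Eq_1} I would just compute directly: since $\rd_\nabla\Phi=\nabla\Phi$ and $\rd_\nabla^\ast$ annihilates $0$-forms, $\Delta_\nabla\Phi=\rd_\nabla^\ast\rd_\nabla\Phi=\rd_\nabla^\ast\ast(F_\nabla\wedge\psi)$; using $\rd_\nabla^\ast=-\ast\rd_\nabla\ast$ on $1$-forms in dimension $n=7$ and $\ast\ast=\id$ on $6$-forms, this becomes $-\ast\rd_\nabla(F_\nabla\wedge\psi)=-\ast\big((\rd_\nabla F_\nabla)\wedge\psi+F_\nabla\wedge\rd\psi\big)=0$ by Bianchi and $\rd\psi=0$.

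For \cref{eq:2nd_Order_Eq_2} the key step is a pointwise $\rG_2$ linear-algebra identity: for every $2$-form $\omega$ on $X$,
\[
	\ast\omega=\big(\ast(\omega\wedge\psi)\big)\wedge\psi-\omega\wedge\varphi ,
\]
(with the first term suitably normalized), which one verifies by splitting $\omega$ along $\Lambda^2=\Lambda^2_7\oplus\Lambda^2_{14}$ and invoking the standard relations $\omega\wedge\varphi=2\ast\omega$ and $\big(\ast(\omega\wedge\psi)\big)\wedge\psi=3\ast\omega$ on $\Lambda^2_7$, together with $\omega\wedge\varphi=-\ast\omega$ and $\omega\wedge\psi=0$ on $\Lambda^2_{14}$. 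Taking $\omega=F_\nabla$ and feeding in the monopole equation $\ast(F_\nabla\wedge\psi)=\nabla\Phi$ gives $\ast F_\nabla=(\nabla\Phi)\wedge\psi-F_\nabla\wedge\varphi$. I would then apply $\rd_\nabla$: by the Leibniz rule, Bianchi, $\rd\varphi=\rd\psi=0$, and $\rd_\nabla\nabla\Phi=\rd_\nabla\rd_\nabla\Phi=[F_\nabla,\Phi]$, everything collapses to $\rd_\nabla\ast F_\nabla=[F_\nabla,\Phi]\wedge\psi$; applying $\ast$ and using $\rd_\nabla^\ast=\ast\rd_\nabla\ast$ on $2$-forms in dimension $7$, and that the bundle maps $\ast$ and $\wedge\psi$ commute with the pointwise bracket against the $0$-form $\Phi$, I get $\rd_\nabla^\ast F_\nabla=\ast\big([F_\nabla,\Phi]\wedge\psi\big)=\big[\ast(F_\nabla\wedge\psi),\Phi\big]=[\nabla\Phi,\Phi]$. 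The ``in particular'' is then purely formal: Bianchi gives $\Delta_\nabla F_\nabla=\rd_\nabla\rd_\nabla^\ast F_\nabla=\rd_\nabla[\nabla\Phi,\Phi]$, and the graded Leibniz rule for brackets of $\g_P$-valued forms yields $\rd_\nabla[\nabla\Phi,\Phi]=[\rd_\nabla\rd_\nabla\Phi,\Phi]-[\nabla\Phi\wedge\nabla\Phi]=[[F_\nabla,\Phi],\Phi]-[\nabla\Phi\wedge\nabla\Phi]$.

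The only genuinely nontrivial ingredient, and the one I would treat with care, is the pointwise $\rG_2$ identity for $\ast\omega$ used for \cref{eq:2nd_Order_Eq_2}; everything else is formal manipulation with Hodge-star signs in dimension $7$, the Bianchi identity, closedness of $\varphi$ and $\psi$, and the graded Leibniz rule. One must fix the Hodge-star and interior-product sign and normalization conventions consistently throughout so that the coefficients come out exactly as stated — in particular the coefficient $+1$ in $\rd_\nabla^\ast F_\nabla=[\nabla\Phi,\Phi]$ and the sign in $\rd_\nabla\rd_\nabla\Phi=[F_\nabla,\Phi]$.
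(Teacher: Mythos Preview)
Your proof is correct and rests on exactly the same ingredients as the paper's: the Bianchi identity, the closedness of $\varphi$ and $\psi$, and the standard $\rG_2$ linear-algebra relations on $\Lambda^2_7\oplus\Lambda^2_{14}$. The only difference is organizational: the paper first computes $3\,\rd_\nabla^{*}F_\nabla^{7}=\ast\big([F_\nabla,\Phi]\wedge\psi\big)=[\nabla\Phi,\Phi]$ using $3F_\nabla^{7}=\ast\big(\ast(F_\nabla\wedge\psi)\wedge\psi\big)$, and then invokes $3F_\nabla^{7}=F_\nabla+\ast(F_\nabla\wedge\varphi)$ together with $\rd\varphi=0$ to conclude $\rd_\nabla^{*}F_\nabla=3\,\rd_\nabla^{*}F_\nabla^{7}$. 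Your identity $\ast\omega=\big(\ast(\omega\wedge\psi)\big)\wedge\psi-\omega\wedge\varphi$ is just the Hodge-dual of the paper's $3\omega^{7}=\omega+\ast(\omega\wedge\varphi)$, so you are running the same argument in a single pass rather than via the intermediate $F_\nabla^{7}$ step --- a slightly more streamlined packaging of the same computation.
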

\begin{proof}
	The first equation, $\Delta_\nabla \Phi = 0$ is immediate from applying $\rd_\nabla^*$ to the $\rG_2$-monopole \cref{eq:Monopole} and using the Bianchi identity $\rd_\nabla F_\nabla = 0$ together with $\rd \psi = 0$. As for the second equation, we first use the fact that $3 F_\nabla^7 = \ast ( \ast(F_\nabla \wedge \psi) \wedge \psi)$ to compute
	\begin{equation}
		3 \rd_\nabla^*F_\nabla^7 = \ast \rd_\nabla \ast^2 (\nabla \Phi \wedge \psi) = \ast \left( [ F_\nabla , \Phi] \wedge \psi \right) = [\nabla\Phi,\Phi]. \label{eq:YMH_rewrite}
	\end{equation}
	Notice that $3F_\nabla^7 = F_\nabla + \ast (F_\nabla \wedge \varphi)$ and $3F_\nabla^{14} = 2F_\nabla - \ast (F_\nabla \wedge \varphi)$. Thus, using the fact that $\varphi$ is closed we find
	\begin{equation}
		\rd_\nabla^*F_\nabla = 3 \rd_\nabla^* F_\nabla^7 = \frac{3}{2} \rd_\nabla^*F_\nabla^{14}.
	\end{equation}
	The result follows from inserting this into the equation above.
\end{proof}

\begin{lemma}\label{lem:Bochner_Delta_nabla_Phi_0}
	For any solution $(\nabla, \Phi)$ of the second order \cref{eq:2nd_Order_Eq_1,eq:2nd_Order_Eq_2}, we have
	\begin{equation}\label{eq:Bochner_identity_nablaPhi}
	    \nabla^*\nabla (\nabla \Phi) = [[\nabla \Phi, \Phi], \Phi] - 2\ast [\ast F_\nabla \wedge \nabla \Phi].
	\end{equation}
	In particular,
	\begin{align}
		\frac{1}{2}\Delta \ |\nabla \Phi|^2 + |\nabla^2 \Phi|^2	&= \langle \nabla \Phi, \nabla^*\nabla (\nabla \Phi) \rangle \\
		&= - 2 \langle \nabla \Phi, \ast [\ast F_\nabla \wedge \nabla \Phi] \rangle - |[\Phi, \nabla \Phi]|^2, \label{eq:Weitzenbock_Intermediate}
	\end{align}
	which implies
	\begin{equation}\label[ineq]{ineq:Bochner_ineq_nablaPhi}
		\frac{1}{2}\Delta |\nabla \Phi|^2 + |\nabla^2 \Phi|^2 + |[\Phi, \nabla \Phi]|^2 \lesssim |F_\nabla| |\nabla \Phi|^2. 
	\end{equation}
\end{lemma}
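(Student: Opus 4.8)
The plan is to first establish the pointwise identity \eqref{eq:Bochner_identity_nablaPhi}, and then obtain \eqref{eq:Weitzenbock_Intermediate} and \cref{ineq:Bochner_ineq_nablaPhi} from it by routine manipulations.

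For \eqref{eq:Bochner_identity_nablaPhi} I would combine the second-order equations of \Cref{lem:Second_Order_Eqs} with the standard Weitzenb\"ock formula for $\mathfrak{g}_P$-valued $1$-forms. Since an irreducible $\rG_2$-manifold is Ricci-flat, that formula reduces to $\Delta_\nabla\omega = \nabla^*\nabla\omega + \mathfrak{R}(\omega)$ for $\omega\in\Omega^1(X,\mathfrak{g}_P)$, where $\mathfrak{R}$ is the algebraic curvature operator built from the bundle curvature $\ad(F_\nabla)$ of $\mathfrak{g}_P$; in an orthonormal coframe $\mathfrak{R}(\omega)$ has components $\sum_i[F_{ik},\omega_i]$, which is exactly $\ast[\ast F_\nabla\wedge\omega]$ in our conventions (checking this normalization is the one point that needs care). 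Now apply this with $\omega = \nabla\Phi$. On one hand $\rd_\nabla^*(\nabla\Phi) = \nabla^*\nabla\Phi = \Delta_\nabla\Phi = 0$ by \eqref{eq:2nd_Order_Eq_1} and $\rd_\nabla(\nabla\Phi) = \rd_\nabla\rd_\nabla\Phi = [F_\nabla,\Phi]$, so $\Delta_\nabla(\nabla\Phi) = \rd_\nabla^*[F_\nabla,\Phi]$. On the other hand, writing $\rd_\nabla^* = \pm\ast\rd_\nabla\ast$, using $\ast[F_\nabla,\Phi] = [\ast F_\nabla,\Phi]$ (as $\Phi$ is a $0$-form), the Leibniz rule $\rd_\nabla[\ast F_\nabla,\Phi] = [\rd_\nabla\ast F_\nabla,\Phi] \pm [\ast F_\nabla\wedge\nabla\Phi]$, and $\ast\rd_\nabla\ast F_\nabla = \pm\rd_\nabla^* F_\nabla = \pm[\nabla\Phi,\Phi]$ from \eqref{eq:2nd_Order_Eq_2}, one gets (after the sign bookkeeping) $\Delta_\nabla(\nabla\Phi) = [[\nabla\Phi,\Phi],\Phi] - \ast[\ast F_\nabla\wedge\nabla\Phi]$. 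Since $\mathfrak{R}(\nabla\Phi) = \ast[\ast F_\nabla\wedge\nabla\Phi]$, subtracting yields \eqref{eq:Bochner_identity_nablaPhi}, the coefficient $-2$ arising as the sum of two equal contributions. Alternatively, one can bypass the named Weitzenb\"ock formula and prove \eqref{eq:Bochner_identity_nablaPhi} directly at a point in a geodesic normal frame, commuting covariant derivatives twice in $\nabla^*\nabla(\nabla\Phi) = -\sum_i\nabla_{e_i}\nabla_{e_i}(\nabla\Phi)$ and feeding in $\nabla^*\nabla\Phi = 0$, $\rd_\nabla^*F_\nabla = [\nabla\Phi,\Phi]$, and $\Ric = 0$ to absorb the Riemann-curvature commutators.

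Next, apply the elementary identity $\tfrac12\Delta|s|^2 = \langle\nabla^*\nabla s, s\rangle - |\nabla s|^2$ to the section $s = \nabla\Phi$ of $T^*X\otimes\mathfrak{g}_P$, noting $\nabla s = \nabla^2\Phi$; this gives the first equality in \eqref{eq:Weitzenbock_Intermediate}. Substituting \eqref{eq:Bochner_identity_nablaPhi} and using $\ad$-invariance of the metric on $\mathfrak{g}$ --- concretely $[[\nabla\Phi,\Phi],\Phi] = \ad_\Phi^2(\nabla\Phi)$ with $\ad_\Phi$ skew-adjoint, so $\langle\nabla\Phi, [[\nabla\Phi,\Phi],\Phi]\rangle = -|\ad_\Phi(\nabla\Phi)|^2 = -|[\Phi,\nabla\Phi]|^2$ --- gives the second equality.

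Finally, for \cref{ineq:Bochner_ineq_nablaPhi} move $|[\Phi,\nabla\Phi]|^2$ to the left of \eqref{eq:Weitzenbock_Intermediate} and estimate $\bigl|2\langle\nabla\Phi,\ast[\ast F_\nabla\wedge\nabla\Phi]\rangle\bigr| \leqslant 2|\nabla\Phi|\,\bigl|\ast[\ast F_\nabla\wedge\nabla\Phi]\bigr| \lesssim |F_\nabla|\,|\nabla\Phi|^2$, since $\ast$ is a pointwise isometry and the algebraic wedge-bracket pairing is bounded by a constant depending only on $n$ and $\mathfrak{g}$. The only genuinely delicate point in the whole argument is fixing the sign and normalization of the curvature term in the Weitzenb\"ock formula so that the two contributions combine to the stated coefficient $-2$ rather than $0$ or $-1$; everything else is formal.
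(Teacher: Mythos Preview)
Your proposal is correct and follows essentially the same route as the paper: the Ricci-flat Weitzenb\"ock formula $\nabla^*\nabla(\nabla\Phi) = \Delta_\nabla\nabla\Phi - \ast[\ast F_\nabla\wedge\nabla\Phi]$, the computation $\Delta_\nabla\nabla\Phi = \rd_\nabla^*[F_\nabla,\Phi] = [[\nabla\Phi,\Phi],\Phi] - \ast[\ast F_\nabla\wedge\nabla\Phi]$ via \eqref{eq:2nd_Order_Eq_1}, \eqref{eq:2nd_Order_Eq_2} and the Leibniz rule, and then the standard $\tfrac{1}{2}\Delta|s|^2$ identity together with $\ad$-invariance. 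Your caution about the sign/normalization of the curvature term is well placed but you have it right.
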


\begin{proof}
	Using the Ricci-flatness and the Bochner--Weitzenb\"ock formula, we have
	\begin{equation}\label{eq:BW_1}
    	\nabla^* \nabla (\nabla \Phi) = \Delta_\nabla \nabla \Phi - \ast [\ast F_\nabla \wedge \nabla \Phi].
	\end{equation}
	Now, using the second order \cref{eq:2nd_Order_Eq_1,eq:2nd_Order_Eq_2} and the Bianchi identity we compute 
	\begin{align}
		\Delta_\nabla \nabla \Phi &= \rd_\nabla^* [F_\nabla , \Phi] \\
		&= [\rd_\nabla^*F_\nabla , \Phi] - \ast [ \ast F_\nabla \wedge \nabla \Phi]\\
		&= [[\nabla \Phi, \Phi], \Phi] - \ast [\ast F_\nabla \wedge \nabla \Phi].\label{eq:BW_2}
	\end{align}
	Putting \cref{eq:BW_1,eq:BW_2} together implies \cref{eq:Bochner_identity_nablaPhi}.
\end{proof}

\smallskip

\subsection{Finite mass configurations}
To finish this preliminary section, we introduce the precise definition of finite mass configurations and make a simple but useful remark. Recall that since $(X^7,\varphi)$ is a complete, noncompact and irreducible $\rG_2$-manifold, it follows from the Cheeger--Gromoll splitting theorem that $(X^7,\varphi)$ has only one end.
\begin{definition}
	A configuration $(\nabla, \Phi)$ is said to have \emph{finite mass} if $\left|\Phi\right|$ converges uniformly to a constant $m\in\mathbb{R}_+ $ along the end, i.e. for any choice of reference point $o\in X$ one has
	\begin{equation}
		\lim_{\dist(x,o)\to\infty}|\Phi(x)| = m.
	\end{equation}
	Then the constant $m$ is called the \emph{mass} of $(\nabla, \Phi)$.
\end{definition}

\begin{remark}\label{rem:Finite_Mass}
If $(\nabla, \Phi)$ is a solution to the second order \cref{eq:2nd_Order_Eq_1,eq:2nd_Order_Eq_2} then, in particular, $\Delta_\nabla\Phi = \nabla^*\nabla\Phi = 0$ and this implies that
\begin{align}
	\frac{1}{2}\Delta \left|\Phi\right|^2 &= \frac{1}{2} \rd^\ast \rd |\Phi|^2\\
	&= \rd^\ast \langle\nabla\Phi,\Phi\rangle\\
	&= -\ast \rd \langle\ast\nabla\Phi,\Phi\rangle\\
	&= \langle \nabla^*\nabla \Phi, \Phi\rangle - \left|\nabla\Phi\right|^2\\
	&= - \left|\nabla\Phi\right|^2 \leqslant 0. \label{eq:subharmonic}
\end{align}
Thus, the function $\left|\Phi\right|^2$ is subharmonic. When $(\nabla, \Phi)$ also has finite mass $m \in \mathbb{R}_+$, then by the maximum principle (cf. \cite{Jaffe1980}*{Chapter~VI, Proposition~3.3}) one has either $|\Phi|\equiv m$ or $|\Phi|<m$ everywhere on $X$. Moreover, by the uniform convergence $|\Phi|\to m$ along the end, one has that $|\Phi|\geqslant\frac{m}{2}$ outside a sufficiently large geodesic ball. 
\end{remark}

\section{Consequences of Moser iteration and $\epsilon$-regularity}\label{sec:Moser_e-reg}

In this section we deduce step by step the consequences that can be taken from the use of Moser iteration and $\epsilon$-regularity along the end of $X$. The final result of the section which concentrates our conclusions and follows from the preceding work is \Cref{cor:Lpbounds}.

We start with a simple consequence of \Cref{lem:Bochner_Delta_nabla_Phi_0} using Moser iteration.

\begin{lemma}\label{lem:bounded_curvature_moser}
	Let $(\nabla, \Phi)$ be a solution to the second order \cref{eq:2nd_Order_Eq_1,eq:2nd_Order_Eq_2}. Then for any $x\in X$ and $0<r<\frac{1}{2}\mathrm{inj}_x(X,g)$,
	\begin{equation}\label[ineq]{ineq:moser}
		\sup_{B_{\frac{r}{2}}(x)} |\nabla\Phi|^2 \lesssim \left(\|F_\nabla\|_{L^\infty (B_r(x))}^{n/2} + r^{-n}\right) \int\limits_{B_r(x)} |\nabla\Phi|^2 \vol_X.
	\end{equation}
\end{lemma}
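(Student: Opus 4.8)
The plan is to feed the Bochner inequality \eqref{ineq:Bochner_ineq_nablaPhi} of \Cref{lem:Bochner_Delta_nabla_Phi_0} into the Moser iteration estimate of \Cref{prop:Moser}, applied to the function $u := |\nabla\Phi|^2$.

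First I would note that $u$ is smooth and nonnegative, since $(\nabla,\Phi)$ is a smooth configuration solving \cref{eq:2nd_Order_Eq_1,eq:2nd_Order_Eq_2}. Discarding the nonnegative terms $|\nabla^2\Phi|^2$ and $|[\Phi,\nabla\Phi]|^2$ from the left-hand side of \eqref{ineq:Bochner_ineq_nablaPhi} leaves, pointwise on $B_r(x)$,
\[
	\tfrac{1}{2}\Delta u \;\leqslant\; c\,|F_\nabla|\,u \;\leqslant\; c\,\|F_\nabla\|_{L^\infty(B_r(x))}\,u
\]
for a constant $c>0$ depending only on $n$; here $\|F_\nabla\|_{L^\infty(B_r(x))}<\infty$ because $F_\nabla$ is continuous and $\overline{B_r(x)}$ is compact (using completeness of $X$ together with $r<\tfrac12\mathrm{inj}_x(X,g)$). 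Thus $u$ satisfies $\Delta u \leqslant c_0\,u$ with $c_0 := 2c\,\|F_\nabla\|_{L^\infty(B_r(x))}\geqslant 0$.

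Next, since $r<\tfrac12\mathrm{inj}_x(X,g)$, the geodesic ball $B_r(x)$ is a convex geodesic ball, so \Cref{prop:Moser} applies with this $c_0$ and yields
\[
	\sup_{B_{r/2}(x)} u \;\lesssim\; \bigl(c_0^{n/2}+r^{-n}\bigr)\int\limits_{B_r(x)} u\,\vol_X .
\]
Because $c_0^{n/2}\lesssim \|F_\nabla\|_{L^\infty(B_r(x))}^{n/2}$, substituting back $u=|\nabla\Phi|^2$ gives precisely \eqref{ineq:moser}.

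The argument is a direct concatenation of two results already established in the paper, so I do not expect a genuine obstacle; the only hypothesis requiring a word of care is the convexity of $B_r(x)$ needed to invoke \Cref{prop:Moser}, which is guaranteed by the stated bound on $r$ in terms of the injectivity radius (and, in the bounded geometry setting of interest, the constants can furthermore be taken uniform in $x$, as recorded after \Cref{prop:Moser}).
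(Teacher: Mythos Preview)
Your proposal is correct and follows exactly the same approach as the paper: the paper's proof is a one-line reference to combining \cref{ineq:Bochner_ineq_nablaPhi} from \Cref{lem:Bochner_Delta_nabla_Phi_0} with the Moser iteration of \Cref{prop:Moser}, and you have simply spelled out the details of that combination.
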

\begin{proof}
	This follows from a direct application of the \cref{ineq:Bochner_ineq_nablaPhi} in \Cref{lem:Bochner_Delta_nabla_Phi_0} with the Moser iteration result stated in \Cref{prop:Moser}.
\end{proof}

\begin{corollary}\label{cor:YMH_Lpbounds}
Let $(X, \varphi)$ be a complete, noncompact and irreducible $\rG_2$-manifold of bounded geometry. Let $(\nabla, \Phi)$ be a solution to the second order \cref{eq:2nd_Order_Eq_1,eq:2nd_Order_Eq_2}. If $|F_\nabla|\in L^\infty (X)$ and $|\nabla\Phi|^2\in L^1 (X)$, then $|\nabla\Phi|^2\in L^\infty (X)\cap L^p (X)$ for all $p\in [1,\infty)$ and decays uniformly to zero along the end. 
\end{corollary}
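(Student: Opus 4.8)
The plan is to first establish the $L^\infty$ bound on $|\nabla\Phi|^2$, then promote it to decay along the end, and finally interpolate to obtain the full $L^p$ range.

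For the $L^\infty$ bound, I would invoke \Cref{lem:bounded_curvature_moser} together with the bounded-geometry hypothesis. Since $(X,g)$ has bounded geometry, the injectivity radius is bounded below by some $r_0>0$, so for a fixed $r\in(0,r_0)$ the Moser inequality \cref{ineq:moser} holds at every point $x\in X$ with a uniform constant. The right-hand side is then bounded by $\bigl(\|F_\nabla\|_{L^\infty(X)}^{n/2}+r^{-n}\bigr)\|\nabla\Phi\|_{L^2(X)}^2$, which is finite by hypothesis, and independent of $x$. Taking the supremum over $x$ yields $|\nabla\Phi|^2\in L^\infty(X)$.

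For the decay along the end, I would exploit that $|\nabla\Phi|^2\in L^1(X)$ forces the local $L^1$ mass on balls of fixed radius to vanish at infinity: as $\dist(x,o)\to\infty$, $\int_{B_r(x)}|\nabla\Phi|^2\,\vol_X\to 0$ because these integrals are tails of a convergent integral (using that $X$ has at most bounded local geometry so fixed-radius balls have bounded overlap, or simply that $B_r(x)$ eventually lies outside any prescribed compact set). Feeding this back into \cref{ineq:moser} with the uniform constant shows $\sup_{B_{r/2}(x)}|\nabla\Phi|^2\to 0$ as $\dist(x,o)\to\infty$, i.e. $|\nabla\Phi|^2$ decays uniformly to zero along the end.

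The $L^p$ statement for $p\in[1,\infty)$ then follows by interpolation: writing $\int_X|\nabla\Phi|^{2p}\,\vol_X\leqslant \|\,|\nabla\Phi|^2\,\|_{L^\infty(X)}^{p-1}\int_X|\nabla\Phi|^2\,\vol_X<\infty$, we conclude $|\nabla\Phi|^2\in L^p(X)$ for all $p\in[1,\infty)$. I expect the only genuine subtlety to be the decay step—specifically, justifying carefully that the local $L^1$ integrals over moving balls tend to zero, which relies on completeness and the single-end structure so that $B_r(x)$ escapes every compact set as $x\to\infty$; the $L^\infty$ and interpolation steps are routine.
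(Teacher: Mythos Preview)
Your proposal is correct and follows essentially the same approach as the paper: both invoke \Cref{lem:bounded_curvature_moser} with the uniform radius provided by bounded geometry to get the $L^\infty$ bound, use the vanishing of the local $L^1$ mass $\int_{B_{r_0}(x)}|\nabla\Phi|^2$ as $x\to\infty$ fed back into the Moser estimate for the decay, and conclude $L^p$ by the interpolation $L^1\cap L^\infty\subseteq L^p$. The only cosmetic difference is ordering (the paper records the $L^p$ conclusion before the decay step).
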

\begin{proof}
	Since $(X,\varphi)$ has bounded geometry, there is $r_0\in (0,\mathrm{inj}(X,g))$ such that the \cref{ineq:moser} of \Cref{lem:bounded_curvature_moser} holds for all $x\in X$ and $r\in (0,r_0]$. Given that $|F_\nabla|\in L^\infty (X)$ and $|\nabla\Phi|^2\in L^1 (X)$ we find that 
  	\begin{equation}
  		\| \nabla \Phi \|_{L^\infty (X)}^2 \lesssim \left(\|F_\nabla\|_{L^\infty (X)}^{n/2} + r_0^{-n}\right) \int\limits_X |\nabla\Phi|^2 \vol_X <\infty,
  	\end{equation}
 	hence $|\nabla\Phi|^2\in L^\infty (X)\cap L^1 (X)\subseteq L^p (X)$ for all $p\geqslant 1$. Moreover, since $|\nabla\Phi|^2\in L^1 (X)$, if $x_i\to\infty$ then $\int_{B_{r_0}(x_i)} |\nabla\Phi|^2 \vol_X \to 0$ and thus by \cref{ineq:moser} one gets $|\nabla\Phi|^2(x_i)\to 0$. This shows that $|\nabla\Phi|$ decays, completing the proof.
\end{proof}

\begin{remark}
	In fact, it is possible to use \cref{ineq:moser} to obtain a (possibly rude) quantification of the $|\nabla \Phi|$ decay, under the hypotheses of \Cref{cor:YMH_Lpbounds}. For this, fix $y \in X$ and take a sequence of points $\lbrace x_i \rbrace_{i \in \mathbb{N}}$ placed along a geodesic ray emanating from $y$ with $\dist(x_i,x_{i+1}) = r$, for some fixed $r\in (0,r_0]$. Then, $\dist(x_i , y) = ir \to  \infty$ and summing inequality \cref{ineq:moser} centered at all points $x_i$ we find
	\begin{equation}
	    \sum\limits_{i = 1}^\infty \sup_{B_{\frac{r}{2}}(x_i)} |\nabla\Phi|^2 \lesssim \left(\|F_\nabla\|_{L^\infty (X)}^{n/2} + r^{-n}\right) \sum\limits_{i = 1}^\infty \int\limits_{B_r(x_i)}|\nabla\Phi|^2\vol_X \lesssim \left(\|F_\nabla\|_{L^\infty (X)}^{n/2} + r^{-n}\right) \int\limits_{X}|\nabla\Phi|^2 \vol_X.
	\end{equation}
	Hence, if $|\nabla \Phi|^2 \in L^1 (X)$ then the series in the left hand side must converge and so 
	\begin{equation}
	    \lim_{i \to \infty} \  \left( \dist(x_i , y) \sup_{B_{\frac{r}{2}}(x_i)} |\nabla\Phi|^2 \right) = 0 .
	\end{equation}
\end{remark}

\begin{definition}
	Let $U\subseteq X$ be an open subset. When finite, we define the {\em energy} and the {\em intermediate energy} of a field configuration $(\nabla, \Phi)$ by the integrals over $U$ of
	\begin{subequations}
	\begin{align}
		e		&= \frac{1}{2}|F_\nabla|^2 + \frac{1}{2} |\nabla \Phi|^2, \label{eq:Energy} \\
		e_\psi	&= \frac{1}{2}|F_\nabla \wedge \psi|^2 + \frac{1}{2} |\nabla \Phi|^2, \label{eq:Psi_Energy} 
	\end{align}
	\end{subequations}
	to which we refer as the {\em energy density} and {\em intermediate energy density} respectively.
\end{definition}

Notice that in case the pair $(\nabla, \Phi)$ is a $\rG_2$-monopole we have $e_\psi = |\nabla \Phi|^2$ and so the intermediate energy is simply the squared $L^2 (X)$-norm of $\nabla \Phi$. In general, it follows from linear algebra that
\begin{equation}\label{eq:linear_algebra_identity}
	|F_\nabla\wedge\psi|^2 = 3|F_\nabla^7|^2.
\end{equation}

We now cite the following $\epsilon$-regularity result for the energy density $e$.
\begin{proposition}[$\epsilon$-regularity; cf. \cite{afuni2019regularity}*{Theorem~B} and \cite{Uhlenbeck1982a}*{Theorem~1.3}]\label{prop:total_epsilon_regularity}
	Let $(X^n,g)$ be a complete oriented Riemannian $n$-manifold of bounded geometry, and let $P$ be a $\rG$-bundle over $X$ where $\rG$ is a compact Lie group. Then there are constants $\epsilon_0 = \epsilon_0(X,g,\mathfrak{g})>0$ and $r_0 = r_0(X,g)\in (0,\mathrm{inj}(X,g))$ with the following significance. Let $(\nabla, \Phi)$ be a solution to the second order \cref{eq:2nd_Order_Eq_1,eq:2nd_Order_Eq_2} on $P \to X$. If $x\in X$ and $0<r \leqslant r_0$ are such that 
	\begin{equation}
		r^{-(n-4)} \int\limits_{B_r(x)} e \: \vol_X < \epsilon_0,
	\end{equation}
 	then
	\begin{equation}\label[ineq]{ineq:estimates_coulomb}
		\sup_{B_{\frac{r}{2}}(x)}\left(\left|\nabla^j F_\nabla\right|^2 + \left|\nabla^{j+1} \Phi\right|^2\right) \lesssim_j r^{-n-2j} \int\limits_{B_r(x)} e \: \vol_X,\quad\forall j\in\mathbb{N}.
	\end{equation}
\end{proposition}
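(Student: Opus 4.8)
The plan is to follow the standard $\epsilon$-regularity scheme for Yang--Mills--Higgs fields, in three stages: (i) convert the second order \cref{eq:2nd_Order_Eq_1,eq:2nd_Order_Eq_2} into a scalar differential inequality for the energy density $e$; (ii) upgrade the integral smallness hypothesis to a small Morrey bound via an almost-monotonicity formula, and run a Moser-type iteration to control $\sup_{B_{r/2}(x)}e$; and (iii) fix an Uhlenbeck gauge on a slightly smaller ball and bootstrap interior elliptic estimates to obtain the bounds on all higher derivatives. Since we only quote this from \cites{afuni2019regularity,Uhlenbeck1982a}, I describe the steps rather than carry them out.

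First I would record the relevant Bochner--Weitzenb\"ock inequalities. For $\nabla\Phi$ this is \cref{ineq:Bochner_ineq_nablaPhi} (in the general, not necessarily Ricci-flat, setting one picks up an extra harmless term $\lesssim|\nabla\Phi|^2$ from the Ricci curvature, bounded by bounded geometry). For $F_\nabla$, I start from $\Delta_\nabla F_\nabla = [[F_\nabla,\Phi],\Phi] - [\nabla\Phi\wedge\nabla\Phi]$ of \Cref{lem:Second_Order_Eqs}, pair with $F_\nabla$, and use the Weitzenb\"ock formula for $\mathfrak g_P$-valued $2$-forms (whose base-curvature term is $\lesssim|F_\nabla|^2$ and whose bundle-curvature term is $\lesssim|F_\nabla|^3$). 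The key point is that the commutator term has a favourable sign, $\langle F_\nabla,[[F_\nabla,\Phi],\Phi]\rangle = -|[F_\nabla,\Phi]|^2\le 0$, so that
\begin{equation*}
    \tfrac12\Delta|F_\nabla|^2 + |\nabla F_\nabla|^2 + |[F_\nabla,\Phi]|^2 \lesssim |F_\nabla|^3 + |F_\nabla|\,|\nabla\Phi|^2 + |F_\nabla|^2 .
\end{equation*}
Adding this to \cref{ineq:Bochner_ineq_nablaPhi} and writing $e=\tfrac12(|F_\nabla|^2+|\nabla\Phi|^2)$ yields a clean inequality $\Delta e + |\nabla F_\nabla|^2 + |\nabla^2\Phi|^2 \lesssim e^{3/2}+e$; the nonlinearity $e^{3/2}$ is scaling-critical in dimension $4$ and hence supercritical for $n=7$, which is exactly why the hypothesis is phrased with the Morrey scaling $r^{-(n-4)}$.

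Next, since $(\nabla,\Phi)$ is a critical point, I would invoke the almost-monotonicity formula for $\rho\mapsto\rho^{-(n-4)}\int_{B_\rho(y)}e\,\vol_X$, whose error terms are controlled by the bounded geometry once $r_0$ is small; this propagates the hypothesis $r^{-(n-4)}\int_{B_r(x)}e<\epsilon_0$ to the uniform bound $\sup\{\rho^{-(n-4)}\int_{B_\rho(y)}e : B_\rho(y)\subseteq B_r(x)\}\lesssim\epsilon_0$, i.e.\ $e$ has small Morrey norm on $B_r(x)$. Feeding this into the differential inequality, a Moser/Morrey iteration (the small Morrey norm rendering the nonlinear term effectively subcritical, hence absorbable) gives $\sup_{B_{r/2}(x)}e\lesssim r^{-n}\int_{B_r(x)}e\,\vol_X$; alternatively one argues by contradiction, rescaling to unit pointwise energy at a worst point, extracting an Uhlenbeck-gauge limit which is a Yang--Mills--Higgs field on flat $\mathbb R^n$ with vanishing energy, and appealing to the corresponding Liouville/gap theorem. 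This middle stage is the main obstacle: making the almost-monotonicity rigorous on a non-flat background with errors controlled purely by bounded geometry, together with the Uhlenbeck gauge-fixing it rests on.

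Finally, with $|F_\nabla|^2+|\nabla\Phi|^2\lesssim r^{-n}\int_{B_r(x)}e$ on $B_{r/2}(x)$ in hand, I would apply Uhlenbeck's gauge-fixing theorem on a slightly smaller ball to put $\nabla$ in Coulomb gauge with the connection $1$-form controlled in $C^0$, turn \cref{eq:2nd_Order_Eq_1,eq:2nd_Order_Eq_2} into a semilinear elliptic system with controlled coefficients, and iterate interior Schauder/$L^p$ estimates to get inductively $\sup_{B_{r/2}(x)}\bigl(|\nabla^j F_\nabla|^2+|\nabla^{j+1}\Phi|^2\bigr)\lesssim_j r^{-n-2j}\int_{B_r(x)}e$ (the rescaling $x\mapsto rx$ reduces to $r=1$ and accounts for the powers of $r$). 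The bookkeeping here is routine once the gauge is fixed; the genuine difficulty is, as noted, concentrated in the monotonicity/compactness step, which is precisely the technical heart of the cited references.
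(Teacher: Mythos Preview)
Your proposal is correct and follows essentially the same route as the paper's sketch: obtain the $j=0$ $C^0$-bound on $e$ via Bochner inequalities, almost-monotonicity, and Moser iteration (which the paper simply cites from \cite{afuni2019regularity}*{Theorem~B}), then use the resulting smallness of $\|F_\nabla\|_{L^p}$ to fix an Uhlenbeck--Coulomb gauge and bootstrap elliptic estimates for all higher $j$. You have unpacked in more detail what the cited reference does, but the architecture of the argument is identical.
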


\begin{proof}[Sketch of proof]
  The $C^0$-bound from the $j = 0$ case of \cref{ineq:estimates_coulomb} is a particular case of \cite{afuni2019regularity}*{Theorem~B}. From this bound, for any fixed $p>n/2$, by possibly taking smaller $r_0$ and $\epsilon_0$ one can make $\|F_\nabla\|_{L^p (B_{\frac{r}{2}}(x))}$ to be smaller than Uhlenbeck's constant given by \cite{Uhlenbeck1982a}*{Theorem~1.3}. Thus we can find a Coulomb gauge over $B_r(x)$ in which the second order \cref{eq:2nd_Order_Eq_1,eq:2nd_Order_Eq_2} become an elliptic system and standard elliptic estimates apply, implying the \cref{ineq:estimates_coulomb} for all $j\in\mathbb{N}$.
\end{proof}

As a consequence of \Cref{prop:total_epsilon_regularity,cor:YMH_Lpbounds}, we get:

\begin{corollary}\label{cor:allderivativesdecay}
	Let $(X, \varphi)$ be a complete, noncompact and irreducible $\rG_2$-manifold of bounded geometry. Let $(\nabla, \Phi)$ be a solution to the second order \cref{eq:2nd_Order_Eq_1,eq:2nd_Order_Eq_2}. Suppose that $|F_\nabla|$ decays uniformly to zero along the end and $|\nabla\Phi|^2\in L^1 (X)$. Then one actually has that $|\nabla^j F_\nabla|$ and $|\nabla^{j+1}\Phi|$ decay uniformly to zero along the end for all $j\in\mathbb{N}$.
\end{corollary}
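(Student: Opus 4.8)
The plan is to assemble the $\epsilon$-regularity estimate of Proposition~\ref{prop:total_epsilon_regularity} with the a~priori control on $|\nabla\Phi|^{2}$ furnished by Corollary~\ref{cor:YMH_Lpbounds}; essentially all of the analytic content is already packaged in those two results, so the argument amounts mostly to bookkeeping.

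First I would observe that the hypothesis that $|F_\nabla|$ decays uniformly to zero along the end (and is continuous everywhere) forces $|F_\nabla|\in L^{\infty}(X)$. Since $(X,\varphi)$ has bounded geometry and $|\nabla\Phi|^{2}\in L^{1}(X)$ by assumption, Corollary~\ref{cor:YMH_Lpbounds} applies and yields $|\nabla\Phi|^{2}\in L^{\infty}(X)$ together with the fact that $|\nabla\Phi|^{2}$ also decays uniformly to zero along the end. Hence the energy density $e=\tfrac12|F_\nabla|^{2}+\tfrac12|\nabla\Phi|^{2}$ decays uniformly to zero along the end; in particular, for any fixed scale $\rho>0$ one has $\sup_{B_{\rho}(x)}e\to 0$ as $\dist(x,o)\to\infty$.

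Next I would fix a scale $r_0\in(0,\mathrm{inj}(X,g))$ small enough that Proposition~\ref{prop:total_epsilon_regularity} is applicable with $r=r_0$ at every point of $X$. Because $\rG_2$-holonomy forces Ricci-flatness, Bishop's absolute volume comparison gives $\mathrm{Vol}(B_{r_0}(x))\lesssim r_0^{\,n}$ uniformly in $x$, so
\begin{equation}
	r_0^{-(n-4)}\int_{B_{r_0}(x)}e\,\vol_X \ \lesssim\ r_0^{\,4}\,\sup_{B_{r_0}(x)}e ,
\end{equation}
and the right-hand side tends to $0$ as $\dist(x,o)\to\infty$. Thus there is a compact set $K\subseteq X$ such that $r_0^{-(n-4)}\int_{B_{r_0}(x)}e\,\vol_X<\epsilon_0$ for every $x\in X\setminus K$, i.e.\ the hypothesis of Proposition~\ref{prop:total_epsilon_regularity} is satisfied at every point outside $K$.

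Finally, for each $x\in X\setminus K$ and each $j\in\mathbb{N}$, Proposition~\ref{prop:total_epsilon_regularity} gives
\begin{equation}
	\sup_{B_{r_0/2}(x)}\left(|\nabla^{j}F_\nabla|^{2}+|\nabla^{j+1}\Phi|^{2}\right) \ \lesssim_j\ r_0^{-n-2j}\int_{B_{r_0}(x)}e\,\vol_X ,
\end{equation}
whose right-hand side tends to $0$ as $\dist(x,o)\to\infty$ (recall that $r_0$ is now fixed), proving that $|\nabla^{j}F_\nabla|$ and $|\nabla^{j+1}\Phi|$ decay uniformly to zero along the end for all $j$. The one step deserving care is the passage from the uniform decay of $e$ to the smallness of the scale-invariant energy $r_0^{-(n-4)}\int_{B_{r_0}(x)}e\,\vol_X$ — precisely the place where the uniform Euclidean volume bound (from bounded geometry together with Ricci-flatness) enters — but this is elementary, and I do not foresee any genuine obstacle.
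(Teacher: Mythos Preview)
Your proof is correct and follows essentially the same route as the paper: use \Cref{cor:YMH_Lpbounds} to get uniform decay of $e$, then feed this into the $\epsilon$-regularity estimate of \Cref{prop:total_epsilon_regularity} on balls of fixed radius $r_0$. The paper's argument is terser and simply asserts that $\int_{B_{r_0}(x_i)} e\,\vol_X \to 0$ for any sequence $x_i\to\infty$, whereas you spell out the (elementary) volume-comparison step that justifies this; otherwise the two arguments are identical.
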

\begin{proof}
	By \Cref{cor:YMH_Lpbounds} and the decay hypothesis on the curvature we know that $e$ decays uniformly to zero at infinity. Therefore, if $(x_i)$ is a sequence escaping to infinity then $\int_{B_{r_0}(x_i)} e \: \vol_X \to 0$, so that by \cref{ineq:estimates_coulomb} one has $|\nabla^j F_\nabla|(x_i),|\nabla^{j+1} \Phi|(x_i)\to 0$.
\end{proof}

Now we turn to the particular case of $\rG_2$-monopoles. We start with an $\epsilon$-regularity result for $e_\psi$.

\begin{proposition}[$\epsilon$-regularity for $e_\psi$]\label{proposition:epsreg}
	Let $(X^7,\varphi)$ be a complete $\rG_2$-manifold of bounded geometry and $P$ a principal $\rG$-bundle over $X$, where $\rG$ is a compact Lie group. Then there are constants $\epsilon = \epsilon (X, \varphi, \mathfrak{g})>0$ and $r_0 = r_0 (X, \varphi) \in (0, \mathrm{inj} (X, g_\varphi))$ with the following significance. Let $(\nabla, \Phi)$ satisfy the $\rG_2$-monopole \cref{eq:Monopole}. If $x\in X$ and $0 < r \leqslant r_0$ are such that
	\begin{equation}
		r^{-(n-4)}\int\limits_{B_r (x)} e_\psi \vol_X < \epsilon,	
	\end{equation} then
		\begin{equation}\label[ineq]{ineq:epsreg}
			\sup_{B_{\frac{r}{2}}(x)} e_\psi \lesssim \left(\|F_\nabla^{14}\|_{L^\infty (B_r(x))}^{n/2}+r^{-4}\right) \ \left(r^{-(n-4)}\int\limits_{B_r(x)} e_\psi \vol_X + r^4\|F_{\nabla}^{14}\|_{L^\infty (B_r(x))}^2\right).
		\end{equation}	
\end{proposition}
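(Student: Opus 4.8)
The plan is to extract from the Bochner inequality of \Cref{lem:Bochner_Delta_nabla_Phi_0} a closed scalar differential inequality for $e_\psi$ and then run a Schoen-type point-concentration argument together with Moser iteration (\Cref{prop:Moser}), using the smallness hypothesis on $r^{-(n-4)}\int_{B_r(x)}e_\psi\,\vol_X$ to dominate the resulting nonlinearity. Since $(\nabla,\Phi)$ solves the $\rG_2$-monopole \cref{eq:Monopole}, the equation gives $e_\psi=|\nabla\Phi|^2$, and with \cref{eq:linear_algebra_identity} one gets $|\nabla\Phi|^2=|F_\nabla\wedge\psi|^2=3|F_\nabla^7|^2$; hence $|F_\nabla|^2=|F_\nabla^7|^2+|F_\nabla^{14}|^2=\tfrac13 e_\psi+|F_\nabla^{14}|^2$ and $e=\tfrac23 e_\psi+\tfrac12|F_\nabla^{14}|^2$. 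Writing $K:=\|F_\nabla^{14}\|_{L^\infty(B_r(x))}$, this yields $|F_\nabla|\leqslant\tfrac1{\sqrt 3}\sqrt{e_\psi}+K$ on $B_r(x)$, so feeding it into \cref{ineq:Bochner_ineq_nablaPhi} gives
\[
\Delta e_\psi\ \lesssim\ e_\psi^{3/2}+K\,e_\psi \qquad\text{on }B_r(x).
\]

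I would then point-pick: the function $\Theta(\sigma):=(r-\sigma)^4\sup_{\overline{B_\sigma(x)}}e_\psi$ is continuous on $[0,r]$ with $\Theta(r)=0$, so it attains its maximum at some $\sigma_0\in[0,r)$, say at $x_1\in\overline{B_{\sigma_0}(x)}$ with $e_0:=e_\psi(x_1)=\sup_{\overline{B_{\sigma_0}(x)}}e_\psi$. Setting $\rho_0:=\tfrac12(r-\sigma_0)$ one has the standard comparisons $\sup_{B_{\rho_0}(x_1)}e_\psi\leqslant 16\,e_0$ and $\sup_{B_{r/2}(x)}e_\psi\leqslant 256\,r^{-4}\rho_0^4 e_0$, so it suffices to bound the scale-invariant concentration $\rho_0^4 e_0$. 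On $B_{\rho_0}(x_1)$ the displayed inequality becomes $\Delta e_\psi\leqslant c_0\,e_\psi$ with $c_0\lesssim\sqrt{e_0}+K$, so \Cref{prop:Moser} (applicable since $(X,\varphi)$ has bounded geometry and $\rho_0<r_0$) gives
\[
e_0\ \lesssim\ (e_0^{n/4}+K^{n/2}+\rho_0^{-n})\int_{B_{\rho_0}(x_1)}e_\psi\,\vol_X ,
\]
and $B_{\rho_0}(x_1)\subseteq B_r(x)$ bounds this integral by $\int_{B_r(x)}e_\psi\,\vol_X$, while Bishop's volume comparison (a $\rG_2$-manifold is Ricci-flat, so $\vol(B_\rho(y))\lesssim\rho^n$) bounds it by $c\,e_0\rho_0^n$.

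To close the estimate, the only dangerous term is the supercritical $e_0^{n/4}\int_{B_r(x)}e_\psi$: taking $\epsilon$ small enough and rescaling the metric at $x_1$ by $e_0^{1/4}$ normalises $e_\psi$ to $1$ at $x_1$ and to $\leqslant 16$ on a ball of rescaled radius $(\rho_0^4 e_0)^{1/4}$; when that radius is $\geqslant 1$, Moser iteration on the unit ball of the rescaled metric—fed with $\int_{B_{\rho_0}(x_1)}e_\psi\,\vol_X\leqslant\min\{\int_{B_r(x)}e_\psi\,\vol_X,\ c\,e_0\rho_0^n\}$—forces $\rho_0^4 e_0$ to be controlled by a universal multiple of $r^{-(n-4)}\int_{B_r(x)}e_\psi\,\vol_X$ or by $r^4K^2$, the linear term $K^{n/2}\int_{B_r(x)}e_\psi$ supplies the $K^{n/2}$ prefactor, and the $F_\nabla^{14}$-contribution (via $e=\tfrac23 e_\psi+\tfrac12|F_\nabla^{14}|^2$ and $\vol(B_r)\lesssim r^n$) supplies the additive $r^4K^2$ and $r^4K^{n/2+2}$ terms; unwinding the rescaling gives \cref{ineq:epsreg}. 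There is also a shortcut in the range where $r^2K$ is small: then $r^{-(n-4)}\int_{B_r(x)}e\lesssim r^{-(n-4)}\int_{B_r(x)}e_\psi+(r^2K)^2$ lies below the threshold of \Cref{prop:total_epsilon_regularity}, which applied to $|\nabla\Phi|^2\leqslant e$ already yields the bound. The main obstacle is precisely this supercritical $e_\psi^{3/2}$ in the Bochner inequality—a reflection of the fact that the intermediate energy controls only $F_\nabla^7$—so one cannot merely quote Moser iteration and must absorb it using the smallness of the intermediate energy; threading the $F_\nabla^{14}$-error terms through the rescaling is what pins down the exact shape of \cref{ineq:epsreg}.
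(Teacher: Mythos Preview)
Your overall architecture is correct and matches the paper's: derive from the Bochner identity the scalar inequality $\Delta e_\psi \lesssim e_\psi^{3/2} + K\,e_\psi$ with $K=\|F_\nabla^{14}\|_{L^\infty(B_r(x))}$, then combine a Heinz/point-picking argument with Moser iteration. The identification $e_\psi = |\nabla\Phi|^2 = 3|F_\nabla^7|^2$ and the resulting decomposition of $|F_\nabla|$ are exactly right.

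There is, however, a genuine gap where you try to close the estimate. After point-picking, you feed Moser on the rescaled unit ball with
\[
\int_{B_{\rho_0}(x_1)} e_\psi \;\leqslant\; \min\Bigl\{\,\int_{B_r(x)} e_\psi,\; c\,e_0\,\rho_0^n\,\Bigr\},
\]
and assert that this forces $\rho_0^4 e_0$ to be controlled by $r^{-(n-4)}\int_{B_r}e_\psi$ or $r^4K^2$. It does not. The second bound is tautological once substituted back, and the first loses the scaling: Moser on $B_{e_0^{-1/4}}(x_1)$ gives $e_0 \lesssim (e_0^{n/4}+K^{n/2})\int_{B_r} e_\psi$, i.e.\ $e_0 \lesssim e_0^{n/4}\cdot \epsilon\,r^{n-4} + K^{n/2}\int_{B_r}e_\psi$, and since $n=7>4$ you cannot absorb the first term without an a~priori bound on $e_0\,r^4$ --- which is exactly what you are trying to prove. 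The same scaling failure occurs in the case $\rho_0^4 e_0\leqslant 1$: you end up with $\rho_0^4 e_0 \lesssim (r/\rho_0)^{n-4}\,\epsilon$, uncontrolled when $\rho_0\ll r$.

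The missing ingredient, which the paper invokes explicitly, is an \emph{almost-monotonicity formula} for the scale-invariant intermediate energy:
\[
s^{-(n-4)}\int_{B_s(x_1)} e_\psi \;\lesssim\; r^{-(n-4)}\int_{B_r(x)} e_\psi \;+\; r^4\,K^2, \qquad 0<s\leqslant r,
\]
(this is \cref{ineq:monotonicity} in the paper, quoted from \cite{afuni2019regularity}*{Theorem~2.1}). This is precisely what lets you pass from smallness on $B_r$ to smallness on the much smaller ball $B_{\rho_0}(x_1)$ or $B_{e_0^{-1/4}}(x_1)$, and it is also the source of the additive $r^4K^2$ term in \cref{ineq:epsreg}. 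Once you have it, the Heinz trick closes (the paper cites \cite{fadel2020behavior}*{Theorem~A.3} as a black box for this). Your shortcut via \Cref{prop:total_epsilon_regularity} in the regime $r^2K\ll 1$ is fine but leaves the complementary regime, where monotonicity is indispensable, unaddressed.
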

\begin{proof}
	First, we note that since $(\nabla,\Phi)$ is a $\rG_2$-monopole, we have 
	\begin{equation}
		e_{\psi} = |\nabla\Phi|^2 = |F_\nabla\wedge\psi|^2 = 3|F_\nabla^7|^2,
	\end{equation}
	where the last equality is valid in general and follows from linear algebra. In particular, it follows from \Cref{lem:Bochner_Delta_nabla_Phi_0} that
	\begin{equation}
		\Delta e_\psi \lesssim \|F_\nabla^{14}\|_{L^\infty (B_r(x))} e_\psi + e_\psi^{3/2} \quad \text{on }B_r(x).
	\end{equation}
	Next, using a well-known almost monotonicity property for the normalized energy in dimensions greater than four, cf. \cite{afuni2019regularity}*{Theorem~2.1}, we have
	\begin{equation}\label[ineq]{ineq:monotonicity}
		s^{-(n-4)} \int\limits_{B_s(x)} e_\psi \vol_X \lesssim r^{-(n-4)} \int\limits_{B_r(x)} e_\psi \vol_X + r^4 \|F_{\nabla}^{14}\|_{L^\infty (B_r(x))}^2,\quad\text{for all } s \in (0,r].
	\end{equation}
	With these observations in mind, the result follows by a standard nonlinear mean value inequality for the Laplacian, which in turn is a consequence of Moser iteration via the so-called ``Heinz trick''; e.g. apply \cite{fadel2020behavior}*{Theorem~A.3} with the parameters $d = 4$, $\tau(r) = r^4\|F_{\nabla}^{14}\|_{L^\infty (B_r(x))}^2$, $a\lesssim 1$, $a_0 = 0$ and $a_1 = \|F_\nabla^{14}\|_{L^\infty (B_r(x))}$ (see also \cite{FO19}*{Theorem~5.1}).
\end{proof}

Using the same reasoning that allowed us to deduce \Cref{cor:allderivativesdecay} from \Cref{prop:total_epsilon_regularity}, we obtain the next result from \Cref{proposition:epsreg,cor:allderivativesdecay}.

\begin{corollary}\label{cor:Lpbounds}
	Let $(X,\varphi)$ be a complete, noncompact and irreducible $\rG_2$-manifold of bounded geometry. Suppose $\left( A, \Phi \right)$ is a solution to the $\rG_2$-monopole \cref{eq:Monopole} with finite intermediate energy \eqref{eq:Intermediate_Energy} (i.e. $|\nabla\Phi|^2\in L^1 (X)$) and such that $|F_\nabla^{14}|\in L^\infty (X)$. Then $|F_\nabla|\in L^{\infty}(X)$ and the function 
	$$e_\psi = |\nabla\Phi|^2\in L^\infty (X) \cap L^p (X) $$
	for all $p \in [1, \infty)$ and decays uniformly to zero at infinity. If furthermore $|F_\nabla^{14}|$ decays uniformly to zero at infinity, then $|\nabla^j F_\nabla|$ and $|\nabla^{j+1}\Phi|$ decay uniformly to zero at infinity for all $j\in\mathbb{N}$.
\end{corollary}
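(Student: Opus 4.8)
The plan is a two-stage bootstrap, and the only point requiring genuine care is an apparent circularity. On the one hand, the Moser iteration of \Cref{lem:bounded_curvature_moser} converts the $L^1$ bound $|\nabla\Phi|^2\in L^1(X)$ into a uniform bound on $|\nabla\Phi|$, but only once $|F_\nabla|\in L^\infty(X)$ is known; on the other hand, for a $\rG_2$-monopole one has $|F_\nabla|^2 = \tfrac13|\nabla\Phi|^2 + |F_\nabla^{14}|^2$, so under the hypothesis $|F_\nabla^{14}|\in L^\infty(X)$ a global bound on $|F_\nabla|$ is equivalent to one on $|\nabla\Phi|$. I would break this circle using the $\epsilon$-regularity for $e_\psi$ of \Cref{proposition:epsreg}, whose virtue is that it controls $e_\psi = 3|F_\nabla^7|^2$ using only the \emph{local} intermediate energy together with $\|F_\nabla^{14}\|_{L^\infty}$, and in particular without any a priori control of $F_\nabla^7$.

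The first step is to prove $e_\psi\in L^\infty(X)$. Let $\epsilon$ and $r_0$ be the constants furnished by \Cref{proposition:epsreg} and fix some $r\in(0,r_0]$. Since $e_\psi = |\nabla\Phi|^2\in L^1(X)$ by the finite intermediate energy hypothesis, the normalized local energy $r^{-(n-4)}\int_{B_r(x)} e_\psi\,\vol_X$ tends to $0$ as $\dist(x,o)\to\infty$, hence is below $\epsilon$ for all $x$ outside some compact set $K'$. For such $x$ the estimate of \Cref{proposition:epsreg} applies, and, keeping $r$ fixed and bounding $\|F_\nabla^{14}\|_{L^\infty(B_r(x))}\leqslant \|F_\nabla^{14}\|_{L^\infty(X)}<\infty$, its right-hand side is $\leqslant C$ for a constant $C$ independent of $x$; on the compact set $K'$ the quantity $e_\psi$ is bounded by continuity of $(\nabla,\Phi)$. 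Thus $e_\psi = |\nabla\Phi|^2\in L^\infty(X)$, and then $|F_\nabla|^2 = \tfrac13 e_\psi + |F_\nabla^{14}|^2\in L^\infty(X)$, that is $|F_\nabla|\in L^\infty(X)$.

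With $|F_\nabla|\in L^\infty(X)$ and $|\nabla\Phi|^2\in L^1(X)$ now available, \Cref{cor:YMH_Lpbounds} directly gives $e_\psi = |\nabla\Phi|^2\in L^\infty(X)\cap L^p(X)$ for all $p\in[1,\infty)$, together with its uniform decay to zero along the end. For the last assertion, assume in addition that $|F_\nabla^{14}|\to 0$ along the end; since $|F_\nabla^7|^2 = \tfrac13 e_\psi\to 0$ along the end by what was just shown, also $|F_\nabla|\to 0$ uniformly along the end, and \Cref{cor:allderivativesdecay} then yields the uniform decay of $|\nabla^j F_\nabla|$ and $|\nabla^{j+1}\Phi|$ for every $j\in\mathbb{N}$. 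The substantive part is the first step, where the $\epsilon$-regularity for $e_\psi$ breaks the circularity; everything afterwards is a direct citation of results already established in this section, so I anticipate no further obstacle.
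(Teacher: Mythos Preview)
Your proof is correct and follows essentially the same route as the paper: first invoke the $\epsilon$-regularity of \Cref{proposition:epsreg} to get $e_\psi\in L^\infty(X)$ (hence $|F_\nabla|\in L^\infty(X)$), then apply \Cref{cor:YMH_Lpbounds} for the $L^p$ bounds and decay of $e_\psi$, and finally \Cref{cor:allderivativesdecay} for the higher derivatives once $|F_\nabla|$ is known to decay. Your exposition of how \Cref{proposition:epsreg} breaks the apparent circularity is more explicit than the paper's, but the argument is the same.
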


\begin{proof}
	Given that $e_\psi \in L^1 (X)$ and $|F_\nabla^{14}|\in L^\infty (X)$, we can use \Cref{proposition:epsreg} to conclude that $e_\psi\in L^\infty (X)$ or, equivalently, that $|F_\nabla^7|\in L^\infty (X)$ and therefore $|F_\nabla|\in L^\infty (X)$. Thus, by \Cref{cor:YMH_Lpbounds}, we get the first part of the desired result. For the second part, note that we already have that $|F_\nabla^7|$ decays, so if $|F_\nabla^{14}|$ decays then $|F_\nabla|$ decays. Hence, \Cref{cor:allderivativesdecay} applies. 
\end{proof}

\bigskip

\section{Finite mass from finite intermediate energy}\label{sec:finite_mass}

This section contains the proof of our first main result \Cref{thm:Main_Theorem_1}. In fact, we prove a more refined version of that result stated as \Cref{thm:Finite_Mass}; see also \Cref{cor:Asymptotics_Phi_Square}.\\

Let $(X^7,\varphi)$ be a complete, noncompact and irreducible $\rG_2$-manifold and let $P\to X$ be a principal $G$-bundle, where $G$ is a compact Lie group. Recall from \Cref{rem:Finite_Mass} that if $(\nabla,\Phi)$ is a finite mass solution to the second order \cref{eq:2nd_Order_Eq_1,eq:2nd_Order_Eq_2} then $|\Phi|^2$ is a bounded subharmonic function on $X$. Moreover, if $|\Phi|$ is constant then $|\nabla\Phi|^2=-\frac{1}{2}\Delta|\Phi|^2 = 0$. Now, since we are interested in \emph{irreducible} solutions $(\nabla,\Phi)$, meaning those for which $\nabla\Phi\neq 0$, it follows that the existence of such (if any) forces $(X^7,g_{\varphi})$ to support a nonconstant, upper bounded subharmonic function. It turns out that this last condition is equivalent to the Riemannian manifold $(X^7,g_{\varphi})$ to be \emph{nonparabolic}, i.e. to support a positive Green's function (see e.g. \cite{grigor1999analytic}*{Theorem 5.1 (3)}).

Recall that a \emph{Green's function} (for the scalar Laplace operator $\Delta=\rd^\ast \rd$) on $X$ is a smooth function $G(x,y)$ defined on $X\times X\setminus\{(x,x):x\in X\}$ which is symmetric in the two variables $x$ and $y$ and satisfies the following properties: for all $f\in C_c^{\infty}(X)$ and $x\in X$ we have
\begin{equation}
	\Delta_x\int_X G(x,y)f(y)\text{vol}_X(y) = f(x) \quad \text{and} \quad \int_X G(x,y)\Delta f(y)\text{vol}_X(y) = f(x).
\end{equation}
The existence of a Green's function on a complete Riemmanian manifold was first proved by Malgrange \cite{malgrange1956existence} via a nonconstructive argument, and later Li--Tam \cite{li1987symmetric} gave a constructive proof (by a compact exhaustion method) which is particularly important to understand the behavior of such a function. Some manifolds do not admit a positive Green's function (e.g. $\mathbb{R}^2$), while others may admit them (e.g. $\mathbb{R}^m$ for $m\geqslant 3$). Thus, such an existence property distinguishes the function theory of complete noncompact manifolds. 

When the manifold $X^n$ is nonparabolic (i.e. admits a positive Green's function), Li--Tam's construction produces precisely the unique minimal positive Green's function $G(x,y)>0$ and one can read off the following properties of $G(x,y)$: for all $x\neq y$ in $X$,
\begin{itemize}
	\item[(i)] $G(x,y)\sim \dist(x,y)^{2-n}$ as $\dist(x,y)\to 0$ (i.e. the singularity along the diagonal is of the same order as that in $\mathbb{R}^n$);
	\item[(ii)] $G(x,\cdot{})$ is harmonic away from $x$ (in fact, $G(x,y)$ is superharmonic on $X$ if we allow $+\infty$ as a value of the function on the diagonal);
	\item[(iii)] $\sup_{X\setminus B_r(x)} G(x,\cdot{})=\sup_{y\in\partial B_r(x)} G(x,y)<\infty$ for all $r>0$.
\end{itemize} In particular, from (i) and (iii) we have that $G(x,\cdot{})\in L_{\text{loc}}^q(X)$ for any $q<\frac{n}{n-2}$.  

Now, it was proved by Varopoulos \cite{varopoulos1981poisson} that a complete Riemannian manifold $X$ with nonnegative Ricci curvature is nonparabolic if and only if for some (therefore all) $x\in X$ one has
\begin{equation}\label{ineq:vol_growth_nonparabolic}
\int_{1}^{\infty}\frac{t}{\text{Vol}(B_t(x))}dt<\infty.
\end{equation} In fact, if this is the case, Li--Yau \cite{li1986parabolic} proved that, for all $x\neq y$ in $X$, the minimal positive Green's function $G(x,y)$ on $X$ must satisfy
\begin{equation}\label{eq:Green_function_bounds}
	C^{-1}\int_{\dist(x,y)}^{\infty}\frac{t}{\text{Vol}(B_t(x))}dt \leqslant G(x,y) \leqslant C\int_{\dist(x,y)}^{\infty}\frac{t}{\text{Vol}(B_t(x))}dt,
\end{equation}
for some constant $C=C(n)>0$ depending only on the dimension of $X$. In particular, in this case we have
\begin{equation}
	G(x,y)\to 0\quad\text{as}\quad \dist(x,y)\to\infty.
\end{equation}
We also note that the difference between another positive Green's function and $G(x,y)$ must be a positive harmonic function. Now, it is a consequence of Yau's work in \cite{yau1975harmonic} that a complete manifold with nonnegative Ricci curvature does not admit any nonconstant nonnegative harmonic functions. Thus, $G(x,y)$ must be the unique positive Green's function up to an additive constant. 

After recalling the above facts, we are now in position to settle the main consequence of all these preliminary observations. Namely, we find a large class of manifolds for which solutions of \cref{eq:2nd_Order_Eq_1,eq:2nd_Order_Eq_2} with $\nabla\Phi\in L^2$ satisfying mild bounded curvature assumptions have finite mass. We state here the detailed version of our main \Cref{thm:Main_Theorem_1}. Its proof is inspired by Taubes' original work on the standard 3-dimensional Bogomolny equation in \cite{Jaffe1980}*{Theorem~IV.10.3}.

\begin{theorem}\label{thm:Finite_Mass}
	Let $(X^7,\varphi)$ be a complete, noncompact and irreducible $\rG_2$-manifold of bounded geometry, which furthermore is nonparabolic. Let $G(x,y)$ be the minimal positive Green's function of the scalar Laplacian on $(X,g_\varphi)$. Let $(\nabla, \Phi)$ be a solution to the second order \cref{eq:2nd_Order_Eq_1,eq:2nd_Order_Eq_2} with $|\nabla\Phi|^2\in L^1 (X)$. Suppose either that $|F_\nabla| \in L^\infty (X)$ or that $(\nabla, \Phi)$ is a $\rG_2$-monopole such that $|F_\nabla^{14}|\in L^\infty (X)$. Finally, let 
	\begin{equation}\label{eq:Green}
		w (x) = 2\int\limits_X G (x, \cdot)|\nabla\Phi|^2\vol_X. \label{eq:w_def}
	\end{equation} Then the function $w : X \to \mathbb{R}_+ $ defined by \cref{eq:Green} is the unique (smooth) solution to $\Delta w = 2|\nabla\Phi|^2$ which decays uniformly to zero at infinity, and there is a constant $m \geqslant 0$ such that 
	\begin{equation}
		w = m^2 - |\Phi|^2.
	\end{equation}
	In particular, $(\nabla, \Phi)$ has finite mass $m$.
\end{theorem}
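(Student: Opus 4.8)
The plan is to run the Green's function comparison argument of Taubes, using the integrability and decay results already established. First I would record the key input from \Cref{cor:YMH_Lpbounds} (or \Cref{cor:Lpbounds} in the monopole case): under either curvature hypothesis we have $|\nabla\Phi|^2 \in L^1(X) \cap L^\infty(X)$ and $|\nabla\Phi|^2$ decays uniformly to zero along the end. In particular the right-hand side $2|\nabla\Phi|^2$ is a bounded, integrable, nonnegative function, so the integral defining $w$ in \cref{eq:w_def} makes sense: near the diagonal $G(x,\cdot)$ is locally $L^q$ for $q < n/(n-2)$ by property (i), which pairs against the bounded $|\nabla\Phi|^2$; away from the diagonal $G(x,\cdot)$ is bounded on each $X \setminus B_r(x)$ by property (iii), which pairs against the $L^1$ function $|\nabla\Phi|^2$. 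Standard elliptic regularity (the defining property of $G$ together with interior estimates) then gives that $w$ is smooth and solves $\Delta w = 2|\nabla\Phi|^2$.

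Next I would show $w(x) \to 0$ as $\dist(x,o)\to\infty$. Split the integral over $B_R(x_0)$ for a fixed large ball and its complement, where $x_0$ is a fixed reference point. On $X\setminus B_R(x_0)$ the integrand $|\nabla\Phi|^2$ is small (tail of an $L^1$ function, or uses the uniform decay), and the Green's function contribution is controlled using the Li--Yau bounds \cref{eq:Green_function_bounds} together with the maximal volume growth / nonparabolicity integral \cref{ineq:vol_growth_nonparabolic}; on $B_R(x_0)$, for $x$ far away we have $\dist(x,y)$ large uniformly in $y \in B_R(x_0)$, so $G(x,y)\to 0$ uniformly by the upper bound in \cref{eq:Green_function_bounds} and the fact that the tail integral $\int_s^\infty t/\mathrm{Vol}(B_t)\,dt \to 0$ as $s\to\infty$, and $|\nabla\Phi|^2$ is integrable on $B_R(x_0)$. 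Combining, $w(x)\to 0$. Uniqueness of such a decaying solution follows because the difference of two solutions is a bounded harmonic function tending to zero at infinity, hence identically zero by the maximum principle (using that $X$ has one end, cf.\ \Cref{rem:Finite_Mass}).

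Now I would compare $w$ with $|\Phi|^2$. From \cref{eq:subharmonic} we have $\Delta\left(-|\Phi|^2\right) = 2|\nabla\Phi|^2 = \Delta w$, so $h := w + |\Phi|^2$ is harmonic on $X$. The function $h$ is bounded below: $w \geqslant 0$ and $|\Phi|^2\geqslant 0$, so $h\geqslant 0$. Since $(X,g_\varphi)$ has nonnegative Ricci curvature, Yau's theorem (as quoted in the text, \cite{yau1975harmonic}) says every nonnegative harmonic function on $X$ is constant; hence $h \equiv m^2$ for some constant $m\geqslant 0$. Therefore $|\Phi|^2 = m^2 - w$, and since $w\to 0$ along the end we get $|\Phi(x)|^2 \to m^2$, i.e.\ $|\Phi(x)|\to m$; this is precisely finite mass with mass $m$.

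\textbf{Main obstacle.} The routine-looking but genuinely delicate step is the uniform decay $w(x)\to 0$: it is where all the hypotheses conspire (nonparabolicity via \cref{ineq:vol_growth_nonparabolic}, the two-sided Li--Yau estimates \cref{eq:Green_function_bounds}, and the $L^1\cap L^\infty$ control on $|\nabla\Phi|^2$ coming from Moser iteration and $\epsilon$-regularity), and one must be careful to get the estimate uniformly in the direction of escape to infinity rather than just along a sequence. Everything else — smoothness and the PDE for $w$, harmonicity of $w+|\Phi|^2$, and the appeal to Yau's Liouville theorem — is comparatively formal.
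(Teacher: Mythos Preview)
Your overall architecture is the same as the paper's: represent $w$ by the Green integral, show it solves $\Delta w = 2|\nabla\Phi|^2$ and decays to zero, then observe $w+|\Phi|^2$ is a nonnegative harmonic function and invoke Yau's Liouville theorem. Steps 1 and 3 are fine and match the paper essentially verbatim.

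The gap is in your decay argument. You split the integral over a \emph{fixed} ball $B_R(x_0)$ and its complement. But for $x$ far from $x_0$ the singularity of $G(x,\cdot)$ lies in $X\setminus B_R(x_0)$, so ``$|\nabla\Phi|^2$ is small there'' does not control $\int_{X\setminus B_R(x_0)} G(x,y)|\nabla\Phi(y)|^2$: the quantity $\int_X G(x,y)\,\vol_X$ is typically infinite (already on $\mathbb{R}^n$), so pairing a uniform $L^\infty$ bound on $|\nabla\Phi|^2$ against $G$ fails. Your appeal to the Li--Yau bounds and ``maximal volume growth'' does not close this; moreover, maximal volume growth is not among the hypotheses of the theorem (it only enters later, in \Cref{cor:Asymptotics_Phi_Square}).

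The paper's fix is to center the split at $x$, not at $x_0$: take $\rho>0$ and write
\[
w(x)=2\int_{B_\rho(x)} G(x,\cdot)|\nabla\Phi|^2+2\int_{X\setminus B_\rho(x)} G(x,\cdot)|\nabla\Phi|^2.
\]
On $X\setminus B_\rho(x)$ one uses property (iii) together with $G(x,y)\to 0$ as $\dist(x,y)\to\infty$ (from Li--Yau and Ricci-flatness) and $\|\nabla\Phi\|_{L^2(X)}^2<\infty$; this term is made small by taking $\rho$ large. On $B_\rho(x)$ one uses H\"older with the specific exponent $q=\tfrac{n-1}{n-2}$, pairing $\|G(x,\cdot)\|_{L^q(B_\rho(x))}$ (finite by property (i)) against $\|\nabla\Phi\|_{L^{2(n-1)}(X\setminus B_R(o))}^2$, which is small for $R$ large since $\nabla\Phi\in L^{2(n-1)}(X)$ by \Cref{cor:YMH_Lpbounds}/\Cref{cor:Lpbounds}. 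The two scales $\rho$ and $R$ are then chosen in order. This is exactly where the higher $L^p$ integrability of $\nabla\Phi$ (not just $L^1\cap L^\infty$) is used, and it is the reason the bounded-curvature hypothesis enters the theorem at all; cf.\ the remark immediately following the proof.
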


\begin{proof}
	Since $(X^7,g_{\varphi})$ is a complete nonparabolic manifold and $|\nabla\Phi|^2$ is a smooth, nonnegative and integrable function, it follows from the result in \cite{ni2002poisson}*{(proof of) Lemma 2.3}, together with elliptic regularity, that $w$ as defined by \cref{eq:Green} is a smooth solution to the Poisson equation $\Delta w = 2|\nabla\Phi|^2$. 
	
	 We now show that $w$ decays uniformly to zero at infinity, and therefore is the unique such solution\footnote{If $\tilde{w}$ is another smooth solution to $\Delta\tilde{w} = 2|\nabla\Phi|^2$ decaying at infinity, then both $\pm(w-\tilde{w})$ are harmonic functions which decay at infinity, so that by the maximum principle we must have $w=\tilde{w}$.}. Fix any reference point $o\in X$. Let $R,\rho>0$ and suppose that $x\in X\setminus B_{R+\rho}(o)$. Note that, by triangle's inequality, for any $y\in B_{\rho}(x)$ we have $\dist(o,y)>R$. Thus, setting $q:=\frac{n-1}{n-2}$, separating the region of integration into two and using H\"older's inequality we have
	\begin{align}
		0\leqslant \frac{1}{2}w(x) &= \left(\int_{B_{\rho}(x)}+\int_{X\setminus B_{\rho}(x)}\right) G(x,y)|\nabla\Phi(y)|^2\text{vol}_X(y)\\
		&\leqslant \|G(x,\cdot{})\|_{L^{q}(B_{\rho}(x))}\|\nabla\Phi\|_{L^{2(n-1)}(X\setminus B_R(o))}^2 + \sup_{y\in X\setminus B_{\rho}(x)} G(x,y)\|\nabla\Phi\|_{L^2(X)}^2.\label{ineq:w_decays} 
	\end{align} Here we recall that, by either of the cases proven in \Cref{cor:YMH_Lpbounds,cor:Lpbounds}, from the assumptions on $(\nabla,\Phi)$ we have that $\nabla\Phi\in L^p (X)$ for all $p \in [2,\infty]$. Moreover, from the discussion preceding the theorem, note that $G(x,\cdot{})\in L_{\text{loc}}^q(X)$ and, since $(X^7,g_{\varphi})$ is Ricci-flat, $G(x,y)\to 0$ as $\dist(x,y)\to\infty$. Hence, given $\varepsilon>0$, we can choose $\rho\gg 1$ so that the last term in the right-hand side of \cref{ineq:w_decays} is less than $\varepsilon/4$, and then we can choose $R\gg 1$ so that the first term in the right-hand side of \cref{ineq:w_decays} is less than $\varepsilon/4$. This gives $r:=R+\rho>0$ such that $\dist(o,x)>r$ implies $w(x)<\varepsilon$, showing that $w$ decays uniformly to zero as we wanted.
	
	Finally, since $\Delta|\Phi|^2 = -2|\nabla\Phi|^2$ by \cref{eq:subharmonic}, we conclude that $m^2:= |\Phi|^2+w$ is a smooth, nonnegative harmonic function on $X$. Since $(X^7,g_{\varphi})$ is Ricci-flat it follows that $m^2$ must be a constant (cf. \cite{yau1975harmonic}). This completes the proof. 
\end{proof}

\begin{remark}
	Note from the above proof that the assumptions of bounded geometry on $(X^7,\varphi)$ and bounded curvature on $\nabla$ are made in \Cref{thm:Finite_Mass} just to ensure, via \Cref{cor:YMH_Lpbounds,cor:Lpbounds}, that $\nabla\Phi\in L^{2(n-1)}(X)$.
\end{remark}

An interesting easy consequence of \Cref{thm:Finite_Mass} is the following.
\begin{corollary}
	Under the hypothesis of \Cref{thm:Finite_Mass}, the following inequality holds on $X$:
	\begin{equation}
		|\nabla\Phi|^2\lesssim \|F_\nabla\|_{L^\infty (X)}(m^2-|\Phi|^2).
	\end{equation}    
\end{corollary}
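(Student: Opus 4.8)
The plan is to obtain the pointwise bound from an elliptic comparison, via a maximum principle, between $f:=|\nabla\Phi|^2$ and the function $w:=2\int_X G(x,\cdot)\,|\nabla\Phi|^2\,\vol_X$ supplied by \Cref{thm:Finite_Mass}; write $K:=\|F_\nabla\|_{L^\infty(X)}$. The two facts that drive everything are: (i) from the Bochner inequality \cref{ineq:Bochner_ineq_nablaPhi} of \Cref{lem:Bochner_Delta_nabla_Phi_0}, after discarding the nonnegative terms $|\nabla^2\Phi|^2$ and $|[\Phi,\nabla\Phi]|^2$ and estimating $|F_\nabla|\leqslant K$ pointwise, there is a universal constant $c>0$ with $\Delta f\leqslant cKf$ on $X$; and (ii) from \Cref{thm:Finite_Mass}, the function $w$ is smooth, nonnegative, satisfies $\Delta w=2f$, equals $m^2-|\Phi|^2$, and tends uniformly to zero along the end. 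Since the hypotheses of this corollary are exactly those of \Cref{thm:Finite_Mass}, \Cref{cor:YMH_Lpbounds} (or \Cref{cor:Lpbounds} in the $\rG_2$-monopole case) also gives that $f$ tends uniformly to zero along the end.

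Next I would introduce the comparison function $v:=cKw-f$. It is smooth, tends uniformly to zero along the end (being the difference of two such functions), and, by (i) and (ii),
\[
	\Delta v \;=\; cK\,\Delta w-\Delta f \;\geqslant\; 2cKf-cKf \;=\; cKf \;\geqslant\; 0,
\]
so $v$ is superharmonic. The claimed inequality $|\nabla\Phi|^2\lesssim K(m^2-|\Phi|^2)$ is then exactly the assertion $v\geqslant 0$ on $X$. To prove this last step I would argue by contradiction: if $v(x_0)<0$ at some $x_0\in X$, fix a reference point $o$ and choose $R$ so large that $x_0\in B_R(o)$ and $|v|<\tfrac{1}{2}|v(x_0)|$ outside $B_R(o)$ (possible by the uniform decay of $v$); the weak minimum principle on the compact manifold-with-boundary $\overline{B_R(o)}$ then yields
\[
	v(x_0)\;\geqslant\;\min_{\overline{B_R(o)}} v\;=\;\min_{\partial B_R(o)} v\;\geqslant\;-\tfrac{1}{2}|v(x_0)|\;>\;v(x_0),
\]
a contradiction. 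Hence $v\geqslant0$ everywhere.

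Almost every step here is immediate from results already proved; the one place needing a little care is precisely the promotion of the bounded-domain minimum principle to a statement on the noncompact $X$, and that is exactly where the uniform decay at infinity of both $w$ (from \Cref{thm:Finite_Mass}) and $f$ (from \Cref{cor:YMH_Lpbounds}/\Cref{cor:Lpbounds}) enters — no analytic input beyond the Bochner inequality and these decay statements is required. I expect this to be the cleanest route; alternatively one could avoid introducing $v$ and instead observe directly that $cKw-f$ is superharmonic and vanishes at infinity and invoke a Liouville-type statement, but the ball-exhaustion above is the most self-contained.
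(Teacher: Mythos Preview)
Your proof is correct and follows essentially the same route as the paper's: form $cKw-|\nabla\Phi|^2$, observe via the Bochner inequality and $\Delta w=2|\nabla\Phi|^2$ that it is superharmonic, note both pieces decay at infinity, and apply the maximum principle. The paper states the final step more tersely (``the desired conclusion follows by the maximum principle''), whereas you spell out the ball-exhaustion argument; but the idea is identical.
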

\begin{proof}
	By \Cref{lem:Bochner_Delta_nabla_Phi_0}, we know that there is $c>0$ such that
	\begin{equation}
		\Delta|\nabla\Phi|^2 \leqslant 2 c \|F_\nabla\|_{L^\infty (X)}|\nabla\Phi|^2.
	\end{equation} 
	On the other hand, we know that $w = m^2-|\Phi|^2$ is a nonnegative function such that $\Delta w = 2|\nabla\Phi|^2$. Thus,
	\begin{equation}
		\Delta \left( c\|F_\nabla\|_\infty w - |\nabla\Phi|^2 \right) \geqslant 0.
	\end{equation} Moreover, by \Cref{cor:YMH_Lpbounds,cor:Lpbounds,thm:Finite_Mass} we know that both $|\nabla\Phi|$ and $w$ decay. Therefore, the desired conclusion follows by the maximum principle.
\end{proof}

Now we recall that on a complete, noncompact Riemannian manifold with nonnegative Ricci curvature, an application of the Bishop--Gromov volume comparison theorem and a clever argument by Yau \cite{Yau1976} shows that for any $x \in X$ and $r>2$, the volume of the radius $r$ ball centered at $x \in X$ satisfies
\begin{equation}
	\Vol(B_1 (x))r \leqslant \Vol(B_r (x)) \lesssim r^n.
\end{equation} We now combine \Cref{thm:Finite_Mass} with a polynomial volume growth assumption on $(X^7,g_{\varphi})$ to obtain a slight refinement of the asymptotic behavior of $|\Phi|$. For the rest of this section, suppose that $(X^7,\varphi)$ is a complete, noncompact and irreducible $\rG_2$-manifold satisfying the following: there is $0\leqslant k<n-2$ such that for any $x\in X$ and $r\gg 1$ we have
\begin{equation}
	\text{Vol}(B_r(x))\sim r^{n-k}.
\end{equation}
Since $n - k > 2$, it follows that the \cref{ineq:vol_growth_nonparabolic} holds and therefore $(X^7,g_{\varphi})$ is nonparabolic. Furthermore, using \cref{eq:Green_function_bounds} we get that the minimal positive Green's function satisfies the asymptotic estimate
\begin{equation}
	G(x,y) \sim \dist(x,y)^{-(n-k-2)}\quad\text{when}\quad \dist(x,y)\gg 1.
\end{equation}
Another important observation of our volume growth assumption is the following. Let $x \in X$, $\rho$ be the radial coordinate on $T_x X$ and $\lambda: T_x X \to \mathbb{R}$ the function so that
\begin{equation}
	\exp_x^*\left( \vol_X \right) = \lambda \ \rd \rho \wedge \vol_{\mathbb{S}^{n-1}},
\end{equation}
where $n = 7$. Then the Laplacian comparison theorem, see \cite{Walpuski}*{Proposition~20.7}, states that
\begin{equation}\label{eq:Local_BG}
	\del_\rho \left( \rho^{-(n-1)} \lambda \right) \leqslant 0,
\end{equation}
away from the cut locus. As $\rho^{-(n-1)}\lambda$ converges to a constant as $\rho \to 0$ we find that for all $\rho>0$ we have $\lambda \lesssim \rho^{n-1}$. Furthermore, from the Bishop--Gromov volume comparison and the Ricci-flatness together with our volume growth assumption we have that $\lambda \sim \rho^{(n-1) -k}$ for $\rho \gg 1$.

Thus, under the above hypotheses on $(X^7,\varphi)$, we have the following consequences.

\begin{corollary}\label{cor:Asymptotics_Phi_Square}
	Let $(\nabla, \Phi)$ be a solution to the second order \cref{eq:2nd_Order_Eq_1,eq:2nd_Order_Eq_2} with $|\nabla \Phi|^2 \in L^1 (X)$ and suppose either that $|F_\nabla|\in L^{\infty}(X)$ or that $(\nabla,\Phi)$ is a $\rG_2$-monopole such that $|F_{\nabla}^{14}|\in L^{\infty}(X)$. Then, for any $o\in X$ there is $c>0$ and $R_0 \gg 1$ such that if $\dist(x,o)\geqslant R_0$ 
	\begin{equation}
		|\Phi(x)|^2 \leqslant m^2 - \frac{c}{\dist(o,x)^{n-k-2}}.
	\end{equation}
\end{corollary}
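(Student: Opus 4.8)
The plan is to combine the Green's-function representation $m^2-|\Phi|^2=w=2\int_X G(x,\cdot)\,|\nabla\Phi|^2\,\vol_X$ established in \Cref{thm:Finite_Mass} with the sharp two-sided Green's-function asymptotics $G(x,y)\sim\dist(x,y)^{-(n-k-2)}$ recorded above (valid for $\dist(x,y)\gg 1$ under the volume-growth hypothesis $\Vol(B_r(x))\sim r^{n-k}$). Since we only care about irreducible configurations I may assume $\nabla\Phi\not\equiv 0$ — if $\nabla\Phi\equiv 0$ then $w\equiv 0$ and no such $c>0$ can exist. By continuity of $|\nabla\Phi|$ I first fix a base point $p\in X$, a radius $\delta>0$ and a constant $\kappa>0$ such that $|\nabla\Phi|^2\geqslant\kappa$ on the geodesic ball $B_\delta(p)$.

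Next I would discard all of the domain of integration except this ball: by positivity of $G$,
\[
	w(x)\;\geqslant\;2\kappa\int_{B_\delta(p)}G(x,y)\,\vol_X(y)\qquad\text{for every }x\in X .
\]
Given the reference point $o$, set $d=\dist(o,p)$. For $y\in B_\delta(p)$ the triangle inequality gives $\dist(x,o)-d-\delta\leqslant\dist(x,y)\leqslant\dist(x,o)+d+\delta$, so there is $R_0\gg 1$ (depending only on $o$, $p$, $\delta$) such that $\dist(x,o)\geqslant R_0$ forces $\dist(x,y)\gg 1$ and $\tfrac12\dist(o,x)\leqslant\dist(x,y)\leqslant 2\dist(o,x)$ uniformly in $y\in B_\delta(p)$. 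Plugging the lower Green's bound $G(x,y)\gtrsim\dist(x,y)^{-(n-k-2)}\gtrsim\dist(o,x)^{-(n-k-2)}$ into the previous display yields
\[
	w(x)\;\geqslant\;2\kappa\,c'\,\Vol(B_\delta(p))\,\dist(o,x)^{-(n-k-2)}\;=\;c\,\dist(o,x)^{-(n-k-2)},
\]
where $c:=2\kappa\,c'\,\Vol(B_\delta(p))>0$; since $|\Phi(x)|^2=m^2-w(x)$ this is precisely the desired inequality.

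The only step that is not completely routine is this last comparison: one must be sure the implied constant in $G(x,y)\sim\dist(x,y)^{-(n-k-2)}$ is genuinely uniform (this is exactly what the Li--Yau bounds \cref{eq:Green_function_bounds} together with the polynomial volume-growth hypothesis supply), and that $R_0$ can be chosen once and for all so that the equivalence $\dist(x,y)\asymp\dist(o,x)$ holds for every $y$ in the \emph{fixed} ball $B_\delta(p)$; neither is difficult. Everything else — the localization to $B_\delta(p)$, the use of \Cref{thm:Finite_Mass}, and the final arithmetic — is elementary.
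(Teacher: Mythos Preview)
Your argument is correct, but it is genuinely different from the paper's. The paper does not use the integral representation $w(x)=2\int_X G(x,y)|\nabla\Phi|^2(y)\,\vol_X(y)$ at all; instead it argues by comparison and the maximum principle. Namely, since $w=m^2-|\Phi|^2$ is superharmonic ($\Delta w=2|\nabla\Phi|^2\geqslant 0$), strictly positive (by \Cref{rem:Finite_Mass} in the irreducible case), and decays to zero, one can pick $R>0$ and $\epsilon>0$ with $w\geqslant\epsilon\,G(o,\cdot)$ on $\partial B_R(o)$; as $G(o,\cdot)$ is harmonic on $X\setminus B_R(o)$ and both functions decay at infinity, the maximum principle gives $w\geqslant\epsilon\,G(o,\cdot)$ on all of $X\setminus B_R(o)$, and the Green's-function asymptotics $G(o,x)\sim\dist(o,x)^{-(n-k-2)}$ finish the job.

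Your route is arguably more direct and more quantitative: by localizing the integral to a ball where $|\nabla\Phi|^2$ is bounded below, you produce an explicit constant $c$ in terms of local data of the configuration and the Li--Yau constant. The paper's route, on the other hand, uses only the qualitative facts that $w$ is superharmonic, positive, and decaying---it never needs to open up the Green's representation---and is the classical potential-theoretic comparison. Both rely on exactly the same asymptotic input for $G$, and both implicitly require $\nabla\Phi\not\equiv 0$ (in the paper's argument this enters through $w>0$ on $\partial B_R(o)$), which you correctly flagged.
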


\begin{proof}
	It follows from \Cref{thm:Finite_Mass} that $w = m^2-|\Phi|^2$ decays to zero, and as $\Delta w = 2|\nabla \Phi|^2$, it is superharmonic. Now, let $G (o,\cdot)$ be the decaying Green's function with a pole at $o$. As $w$ is bounded, for any $R>0$ there is $\epsilon>0$ such that $w|_{\partial B_R(o)} > \epsilon G|_{\partial B_R(o)}$, and as both of these decay to zero at infinite we find that
	\begin{equation}
		\sup_{x \in X - B_R(o)} w > \sup_{x \in X - B_R(o)} \epsilon G.
	\end{equation}
	Thus, recalling that for $\dist(x,o) \gg 1$, the Green's function is of order $\dist(x,o)^{-(n-k-2)}$, we find from rearranging the above inequality that 
	\begin{equation}
		|\Phi(x)|^2 \leqslant m^2 - \frac{c}{\dist(x,o)^{n-k-2}},
	\end{equation}
	for some constant $c>0$.
\end{proof}

We finish this section by proving a simple result which constrains the asymptotic behavior of $\nabla\Phi$ (cf. \Cref{rem:Sharp}).

\begin{lemma}\label{lem:constrain_decay_nablaPhi}
	Let $(\nabla, \Phi)$ be a solution to the second order \cref{eq:2nd_Order_Eq_1,eq:2nd_Order_Eq_2} with $0 \neq |\nabla \Phi|^2 \in L^1 (X)$, and $r:X \to \mathbb{R}$ be a smooth positive radial function, meaning that for $r \gg 1$ we have $C^{-1}\dist(o,\cdot) \leqslant r(\cdot) \leqslant C\dist(o,\cdot)$ for some reference point $o\in X$. Then, there is a sequence of points $\lbrace x_i \rbrace_{i \in \mathbb{N}}$ with $r(x_i) \to \infty$ such that if it exists, then
	\begin{equation}
		\lim\limits_{i \to \infty} r(x_i)^{n-k-1} \left( |\partial_r |\Phi |^2|(x_i) \right) > 0;
	\end{equation}
	in other words $|\partial_r|\Phi|^2| = O (r^{-(n-k-1)})$.
\end{lemma}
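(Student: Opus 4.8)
The plan is to integrate the identity $\Delta|\Phi|^2 = -2|\nabla\Phi|^2$ from \eqref{eq:subharmonic} over large geodesic balls centered at a reference point $o\in X$ and to balance the resulting boundary flux against the area of the corresponding geodesic spheres.

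Write $B_R = B_R(o)$, $\Sigma_R = \partial B_R(o)$, and set $c_0 := \int_X |\nabla\Phi|^2\,\vol_X$, which is finite and strictly positive by hypothesis. Applying the divergence theorem to the smooth vector field $\nabla|\Phi|^2$ on $B_R$ gives, for a.e.\ $R>0$,
\[
	\int_{\Sigma_R}\partial_\nu|\Phi|^2\,\rd A \;=\; -\int_{B_R}\Delta|\Phi|^2\,\vol_X \;=\; 2\int_{B_R}|\nabla\Phi|^2\,\vol_X ,
\]
where $\nu$ is the outward unit normal; thus $\partial_\nu|\Phi|^2$ is exactly the radial derivative $\partial_r|\Phi|^2$ of the statement (differentiation along the unit radial direction $\nabla\dist(o,\cdot)$). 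Since the right-hand side increases to $2c_0$ as $R\to\infty$, there is $R_0>0$ with $\int_{\Sigma_R}\partial_\nu|\Phi|^2\,\rd A\geqslant c_0$ for a.e.\ $R\geqslant R_0$.

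The second ingredient is the area estimate $\mathrm{Area}(\Sigma_R)\lesssim R^{n-k-1}$ for $R\gg1$: this is a standard consequence of the Bishop--Gromov comparison (monotonicity of $R^{-(n-1)}\mathrm{Area}(\Sigma_R)$) together with the volume hypothesis $\mathrm{Vol}(B_R)\sim R^{n-k}$, and is already implicit in the bound $\lambda\sim\rho^{(n-1)-k}$ recorded above. Combining the two ingredients, for a.e.\ $R\geqslant R_0$ one gets
\[
	c_0 \;\leqslant\; \int_{\Sigma_R}\partial_\nu|\Phi|^2\,\rd A \;\leqslant\; \mathrm{Area}(\Sigma_R)\,\sup_{\Sigma_R}|\partial_r|\Phi|^2| \;\lesssim\; R^{n-k-1}\,\sup_{\Sigma_R}|\partial_r|\Phi|^2| ,
\]
so $\sup_{\Sigma_R}|\partial_r|\Phi|^2|\gtrsim R^{-(n-k-1)}$. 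Then, for each large integer $i$, I would choose a value $R_i\in(i,i+1)$ for which the above holds and a point $x_i\in\Sigma_{R_i}$ at which $|\partial_r|\Phi|^2|$ is at least half this supremum; since $r(x_i)\asymp\dist(o,x_i)=R_i\to\infty$, we obtain $r(x_i)^{n-k-1}\,|\partial_r|\Phi|^2|(x_i)\gtrsim1$ uniformly in $i$, so that if the limit in the statement exists it is bounded below by this positive constant.

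I do not expect a genuine obstacle here. The two points that deserve a word of care are the validity of the divergence theorem on the geodesic ball $B_R$ --- legitimate for a.e.\ $R$, because for a.e.\ $R$ the cut locus of $o$ meets $\Sigma_R$ in a set of zero $(n-1)$-dimensional Hausdorff measure and thus contributes nothing to the boundary integral --- and the harmless passage from $\dist(o,\cdot)$ to the radius function $r$ at the end via their comparability. The real content, the flux--area balance of the first two steps, is elementary.
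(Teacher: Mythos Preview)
Your proof is correct and follows essentially the same approach as the paper: both use the identity $\int_{B_R}|\nabla\Phi|^2 = \tfrac{1}{2}\int_{\Sigma_R}\partial_r|\Phi|^2$ (you derive it from $\Delta|\Phi|^2=-2|\nabla\Phi|^2$, the paper from $\Delta_\nabla\Phi=0$, which is the same computation) together with the area bound $\mathrm{Area}(\Sigma_R)\lesssim R^{n-k-1}$. The only cosmetic difference is that the paper argues by contrapositive (assume $r^{n-k-1}|\partial_r|\Phi|^2|\to 0$ and deduce $\nabla\Phi=0$), whereas you construct the sequence directly; your version is in fact slightly more careful about the cut locus and the distinction between $r$ and $\dist(o,\cdot)$.
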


\begin{proof}
	We prove the contrapositive. Suppose that $\lim_{r \to \infty} r^{n-k-1}|\partial_r |\Phi|^2| = 0$, then 
	\begin{align}
		\| \nabla \Phi \|_{L^2 (X)}^2	&= \lim\limits_{R \to \infty} \int\limits_{B_R} |\nabla \Phi|^2 \vol_X \\
										&= \lim\limits_{R \to \infty} \int\limits_{\Sigma_R} \langle \Phi , \ast \nabla \Phi \rangle \\
										&= \lim\limits_{R \to \infty} \frac{1}{2} \int\limits_{\Sigma_R} \partial_r |\Phi|^2 \vol_{\Sigma_R} \\
										&\lesssim \lim\limits_{R \to \infty} \left( R^{n-k-1} \sup_{\Sigma_R}| \partial_r |\Phi|^2| \right).
	\end{align}
	Thus, we find that $\nabla\Phi = 0$.
\end{proof}

\bigskip

\section{Bochner--Weitzenb\"ock formulas along the end}\label{sec:BW_formulas}

In the Bochner--Weitzenb\"ock formulas presented below we assume that the gauge group $\rG = \SU (2)$ and that we are away from the zeros of $\Phi$. By \Cref{rem:Finite_Mass}, if $(\nabla,\Phi)$ is a finite mass solution to the second order \cref{eq:2nd_Order_Eq_1,eq:2nd_Order_Eq_2}, then this condition is met sufficiently far out along the end of our complete, noncompact, irreducible $\rG_2$-manifold $(X^7,\varphi)$.

Whenever $\Phi(x) \neq 0$ for some $x \in X$, we can decompose $\mathfrak{g}_P = \mathfrak{g}_P^{||} \oplus \mathfrak{g}_P^\perp$ near $x$, by setting
\begin{equation}
	\mathfrak{g}_P^{||} = \ker \left( \ad_{\Phi(x)} :\mathfrak{g}_P \to \mathfrak{g}_P  \right),
\end{equation}
and $\mathfrak{g}_P^\perp$ its orthogonal complement. Clearly, when $\rG = \SU (2)$ then in fact $\mathfrak{g}_P^{||} = \langle \Phi \rangle$ is the real vector subbundle of $\mathfrak{g}_P$ whose fiber at a point $y$ (near $x$) is the $1$-dimensional vector subspace of $(\mathfrak{g}_P)_y$ generated by $\Phi(y)\neq 0$. In what follows we split any section $\chi$ of $\mathfrak{g}_P$ defined around $x$ as $\chi = \chi^{||} + \chi^\perp$, and we note that with the particular choice of metric on $\mathfrak{g}_P$ induced by $(a,b)\mapsto -2\tr(ab)$ it holds $|[\Phi,\chi]|\geqslant |\Phi||\chi^{\perp}|$. (Of course, for other choices of normalization of the negative of the Cartan--Killing form of $\mathfrak{g}=\mathfrak{su}(2)$ the inequality holds up to a constant.)

We start by applying this decomposition to refine the standard Bochner--Weitzenb\"ock inequality in \Cref{lem:Bochner_Delta_nabla_Phi_0}.

\begin{lemma}\label{lem:Bochner_Delta_nabla_Phi_1}
	Let $(\nabla, \Phi)$ be a solution to the second order \cref{eq:2nd_Order_Eq_1,eq:2nd_Order_Eq_2}. Then
	\begin{equation}\label[ineq]{ineq:Bochner_Delta_nabla_Phi_YMH}
		\frac{1}{2} \Delta |\nabla \Phi|^2 + |\nabla^2 \Phi|^2 + |[\Phi, \nabla \Phi]|^2\lesssim |(F_\nabla)^\perp| |(\nabla \Phi)^{||}| |(\nabla \Phi)^\perp| + |(F_\nabla)^{||}|\ |(\nabla \Phi)^\perp|^2. 
	\end{equation}
	If, moreover, $(\nabla, \Phi)$ is a solution to the $\rG_2$-monopole \cref{eq:Monopole} then
	\begin{equation}\label[ineq]{ineq:Bochner_Delta_nabla_Phi_Monopole}
		\begin{aligned}
			\frac{1}{2} \Delta |\nabla \Phi|^2 + |\nabla^2 \Phi|^2 + |[\Phi, \nabla \Phi]|^2 &\lesssim |(F_\nabla^{14})^\perp| |(\nabla \Phi)^{||}| |(\nabla \Phi)^\perp| \\
			& \quad + \left( |(\nabla \Phi)^{||}| + |(F_\nabla^{14})^{||}| \right) \ |(\nabla \Phi)^\perp|^2.
		\end{aligned}
	\end{equation}
\end{lemma}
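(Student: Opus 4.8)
The plan is to start from the Weitzenb\"ock identity \cref{eq:Weitzenbock_Intermediate} of \Cref{lem:Bochner_Delta_nabla_Phi_0},
\begin{equation}
	\tfrac{1}{2}\Delta|\nabla\Phi|^2 + |\nabla^2\Phi|^2 + |[\Phi,\nabla\Phi]|^2 = -2\langle\nabla\Phi,\ast[\ast F_\nabla\wedge\nabla\Phi]\rangle,
\end{equation}
and to estimate the cross term on the right by exploiting the algebra of $\su(2)$ together with the splitting $\mathfrak{g}_P = \mathfrak{g}_P^{||}\oplus\mathfrak{g}_P^\perp$ introduced above. Near a point where $\Phi\neq 0$ we have $\mathfrak{g}_P^{||} = \langle\Phi\rangle$, and, choosing a pointwise orthonormal basis of $\mathfrak{g}_P$ adapted to $\Phi$, the bracket relations $[\mathfrak{g}_P^{||},\mathfrak{g}_P^{||}] = 0$, $[\mathfrak{g}_P^{||},\mathfrak{g}_P^\perp]\subseteq\mathfrak{g}_P^\perp$ and $[\mathfrak{g}_P^\perp,\mathfrak{g}_P^\perp]\subseteq\mathfrak{g}_P^{||}$ follow at once. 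These are the only structural facts used in the first estimate.

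Expanding $\ast F_\nabla = (\ast F_\nabla)^{||} + (\ast F_\nabla)^\perp$ and $\nabla\Phi = (\nabla\Phi)^{||} + (\nabla\Phi)^\perp$ and inserting into the pointwise pairing $\langle\nabla\Phi,\ast[\ast F_\nabla\wedge\nabla\Phi]\rangle$, the bracket relations above kill the $[(\ast F_\nabla)^{||}\wedge(\nabla\Phi)^{||}]$ contribution, place $[(\ast F_\nabla)^{||}\wedge(\nabla\Phi)^\perp]$ and $[(\ast F_\nabla)^\perp\wedge(\nabla\Phi)^{||}]$ in $\mathfrak{g}_P^\perp$, and place $[(\ast F_\nabla)^\perp\wedge(\nabla\Phi)^\perp]$ in $\mathfrak{g}_P^{||}$. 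Using $\langle\mathfrak{g}_P^{||},\mathfrak{g}_P^\perp\rangle = 0$, only three terms survive: $\langle(\nabla\Phi)^\perp,\ast[(\ast F_\nabla)^{||}\wedge(\nabla\Phi)^\perp]\rangle$, $\langle(\nabla\Phi)^\perp,\ast[(\ast F_\nabla)^\perp\wedge(\nabla\Phi)^{||}]\rangle$ and $\langle(\nabla\Phi)^{||},\ast[(\ast F_\nabla)^\perp\wedge(\nabla\Phi)^\perp]\rangle$, bounded in absolute value by $|(F_\nabla)^{||}||(\nabla\Phi)^\perp|^2$, $|(F_\nabla)^\perp||(\nabla\Phi)^{||}||(\nabla\Phi)^\perp|$ and $|(F_\nabla)^\perp||(\nabla\Phi)^{||}||(\nabla\Phi)^\perp|$ respectively. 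Summing yields \cref{ineq:Bochner_Delta_nabla_Phi_YMH}. Crucially, no term of the shape $|(F_\nabla)^\perp||(\nabla\Phi)^{||}|^2$ appears.

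For the monopole refinement I would trade $F_\nabla^7$ for $\nabla\Phi$. From $3F_\nabla^7 = \ast(\ast(F_\nabla\wedge\psi)\wedge\psi)$ (as in \cref{eq:YMH_rewrite}) and the monopole \cref{eq:Monopole} one gets $3F_\nabla^7 = \ast(\nabla\Phi\wedge\psi)$, and the linear algebra identity \cref{eq:linear_algebra_identity} gives $|F_\nabla^7| = \tfrac{1}{\sqrt 3}|\nabla\Phi|$ pointwise. Since the operators $\alpha\mapsto\ast(\alpha\wedge\psi)$ and the projection onto $\Lambda^2_7$ act only on the form factor of $\Omega^\bullet(X,\mathfrak{g}_P)$, they commute with $\ad_\Phi$ and hence preserve the $\mathfrak{g}_P^{||}/\mathfrak{g}_P^\perp$ decomposition, so $|(F_\nabla^7)^{||}| = \tfrac{1}{\sqrt 3}|(\nabla\Phi)^{||}|$ and $|(F_\nabla^7)^\perp| = \tfrac{1}{\sqrt 3}|(\nabla\Phi)^\perp|$. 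Writing $F_\nabla = F_\nabla^7 + F_\nabla^{14}$ then gives $|(F_\nabla)^{||}|\lesssim|(\nabla\Phi)^{||}| + |(F_\nabla^{14})^{||}|$ and $|(F_\nabla)^\perp|\lesssim|(\nabla\Phi)^\perp| + |(F_\nabla^{14})^\perp|$; substituting these into \cref{ineq:Bochner_Delta_nabla_Phi_YMH} and absorbing the resulting term $|(\nabla\Phi)^\perp|\,|(\nabla\Phi)^{||}|\,|(\nabla\Phi)^\perp|$ into the second group produces \cref{ineq:Bochner_Delta_nabla_Phi_Monopole}. The only delicate point in the whole argument is the bookkeeping in the second paragraph — that the ``bad'' quadratic-in-$(\nabla\Phi)^{||}$ terms genuinely cancel — which rests entirely on the $\su(2)$ bracket relations; the rest is routine.
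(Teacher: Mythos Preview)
Your proof is correct and follows essentially the same approach as the paper's: start from \cref{eq:Weitzenbock_Intermediate}, split both $F_\nabla$ and $\nabla\Phi$ into $||$ and $\perp$ components, use the $\su(2)$ bracket relations to kill the $[\,||\,,||\,]$ term and track where the remaining brackets land, then bound. For the monopole case the paper likewise uses $3\ast F_\nabla^7 = \nabla\Phi\wedge\psi$ to bound $|(F_\nabla^7)^{||}|\lesssim|(\nabla\Phi)^{||}|$ and $|(F_\nabla^7)^\perp|\lesssim|(\nabla\Phi)^\perp|$, then substitutes $F_\nabla = F_\nabla^7 + F_\nabla^{14}$ into \cref{ineq:Bochner_Delta_nabla_Phi_YMH}. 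The only cosmetic difference is that the paper combines your second and third surviving terms into one (they are equal by $\ad$-invariance of the inner product), but this has no bearing on the estimate.
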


\begin{proof}
	Recall \cref{eq:Weitzenbock_Intermediate} from \Cref{lem:Bochner_Delta_nabla_Phi_0}. Splitting $\nabla \Phi = (\nabla \Phi)^{||} + (\nabla \Phi)^\perp$ and similarly for $F_\nabla$ we find
	\begin{align}
		\langle \nabla \Phi, \ast [\ast F_\nabla \wedge \nabla \Phi] \rangle 	&=\langle \nabla \Phi, \ast [\ast (F_\nabla)^\perp \wedge (\nabla \Phi)^\perp] \rangle \\
		& \quad + \langle \nabla \Phi, \ast [\ast (F_\nabla)^\perp \wedge (\nabla \Phi)^{||}] \rangle + \langle \nabla \Phi,\ast [\ast (F_\nabla)^{||} \wedge (\nabla \Phi)^\perp] \rangle \\
		 = & 2 \langle (\nabla \Phi)^{||} , \ast [\ast (F_\nabla)^\perp \wedge (\nabla \Phi)^\perp] \rangle + \langle (\nabla \Phi)^\perp , \ast [\ast (F_\nabla)^{||} \wedge (\nabla \Phi)^\perp] \rangle.
	\end{align}
	Thus we get
	\begin{equation}
		|\langle \nabla \Phi , \ast [\ast F_\nabla \wedge \nabla \Phi] \rangle| \lesssim |(F_\nabla)^\perp| |(\nabla \Phi)^{||}| |(\nabla \Phi)^\perp| + |(F_\nabla)^{||}| |(\nabla \Phi)^\perp|^2,
	\end{equation}
	thus inserting into \cref{eq:Weitzenbock_Intermediate} yields \cref{ineq:Bochner_Delta_nabla_Phi_YMH}. 
	
	As for the case when $(\nabla, \Phi)$ satisfies the $\rG_2$-monopole \cref{eq:Monopole}, note that we have $3 \ast F_\nabla^7 = \nabla\Phi\wedge\psi$; in particular, $\left| (F_\nabla^7)^{||}\right|\lesssim \left|(\nabla\Phi)^{||}\right|$ and $\left| (F_\nabla^7)^\perp\right|\lesssim \left| (\nabla\Phi)^\perp\right|$. Using these, the orthogonal decomposition $F_\nabla = F_\nabla^7 + F_\nabla^{14}$ and inserting into \cref{ineq:Bochner_Delta_nabla_Phi_YMH}, we get \cref{ineq:Bochner_Delta_nabla_Phi_Monopole}.
\end{proof}

Next we compute Bochner type formulas for both $|[\Phi, \nabla \Phi]|^2$ and $|[F_\nabla,\Phi]|^2$.

\begin{lemma}\label{lem:Bochner1}
	Let $(\nabla, \Phi)$ be a solution to the second order \cref{eq:2nd_Order_Eq_1,eq:2nd_Order_Eq_2}. Then,
	\begin{equation}
		\frac{1}{2} \Delta |[\Phi, \nabla \Phi]|^2 + |\nabla [\Phi, \nabla \Phi]|^2 + |\Phi|^2|[\Phi, \nabla \Phi]|^2 \lesssim |(F_\nabla)^{||}||[\Phi, \nabla \Phi]|^2 + |(\nabla \Phi)^{||}| \ |[F_\nabla, \Phi]||[\Phi,\nabla \Phi]|.
	\end{equation}
\end{lemma}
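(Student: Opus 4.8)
The plan is to exploit the general Bochner--Weitzenböck identity, which for any section $\xi$ of $\Lambda^1(X,\mathfrak g_P)$ reads $\nabla^*\nabla\xi = \Delta_\nabla\xi - \ast[\ast F_\nabla\wedge\xi]$ (Ricci-flatness kills the curvature term of the base), combined with the pointwise identity $\tfrac12\Delta|\xi|^2 + |\nabla\xi|^2 = \langle\xi,\nabla^*\nabla\xi\rangle$. Apply this with $\xi = [\Phi,\nabla\Phi]$. So the first task is to compute $\Delta_\nabla[\Phi,\nabla\Phi]$, or rather $\nabla^*\nabla[\Phi,\nabla\Phi]$, directly; expanding the rough Laplacian via the Leibniz rule is the cleanest route:
\begin{equation}
	\nabla^*\nabla[\Phi,\nabla\Phi] = [\nabla^*\nabla\Phi,\nabla\Phi] + [\Phi,\nabla^*\nabla(\nabla\Phi)] - 2[\nabla_i\Phi,\nabla_i\nabla\Phi].
\end{equation}
The first term vanishes since $\nabla^*\nabla\Phi=0$ by \cref{eq:2nd_Order_Eq_1}. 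For the second term I substitute the Bochner identity \cref{eq:Bochner_identity_nablaPhi} from \Cref{lem:Bochner_Delta_nabla_Phi_0}, namely $\nabla^*\nabla(\nabla\Phi) = [[\nabla\Phi,\Phi],\Phi] - 2\ast[\ast F_\nabla\wedge\nabla\Phi]$. Plugging in and pairing the whole expression with $[\Phi,\nabla\Phi]$ gives, after the Weitzenbock manipulation,
\begin{equation}
	\tfrac12\Delta|[\Phi,\nabla\Phi]|^2 + |\nabla[\Phi,\nabla\Phi]|^2 = \big\langle[\Phi,\nabla\Phi],\,[\Phi,[[\nabla\Phi,\Phi],\Phi]]\big\rangle - 2\big\langle[\Phi,\nabla\Phi],\,[\Phi,\ast[\ast F_\nabla\wedge\nabla\Phi]]\big\rangle - 2\big\langle[\Phi,\nabla\Phi],\,[\nabla_i\Phi,\nabla_i\nabla\Phi]\big\rangle + (\text{curvature term from }\nabla^*\nabla[\Phi,\nabla\Phi]).
\end{equation}

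The next step is to identify the leading negative term. Here the key is that $\rG=\SU(2)$: for $\mathfrak{su}(2)$ with its invariant inner product one has the identity $\langle[\Phi,\eta],[\Phi,\zeta]\rangle = |\Phi|^2\langle\eta^\perp,\zeta^\perp\rangle$ (equivalently $\ad_\Phi^*\ad_\Phi$ acts as $|\Phi|^2$ on $\mathfrak g_P^\perp$ and as $0$ on $\mathfrak g_P^{||}$), and more precisely $[\Phi,[\Phi,\eta]] = -|\Phi|^2\eta^\perp$. Applying this to the iterated-bracket term, $\langle[\Phi,\nabla\Phi],[\Phi,[[\nabla\Phi,\Phi],\Phi]]\rangle = -|\Phi|^2\,\langle[\Phi,\nabla\Phi],[\Phi,(\nabla\Phi)^\perp]\rangle$... actually $[[\nabla\Phi,\Phi],\Phi] = -[\Phi,[\Phi,\nabla\Phi]]$, and since $[\Phi,\nabla\Phi]$ already lies in $\mathfrak g_P^\perp$, bracketing once more with $\Phi$ and pairing reproduces $-|\Phi|^2|[\Phi,\nabla\Phi]|^2$ exactly. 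That is the source of the $-|\Phi|^2|[\Phi,\nabla\Phi]|^2$ on the right-hand side. The curvature term $-\ast[\ast F_\nabla\wedge[\Phi,\nabla\Phi]]$ coming from the Weitzenbock formula for the $1$-form $[\Phi,\nabla\Phi]$ is bounded by $|F_\nabla||[\Phi,\nabla\Phi]|^2$; I then split $F_\nabla = (F_\nabla)^{||}+(F_\nabla)^\perp$, and observe that the $(F_\nabla)^\perp$ contribution can be absorbed since $[\Phi,\nabla\Phi]\in\mathfrak g_P^\perp$ forces the relevant bracket to only see the parallel part up to terms that recombine — this is where one must be slightly careful about which components survive, and it is the step I expect to be most delicate. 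The term $-2\langle[\Phi,\nabla\Phi],[\Phi,\ast[\ast F_\nabla\wedge\nabla\Phi]]\rangle$ expands, using $[\Phi,\cdot]$ and the decomposition, into something controlled by $|(\nabla\Phi)^{||}||[F_\nabla,\Phi]||[\Phi,\nabla\Phi]|$ — note $[\Phi,\ast[\ast F_\nabla\wedge\nabla\Phi]]$ pairs with $[\Phi,\nabla\Phi]$ only through $|\Phi|^2$-weighted perpendicular components, and rewriting $|\Phi|^2 F_\nabla^\perp \sim [\Phi,[F_\nabla,\Phi]]$ up to constants produces the $|[F_\nabla,\Phi]|$ factor; the $(\nabla\Phi)^{||}$ arises because the $(\nabla\Phi)^\perp$ piece would instead feed into the $|\Phi|^2$ term.

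The final step is bookkeeping: collect the genuinely negative term $-|\Phi|^2|[\Phi,\nabla\Phi]|^2$, bound the $(F_\nabla)^{||}$-curvature contribution by $|(F_\nabla)^{||}||[\Phi,\nabla\Phi]|^2$, and bound the cross terms by $|(\nabla\Phi)^{||}||[F_\nabla,\Phi]||[\Phi,\nabla\Phi]|$, checking that the numerical constant in front of the latter works out to $4$ after combining the contribution from $\nabla^*\nabla(\nabla\Phi)$ with the $-2[\nabla_i\Phi,\nabla_i\nabla\Phi]$ term (the latter, after a Young-type estimate or direct bracket expansion, also contributes a multiple of $|(\nabla\Phi)^{||}||[F_\nabla,\Phi]||[\Phi,\nabla\Phi]|$ once $\nabla_i\nabla_j\Phi$ is traded for curvature via the second-order equation). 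I expect the main obstacle to be precisely tracking the $\mathfrak g_P^{||}/\mathfrak g_P^\perp$ decomposition through these nested brackets so as to land exactly the stated right-hand side — in particular verifying that no uncontrolled $|(F_\nabla)^\perp||[\Phi,\nabla\Phi]|^2$ term survives and that the cross term only involves $(\nabla\Phi)^{||}$ and not $(\nabla\Phi)^\perp$. The underlying reason this works is the $\SU(2)$ algebra identity $\ad_\Phi^2|_{\mathfrak g_P^\perp} = -|\Phi|^2$, which both produces the spectral gap and lets one convert perpendicular components of $F_\nabla$ into $[F_\nabla,\Phi]$ divided by $|\Phi|^2$, keeping everything manifestly controlled far out along the end where $|\Phi|\geqslant m/2$.
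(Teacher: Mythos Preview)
Your overall strategy matches the paper's --- compute $\nabla^*\nabla[\Phi,\nabla\Phi]$, pair with $[\Phi,\nabla\Phi]$, and use the $\SU(2)$ identity $\ad_\Phi^2|_{\mathfrak g_P^\perp}=-|\Phi|^2$ to extract the leading negative term. Two issues, one minor and one a genuine gap.

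\textbf{Minor (bookkeeping).} Once you expand $\nabla^*\nabla[\Phi,\nabla\Phi]$ by Leibniz, there is \emph{no} separate Weitzenb\"ock curvature term $-\ast[\ast F_\nabla\wedge[\Phi,\nabla\Phi]]$; that term only appears if you compute $\Delta_\nabla$ first and then convert. Your three Leibniz pieces already account for everything. The $|(F_\nabla)^{||}|\,|[\Phi,\nabla\Phi]|^2$ contribution in the stated inequality actually emerges from the $(F_\nabla)^{||}\cdot(\nabla\Phi)^\perp$ piece of $-2[\Phi,\ast[\ast F_\nabla\wedge\nabla\Phi]]$, after using $|\Phi|\,|(\nabla\Phi)^\perp|\lesssim|[\Phi,\nabla\Phi]|$.

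\textbf{Gap (the cross term).} The term $-2\langle[\Phi,\nabla_j\Phi],[\nabla_i\Phi,\nabla_i\nabla_j\Phi]\rangle$ is the heart of the argument, and your treatment of it is not yet a proof. A priori it contains $\nabla^2\Phi$, which does \emph{not} appear on the claimed right-hand side; ``trading $\nabla_i\nabla_j\Phi$ for curvature via the second-order equation'' is not what happens. The paper's reduction (which you should carry out) goes: use ad-invariance to rewrite the pairing as $2\langle[\nabla_j\Phi,[\Phi,\nabla_i\Phi]],\nabla_i\nabla_j\Phi\rangle$, apply the Jacobi identity to the bracket, and then observe that the resulting terms involve $[\nabla_i\Phi,\nabla_j\Phi]$, which is antisymmetric in $i,j$ --- so pairing with $\nabla_i\nabla_j\Phi$ sees only the antisymmetric part $\tfrac12[F_{ij},\Phi]$. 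After this one obtains
\[
2\langle[\Phi,\nabla\Phi],\,\rd_\nabla^*[\nabla\Phi\wedge\nabla\Phi]\rangle \;=\; 2\langle[\nabla_j\Phi,[\Phi,\nabla_i\Phi]],[F_{ij},\Phi]\rangle,
\]
with all second covariant derivatives eliminated. This is now manifestly bounded by $|(\nabla\Phi)^{||}|\,|[\Phi,\nabla\Phi]|\,|[F_\nabla,\Phi]|$ (the $(\nabla\Phi)^\perp$ part of $\nabla_j\Phi$ lands in $\mathfrak g_P^{||}$ after bracketing and drops out against $[F_{ij},\Phi]\in\mathfrak g_P^\perp$). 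Without this Jacobi/antisymmetrization step you cannot close the inequality.
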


\begin{proof}
	We start by computing each term in $\Delta_\nabla [\Phi, \nabla \Phi] = \rd_\nabla \rd_\nabla^* [\Phi, \nabla \Phi] + \rd_\nabla^* \rd_\nabla [\Phi, \nabla \Phi]$. First we get
	\begin{equation}
		\rd_\nabla^* [\Phi, \nabla \Phi] = - \rd_\nabla \ast \left( [\nabla \Phi \wedge \ast \nabla \Phi] + [\Phi , \rd_\nabla \ast \nabla \Phi] \right) = 0,
	\end{equation}
	as $[\nabla \Phi \wedge \ast \nabla \Phi] = 0$ and $\rd_\nabla \ast \rd_\nabla \Phi = 0$ are both zero. We also have
	\begin{align}
		\rd_\nabla^* \rd_\nabla [\Phi, \nabla \Phi]	&= \rd_\nabla^* \left( [\nabla \Phi \wedge \nabla \Phi] + [\Phi , \rd_\nabla^2 \Phi] \right) \\
		&= \rd_\nabla^* [\nabla \Phi \wedge \nabla \Phi] - \ast \rd_\nabla [ \Phi, [\Phi, \ast F_\nabla]] \\
		&= \rd_\nabla^* [\nabla \Phi \wedge \nabla \Phi] - \ast [\nabla \Phi \wedge [\Phi, \ast F_\nabla]] - \ast [\Phi, [\nabla \Phi \wedge \ast F_\nabla]] - [\Phi, [\Phi, \rd_\nabla^* F_\nabla]] \\
		&= \rd_\nabla^* [\nabla \Phi \wedge \nabla \Phi] - 2 \ast [\Phi, [\nabla \Phi \wedge \ast F_\nabla]] + [\Phi, [\Phi, [\Phi, \nabla \Phi]]].
	\end{align}
	Putting these two together we find that
	\begin{equation}
		\Delta_\nabla [\Phi, \nabla \Phi] = \rd_\nabla^* [\nabla \Phi \wedge \nabla \Phi] + \ast [\nabla\Phi \wedge [\ast F_\nabla,\Phi] ] + [\Phi , \Delta_\nabla \nabla \Phi].
	\end{equation}
	Now, a short computation shows that $\rd_\nabla^* [\nabla \Phi \wedge \nabla \Phi] = 2 [\nabla_i \nabla_j \Phi , \nabla_i \Phi] e^j $ and so
	\begin{align}
		\langle [\Phi , \nabla \Phi] , \rd_\nabla^* [\nabla \Phi \wedge \nabla \Phi] \rangle	&= 2 \langle [\Phi ,\nabla_j \Phi] , [\nabla_i \nabla_j \Phi , \nabla_i \Phi] \rangle \\
		&= 2 \langle [\nabla_j \Phi , [\Phi , \nabla_i \Phi] ] , \nabla_i \nabla_j \Phi \rangle \\
		&= - 2 \langle [\nabla_i \Phi , [\nabla_j \Phi , \Phi] ] , \nabla_i \nabla_j \Phi \rangle - 2 \langle [ \Phi , [\nabla_i \Phi , \nabla_j \Phi] ] , \nabla_i \nabla_j \Phi \rangle \\
		&= - 2 \langle [\nabla_i \Phi , [\nabla_j \Phi , \Phi] ] , [F_{ij},\Phi] \rangle - 2 \langle [\Phi , [\nabla_i \Phi , \nabla_j \Phi] ] , [F_{ij} , \Phi] \rangle \\
		& \quad + 2 \langle [\nabla_i \Phi , [\nabla_j \Phi , \Phi] ] , \nabla_j \nabla_i \Phi \rangle \\
		&= - 2 \langle [\nabla_i \Phi , [\nabla_j \Phi , \Phi] ] , [F_{ij}, \Phi] \rangle - 2 \langle [\Phi , [\nabla_i \Phi , \nabla_j \Phi]] , [F_{ij} , \Phi] \rangle \\
		& \quad - 2 \langle [\nabla_i \Phi , [ \Phi , \nabla_j \Phi ] ] , \nabla_j \nabla_i \Phi \rangle 
	\end{align}
	as $ [\nabla_j \Phi , \nabla_i \Phi]$ is anti-symmetric in $i,j$. Thus,
	\begin{align}
		2\langle [\Phi , \nabla \Phi] , \rd_\nabla^* [\nabla \Phi \wedge \nabla \Phi] \rangle &= 4 \langle [\nabla_j \Phi , [\Phi , \nabla_i \Phi] ] , \nabla_i \nabla_j \Phi \rangle \\
		&= - 2 \langle [\nabla_i \Phi , [\nabla_j \Phi , \Phi] ] , [F_{ij}, \Phi] \rangle - 2 \langle [\Phi , [\nabla_i \Phi , \nabla_j \Phi]] , [F_{ij} , \Phi] \rangle \\
		&= - 2 \langle [\nabla_i \Phi , [\nabla_j \Phi , \Phi] ] + [\Phi , [\nabla_i \Phi , \nabla_j \Phi]], [F_{ij}, \Phi] \rangle \\
		&= 2 \langle [\nabla_j \Phi , [\Phi , \nabla_i \Phi] ] , [F_{ij}, \Phi] \rangle .
	\end{align}
	We also have
	\begin{align}
		\nabla^*\nabla [\Phi , \nabla \Phi] &= \Delta_\nabla [\Phi , \nabla \Phi] - \ast [\ast F_\nabla \wedge [\Phi , \nabla \Phi]] \\
		&= [\Phi , \Delta_\nabla \nabla \Phi] + \rd_\nabla^* [\nabla \Phi \wedge \nabla \Phi] + \ast [\nabla\Phi \wedge [\ast F_\nabla,\Phi] ] - \ast [\ast F_\nabla \wedge [\Phi , \nabla \Phi]],
	\end{align}
	and using the second order equations again we find $\Delta_\nabla \nabla \Phi = [[\nabla \Phi, \Phi], \Phi]- \ast [\ast F_\nabla \wedge \nabla \Phi]$ and so
	\begin{align}
		\nabla^*\nabla [\Phi , \nabla \Phi] &= [\Phi , [[\nabla \Phi, \Phi], \Phi] ] + \rd_\nabla^* [\nabla \Phi \wedge \nabla \Phi] \\
		& \quad + \ast [\nabla\Phi \wedge [\ast F_\nabla,\Phi] ] - \ast [\ast F_\nabla \wedge [\Phi , \nabla \Phi]] - [\Phi , \ast [\ast F_\nabla \wedge \nabla \Phi] ] \\
		&= [\Phi , [[\nabla \Phi, \Phi], \Phi] ] + \rd_\nabla^* [\nabla \Phi \wedge \nabla \Phi] - 2 [\Phi , \ast [\ast F_\nabla \wedge \nabla \Phi] ],
	\end{align}
	where in the last inequality we used the (graded) Jacobi identity. Thus,
	\begin{align}
		\Delta \frac{|[\Phi, \nabla \Phi]|^2}{2} &= \langle [\Phi, \nabla \Phi] , \nabla^*\nabla [\Phi, \nabla \Phi] \rangle - |\nabla [\Phi, \nabla \Phi]|^2 \\
		&= \langle [\Phi, \nabla \Phi] , [\Phi , [[\nabla \Phi, \Phi], \Phi] ] \rangle - 2 \langle [\Phi, \nabla \Phi] ,  [\Phi , \ast [\ast F_\nabla \wedge \nabla \Phi] ] \rangle \\
		& \quad + \langle [\Phi, \nabla \Phi] , \rd_\nabla^* [\nabla \Phi \wedge \nabla \Phi] \rangle - |\nabla [\Phi, \nabla \Phi]|^2 \\
		&= - | [[\nabla \Phi, \Phi], \Phi]|^2 + 2 \langle [\nabla_j \Phi , [\Phi , \nabla_i \Phi] ] , [F_{ij}, \Phi] \rangle - 2 \langle [\Phi, \nabla \Phi] ,  [\Phi , \ast [\ast F_\nabla \wedge \nabla \Phi] ] \rangle \\
		& \quad - |\nabla [\Phi, \nabla \Phi]|^2 .
	\end{align}
	Since $| [[\nabla \Phi, \Phi], \Phi]|^2\geqslant |\Phi|^2|[\Phi,\nabla\Phi]|^2$, we conclude that
	\begin{align}
		\Delta \frac{|[\Phi, \nabla \Phi]|^2}{2} + |\nabla [\Phi, \nabla \Phi]|^2 + |\Phi|^2 | [\Phi, \nabla \Phi]|^2 &\lesssim |(\nabla \Phi)^{||}| |[F_\nabla, \Phi]| \ |[\Phi,\nabla \Phi]| \\
		& \quad + |\Phi| \ \left( |(F_\nabla)^{||}||(\nabla \Phi)^\perp| + |(F_\nabla)^\perp||(\nabla \Phi)^{||}| \right) \ |[\Phi,\nabla \Phi]| ,
	\end{align}
	with the stated inequality following from noticing that $|\Phi||(\nabla \Phi)^\perp| \leqslant |[\nabla \Phi , \Phi]|$ and similarly for $(F_\nabla)^\perp$.
\end{proof}

\begin{lemma}\label{lem:Bochner2}
	Let $(\nabla, \Phi)$ be a solution to the second order \cref{eq:2nd_Order_Eq_1,eq:2nd_Order_Eq_2}. Then,
	\begin{align}
		\frac{1}{2} \Delta |[F_\nabla, \Phi]|^2 + |\nabla [F_\nabla, \Phi] |^2 +|\Phi|^2|[F_\nabla, \Phi]|^2 &\lesssim (|\mathrm{Riem}| + |F_\nabla| + |\Phi|^{-2} |\nabla \Phi| ) \ |[F_\nabla, \Phi]|^2 \\
		&\text{ } + \left(|(\nabla \Phi)^{||}| + |\Phi|^{-2} |F_\nabla| |(\nabla \Phi)^{||}| + |\Phi|^{-1}|(\nabla F_\nabla)^{||}| \right) \ |[\nabla \Phi, \Phi]| \ |[F_\nabla, \Phi]| \\
		&\text{ } + |\Phi|^{-1} | \nabla_i[ F_\nabla , \Phi] ||( \nabla_i \Phi)^{||}| | [F_\nabla , \Phi] |.
	\end{align}
\end{lemma}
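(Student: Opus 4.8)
The plan is to run the same scheme used for \Cref{lem:Bochner_Delta_nabla_Phi_0,lem:Bochner1}, now applied to the $\mathfrak{g}_P$-valued $2$-form $\sigma:=[F_\nabla,\Phi]$. For $\rG=\SU(2)$ this $\sigma$ is a section of $\mathfrak{g}_P^\perp$, and the pointwise algebra I would exploit is: $\ad_\Phi$ is skew-adjoint, $\ad_\Phi^2=-|\Phi|^2$ on $\mathfrak{g}_P^\perp$, $[\mathfrak{g}_P^\perp,\mathfrak{g}_P^\perp]\subseteq\mathfrak{g}_P^{||}=\ker\ad_\Phi$, and $|[\xi,\Phi]|=|\Phi|\,|\xi^\perp|$ for any section $\xi$ of $\mathfrak{g}_P$. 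Starting from the identity $\tfrac12\Delta|\sigma|^2+|\nabla\sigma|^2=\langle\sigma,\nabla^*\nabla\sigma\rangle$, the whole problem reduces to computing and estimating $\nabla^*\nabla\sigma$.

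First I would compute $\nabla^*\nabla\sigma$ via the product rule for rough Laplacians applied to $\sigma=-\ad_\Phi F_\nabla$: this gives $\nabla^*\nabla\sigma=[\nabla^*\nabla F_\nabla,\Phi]+[F_\nabla,\nabla^*\nabla\Phi]-2\sum_i[\nabla_iF_\nabla,\nabla_i\Phi]$, and the middle term vanishes since $\nabla^*\nabla\Phi=\Delta_\nabla\Phi=0$ by \Cref{lem:Second_Order_Eqs} --- this is the point where the monopole structure enters, exactly as in \Cref{lem:Bochner_Delta_nabla_Phi_0,lem:Bochner1}. Next I would remove $\nabla^*\nabla F_\nabla$: the Bochner--Weitzenb\"ock formula on $\mathfrak{g}_P$-valued $2$-forms gives $\nabla^*\nabla F_\nabla=\Delta_\nabla F_\nabla-\mathcal{W}(F_\nabla)$, where $\mathcal{W}$ splits into a Riemannian piece with $|\mathcal{W}_{\mathrm{Riem}}(F_\nabla)|\lesssim|\mathrm{Riem}|\,|F_\nabla|$ and a bundle piece which is a bracket quadratic in $F_\nabla$, while $\Delta_\nabla F_\nabla=[[F_\nabla,\Phi],\Phi]-[\nabla\Phi\wedge\nabla\Phi]$ by \Cref{lem:Second_Order_Eqs}. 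Pairing with $\sigma$ and using the $\SU(2)$-algebra above: $\langle\sigma,[[[F_\nabla,\Phi],\Phi],\Phi]\rangle=\langle\sigma,\ad_\Phi^2\sigma\rangle=-|\Phi|^2|\sigma|^2$, the leading negative term; $\langle\sigma,[[\nabla\Phi\wedge\nabla\Phi],\Phi]\rangle$ is bounded by $|(\nabla\Phi)^{||}|\,|[\nabla\Phi,\Phi]|\,|\sigma|$ because the part of $[\nabla_i\Phi,\nabla_j\Phi]$ landing in $[\mathfrak{g}_P^\perp,\mathfrak{g}_P^\perp]$ is annihilated by the outer $\ad_\Phi$; $\langle\sigma,[\mathcal{W}_{\mathrm{Riem}}(F_\nabla),\Phi]\rangle=\langle\sigma,\mathcal{W}_{\mathrm{Riem}}(\sigma)\rangle\lesssim|\mathrm{Riem}|\,|\sigma|^2$ since $\ad_\Phi$ commutes with the curvature action on form indices; and pairing the bundle piece with $\sigma$, moving one $\ad_\Phi$ across (which replaces an $F_\nabla$-slot by its perpendicular part, of norm $\lesssim|\Phi|^{-1}|\sigma|$) and projecting the quadratic bracket onto $\mathfrak{g}_P^\perp$ yields $\lesssim|F_\nabla|\,|\sigma|^2$.

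What remains is the derivative cross-term $\sum_i\langle\sigma,[\nabla_iF_\nabla,\nabla_i\Phi]\rangle$, and this is where the real work lies. Here I would decompose both $\nabla_iF_\nabla$ and $\nabla_i\Phi$ into $\mathfrak{g}_P^{||}$- and $\mathfrak{g}_P^\perp$-parts: the contribution with both factors parallel to $\Phi$ vanishes, and extracting the scalar projection coefficients (each of the shape $|\Phi|^{-2}\langle\,\cdot\,,\Phi\rangle$) is what produces the weights $|\Phi|^{-1}$ and $|\Phi|^{-2}$ on the right-hand side; rewriting $[\nabla_iF_\nabla,\Phi]=\nabla_i[F_\nabla,\Phi]-[F_\nabla,\nabla_i\Phi]$ then converts the surviving pieces into $|\Phi|^{-1}|\nabla[F_\nabla,\Phi]|\,|[F_\nabla,\Phi]|$ together with $|\Phi|^{-1}|(\nabla F_\nabla)^{||}|\,|[\nabla\Phi,\Phi]|\,|\sigma|$, $|\Phi|^{-2}|F_\nabla|\,|(\nabla\Phi)^{||}|\,|[\nabla\Phi,\Phi]|\,|\sigma|$ and $|\Phi|^{-2}|\nabla\Phi|\,|\sigma|^2$. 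A few Young-type manipulations, together with the a priori bounds $|\nabla\Phi|,|F_\nabla|\lesssim 1$ that hold on the region where the lemma is applied (by \Cref{cor:Lpbounds}, recalling that $\Phi\ne0$ forces one far along the end), then assemble everything into the stated inequality. The main obstacle is precisely this bookkeeping: unlike in \Cref{lem:Bochner1}, the cross-term involves $\nabla F_\nabla$, which is not controlled on its own, so one must carefully trade it for $\nabla[F_\nabla,\Phi]$ and keep track of every $\mathfrak{g}_P^{||}/\mathfrak{g}_P^\perp$-splitting so that each resulting term lands with the correct power of $|\Phi|$ and the correct parallel/perpendicular factors.
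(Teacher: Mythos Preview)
Your approach is essentially the same as the paper's: start from $\tfrac12\Delta|\sigma|^2+|\nabla\sigma|^2=\langle\sigma,\nabla^*\nabla\sigma\rangle$, expand $\nabla^*\nabla[F_\nabla,\Phi]$ by the Leibniz rule, kill the $[F_\nabla,\nabla^*\nabla\Phi]$ term via harmonicity, replace $\nabla^*\nabla F_\nabla$ using the Weitzenb\"ock formula and $\Delta_\nabla F_\nabla=[[F_\nabla,\Phi],\Phi]-[\nabla\Phi\wedge\nabla\Phi]$, and then handle the cross-term $[\nabla_i F_\nabla,\nabla_i\Phi]$ by the $||/\perp$ splitting together with the identity $[\Phi,\nabla_i F_\nabla]=\nabla_i[\Phi,F_\nabla]-[\nabla_i\Phi,F_\nabla]$. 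One minor point: the lemma as stated is a purely pointwise algebraic inequality (away from the zeros of $\Phi$) and does not require the a priori bounds $|\nabla\Phi|,|F_\nabla|\lesssim 1$ or any Young-type absorption---those enter only later, in \Cref{cor:Elliptic_Inequality_For_Exponential_Decay}, when the inequalities of \Cref{lem:Bochner1,lem:Bochner2} are combined and the decay hypotheses are invoked.
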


\begin{proof}
	We start by computing
	\begin{equation}\label{eq:First_Eq_Bochner_F_perp}
		\frac{1}{2} \Delta |[F_\nabla,\Phi]|^2 = \langle [F_\nabla,\Phi] , \nabla^*\nabla [F_\nabla,\Phi] \rangle - |\nabla [F_\nabla , \Phi]|^2 .
	\end{equation}
	We work out the first term above using the Leibniz rule, the Weitzenb\"ock formula for $F_\nabla$ given by $\nabla^*\nabla F_\nabla = \Delta_\nabla F_\nabla  + \mathrm{Riem}(F_\nabla) + (F_\nabla \cdot F_\nabla)$, and the second order equations to get
	\begin{align}
		\nabla^*\nabla [F_\nabla,\Phi]&= - \nabla_i \nabla_i [F , \Phi] \\
		&= [- \nabla_i \nabla_i F , \Phi] - 2 [\nabla_i F , \nabla_i \Phi] - [F , \nabla_i \nabla_i \Phi] \\
		&= [\Delta_\nabla F_\nabla , \Phi] + \mathrm{Riem}([F_\nabla, \Phi]) + (F_\nabla \cdot [F_\nabla, \Phi]) - 2 [\nabla_i F_\nabla , \nabla_i \Phi] + [F_\nabla, \Delta_\nabla \Phi] \\	
		&= [ [[F_\nabla , \Phi], \Phi] , \Phi] - [ [\nabla \Phi \wedge \nabla \Phi] , \Phi] + \mathrm{Riem}([F_\nabla, \Phi]) \\
		& \quad + (F_\nabla \cdot [F_\nabla, \Phi]) - 2 [\nabla_i F_\nabla , \nabla_i \Phi],
	\end{align}
	so that the first term on the right-hand side of \eqref{eq:First_Eq_Bochner_F_perp} is
	\begin{align}
		\langle [F_\nabla,\Phi] , \nabla^*\nabla [F_\nabla,\Phi] \rangle &= - |\Phi|^2 |[F_\nabla, \Phi]|^2 - \langle [F_\nabla,\Phi] ,[ [\nabla \Phi \wedge \nabla \Phi] , \Phi] \rangle + \langle [F_\nabla,\Phi] , \mathrm{Riem}([F_\nabla, \Phi])\rangle \\
		& \quad + \langle [F_\nabla,\Phi] , (F_\nabla \cdot [F_\nabla, \Phi]) \rangle - 2 \langle [F_\nabla, \Phi] , [\nabla_i F_\nabla , \nabla_i \Phi] \rangle \\
		&\lesssim (-|\Phi|^2 + |\mathrm{Riem}| + |F_\nabla|) \ |[F_\nabla, \Phi]|^2 \\
		& \quad + \left(|(\nabla \Phi)^{||}| \ |\Phi| \ |(\nabla \Phi)^\perp| + |[\nabla_i F_\nabla , \nabla_i \Phi]|\right) \ |[F_\nabla, \Phi]| .
	\end{align}
	Now, notice that $|\Phi| |(\nabla \Phi)^\perp| \leqslant \left| [\nabla \Phi , \Phi]\right|$ while
	\begin{align}
		|[\nabla_i F_\nabla , \nabla_i \Phi]| &\lesssim |(\nabla_i F_\nabla)^{||}| |( \nabla_i \Phi)^\perp| + |(\nabla F_\nabla)^\perp| |( \nabla_i \Phi)^{||}| \\
		&\lesssim |\Phi|^{-1}|(\nabla F_\nabla)^{||}| |[\Phi, \nabla \Phi]| + |\Phi|^{-2}|[\Phi, [\Phi, \nabla_i F_\nabla ]]| |( \nabla_i \Phi)^{||}| \\
		&\lesssim |\Phi|^{-1}|(\nabla F_\nabla)^{||}| |[\Phi, \nabla \Phi]| + |\Phi|^{-2}|[\Phi, \nabla_i[\Phi, F_\nabla ]] - [\Phi, [\nabla_i \Phi, F_\nabla ]]| |( \nabla_i \Phi)^{||}| \\
		&\lesssim |\Phi|^{-1}|(\nabla F_\nabla)^{||}| |[\Phi, \nabla \Phi]| + |\Phi|^{-1}| \nabla_i[\Phi, F_\nabla ]||( \nabla_i \Phi)^{||}| \\
		& \quad + |\Phi|^{-2} |[\nabla_i \Phi, [F_\nabla, \Phi ]] +[F_\nabla, [\Phi , \nabla_i \Phi]] | |( \nabla_i \Phi)^{||}| \\
		&\lesssim |\Phi|^{-1}|(\nabla F_\nabla)^{||}| |[\Phi, \nabla \Phi]| + |\Phi|^{-1}| \nabla_i[\Phi, F_\nabla ]||( \nabla_i \Phi)^{||}| \\
		& \quad + |\Phi|^{-2} \left( |\nabla \Phi| |[F_\nabla, \Phi ]| + |F_\nabla| |[\Phi , \nabla \Phi]| \right) |( \nabla_i \Phi)^{||}| , 
	\end{align}
	which upon inserting above gives
	\begin{align}
		\langle [F_\nabla,\Phi] , \nabla^*\nabla [F_\nabla,\Phi] \rangle + |\Phi|^2|[F_\nabla, \Phi]|^2 &\lesssim (|\mathrm{Riem}| + |F_\nabla| + |\Phi|^{-2} |\nabla \Phi| ) \ |[F_\nabla, \Phi]|^2 \\
		&\text{ }+ \left(|(\nabla \Phi)^{||}| + |\Phi|^{-2} |F_\nabla| |(\nabla \Phi)^{||}| + |\Phi|^{-1}|(\nabla F_\nabla)^{||}| \right) \ |[\nabla \Phi, \Phi]| \ |[F_\nabla, \Phi]| \\
		&\text{ }+ |\Phi|^{-1} | \nabla_i[ F_\nabla , \Phi] ||( \nabla_i \Phi)^{||}| | [F_\nabla , \Phi] |.
	\end{align}
	Inserting into \cref{eq:First_Eq_Bochner_F_perp} we obtain the inequality in the statement.
\end{proof}

We now give the main consequence of the Bochner inequalities proved in this section.

\begin{corollary}\label{cor:Elliptic_Inequality_For_Exponential_Decay}
	Let $(\nabla, \Phi)$ be a solution to the second order \cref{eq:2nd_Order_Eq_1,eq:2nd_Order_Eq_2} with finite mass $m\neq 0$, set $f = \left( [\nabla \Phi , \Phi] , [F_\nabla, \Phi] \right) \in \Omega^1(\fg_P) \oplus \Omega^2(\fg_P)$ and suppose that $|\mathrm{Riem}|$, $|\nabla \Phi|$, $|F_\nabla|$ and $|\nabla F_\nabla|$ decay uniformly to zero at infinity. Then, for every $\delta\in (0,1]$, there is a sufficiently large compact set $K=K(\delta)\subset X$ outside of which there holds
	\begin{equation}\label{ineq: ref_Bochner_transv}
		\frac{1}{2} \Delta |f|^2 + (1 - \delta)|\nabla f|^2 \leqslant -\frac{1}{2}|\Phi|^2|f|^2.
	\end{equation} Moreover, if $0<\delta_1<\delta_2\leqslant 1$ then $K(\delta_1)\supset K(\delta_2)$.
\end{corollary}

\begin{proof}
    Since $(\nabla, \Phi)$ has finite mass $m>0$, it follows from \Cref{rem:Finite_Mass} that there is a sufficiently large compact set $K\subset X$ such that
	\begin{equation}\label{ineq: lb_Phi}
	|\Phi|\geqslant\frac{m}{2}>0,\quad\text{ on $X\setminus K$}.
	\end{equation}
    Using this and summing the inequalities in \Cref{lem:Bochner1,lem:Bochner2} and using Young's inequality to deal with the mixed terms of the form $|[\nabla \Phi, \Phi]| \ |[F_\nabla, \Phi]| $, we get that there is $c>0$ such that outside $K$ there holds
    \begin{align}
		\frac{1}{2}\Delta|f|^2 &\leqslant -|\nabla f|^2 + c\frac{2}{m}|\nabla[F_\nabla,\Phi]||(\nabla\Phi)^{||}||[F_\nabla,\Phi]|-|\Phi|^2|f|^2\nonumber\\
		&\text{ }+ c\left( |\mathrm{Riem}|+|F_\nabla|+\frac{4}{m^2}|\nabla\Phi| + |(\nabla\Phi)^{||}| + \frac{4}{m^2}|F_\nabla||(\nabla\Phi)^{||}| +\frac{2}{m}|(\nabla F_\nabla)^{||}| \right)|f|^2.\label{ineq: general_ref_Bochner_transv}
	\end{align}
	 Now, by the hypotheses, the functions $|\mathrm{Riem}|$, $|\nabla\Phi|$, $|F_\nabla|$ and $|\nabla F_\nabla|$ decay uniformly to zero at infinity, so by taking $K$ sufficiently large, we can make the whole term inside the parenthesis in the second line of the right-hand side of inequality \eqref{ineq: general_ref_Bochner_transv} to be less than $\frac{m^2}{16c}$. On the other hand, for any $\delta\in (0,1]$, Young's inequality gives
	\begin{align}
	c\frac{2}{m}|\nabla[F_\nabla,\Phi]||(\nabla\Phi)^{||}||[F_\nabla,\Phi]| &\leqslant \delta|\nabla[F_\nabla,\Phi]|^2 + \frac{c^2}{\delta m^2}|(\nabla\Phi)^{||}|^2|[F_\nabla,\Phi]|^2\\
	&\leqslant \delta|\nabla f|^2 + \frac{c^2}{\delta m^2}|(\nabla\Phi)^{||}|^2|f|^2,
	\end{align} and again since $|\nabla\Phi|$ decays, by taking a sufficiently large compact subset $K(\delta)\supset K$, we can arrange $\frac{c^2}{\delta m^2}|(\nabla\Phi)^{||}|^2\leqslant\frac{m^2}{16}\leqslant\frac{|\Phi|^2}{4}$ outside $K(\delta)$; note that one chooses $K(\delta)$ in such a way that $K(\delta_1)\supset K(\delta_2)$ whenever $0<\delta_1<\delta_2\leqslant 1$. In conclusion, combining these facts with inequality \eqref{ineq: general_ref_Bochner_transv}, we get the desired inequality \eqref{ineq: ref_Bochner_transv} outside $K(\delta)$.

\end{proof}

Here we state and prove a further improvement of the inequalities in \Cref{lem:Bochner_Delta_nabla_Phi_1}. These are valid only sufficiently far out along the end and follow explicitly making use of the finiteness of the mass.

\begin{lemma}[Improved Bochner inequalities]\label{lem:Improved_Bochner_Delta_Nabla_Phi}
	In the conditions of \Cref{thm:Finite_Mass}, the following inequalities hold outside a sufficiently large compact subset:
	\begin{subequations}
	\begin{align}
		\Delta |\nabla \Phi|^2 &\leqslant c|(F_\nabla)^\perp| |(\nabla \Phi)^{||}| |(\nabla \Phi)^\perp| + \left(c|(F_\nabla)^{||}| - \frac{m^2}{4}\right)|(\nabla \Phi)^\perp|^2, \label[ineq]{ineq:YMH_improv_1} \\
		\Delta |\nabla \Phi|^2 &\lesssim |(F_\nabla)^\perp| |(\nabla \Phi)^{||}| |(\nabla \Phi)^\perp|,  \label[ineq]{ineq:YMH_improv_2} \\
		\Delta |\nabla \Phi|^2 &\lesssim |(F_\nabla)^\perp|^2 |(\nabla \Phi)^{||}|^2. \label[ineq]{ineq:YMH_improv_3}
	\end{align}
	\end{subequations}
	In case $(\nabla, \Phi)$ is a $\rG_2$-monopole, then we have:
	\begin{subequations}
	\begin{align}
		\Delta |\nabla \Phi|^2 &\leqslant c|(F_\nabla^{14})^\perp| |(\nabla \Phi)^{||}| |(\nabla \Phi)^\perp| + \left(c|(\nabla \Phi)^{||}| + c|(F_\nabla^{14})^{||}| - \frac{m^2}{4}\right)|(\nabla \Phi)^\perp|^2,  \label[ineq]{ineq:improv_1} \\
		\Delta |\nabla \Phi|^2 &\lesssim |(F_\nabla^{14})^\perp| |(\nabla \Phi)^{||}| |(\nabla \Phi)^\perp|, \label[ineq]{ineq:improv_2} \\
		\Delta |\nabla \Phi|^2 &\lesssim |(F_\nabla^{14})^\perp|^2 |(\nabla \Phi)^{||}|^2.  \label[ineq]{ineq:improv_3}
	\end{align}
	\end{subequations}
\end{lemma}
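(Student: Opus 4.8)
The plan is to squeeze these out of the two Bochner--Weitzenb\"ock estimates \cref{ineq:Bochner_Delta_nabla_Phi_YMH,ineq:Bochner_Delta_nabla_Phi_Monopole} of \Cref{lem:Bochner_Delta_nabla_Phi_1}: the new idea is that, under the finite-mass hypothesis, the term $\abs{[\Phi,\nabla\Phi]}^2$ on the \emph{left}-hand side there should not be discarded but instead moved to the right with its minus sign and bounded from below. We may assume $m>0$ (if $m=0$ then $\Phi\equiv0$, $\nabla\Phi\equiv0$ by \Cref{rem:Finite_Mass}, and there is nothing to prove), and we always work outside a compact set on which $\abs\Phi\geqslant m/2$, so that the splitting $\mathfrak g_P=\mathfrak g_P^{||}\oplus\mathfrak g_P^\perp$ is available.

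First I would use the $\SU(2)$ algebra: $\mathfrak g_P^{||}=\ker\ad_\Phi$ and $\ad_\Phi$ acts on the rank-two bundle $\mathfrak g_P^\perp$ with ``length'' $\abs\Phi$, so $[\Phi,\nabla\Phi]=[\Phi,(\nabla\Phi)^\perp]$ and $\abs{[\Phi,\nabla\Phi]}^2\gtrsim\abs\Phi^2\abs{(\nabla\Phi)^\perp}^2$ (in fact an equality after normalising the inner product on $\su(2)$). Since $\abs\Phi\to m$ along the end, outside a sufficiently large compact set $\abs{[\Phi,\nabla\Phi]}^2\gtrsim m^2\abs{(\nabla\Phi)^\perp}^2$. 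In \cref{ineq:Bochner_Delta_nabla_Phi_YMH} I would then drop the nonnegative term $\abs{\nabla^2\Phi}^2$ and transfer $\abs{[\Phi,\nabla\Phi]}^2$ to the right-hand side; absorbing the resulting positive constant into $\lesssim$ yields exactly \cref{ineq:YMH_improv_1}, the $-m^2\abs{(\nabla\Phi)^\perp}^2$ being the transferred term. For \cref{ineq:improv_1} I would run the identical transfer on \cref{ineq:Bochner_Delta_nabla_Phi_Monopole}; there the $F_\nabla^{14}$ and the extra $\abs{(\nabla\Phi)^{||}}$ already occur, having come from $3\ast F_\nabla^7=\nabla\Phi\wedge\psi$ (i.e.\ $\abs{(F_\nabla^7)^{||}}\lesssim\abs{(\nabla\Phi)^{||}}$, $\abs{(F_\nabla^7)^\perp}\lesssim\abs{(\nabla\Phi)^\perp}$, as in the proof of \Cref{lem:Bochner_Delta_nabla_Phi_1}).

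The remaining four inequalities follow from \cref{ineq:YMH_improv_1,ineq:improv_1} by elementary estimates along the end. By \Cref{cor:YMH_Lpbounds}, resp.\ \Cref{cor:Lpbounds}, one has $\abs{\nabla\Phi}\to0$ along the end, hence $\abs{(\nabla\Phi)^{||}}\to0$ and, in the monopole case, $\abs{(F_\nabla^7)^{||}}\to0$; consequently the coefficient of $\abs{(\nabla\Phi)^\perp}^2$ in \cref{ineq:YMH_improv_1}, resp.\ \cref{ineq:improv_1}, is $\leqslant0$ outside a sufficiently large compact set once the parallel curvature $\abs{(F_\nabla)^{||}}$, resp.\ $\abs{(F_\nabla^{14})^{||}}$, is $<m^2$ there, and then that summand may simply be dropped: this is \cref{ineq:YMH_improv_2}, resp.\ \cref{ineq:improv_2}. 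Finally, for \cref{ineq:YMH_improv_3,ineq:improv_3} I would return to \cref{ineq:YMH_improv_1}, resp.\ \cref{ineq:improv_1}, estimate the cross term by Young's inequality, $\abs{(F_\nabla)^\perp}\abs{(\nabla\Phi)^{||}}\abs{(\nabla\Phi)^\perp}\leqslant\tfrac{1}{2m^2}\abs{(F_\nabla)^\perp}^2\abs{(\nabla\Phi)^{||}}^2+\tfrac{m^2}{2}\abs{(\nabla\Phi)^\perp}^2$, and absorb the final term into half of the negative term $-m^2\abs{(\nabla\Phi)^\perp}^2$; what survives is $\lesssim\abs{(F_\nabla)^\perp}^2\abs{(\nabla\Phi)^{||}}^2$, resp.\ with $F_\nabla^{14}$.

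The one genuinely delicate point I anticipate is the sign check $\abs{(F_\nabla)^{||}}<m^2$ (resp.\ $\abs{(F_\nabla^{14})^{||}}<m^2$) near infinity used for \cref{ineq:YMH_improv_2,ineq:improv_2}: this is the place where the finite-mass and the curvature hypotheses must be combined carefully, and it is automatic in every situation in which these improved inequalities are later invoked --- in particular under the decay assumptions of \Cref{cor:Elliptic_Inequality_For_Exponential_Decay} and of \Cref{thm:Main_Theorem_2}, where $\abs{(F_\nabla^7)^{||}}\lesssim\abs{(\nabla\Phi)^{||}}\to0$ and the remaining $\abs{(F_\nabla^{14})^{||}}\to0$. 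Everything else is routine substitution into \Cref{lem:Bochner_Delta_nabla_Phi_1} and Young's inequality, with the harmless convention that ``$-m^2$'' also absorbs the bounded positive constants coming from the Weitzenb\"ock estimate and the $\su(2)$-normalisation.
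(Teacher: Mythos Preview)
Your proposal is correct and follows essentially the same approach as the paper's proof: start from \Cref{lem:Bochner_Delta_nabla_Phi_1}, move the $|[\Phi,\nabla\Phi]|^2$ term to the right using $|[\Phi,\nabla\Phi]|^2\gtrsim m^2|(\nabla\Phi)^\perp|^2$ on $\{|\Phi|\geqslant m/2\}$ to obtain the first pair of inequalities, then use decay of the curvature and of $|\nabla\Phi|$ to drop the resulting negative term for the second pair, and finally apply Young's inequality with a small parameter to the cross term for the third pair. Your identification of the ``delicate point''---that \cref{ineq:YMH_improv_2,ineq:improv_2,ineq:YMH_improv_3,ineq:improv_3} tacitly require $|(F_\nabla)^{||}|$ (resp.\ $|(F_\nabla^{14})^{||}|$) to decay below $m^2$, which is stronger than the mere boundedness stated in \Cref{thm:Finite_Mass}---is apt: the paper's own proof writes ``$|F_\nabla^{14}|$ decays by hypothesis'' at this step, and indeed every subsequent use of the lemma is under such a decay assumption.
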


\begin{proof}
	We do the proof of the monopole case, the other is entirely analogous. Recall from \cref{ineq:Bochner_Delta_nabla_Phi_Monopole} that
	\begin{equation}
		\Delta |\nabla \Phi|^2 + |[\Phi, \nabla \Phi]|^2\lesssim |(F_\nabla^{14})^\perp| |(\nabla \Phi)^{||}| |(\nabla \Phi)^\perp| + \left(|(\nabla \Phi)^{||}| + |(F_\nabla^{14})^{||}| \right)|(\nabla \Phi)^\perp|^2.
	\end{equation}
	Since, by \Cref{thm:Finite_Mass}, $(\nabla, \Phi)$ has finite mass $m>0$, it follows that outside a sufficiently large compact set $K$ one has $|\Phi| \geqslant m/2$ (see \Cref{rem:Finite_Mass}) and thus
	\begin{equation}
		|[\Phi, \nabla \Phi]|^2 \geqslant |\Phi|^2|(\nabla \Phi)^\perp|^2 \geqslant \frac{m^2}{4}|(\nabla \Phi)^\perp|^2.
	\end{equation}
	Therefore we get \cref{ineq:improv_1}. Now recall that $|F_\nabla^{14}|$ decays by hypothesis and that $|\nabla\Phi|$ also decays as a consequence of \Cref{cor:Lpbounds}. Thus, if $K$ is large enough, then the last term in \cref{ineq:improv_1} becomes negative, so that we get \cref{ineq:improv_2}. 
	
	Finally, to show \cref{ineq:improv_3}, note that we can use Young's inequality in the form 
	\begin{equation}
		2 |(F_\nabla^{14})^\perp| |(\nabla \Phi)^{||}| |(\nabla \Phi)^\perp| \leqslant \epsilon^{-1} |(F_\nabla^{14})^\perp|^2 |(\nabla \Phi)^{||}|^2 + \epsilon |(\nabla \Phi)^\perp|^2,
	\end{equation}
	with $\epsilon>0$ to be fixed later. Then, by \cref{ineq:improv_1}, we find that
	\begin{equation}
		\Delta |\nabla \Phi|^2 \leqslant c\epsilon^{-1} |(F_\nabla^{14})^\perp|^2 |(\nabla \Phi)^{||}|^2 + \left( c\epsilon + c|(\nabla \Phi)^{||}| + c|(F_\nabla^{14})^{||}| - \frac{m^2}{4} \right) |(\nabla \Phi)^\perp|^2.
	\end{equation}
	Now choose $\epsilon \ll m^2$, then given that both $|(\nabla \Phi)^{||}|$, $|(F_\nabla^{14})^{||}|$ decay, we conclude that the second term becomes negative so
	\begin{equation}
		\Delta |\nabla \Phi|^2 \lesssim \epsilon^{-1} |(F_\nabla^{14})^\perp|^2 |(\nabla \Phi)^{||}|^2 ,
	\end{equation}
	as we wanted.
\end{proof}

\bigskip

\section{Refined asymptotics in the AC case}\label{sec:AC}

In this section, let $(X, \varphi)$ be an (irreducible) AC $\rG_2$-manifold as in \Cref{def:AC} and $\rG = \SU (2)$. Here we prove the first part of our second \Cref{thm:Main_Theorem_2}. For the reader's convenience we restate this here as follows.

\begin{theorem}\label{thm:First_part_of_Main_Theorem_2}
	Let $(X, \varphi)$ be an AC $\rG_2$-manifold and $(\nabla, \Phi)$ satisfy the second order \cref{eq:2nd_Order_Eq_1,eq:2nd_Order_Eq_2} with $|\nabla\Phi|^2\in L^1 (X)$. Suppose either that $|F_\nabla|$ decays uniformly along the end or $(\nabla, \Phi)$ is a $\rG_2$-monopole such that $|F_\nabla^{14}|$ decays uniformly along the end. Then:
	\begin{itemize}
	  \item[(i)] the transverse components of $\nabla\Phi$ and $F_\nabla$ decay exponentially along the end;
	  \item[(ii)] $|\nabla \Phi| = O (r^{-(n-1)})$ as $r\to\infty$.
  \end{itemize}
\end{theorem}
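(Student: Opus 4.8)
The plan is to prove the two assertions separately, in each case reducing to a statement about the scalar function $|\Phi|$. Since an AC $\rG_2$-manifold has bounded geometry and is nonparabolic (\Cref{rem:AC_vol_growth}), \Cref{thm:Finite_Mass} applies and $(\nabla,\Phi)$ has a mass $m\geqslant 0$. If $m=0$ then $\Phi\equiv 0$ and there is nothing to prove, so assume $m>0$; then $|\Phi|\geqslant m/2$ outside a large compact set (\Cref{rem:Finite_Mass}). As $\rG=\SU(2)$ one has $\mathfrak{g}_P^{||}=\langle\Phi\rangle$, so along the end $|(\nabla\Phi)^\perp|\sim|\Phi|^{-1}|[\Phi,\nabla\Phi]|$ and $|(F_\nabla)^\perp|\sim|\Phi|^{-1}|[\Phi,F_\nabla]|$, while $\langle\nabla\Phi,\Phi\rangle=|\Phi|\,\rd|\Phi|$ identifies $|(\nabla\Phi)^{||}|$ with $|\rd|\Phi||$. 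Thus (i) amounts to the exponential decay of $|[\Phi,\nabla\Phi]|$ and $|[\Phi,F_\nabla]|$, and — once (i) is available — (ii) reduces to the bound $|\rd|\Phi||=O(r^{-(n-1)})$ along the end.

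For (i): the hypotheses of \Cref{cor:Elliptic_Inequality_For_Exponential_Decay} are met — finite mass $m\neq 0$ as above, $|\mathrm{Riem}|$ decays because $X$ is AC, and $|\nabla\Phi|,|F_\nabla|,|\nabla F_\nabla|$ decay by \Cref{cor:Lpbounds} and \Cref{cor:allderivativesdecay} (in the monopole case one uses $|F_\nabla^7|\lesssim|\nabla\Phi|$ to deduce that $|F_\nabla|$ decays). Hence, with $f=([\nabla\Phi,\Phi],[F_\nabla,\Phi])$ and $\delta>0$ chosen so that $\epsilon(\delta)<m^2/8$, that corollary together with $|\Phi|\geqslant m/2$ gives $\Delta|f|^2\leqslant-c|f|^2$ on $\{r>R_1\}$ for some $c>0$. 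I would then run a maximum-principle comparison with the barrier $Ae^{-\beta r}$ for a small $\beta$ with $\beta^2\leqslant c/2$: a direct computation using $g_C=\rd r^2+r^2g_\Sigma$ and the AC decay of $\Upsilon^\ast g-g_C$ shows $\Delta(e^{-\beta r})=e^{-\beta r}(-\beta^2+o(1))$ as $r\to\infty$, so $\Delta(|f|^2-Ae^{-\beta r})\leqslant-c(|f|^2-Ae^{-\beta r})$ on $\{r>R_2\}$ for $R_2$ large; fixing $A$ so that $|f|^2<Ae^{-\beta r}$ on $\Sigma_{R_2}$ and using $|f|^2\to0$, the maximum principle for $\Delta+c$ forces $|f|^2\leqslant Ae^{-\beta r}$. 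Hence $|f|\lesssim e^{-\beta r/2}$, so $|(\nabla\Phi)^\perp|,|(F_\nabla)^\perp|\lesssim e^{-\beta r/2}$ (a fortiori $|(F_\nabla^{14})^\perp|\lesssim e^{-\beta r/2}$).

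For (ii): from $\Delta|\Phi|^2=-2|\nabla\Phi|^2$ and $|\nabla\Phi|^2=|\rd|\Phi||^2+|(\nabla\Phi)^\perp|^2$ one gets, on the end, $\Delta|\Phi|=-|(\nabla\Phi)^\perp|^2/|\Phi|$, so $u:=m-|\Phi|\geqslant0$ decays to zero with $\Delta u=|(\nabla\Phi)^\perp|^2/|\Phi|\lesssim e^{-\beta r}$ by (i). The key is the bound $u\lesssim r^{-(n-2)}$, which I would obtain by first removing the quadratic gradient term: the smooth function $w:=m^2-|\Phi|^2$ satisfies $\Delta w=2|\nabla\Phi|^2\leqslant\tfrac{2}{m^2}|\rd w|^2+Ce^{-\beta r}$ on the end (using $|\rd w|=2|\Phi|\,|\rd|\Phi||$, $|\Phi|\geqslant m/2$, and (i)), hence for $\lambda\geqslant2/m^2$ the nonnegative function $W:=e^{\lambda w}-1$ obeys $\Delta W=\lambda e^{\lambda w}\Delta w-\lambda^2e^{\lambda w}|\rd w|^2\leqslant C'e^{-\beta r}$ on the end, while $|\Delta W|$ is bounded on the compact core. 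Comparing $W$ with the Green potential $\int_X G(\cdot,y)\,h(y)\,\vol_X(y)$ of a bounded integrable $h\geqslant0$ dominating $\Delta W$ globally (so that $W$ minus the potential is subharmonic, bounded, and vanishing at infinity), and estimating the potential at $x$ with $r(x)=R$ by splitting into $B_{R/2}(x)$ — where $h$ is exponentially small and $\int_{B_{R/2}(x)}G(x,\cdot)\lesssim R^2$ by maximal volume growth — and its complement — where $G(x,\cdot)\lesssim R^{2-n}$ and $\|h\|_{L^1}<\infty$ — yields $W(x)\lesssim R^{2-n}$, hence $u\leqslant w\lesssim r^{2-n}$ (consistent with the lower bound of \Cref{cor:Asymptotics_Phi_Square}). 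Finally, interior $C^{1,\alpha}$ estimates for $\Delta u=|(\nabla\Phi)^\perp|^2/|\Phi|$ on the ball $B_{r(x)/2}(x)$, whose geometry after rescaling by $r(x)^{-1}$ is uniformly controlled on an AC end, give $|\rd u|(x)\lesssim r(x)^{-1}\sup_{B_{r(x)/2}(x)}u+r(x)\sup_{B_{r(x)/2}(x)}|\Delta u|\lesssim r(x)^{1-n}$. Since $|(\nabla\Phi)^{||}|=|\rd|\Phi||=|\rd u|$ and $|(\nabla\Phi)^\perp|$ is exponentially small, this gives $|\nabla\Phi|=O(r^{-(n-1)})$.

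The main obstacle is the sharp exponent $-(n-1)$ rather than the naive $-(n-2)$. The source $2|\nabla\Phi|^2$ in $\Delta(m^2-|\Phi|^2)$ is a priori of the same magnitude as $|\rd(m^2-|\Phi|^2)|^2$, so one genuinely needs (i) to see that, after the change of variables $w\mapsto e^{\lambda w}$, the effective inhomogeneity is only exponentially small; and one must then extract the extra power of $r$ from estimates on balls whose radius grows like $r(x)$. In the body of the paper this last step is carried out not by a single interior estimate but through a chain of weighted energy inequalities — Agmon's identity combined with Hardy's inequality on the conical end — followed by Moser iteration, which is the technically heaviest part of the argument.
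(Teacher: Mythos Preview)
Your argument for (i) is essentially the paper's \Cref{prop:Exponential_Decay_Transverse}: the same Bochner inequality from \Cref{cor:Elliptic_Inequality_For_Exponential_Decay} and the same barrier/maximum-principle comparison, only with a slightly cruder decay rate $e^{-\beta r}$ in place of $e^{-(m-\mu)r}$, which is immaterial.

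For (ii) your route is genuinely different from the paper's, and it works. The paper never passes through the scalar $|\Phi|$; instead it works directly with $|\nabla\Phi|^2$ and builds up weighted $L^2$ bounds $\|r^{\alpha}\nabla\Phi\|_{L^2}$ via the Agmon identity and two rounds of Hardy-type inequalities (\Cref{prop:Decay_1,prop:Decay_2}), converting these to pointwise bounds by Moser iteration, and finally removes the $\epsilon$-loss by a second-derivative-test argument in \Cref{sec:final_est}. Your approach exploits the identity $\Delta|\Phi|=-|(\nabla\Phi)^\perp|^2/|\Phi|$: once (i) is in hand this source is exponentially small, the substitution $W=e^{\lambda w}-1$ with $\lambda\geqslant 2/m^2$ kills the $|\rd w|^2$ term, a Green-potential comparison yields $m-|\Phi|\lesssim r^{2-n}$ (in fact giving the upper bound of \Cref{cor:refined_AC_asymp_Phi} \emph{before} (ii), rather than after as in the paper), and then a single interior gradient estimate on balls of radius $\sim r(x)$ delivers $|\rd|\Phi||\lesssim r^{1-n}$. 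This is shorter and more elementary for the problem at hand. What the paper's method buys is robustness: the Agmon/Hardy machinery applies to any section satisfying a Bochner inequality of the right shape, and is reused verbatim in \Cref{sec:Linearized_Solutions} to prove $r^{2(n-1)}|V|^2\in L^\infty$ for $L^2$-solutions $V=(a,\upphi)$ of the linearized equation, where there is no scalar ``primitive'' playing the role of $|\Phi|$ and your shortcut is unavailable.
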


\begin{remark}[The decay of $|\nabla\Phi|$ in (ii) is sharp]\label{rem:Sharp}
	Let $(X, \varphi)$ be an irreducible AC $\rG_2$-manifold and $(\nabla, \Phi)$ satisfy the second order \cref{eq:2nd_Order_Eq_1,eq:2nd_Order_Eq_2} with $|\nabla\Phi|^2\in L^1 (X)$. Suppose either that $|F_\nabla|$ is bounded or $(\nabla, \Phi)$ is a $\rG_2$-monopole and $|F_\nabla^{14}|$ is bounded. Let $m > 0$ be the mass of $(\nabla,\Phi)$, cf. \Cref{thm:Finite_Mass}. We show that if $\nabla\Phi\neq 0$, then $(\nabla,\Phi)$ cannot decay faster than as in (ii) of \Cref{thm:First_part_of_Main_Theorem_2}, as a consequence of the same argument in \Cref{lem:constrain_decay_nablaPhi}. Indeed, start noting that since $\Delta_\nabla \Phi = 0$, it follows from Stokes' Theorem that
	\begin{equation}
		\int\limits_{B_R} |\nabla\Phi|^2 \vol_X = \int\limits_{\Sigma_R} \langle \Phi, \ast \nabla\Phi \rangle.
	\end{equation}
	Thus, if $|\nabla \Phi| = o (r^{-(n-1)})$ as $r\to\infty$ then
	\begin{equation}
		\lim\limits_{r\to\infty} \langle\Phi,\ast\nabla\Phi\rangle r^{n-1} \leqslant \lim\limits_{r \to \infty} |\Phi||\nabla\Phi| r^{n-1} \leqslant m \lim_{r \to \infty} |\nabla\Phi| r^{n-1} = 0; 
	\end{equation}
	hence, using $|\nabla\Phi|^2 \in L^1 (X)$,
	\begin{equation}
		\|\nabla \Phi\|_{L^2 (X)}^2 = \lim\limits_{R \to \infty} \int\limits_{\Sigma_R} \langle \Phi, \ast \nabla \Phi \rangle = 0,
	\end{equation}
	i.e. $\nabla \Phi = 0$.
\end{remark}

\smallskip

\subsection{Exponential decay for the transverse components}

In this subsection we prove that the components on $\nabla \Phi$ and $F_\nabla$ transverse to the Higgs field decay exponentially with $r$. For the gauge group $\rG = \SU (2)$ it is enough to prove that both $[\Phi, \nabla \Phi]$ and $[\Phi, F_\nabla]$ decay exponentially.

When $(X,\varphi)$ is AC note that $|\mathrm{Riem}|$ decays uniformly along the end. Let $(\nabla, \Phi)$ satisfy the second order \cref{eq:2nd_Order_Eq_1,eq:2nd_Order_Eq_2} with $|\nabla\Phi|^2\in L^1 (X)$. If $|F_\nabla|$ decays uniformly along the end or $(\nabla, \Phi)$ is a $\rG_2$-monopole such that $|F_\nabla^{14}|$ decays uniformly along the end, then we have that $|\nabla\Phi|, |F_\nabla|$ and $|\nabla F_\nabla|$ also decay uniformly along the end by \Cref{cor:allderivativesdecay,cor:Lpbounds}. Furthermore, one has that $|\Phi| \to m$ uniformly along the end by \Cref{thm:Finite_Mass}. Thus, we are in the conditions of \Cref{cor:Elliptic_Inequality_For_Exponential_Decay}, which in turn implies (taking $\delta=1$) that sufficiently far along the end of $X$ we have that $f = \left( [\nabla \Phi , \Phi] , [F_\nabla, \Phi] \right) \in \Omega^1(\fg_P) \oplus \Omega^2(\fg_P)$ satisfies
\begin{equation}\label[ineq]{ineq:transverse_bochner}
	\Delta |f|^2 \leqslant -\frac{m^2}{4}|f|^2.
\end{equation} This has the remarkable consequence that the transverse components controlled by $|f|$ decay exponentially along the end. The following gives part (i) of \Cref{thm:First_part_of_Main_Theorem_2}.

\begin{proposition}\label{prop:Exponential_Decay_Transverse}
	Let $(X,\varphi)$ be AC and assume that the pair $(\nabla, \Phi)$ satisfies the second order \cref{eq:2nd_Order_Eq_1,eq:2nd_Order_Eq_2} and $|\nabla\Phi|^2\in L^1 (X)$. Suppose furthermore that either that $|F_\nabla|$ decays uniformly along the end or $(\nabla, \Phi)$ is a $\rG_2$-monopole such that $|F_\nabla^{14}|$ decays uniformly along the end. Denote by $m>0$ the mass of $(\nabla, \Phi)$. Then, there are constants $c>0$ (depending only on the geometry), $R>0$ (depending only on the geometry and $m$) and $M>0$ (depending on $(\nabla, \Phi)$) such that for $r \geqslant R$ we have
	\begin{equation}
		|[\nabla \Phi, \Phi]|^2 + |[F_\nabla , \Phi]|^2 \leqslant M e^{-cmr}.
	\end{equation}
 	In particular, for $r \geqslant R$,
  	\begin{align}
  		|(\nabla \Phi)^\perp|^2 &\lesssim m^{-2}|[\nabla \Phi, \Phi]|^2 \leqslant m^{-2} M e^{-cmr}\lesssim e^{-cmr},\\
  		|(F_\nabla^{14})^\perp|^2 & \leqslant |F_\nabla^\perp|^2 \lesssim m^{-2}|[F_\nabla,\Phi]|^2 \leqslant m^{-2} M e^{-cmr}\lesssim e^{-cmr}.  
  	\end{align}
\end{proposition}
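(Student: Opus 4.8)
The plan is a maximum-principle comparison of $u := |f|^2 = |[\nabla\Phi,\Phi]|^2 + |[F_\nabla,\Phi]|^2$ against an exponentially decaying barrier, taking \eqref{ineq:transverse_bochner} as the only substantive input. First I would fix $\delta > 0$ small enough, depending only on $m$ and $\mu$, that $0 < (m-\mu)^2 < 2\bigl(m^2 - \epsilon(\delta)\bigr)$. Discarding the nonnegative term $2(1-\delta)|\nabla f|^2$ in \eqref{ineq:transverse_bochner} then yields, on $\{r \geqslant R_0\}$ for some $R_0$ depending on $(\nabla,\Phi)$ and $\mu$,
\[
	\Delta u \;\leqslant\; -2\bigl(m^2 - \epsilon(\delta)\bigr)\,u .
\]
Moreover $u \geqslant 0$, and $u \to 0$ along the end: indeed $|f| \lesssim |\Phi|\bigl(|\nabla\Phi| + |F_\nabla|\bigr)$, while $|\Phi| \to m$ by \Cref{thm:Finite_Mass} and $|\nabla\Phi|, |F_\nabla| \to 0$ by \Cref{cor:Lpbounds,cor:allderivativesdecay}.

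Next I would construct the barrier. Set $\beta := m - \mu$ and, on the end, $b := e^{-\beta r}$ with $r$ the radius function. Because $(X,\varphi)$ is AC, along the end $|\nabla r|_g \to 1$ and $r\,\Delta r \to -(n-1)$, so $\Delta b = \bigl(-\beta^2 + o(1)\bigr) b$ as $r \to \infty$, the $o(1)$ being controlled solely by the AC geometry. Hence
\[
	\Delta b + 2\bigl(m^2 - \epsilon(\delta)\bigr)\,b \;=\; \Bigl(2\bigl(m^2 - \epsilon(\delta)\bigr) - \beta^2 + o(1)\Bigr)\,b \;\geqslant\; 0
\]
for $r \geqslant R$, where $R$ depends only on the AC geometry, $m$ and $\mu$; this is precisely where the choice $\beta^2 < 2\bigl(m^2 - \epsilon(\delta)\bigr)$ is used. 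Thus $b$ is a positive supersolution of $\Delta + 2\bigl(m^2 - \epsilon(\delta)\bigr)$ on $\{r \geqslant R\}$.

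I would then apply the maximum principle on $\Omega := \{r \geqslant \max(R, R_0)\}$. Pick $A > 0$ so large that $A b \geqslant u$ on $\partial\Omega = \{r = \max(R, R_0)\}$ — which ties $A$ to $\sup_{\partial\Omega} u$, hence to $(\nabla,\Phi)$. Then $w := u - A b$ satisfies $\Delta w \leqslant -2\bigl(m^2 - \epsilon(\delta)\bigr)\, w$ on $\Omega$, with $w \leqslant 0$ on $\partial\Omega$ and $w \to 0$ at infinity. Were $\sup_\Omega w > 0$, it would be attained at an interior point $p$, where $\Delta w(p) \geqslant 0$, contradicting $\Delta w(p) \leqslant -2\bigl(m^2 - \epsilon(\delta)\bigr)\, w(p) < 0$. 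Hence $u \leqslant A b$ on $\Omega$; if $R_0 > R$ one enlarges $A$ to dominate $u$ on the compact shell $\{R \leqslant r \leqslant R_0\}$ as well, and obtains $|f|^2 = u \leqslant M e^{-(m-\mu)r}$ on $\{r \geqslant R\}$ with $M := A$, $R$ depending only on the geometry, $m$ and $\mu$, and $M$ on $(\nabla,\Phi)$ and $\mu$. The remaining ``in particular'' bounds follow from the $\SU(2)$ pointwise relations $|\Phi|\,|(\nabla\Phi)^\perp| \lesssim |[\Phi,\nabla\Phi]|$ and $|\Phi|\,|(F_\nabla)^\perp| \lesssim |[F_\nabla,\Phi]|$, the estimate $|(F_\nabla^{14})^\perp| \leqslant |(F_\nabla)^\perp|$ (orthogonality of the $\Lambda^2_7 \oplus \Lambda^2_{14}$ splitting, which commutes with the fibrewise $\langle\Phi\rangle \oplus \langle\Phi\rangle^\perp$ splitting), and $|\Phi| \geqslant m/2$ far along the end, cf. \Cref{rem:Finite_Mass}.

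The part that needs care is the bookkeeping of constants: one must make sure the radius $R$ beyond which $b$ is a supersolution is determined by the AC geometry together with $m$ and $\mu$ only, while all dependence on the particular solution — the radius $R_0$ coming from \eqref{ineq:transverse_bochner} and the amplitude $A$ — is pushed into $M$; this is what forces the shell-filling step. Controlling the AC error term $o(1)$ in $\Delta(e^{-\beta r})$ quantitatively enough to fix such an $R$ is the only point requiring the AC structure beyond a bare maximum principle, and there is no genuine analytic difficulty once \eqref{ineq:transverse_bochner} is available.
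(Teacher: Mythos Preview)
Your proof is correct and follows essentially the same approach as the paper: a maximum-principle comparison of $|f|^2$ against the exponential barrier $e^{-(m-\mu)r}$, using \eqref{ineq:transverse_bochner} as the key input and the AC asymptotics of $\Delta r$ to verify the barrier is a supersolution. Your bookkeeping is in fact slightly more careful than the paper's --- you make explicit the shell-filling step that absorbs the $(\nabla,\Phi)$-dependent radius $R_0$ (on which \eqref{ineq:transverse_bochner} holds) into the constant $M$, whereas the paper leaves this implicit.
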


\begin{proof}
	Along the end of $X$, let $r$ be the pullback of the radius function from the cone. Then, using the almost isometry to the cone we can write
	\begin{equation}
		- \Delta = \frac{\del^2}{\del r^2} + \frac{n-1}{r} \frac{\del}{\del r} + \frac{1}{r^2} \Delta_\Sigma + \ldots,
	\end{equation}
	where the dots denote lower order terms. Furthermore, if $w(r)$ is a function of $r$ we find that
	\begin{align}
		\Delta w	&= -w''(r)|\rd r|^2 + w'(r) \Delta r \\
					&= -w''(r) - \frac{n-1}{r} w'(r) + \ldots.
	\end{align}
	Thus, let $M>0$ and $c>0$ both to be fixed later and set 
	\begin{equation}
		w \coloneqq M e^{-cmr}.
	\end{equation} 
	Then,
	\begin{equation}
		\Delta w = \left( -c^2m^2 + \frac{n-1}{r} cm + \ldots \right) \ w .
	\end{equation}
	It follows that taking $R\gg 1$ sufficiently large, depending on $m$ and the geometry, we can choose $c>0$ depending only on the geometry\footnote{Recall that since $(X, \varphi)$ is AC, we have that $- \Delta r \geqslant (n - 1) r^{- 1} (1 - O(r^{-\nu^\prime}))$, for some $\nu^\prime > 0$.} such that for $r\geqslant R$
	\begin{equation}
		\Delta w \geqslant - \frac{m^2}{4} w .
	\end{equation}
	 Using \cref{ineq:transverse_bochner}, we find that for $r \geqslant R$
	\begin{equation}
		\Delta \left( |f|^2 - w \right) \leqslant - \frac{m^2}{4}\ \left( |f|^2 - w \right) .
	\end{equation}
	Now, both $f$ and $w$ decay to zero along the end, and for $M>0$ large enough, depending on $(\nabla,\Phi)$, we have $|f|^2 \leqslant w$ at $r = R$. Therefore, using the inequality above we can apply the maximum principle to $|f|^2-w$ in the region $r\geqslant R$ to find that
	\begin{equation}
		|f|^2 \leqslant w,
	\end{equation}
 	within that region.
\end{proof}

\smallskip

\subsection{Bounds from Hardy's inequality}\label{subsec:Hardy1}

This section uses Hardy's inequality, the Agmon technique and Moser iteration to prove the following.
\begin{proposition}\label{prop:Decay_1}
	Under the hypothesis of \Cref{thm:First_part_of_Main_Theorem_2} and for all $\epsilon > 0$, we have
	\begin{equation}
		|\nabla \Phi|^2 \leqslant \frac{C}\epsilon r^{-2(n-2) + \epsilon}.
	\end{equation}
\end{proposition}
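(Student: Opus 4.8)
The plan is to work with the scalar function $v:=|\nabla\Phi|$, which we may assume is not identically zero (otherwise there is nothing to prove, and in particular the mass $m>0$), and to prove first a weighted $L^2$ bound for $v$ and then convert it to the pointwise bound by Moser iteration. The first ingredient is a differential inequality with exponentially small forcing. Combining the refined Bochner inequality \cref{ineq:Bochner_Delta_nabla_Phi_Monopole} (or \cref{ineq:Bochner_Delta_nabla_Phi_YMH} in the general Yang--Mills--Higgs case) of \Cref{lem:Bochner_Delta_nabla_Phi_1} with the exponential decay of the $\Phi$-transverse components of $\nabla\Phi$ and $F_\nabla$ (resp.\ $F_\nabla^{14}$) from \Cref{prop:Exponential_Decay_Transverse}, together with the boundedness of $|F_\nabla|$ and $|\nabla\Phi|$ from \Cref{cor:Lpbounds,cor:YMH_Lpbounds}, one observes that every term on the right-hand side of those inequalities carries at least one transverse factor; hence there is a nonnegative function $h$ on $X$ that is bounded and decays exponentially along the end (near the compact part one may instead use the cruder \cref{ineq:Bochner_ineq_nablaPhi}), and such that
\[
 v\,\Delta v \ =\ \tfrac12\Delta v^2+|\nabla v|^2\ \leqslant\ \tfrac12\Delta|\nabla\Phi|^2+|\nabla^2\Phi|^2\ \leqslant\ h ,
\]
where the second inequality is Kato's, and the zero set of $\nabla\Phi$ is handled by replacing $v$ with $v_\delta:=(|\nabla\Phi|^2+\delta^2)^{1/2}$ and letting $\delta\to0$. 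In particular $\int_X r^N h\,\vol_X<\infty$ for every $N\geqslant0$.

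The core of the argument is the weighted estimate: fixing a small $\epsilon>0$ and setting $b:=\tfrac{n-4}{2}-\tfrac{\epsilon}{2}$, the claim is that $\int_X r^{2b}|\nabla\Phi|^2\,\vol_X\lesssim \epsilon^{-1}$. Testing $v\,\Delta v\leqslant h$ against $\chi^2 r^{2(b+1)}v$ for a cut-off $\chi$ and completing the square --- the Agmon technique --- produces
\[
 \int_X|\nabla(\chi r^{b+1}v)|^2\ -\ (b+1)^2\int_X\chi^2 r^{2b}|\nabla r|^2 v^2\ \leqslant\ \int_X\chi^2 r^{2(b+1)}h\ +\ (\text{terms supported on }\supp\nabla\chi).
\]
Applying Hardy's inequality on the conical end to $\zeta=\chi r^{b+1}v$, and using that $b+1=\tfrac{n-2}{2}-\tfrac{\epsilon}{2}$ so that $(\tfrac{n-2}{2})^2-(b+1)^2\sim \tfrac{(n-2)\epsilon}{2}>0$, one absorbs the $(b+1)^2$-term and is left with
\[
 \frac{(n-2)\epsilon}{2}\int_X\chi^2 r^{2b}|\nabla\Phi|^2\ \lesssim\ \int_X r^{2(b+1)}h\ +\ (\text{cut-off and AC-correction terms}).
\]
Since $\int_X r^{2(b+1)}h<\infty$ by the first step, it remains to dispose of the cut-off terms; this is done by a bootstrap. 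Finite intermediate energy gives $\nabla\Phi\in L^2(X)$, a Caccioppoli inequality (again from $v\,\Delta v\leqslant h$) gives $\nabla v\in L^2(X)$, hence the unweighted Hardy inequality gives $\int_X r^{-2}|\nabla\Phi|^2<\infty$, and one then raises the weight in small steps along a sequence of radii on which the boundary contributions are negligible --- possible precisely because the relevant volume integrals over $X$ are already known to be finite --- reaching the weight $r^{2b}$.

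Finally one runs Moser iteration. Along the end $\Delta|\nabla\Phi|^2\leqslant c\,h$ with $c\,h$ exponentially small, so \Cref{prop:Moser}, rescaled to balls of radius $\sim r(x)$ as is legitimate on an asymptotically conical manifold, gives for $r(x)$ large
\[
 |\nabla\Phi|^2(x)\ \lesssim\ r(x)^{-n}\int_{B_{r(x)/2}(x)}|\nabla\Phi|^2\,\vol_X\ \lesssim\ r(x)^{-n-2b}\int_{B_{r(x)/2}(x)}r^{2b}|\nabla\Phi|^2\,\vol_X\ \lesssim\ \frac{C}{\epsilon}\,r(x)^{-n-2b},
\]
and $n+2b=2(n-2)-\epsilon$, which is the assertion.

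The main obstacle is the weighted estimate. The delicate point is to run the Agmon/Hardy argument so that the coefficient of the leading term is exactly $(\tfrac{n-2}{2})^2-(b+1)^2$ --- this is what forces the weight to stop at $b=\tfrac{n-4}{2}$ and is the source both of the factor $\epsilon^{-1}$ and of the ``$+\epsilon$'' in the exponent --- while simultaneously checking that all discrepancies coming from the manifold being only asymptotically (rather than exactly) conical, namely the errors in $|\nabla r|^2$, in $\Delta r$, and in the Hardy inequality itself, are of lower order and can be absorbed for $R$ large, and that the cut-off and boundary terms are controlled by the bootstrap. The differential inequality and the Moser step are comparatively routine given \Cref{lem:Bochner_Delta_nabla_Phi_1}, \Cref{prop:Exponential_Decay_Transverse}, \Cref{cor:Lpbounds}, and \Cref{prop:Moser}.
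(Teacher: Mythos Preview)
Your proposal is correct and follows essentially the same approach as the paper: combine the refined Bochner inequality with the exponential decay of the transverse components (\Cref{prop:Exponential_Decay_Transverse}) to obtain an Agmon-type identity with exponentially small right-hand side, pair it with Hardy's inequality (\Cref{lem:Hardy}) so that the crucial coefficient is $\left(\tfrac{n-2}{2}\right)^2-(b+1)^2\sim\tfrac{(n-2)\epsilon}{2}$, bootstrap to absorb the AC error term $r^{-1+\nu}$, and finish with Moser iteration on balls of radius $\sim r(x)$. The only cosmetic difference is that the paper works with the vector $\nabla\Phi$ and invokes Kato's inequality when applying Hardy, using the capped radial weight $r_{l,L}^\alpha$ (which is bounded and has $dr_{l,L}$ compactly supported) in place of a standard cutoff to avoid outer boundary terms, whereas you work directly with the scalar $v=|\nabla\Phi|$.
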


We divide the proof of this result into a series of lemmas which we prove below. The concluding proof is given at the end. 

For the rest of the paper, let $L \gg 2l > l \gg 1$ and $r_{l, L}$ be the following function:
\begin{equation}
	r_{l, L} = \begin{cases}
					0 & \text{on $B_l$,} \\
          			2(r-l)	& \text{on $B_{2l} - B_l$,} \\
 					r									& \text{on $B_L - B_{2l}$,} \\
 					L									& \text{on $X - B_L$.}
 	\end{cases} \label{eq:rlL_def}
\end{equation}
Note that $r_{l, L} \in L_1^\infty (X)$ and $\rd r_{l, L} = \partial_r r_{l, L} \rd r$, with
\begin{equation}
	\partial_r r_{l, L} = \begin{cases}
          				2	& \text{on $B_{2l} - B_l$,} \\
 						1								& \text{on $B_L - B_{l}$,} \\
 						0								& \text{otherwise.}
 	\end{cases} \label{eq:drlL_def}
\end{equation}

\begin{lemma}\label{lem:Agmon_first_step}
	Under the hypotheses of \Cref{thm:First_part_of_Main_Theorem_2}, the tensors $\nabla(r \nabla \Phi)$, $r\nabla^2\Phi$ and $r^\alpha(\nabla \Phi)^\perp$ are all square integrable, for all $\alpha>0$.
\end{lemma}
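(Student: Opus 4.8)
\emph{Plan.} I would reduce the statement to showing $r\,\nabla^2\Phi\in L^2(X)$, prove this by testing the Bochner identity for $|\nabla\Phi|^2$ against a weight comparable to $r^2$, and feed in the exponential decay of the transverse components already established in \Cref{prop:Exponential_Decay_Transverse}. The transverse term $r^\alpha(\nabla\Phi)^\perp\in L^2$ is then essentially free.

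\emph{First reductions.} Since $\nabla(r\nabla\Phi)=\rd r\otimes\nabla\Phi+r\,\nabla^2\Phi$ and $\nabla\Phi\in L^2(X)$ by hypothesis, the first two assertions follow once $r\,\nabla^2\Phi\in L^2(X)$ is known. For the weighted transverse bound, recall from \Cref{prop:Exponential_Decay_Transverse} that $|(\nabla\Phi)^\perp|\lesssim e^{-c\,r}$ along the end for a fixed $c>0$ (here $m>0$ is the mass furnished by \Cref{thm:Finite_Mass}); since an AC $\rG_2$-manifold has at most Euclidean volume growth $\Vol(B_t(o))\lesssim t^n$ (\Cref{rem:AC_vol_growth}), the integral $\int_X r^{2\alpha}|(\nabla\Phi)^\perp|^2\,\vol_X$ is finite for every $\alpha>0$, its tail being dominated by $\int_1^\infty t^{2\alpha+n-1}e^{-2c\,t}\,\rd t<\infty$ while the remaining compact region contributes a finite amount by smoothness.

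\emph{The weighted Bochner estimate.} For $r\,\nabla^2\Phi\in L^2$ I would start from \eqref{eq:Weitzenbock_Intermediate} in \Cref{lem:Bochner_Delta_nabla_Phi_0}; dropping the nonnegative term $|[\Phi,\nabla\Phi]|^2$ from the left gives
\begin{equation*}
	\tfrac12\Delta|\nabla\Phi|^2+|\nabla^2\Phi|^2\ \leqslant\ 2\,\big|\langle\nabla\Phi,\ast[\ast F_\nabla\wedge\nabla\Phi]\rangle\big|\ =:\ h .
\end{equation*}
Decomposing $\nabla\Phi$ and $F_\nabla$ into their $\mathfrak g_P^{||}$- and $\mathfrak g_P^{\perp}$-parts exactly as in the proof of \Cref{lem:Bochner_Delta_nabla_Phi_1} yields $h\lesssim|(F_\nabla)^\perp|\,|(\nabla\Phi)^{||}|\,|(\nabla\Phi)^\perp|+|(F_\nabla)^{||}|\,|(\nabla\Phi)^\perp|^2$; since $|(\nabla\Phi)^{||}|$ and $|(F_\nabla)^{||}|$ are bounded (\Cref{cor:Lpbounds}, \Cref{cor:YMH_Lpbounds}) and the transverse parts decay exponentially (\Cref{prop:Exponential_Decay_Transverse}), $h$ decays exponentially along the end, so $r^2 h\in L^1(X)$. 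Next, for $R\gg1$ I would take the compactly supported Lipschitz weight $\rho_R$ equal to $r$ on $B_R$, to $2R-r$ on $B_{2R}\setminus B_R$, and to $0$ outside $B_{2R}$ (a tapered analogue of the $r_{l,L}$ of \eqref{eq:rlL_def}), which satisfies $0\leqslant\rho_R\leqslant r$, $|\rd\rho_R|\lesssim1$ uniformly in $R$, and $\rho_R\nearrow r$ pointwise. Testing the displayed inequality against $\rho_R^2$ and integrating by parts in the Laplacian term,
\begin{equation*}
	\int_X\rho_R^2|\nabla^2\Phi|^2\,\vol_X\ \leqslant\ \int_X\rho_R^2\,h\,\vol_X\ +\ \Big|\int_X\rho_R\,\langle\rd\rho_R,\rd|\nabla\Phi|^2\rangle\,\vol_X\Big| ;
\end{equation*}
the first term is $\leqslant\int_X r^2 h\,\vol_X<\infty$ uniformly in $R$, while $|\rd|\nabla\Phi|^2|\leqslant2|\nabla\Phi|\,|\nabla^2\Phi|$ together with Young's inequality bounds the second by $\tfrac12\int_X\rho_R^2|\nabla^2\Phi|^2\,\vol_X+c\,\|\nabla\Phi\|_{L^2(X)}^2$. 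Absorbing the first summand on the left gives a bound uniform in $R$, and letting $R\to\infty$ by monotone convergence yields $\int_X r^2|\nabla^2\Phi|^2\,\vol_X<\infty$, hence $r\,\nabla^2\Phi\in L^2(X)$ and $\nabla(r\nabla\Phi)\in L^2(X)$.

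\emph{Main obstacle.} The delicate point is the integration by parts: one must not pair $\Delta|\nabla\Phi|^2$ against a weight that is a nonzero constant at infinity, so the estimate has to be run against genuinely compactly supported weights, with a bound uniform in the truncation radius extracted before passing to the limit. This is also why $\rho_R$ is tapered linearly rather than cut off on a dyadic scale: it keeps $|\rd\rho_R|$ bounded instead of $O(R^{-1})$, so the cross term can be absorbed using only $\nabla\Phi\in L^2$. Everything else is bookkeeping with inputs already in place — in particular, the crucial structural fact is that the curvature term $h$ in the Bochner identity is not merely decaying but exponentially small, which is exactly what makes $r^2 h$ integrable.
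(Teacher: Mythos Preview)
Your proof is correct and follows essentially the same approach as the paper: both multiply the Bochner identity for $|\nabla\Phi|^2$ by a truncated radial weight and integrate by parts, using that $\langle\nabla\Phi,\nabla^*\nabla(\nabla\Phi)\rangle$ is exponentially small by \Cref{lem:Improved_Bochner_Delta_Nabla_Phi} and \Cref{prop:Exponential_Decay_Transverse}. The only cosmetic difference is that the paper packages the integration by parts as the \emph{Agmon identity} $\|\nabla(f\nabla\Phi)\|_{L^2}^2=\|\rd f\otimes\nabla\Phi\|_{L^2}^2+\int f^2\langle\nabla\Phi,\nabla^*\nabla(\nabla\Phi)\rangle$ with $f=r_{l,L}$ (thereby obtaining $\nabla(r\nabla\Phi)\in L^2$ first and then deducing $r\nabla^2\Phi\in L^2$), whereas you work directly with the inequality form and a compactly supported taper $\rho_R$ to get $r\nabla^2\Phi\in L^2$ first.
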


\begin{proof}
	We start claiming that for any real function $f \in L_1^\infty (X)$, with support in $X - B_l$, we have the \emph{Agmon identity}
	\begin{equation}
		\|\nabla(f \nabla \Phi) \|_{L^2 (X)}^2 = \|\rd f \otimes \nabla \Phi \|_{L^2 (X - B_l)}^2 + \int\limits_{X - B_l} \langle f^2 \nabla \Phi , \nabla^*\nabla (\nabla \Phi) \rangle .
	\end{equation}
	Indeed, it follows from \Cref{lem:Improved_Bochner_Delta_Nabla_Phi,prop:Exponential_Decay_Transverse}, under the hypotheses above, that
	\begin{equation}
		\langle\nabla \Phi , \nabla^*\nabla (\nabla \Phi) \rangle \lesssim e^{-cmr} |\nabla \Phi|^2,
	\end{equation}
	which is in $L^1(X)$ since $|\nabla\Phi|^2\in L^1(X)$. Thus, the claim follows by the obvious approximation argument and integration by parts. 
	
	Next, we prove the statement. Let $f = r_{l, L}$ as above. Then, since $r^2 e^{-cmr} \leqslant 1$ for $r\geqslant l \gg 1$, we have, for all $L$ that
	\begin{equation}
		\|\nabla(r_{l, L} \nabla \Phi) \|_{L^2 (X - B_l)}^2 \lesssim \| \nabla \Phi \|_{L^2 (X)}^2 + \int\limits_{X - B_l} r_{l, L}^2 e^{-cmr} |\nabla \Phi|^2 \vol_X \lesssim \| \nabla \Phi \|_{L^2 (X)}^2.
	\end{equation}
	Hence $\nabla(r \nabla \Phi) \in L^2 (X)$. Since $\nabla \Phi$ is also in $L^2 (X)$, we have that
	\begin{equation}
		r \nabla^2 \Phi = \nabla(r \nabla \Phi) - \rd r \otimes \nabla \Phi \in L^2 (X).
	\end{equation}
	Finally, note that \Cref{prop:Exponential_Decay_Transverse} immediately implies that $r^\alpha(\nabla \Phi)^\perp\in L^2 (X)$ for all $\alpha>0$, which concludes the proof.
\end{proof}

Next we improve the above result to allow higher powers of the radius function. We make use of the following Hardy type inequality.

\begin{lemma}[Hardy's Inequality, cf. \cite{degeratu2013witten}*{Proposition 3.7}]\label{lem:Hardy}
	Let $(X, \varphi)$ be an AC $\rG_2$-manifold with rate $\nu < 0$. Then there is a constant $C_H > 0$ such that for any function $\xi \in H^1$ with support in $X - B_l$, one has
	\begin{equation}
		\| \nabla \xi\|_{L^2 (X - B_l)}^2 \geqslant \left( \frac{n-2}{2} \right)^2 \| r^{- 1} \xi \|_{L^2 (X - B_l)}^2 - C_H \|r^{-1 + \nu} \xi \|_{L^2 (X - B_l)}^2. \label[ineq]{ineq:Hardy}
	\end{equation}
\end{lemma}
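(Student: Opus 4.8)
The plan is to transplant the sharp Hardy inequality of the model cone $C(\Sigma)$ onto $X$ via the classical ``ground state substitution'', absorbing the conical error into the correction term. First I would fix the barrier $h \coloneqq r^{-(n-2)/2} > 0$ (note $r$ is a smooth positive function on all of $X$, so $h$ is smooth and positive). For a smooth function $\xi$ with compact support in $X - B_l$, writing $\xi = h\eta$ and integrating by parts (no boundary terms appear) gives the standard identity
\begin{equation}
	\| \nabla \xi \|_{L^2(X - B_l)}^2 = \int\limits_{X - B_l} h^2\, |\nabla(\xi/h)|^2\, \vol_X + \int\limits_{X - B_l} \frac{\Delta h}{h}\, \xi^2\, \vol_X \geqslant \int\limits_{X - B_l} \frac{\Delta h}{h}\, \xi^2\, \vol_X,
\end{equation}
so the whole estimate is reduced to a pointwise lower bound for $(\Delta h)/h$ along the end.

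The second step is to estimate $(\Delta h)/h$. Since $h$ depends only on the radius function, $\Delta h = -h''\, |\nabla r|^2 + h'\, \Delta r$. On the exact cone one has $|\nabla r|^2 = 1$ and $\Delta r = -(n-1)/r$, and among the powers $r^a$ the exponent $a = -(n-2)/2$ is precisely the one that maximises $-a(a+n-2)$; this yields the \emph{sharp} identity $(\Delta_{g_C} h)/h = \big(\tfrac{n-2}{2}\big)^2 r^{-2}$, which is why the constant in \cref{ineq:Hardy} is optimal and cannot be improved. On an AC $\rG_2$-manifold the defining estimates of \Cref{def:AC} imply $|\nabla r|^2 = 1 + O(r^\nu)$ and $\Delta r = -(n-1)r^{-1} + O(r^{\nu-1})$ along the end; substituting these together with $h' \asymp r^{-n/2}$ and $h'' \asymp r^{-(n+2)/2}$ produces
\begin{equation}
	\frac{\Delta h}{h} \geqslant \left(\frac{n-2}{2}\right)^2\frac{1}{r^2} - C\, r^{-2+\nu}
\end{equation}
for $r$ large, which has exactly the shape required by \cref{ineq:Hardy}. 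The general case $\xi \in H^1$ with support in $X - B_l$ then follows from the compactly supported one by a routine truncation at radius $R \to \infty$, the truncation error being controlled by $R^{-2}\int_{B_{2R}-B_R}\xi^2 \to 0$.

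The main point requiring care is the bookkeeping needed to pin down the \emph{precise} exponent in the correction weight ($r^{-1+\nu}$ as stated): the crude computation above only delivers the weaker weight $r^{-1+\nu/2}$, and to recover the sharper $r^{-1+\nu}$ one should replace $h$ by the corrected barrier $h = r^{-(n-2)/2}\big(1 + O(r^\nu)\big)$, choosing the $O(r^\nu)$ term so as to cancel the leading-order error in $\Delta h$; this refinement is the content of the cited \cite{degeratu2013witten}*{Proposition~3.7}. For all of the applications in this section, however, only a weight of the form $r^{-2+\epsilon'}$ with some $\epsilon' < 0$ is needed, so the cruder version obtained above is already sufficient.
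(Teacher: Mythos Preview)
The paper does not supply its own proof of this lemma; it is simply quoted from \cite{degeratu2013witten}*{Proposition~3.7}, so there is no argument in the paper to compare against. Your approach via the ground-state substitution $\xi = h\eta$ with $h = r^{-(n-2)/2}$ is correct and standard, and your computation of $(\Delta h)/h$ along the end using $|\nabla r|^2 = 1 + O(r^\nu)$ and $\Delta r = -(n-1)r^{-1} + O(r^{\nu-1})$ is accurate.

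Your observation about the exponent is also right and worth recording: the bare substitution yields only the correction weight $r^{-1+\nu/2}$ (equivalently an $O(r^{-2+\nu})$ error in $(\Delta h)/h$), not the stated $r^{-1+\nu}$. As you note, every use of this lemma in the paper (in \Cref{lem:First_time_Hardy} and in \Cref{prop:Decay_2} via the improved version) absorbs the correction term using only that the exponent is negative, so the discrepancy is immaterial for the applications. One small caveat on your proposed sharpening: a purely radial correction $h = r^{-(n-2)/2}(1 + c\,r^\nu)$ cannot cancel the leading error, because the deviations $|\nabla r|^2 - 1$ and $\Delta r + (n-1)r^{-1}$ depend on the point of $\Sigma$ and not just on $r$; recovering the sharper weight would require a genuinely non-radial barrier or a different argument, but as you correctly say this refinement is unnecessary here.
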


\begin{lemma}\label{lem:First_time_Hardy}
	Under the hypotheses of \Cref{thm:First_part_of_Main_Theorem_2}, there is a constant $C>0$, such that for all $\epsilon > 0$
	\begin{equation}
		\|r^{\frac{n - 4}{2} - \epsilon}\nabla \Phi\|_{L^2 (X - B_l)}^2 \leqslant \frac{C}\epsilon \| \nabla \Phi \|_{L^2 (X - B_l)}^2. \label[ineq]{ineq:nablaphi_bound}
	\end{equation}
	Moreover both $\nabla(r^\alpha \nabla \Phi)$ and $r^\alpha\nabla^2\Phi$ are square integrable as long as $\alpha < \tfrac{n - 2}{2} = \tfrac{5}{2}$. 
\end{lemma}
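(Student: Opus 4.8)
The plan is to run a weighted Agmon-type energy estimate with weight $r^{\beta}$, $\beta=\tfrac{n-2}{2}-\epsilon$, closed off by Hardy's inequality (\Cref{lem:Hardy}) together with the exponential decay of the $\Phi$-transverse components (\Cref{prop:Exponential_Decay_Transverse}). One may restrict to $0<\epsilon<\tfrac{n-4}{2}$, since for $\epsilon\geqslant\tfrac{n-4}{2}$ the exponent $\beta-1=\tfrac{n-4}{2}-\epsilon$ is nonpositive, so $r^{\beta-1}\leqslant l^{\beta-1}$ on $X-B_l$ and \cref{ineq:nablaphi_bound} is immediate (the function $\epsilon\mapsto\epsilon\, l^{\,2(\frac{n-4}{2}-\epsilon)}$ is bounded on $[\tfrac{n-4}{2},\infty)$ because $l\gg1$). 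As in the proof of \Cref{lem:Agmon_first_step}, the point of testing against the \emph{truncated} radial function $r_{l,L}^{\beta}$ — bounded, Lipschitz, supported in $X-B_l$ — rather than $r^{\beta}$ is that $r_{l,L}^{\beta}\nabla\Phi\in H^1$ then follows at once from $\nabla^2\Phi\in L^2(X-B_l)$, a consequence of \Cref{lem:Agmon_first_step}; this is exactly the regularity needed to run the integral identities, and one sends $L\to\infty$ at the end by monotone convergence.

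With $f=r_{l,L}^{\beta}$, the Agmon identity of \Cref{lem:Agmon_first_step} reads
\begin{equation}
	\|\nabla(f\nabla\Phi)\|_{L^2(X)}^2=\|\rd f\otimes\nabla\Phi\|_{L^2(X-B_l)}^2+\int\limits_{X-B_l}\langle f^2\nabla\Phi,\nabla^*\nabla(\nabla\Phi)\rangle .
\end{equation}
By \Cref{lem:Improved_Bochner_Delta_Nabla_Phi} together with \Cref{prop:Exponential_Decay_Transverse} the integrand of the last term is $\lesssim e^{-(m-\mu)r}|\nabla\Phi|^2$, and since $r^{2\beta}e^{-(m-\mu)r}$ is bounded for $r\geqslant l$ that term is $\lesssim\|\nabla\Phi\|_{L^2(X-B_l)}^2$. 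On $B_L-B_{2l}$ one has $|\rd f|^2=\beta^2 r^{2\beta-2}(1+O(r^\nu))$ (using $|\rd r|_g=1+O(r^\nu)$ on an AC manifold) and $|\rd f|$ is bounded on the compact shell $B_{2l}-B_l$, so $\|\rd f\otimes\nabla\Phi\|^2\leqslant\beta^2\|r^{\beta-1}\nabla\Phi\|_{L^2(B_L-B_{2l})}^2+C\|\nabla\Phi\|_{L^2(X-B_l)}^2+C\|r^{\sigma}\nabla\Phi\|_{L^2(X-B_{2l})}^2$ for some $\sigma\leqslant\beta-1+\tfrac{\nu}{2}<\beta-1$. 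Inserting this into \Cref{lem:Hardy} applied to $f\nabla\Phi$ (via Kato's inequality), and using $r^{-1}f=r^{\beta-1}$ on $B_L-B_{2l}$ and $r^{-1}f=r^{-1}L^{\beta}\leqslant r^{\beta-1}$ on $X-B_L$, one rearranges to
\begin{equation}
	\Bigl[\bigl(\tfrac{n-2}{2}\bigr)^2-\beta^2\Bigr]\,\|r^{\beta-1}\nabla\Phi\|_{L^2(B_L-B_{2l})}^2\lesssim\|\nabla\Phi\|_{L^2(X-B_l)}^2+\|r^{\sigma}\nabla\Phi\|_{L^2(X-B_{2l})}^2 ,
\end{equation}
where the last term now collects all AC/curvature remainders (from $|\rd r|_g$, from the Hardy error $C_H\|r^{-1+\nu}f\nabla\Phi\|^2$, etc.), each carrying a weight strictly below $r^{\beta-1}$.

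To dispose of $\|r^{\sigma}\nabla\Phi\|_{L^2(X-B_{2l})}^2$ I would argue by a finite induction, building the admissible exponent up in steps of size $\tfrac{|\nu|}{2}$. The base case is the range of $\epsilon$ with $\sigma\leqslant0$, where this term is simply $\leqslant\|\nabla\Phi\|_{L^2(X-B_l)}^2$ for $l$ large; then, since $(\tfrac{n-2}{2})^2-\beta^2=\epsilon(n-2-\epsilon)\geqslant\tfrac{n-2}{2}\,\epsilon$ on $0<\epsilon<\tfrac{n-4}{2}$, one divides, lets $L\to\infty$, and adds back the bounded contribution over $B_{2l}-B_l$ to obtain \cref{ineq:nablaphi_bound}. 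For the inductive step, once \cref{ineq:nablaphi_bound} is known for all exponents $\leqslant\gamma_0$, an exponent $\beta-1\leqslant\gamma_0+\tfrac{|\nu|}{2}$ produces an error of exponent $\sigma\leqslant\gamma_0$, hence $\lesssim\|\nabla\Phi\|_{L^2(X-B_l)}^2$ with a constant that stays bounded (its associated $\epsilon'=\tfrac{n-4}{2}-\sigma$ is bounded below), and the estimate closes as in the base case; finitely many steps exhaust $0<\epsilon<\tfrac{n-4}{2}$. The delicate point, which I expect to be the main obstacle, is exactly this bookkeeping: one must isolate the Hardy constant $(\tfrac{n-2}{2})^2$ against a coefficient that is \emph{precisely} $\beta^2$ in front of the weighted term, so that the gap is $\Theta(\epsilon)$ — this is what yields the $1/\epsilon$ factor — while pushing every curvature- and AC-correction into a strictly lower weight that the induction can absorb.

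For the last assertion, the same Agmon identity with $f=r^{\alpha}$ (again truncated and passed to the limit) gives $\|\nabla(r^{\alpha}\nabla\Phi)\|_{L^2(X)}^2\lesssim\alpha^2\|r^{\alpha-1}\nabla\Phi\|_{L^2(X)}^2+\|\nabla\Phi\|_{L^2(X)}^2$, whose right-hand side is finite for $\alpha<\tfrac{n-2}{2}$ by the part just proved (applied with $\epsilon=\tfrac{n-2}{2}-\alpha>0$, recalling $\nabla\Phi\in L^2(X)$). Hence $\nabla(r^{\alpha}\nabla\Phi)\in L^2(X)$, and then $r^{\alpha}\nabla^2\Phi=\nabla(r^{\alpha}\nabla\Phi)-\alpha\, r^{\alpha-1}\rd r\otimes\nabla\Phi\in L^2(X)$ as well.
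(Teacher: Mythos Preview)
Your proposal is correct and follows essentially the same route as the paper: both combine the Agmon identity with the truncated weight $f=r_{l,L}^{\beta}$ (the paper writes $\alpha$ for your $\beta$), Hardy's inequality via Kato, and the exponential decay of the transverse components to isolate the gap $\bigl(\tfrac{n-2}{2}\bigr)^2-\beta^2=\epsilon(n-2-\epsilon)\sim\epsilon$, then run a finite iteration in steps governed by $|\nu|$ to absorb the AC error terms; the final $L^2$ claims for $\nabla(r^{\alpha}\nabla\Phi)$ and $r^{\alpha}\nabla^2\Phi$ are derived identically. The only cosmetic differences are that the paper iterates in steps of $|\nu|$ rather than your $|\nu|/2$, and does not separately treat the trivial range $\epsilon\geqslant\tfrac{n-4}{2}$.
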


\begin{proof}
	It suffices to consider the case where $\alpha > 1$ by \Cref{lem:Agmon_first_step}. If $f \in L_1^\infty (X)$ is a function with support in $X - B_l$, then (cf. proof of \Cref{lem:Agmon_first_step})
	\begin{equation}\label[ineq]{ineq:Agmon}
		\|\nabla(f \nabla \Phi) \|_{L^2 (X - B_l)}^2 \leqslant \|\rd f \otimes \nabla \Phi \|_{L^2 (X - B_l)}^2 + c \int\limits_{X - B_l} f^2 e^{-cmr} |\nabla \Phi|^2 \vol_X.
	\end{equation}
	On the other hand, by \Cref{lem:Agmon_first_step} we may apply Hardy's \cref{ineq:Hardy} to the function $f|\nabla\Phi|$ and using Kato's inequality this gives us
	\begin{equation}
		\frac{(n - 2)^2}{4} \ \|r^{- 1} f \nabla \Phi \|_{L^2 (X - B_l)}^2 - C_H \|r^{-1 + \nu} f \nabla \Phi \|_{L^2 (X - B_l)}^2 \leqslant \|\nabla(f \nabla \Phi) \|_{L^2 (X - B_l)}^2.
	\end{equation}
	The combination of these two inequalities gives
	\begin{align}
 		\frac{(n - 2)^2}{4} \ \|r^{- 1} f \nabla \Phi \|_{L^2 (X - B_l)}^2 &\leqslant \|\rd f \otimes \nabla \Phi \|_{L^2 (X - B_l)}^2 + C_H \|r^{- 1 + \nu} f \nabla \Phi \|_{L^2 (X - B_l)}^2 \\
 			& \quad + c \int\limits_{X - B_l} f^2 e^{-cmr} |\nabla \Phi|^2 \vol_X. \label[ineq]{ineq:combination}
	\end{align}
	Let now $f = r_{l, L}^\alpha$ from \cref{eq:rlL_def}. Using $|\rd r| = 1 + O \left( r^\nu \right)$ and \cref{eq:drlL_def}, we get that
	\begin{equation}
		|\rd r_{l, L}^\alpha| \leqslant 2\alpha r^{\alpha-1} \chi_{[l, 2l]} (r) + \alpha r_{l, L}^\alpha r^{- 1} \chi_{[2l, L]} (r) + O \left(r_{l, L}^\alpha r^{- 1 + \nu} \right)\chi_{[l, L]} (r),
	\end{equation}
	and thus, we can rearrange \cref{ineq:combination} as
	\begin{align}
		\left( \frac{(n-2)^2}{4} - \alpha^2 \right) \ \|r^{- 1} r_{l, L}^\alpha \nabla \Phi \|_{L^2 (X - B_l)}^2 &\lesssim \|r^{- 1 + \nu} r_{l, L}^\alpha \nabla \Phi \|_{L^2 (X - B_l)}^2 + \int\limits_{X - B_l} r_{l, L}^{2\alpha} e^{-cmr} |\nabla \Phi|^2 \vol_X \\
		& \quad + l^{2 (\alpha - 1)} \| \nabla \Phi \|_{L^2 (B_{2l} - B_l)}^2.
	\end{align}
	The right hand side is finite and bounded independent of $L$, as long as $\alpha \leqslant 1 - \nu > 1$. Thus, if also $\alpha < \tfrac{n - 2}{2} = \tfrac{5}{2}$, then
	\begin{equation}\label[ineq]{ineq:nablaphi_bound_0}
		\| r^{\alpha - 1} \nabla \Phi \|_{L^2 (X - B_l)}^2 \leqslant \frac{C}{\tfrac{5}{2} - \alpha} \| \nabla \Phi \|_{L^2 (X - B_l)}^2.
	\end{equation}
	This works for any $\alpha < \min \left( 1 - \nu, \tfrac{5}{2} \right)$. If $\nu \leqslant - \tfrac{3}{2}$, then for all $\alpha < \tfrac{5}{2}$, we immediately have \cref{ineq:nablaphi_bound_0}.\footnote{Note that, all known examples have $\nu \leqslant - 3$.}

	On the other hand, if $\nu \in (-\tfrac{3}{2}, 0]$, we can still prove \cref{ineq:nablaphi_bound_0} for all $\alpha < \tfrac{n - 2}{2} = \tfrac{5}{2}$ by iterating the following argument, finitely many times: Start with $\alpha_0 \coloneqq 1$, from which obtain that $\| r^{\alpha_0 - 1} \nabla \Phi \|_{L^2 (X)}^2 < \infty$. Then, for all $k \geqslant 1$, let
	\begin{equation}
		\alpha_k \coloneqq \min \left( 1 - k \nu, \tfrac{5}{2} \right) \leqslant \alpha{k - 1} - \nu,
	\end{equation}
	and the sequence $\alpha_k$ reaches $\tfrac{5}{2}$ in finitely many steps, and then becomes constant. Thus we get, by the same argument
	\begin{align}
		\left( \frac{(n-2)^2}{4} - \alpha_k^2 \right) \ \|r^{- 1 + \alpha_k}  \nabla \Phi \|_{L^2 (X - B_l)}^2 &\lesssim \ \|r^{- 1 + \nu + \alpha_k}  \nabla \Phi \|_{L^2 (X - B_l)}^2  + \| \nabla \Phi \|_{L^2 (X - B_l)}^2 \\
		&\lesssim \ \|r^{- 1 + \alpha_{k - 1}}  \nabla \Phi \|_{L^2 (X - B_l)}^2  + \| \nabla \Phi \|_{L^2 (X - B_l)}^2.
	\end{align}
	Thus an induction proves \cref{ineq:nablaphi_bound_0} with $\alpha = \alpha_k$, for all $k \in \N$. Writing $\epsilon = 5 - 2 \alpha$ gives \cref{ineq:nablaphi_bound}.\\
	Finally, similar to \Cref{lem:Agmon_first_step}, using \cref{ineq:Agmon} again, with $f = r_{l, L}^\alpha$, we get
	\begin{align}
		\|\nabla (r_{l, L}^\alpha \nabla \Phi) \|_{L^2 (X - B_l)}^2 &\leqslant \| \rd r_{l, L}^\alpha \otimes \nabla \Phi \|_{L^2 (X - B_l)}^2 + c \int\limits_{X - B_l} r_{l, L}^{2 \alpha} e^{-cmr} |\nabla \Phi|^2 \vol_X \\
		&\lesssim \| r^{\alpha - 1} \nabla \Phi \|_{L^2 (X - B_l)}^2 + \|r^{- 1 + \nu} r_{l, L}^\alpha \nabla \Phi \|_{L^2 (X - B_l)}^2 + \|\nabla \Phi \|_{L^2 (X - B_l)}^2.
	\end{align}
	Now the right hand side is bounded for all $L$ when $\alpha < \tfrac{n - 2}{2}$. Thus $\nabla(r^\alpha \nabla \Phi)$ is square integrable, and hence so is $r^\alpha \nabla^2 \Phi = \nabla(r^\alpha \nabla \Phi) - \alpha r^{\alpha - 1} \rd r \otimes \nabla \Phi$.
\end{proof}

Now we are ready to prove \Cref{prop:Decay_1}.

\begin{proof}[Proof of \Cref{prop:Decay_1}]
	Pick $x \in X$ and let $R = \tfrac{1}{4} \min \left(r (x), \mathrm{inj}_x (X, g)\right)\approx r (x)$. It follows from \Cref{prop:Exponential_Decay_Transverse} and \cref{ineq:improv_3} that
  	\begin{equation}
  		\Delta |\nabla\Phi|^2 \lesssim e^{-cmr}|\nabla\Phi|^2.
  	\end{equation}
	For all $\epsilon > 0$, Moser iteration gives
	\begin{equation}\label{ineq:moser_iteration}
		|\nabla \Phi (x)|^2	\lesssim R^{- n} \int\limits_{B_R (x)} |\nabla \Phi|^2 \vol_X \lesssim r(x)^{- 2(n - 2) + \epsilon} \int\limits_{B_R (x)} |r^{(n-4)/2-\epsilon} \nabla \Phi|^2 \vol_X,
	\end{equation} where in the last inequality we used that $R\approx r(x)$. By \Cref{lem:First_time_Hardy}, the last integral in the right-hand side of \cref{ineq:moser_iteration} can be bounded by $\tfrac{C}{\varepsilon}$, yielding the stated result.
\end{proof}

\smallskip

\subsection{Bounds from an improved Hardy inequality}\label{subsec:Hardy2}

This section follows the same strategy of the previous except that we combine the previously obtained bound with an improved Hardy-inequality which holds for $H^1$-functions supported along an end $X - B_l$. We summarize the main result as follows.

\begin{proposition}\label{prop:Decay_2}
	Under the hypothesis of \Cref{thm:First_part_of_Main_Theorem_2}, there are constants $c_1, c_2 > 0$ such that for all $t \in (0, 1)$ and $\alpha < \tfrac{n - 1}{2}$, we have
	\begin{equation}\label[ineq]{ineq:Inequality_Decay_2}
		\| r^{\alpha - 1/2} \nabla \Phi \|_{L^2 (X)}^2 \leqslant c_1 + c_2 (n - 1 - 2 \alpha)^{- 1}.
	\end{equation}
	In particular,
	\begin{equation}
		|\nabla \Phi|^2 = O(r^{-2(n-1) + \epsilon})\quad\text{as $r\to\infty$} \label{eq:Psi_bound_nonoptimal}
	\end{equation}
	for all $\epsilon > 0$.
\end{proposition}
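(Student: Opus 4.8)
\emph{Strategy.} The plan is to repeat the Agmon--Hardy--Moser scheme of \Cref{subsec:Hardy1}, now feeding in the first--order bound of \Cref{prop:Decay_1} and replacing the Hardy inequality of \Cref{lem:Hardy} by a sharper one. The source of the gain is that along the end $\nabla\Phi$ is, up to exponentially small terms, both closed and coclosed as a $\mathfrak{g}_P$--valued $1$--form: indeed $\rd_\nabla^*(\nabla\Phi)=\Delta_\nabla\Phi=0$ by \cref{eq:2nd_Order_Eq_1}, while $\rd_\nabla(\nabla\Phi)=[F_\nabla,\Phi]$ and $(F_\nabla)^\perp$ decay exponentially by \Cref{prop:Exponential_Decay_Transverse}. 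Heuristically $|\nabla\Phi|\sim(2m)^{-1}|\rd w|$ with $w=m^2-|\Phi|^2$, and since $w$ decays like the Green's function $r^{-(n-2)}$ (cf.\ \Cref{cor:Asymptotics_Phi_Square}) one expects its differential to decay one full power faster, i.e.\ like $r^{-(n-1)}$ --- which is what \cref{eq:Psi_bound_nonoptimal} asserts.

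\emph{The improved Hardy inequality.} First I would isolate the inequality that replaces \Cref{lem:Hardy}: there is a constant $\Lambda_n>\big(\tfrac{n-2}{2}\big)^2$ such that for every $\xi\in H^1$ supported in $X-B_l$ ($l\gg1$) which, up to exponentially small error, is the pointwise norm of a closed and coclosed $\mathfrak{g}_P$--valued $1$--form,
\[
\|\nabla\xi\|_{L^2(X-B_l)}^2 \;\geqslant\; \Lambda_n\,\|r^{-1}\xi\|_{L^2(X-B_l)}^2 \;-\; C\,\|r^{-1+\nu}\xi\|_{L^2(X-B_l)}^2 .
\]
The strict improvement $\Lambda_n>(\tfrac{n-2}{2})^2$ holds because the mode $r^{-(n-2)/2}$ saturating \Cref{lem:Hardy} is not of gradient type; this can be proved working in the cone model of the end as in \cite{degeratu2013witten}, or componentwise together with the trace constraint coming from $\rd_\nabla^*\nabla\Phi=0$.

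\emph{Conclusion.} With this in hand the rest transcribes the proof of \Cref{lem:First_time_Hardy}. The Bochner inequality \cref{ineq:improv_3} and \Cref{prop:Exponential_Decay_Transverse} give, for $f\in L^\infty_1$ supported in $X-B_l$, the Agmon inequality
\[
\|\nabla(f\nabla\Phi)\|_{L^2(X-B_l)}^2 \;\leqslant\; \|\rd f\otimes\nabla\Phi\|_{L^2(X-B_l)}^2 \;+\; c\!\!\int\limits_{X-B_l}\!\! f^2\,e^{-(m-\mu)r}\,|\nabla\Phi|^2\,\vol_X ,
\]
exactly as in \Cref{lem:First_time_Hardy}. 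Taking $f=r_{l,L}^{\alpha}$ from \cref{eq:rlL_def}, applying the improved Hardy inequality to $f|\nabla\Phi|$ (via Kato), estimating $|\rd r_{l,L}^{\alpha}|$ as in \Cref{lem:First_time_Hardy}, absorbing the exponential term, and --- if $\nu$ is close to $0$ --- iterating finitely many times to climb the weight past $\nu$, one obtains \cref{ineq:Inequality_Decay_2}, the factor $(n-1-2\alpha)^{-1}$ arising from the vanishing of $\Lambda_n-(\text{power of }f)^2$ at the critical exponent. Finally, for $x\in X$ set $R=\tfrac{1}{4}\min(r(x),\mathrm{inj}_x(X,g))\approx r(x)$; Moser iteration (\Cref{prop:Moser}) applied to $\Delta|\nabla\Phi|^2\lesssim e^{-2(m-\mu)r}|\nabla\Phi|^2$ gives $|\nabla\Phi(x)|^2\lesssim R^{-n}\int_{B_R(x)}|\nabla\Phi|^2\,\vol_X\lesssim r(x)^{-2(n-1)+\epsilon}\int_{B_R(x)}|r^{(n-2)/2-\epsilon}\nabla\Phi|^2\,\vol_X$, and the last integral is bounded via \cref{ineq:Inequality_Decay_2} with $\alpha=\tfrac{n-1}{2}-\tfrac{\epsilon}{2}$, which is \cref{eq:Psi_bound_nonoptimal}.

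\emph{Main obstacle.} The crux is the improved Hardy inequality with the right constant, and the bookkeeping ensuring that all error terms stay bounded uniformly in $L$ as $\alpha$ is pushed up to the critical value $\tfrac{n-1}{2}$; the remaining steps are a routine transcription of \Cref{subsec:Hardy1}.
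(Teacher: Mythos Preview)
Your proposal rests on an ``improved Hardy inequality'' with a strictly larger constant $\Lambda_n>(\tfrac{n-2}{2})^2$ for the scalar function $\xi=|\nabla\Phi|$, justified by the closed/coclosed structure of $\nabla\Phi$. This is the gap. The Hardy constant $(\tfrac{n-2}{2})^2$ is sharp for compactly supported scalar functions on a cone, and the constraints $\rd_\nabla\nabla\Phi\approx 0$, $\rd_\nabla^*\nabla\Phi=0$ live on the \emph{$1$-form} $\nabla\Phi$, not on its pointwise norm; passing through Kato's inequality discards exactly the differential structure you would need. Your reference to \cite{degeratu2013witten} is a red herring: the ``improved'' Hardy inequality there (and the one the paper actually uses, \Cref{lem:Hardy_2}) keeps the \emph{same} constant $(\tfrac{n-2}{2})^2$ but with only the radial derivative $\nabla_{\partial_r}\xi$ on the left-hand side, which is a stronger inequality, not one with a larger constant. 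For your scheme to reach the critical exponent $\alpha=\tfrac{n-1}{2}$ you would in fact need $\Lambda_n=\tfrac{n^2}{4}$, and no such scalar Hardy inequality is available.

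The paper's mechanism is genuinely different. It applies Hardy only to $\nabla_{\partial_r}(r_{l,L}^\alpha\sqrt{r}\,|\nabla\Phi|)$, so that after the Agmon identity the tangential term $\|r_{l,L}^\alpha\sqrt{r}\,\nabla^\Sigma\nabla\Phi\|_{L^2}^2$ survives as an extra nonnegative quantity on the left. The whole point is then to bound this tangential Hessian term from \emph{below} by a multiple of $\|r_{l,L}^\alpha r^{-1/2}\nabla\Phi\|_{L^2}^2$. This is done via a Stokes-type integration-by-parts identity for $\int r_{l,L}^{2\alpha}\partial_r|\nabla\Phi|^2$ (the ``Agmon--Stern trick''), which uses $\Delta_\nabla\Phi=0$ to commute radial and tangential derivatives, the Laplace comparison $-\Delta r\approx(n-1)r^{-1}$, and the exponential decay of $(F_\nabla)^\perp$ and $(\nabla\Phi)^\perp$ from \Cref{prop:Exponential_Decay_Transverse}. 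A further subtlety absent from your outline is a cutoff parameter $t_L\in(0,1]$ measuring how much $L^2$-mass sits inside $B_L$ versus outside, together with a proof-by-contradiction; these are what produce the precise coefficient $(n-1-2\alpha)^{-1}$. Once the weighted $L^2$-bound is in hand, your final Moser step is correct.
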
 

We start with the statement of the improved Hardy inequality.

\begin{lemma}[Improved Hardy's Inequality as in \cite{degeratu2013witten}]\label{lem:Hardy_2}
	Let $(X, \varphi)$ be an AC $\rG_2$-manifold with rate $\nu < 0$. Then, there is a constant $C_H>0$ such that for all $\xi \in H^1$ with support in $X - B_l$
	\begin{equation}
		\| \nabla_{\partial_r} \xi \|_{L^2 (X - B_l)}^2 \geqslant \left( \frac{n-2}{2} \right)^2 \| r^{- 1} \xi \|_{L^2 (X - B_l)}^2 - C_H \|r^{-1 + \nu} \xi \|_{L^2 (X - B_l)}^2 . \label[ineq]{ineq:Hardy_2}
	\end{equation}
\end{lemma}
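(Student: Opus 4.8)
The plan is to read \cref{ineq:Hardy_2} as the sharp weighted one-dimensional Hardy inequality in the radial variable, integrated over the link $\Sigma$, with the deviation of $g$ from the exact cone metric $g_C$ entering only through a strictly lower-order error. The reason this improves on \Cref{lem:Hardy} is that, already on the model cone, the radial component of the gradient saturates the Hardy constant $\left(\tfrac{n-2}{2}\right)^2$ against the weight $r^{n-1}\,\rd r$, so the spherical part of $|\nabla\xi|^2$ may be discarded. By the directional Kato inequality $|\nabla_{\partial_r}|\xi||\leqslant|\nabla_{\partial_r}\xi|$ (as used in the proof of \Cref{lem:First_time_Hardy}) it suffices to prove the bound for real functions $\xi$, for which $\nabla_{\partial_r}\xi=\partial_r\xi$ is the plain directional derivative and carries no metric dependence at all; by a standard density argument I may further assume $\xi\in C_c^\infty$ with support in $X-B_l$.

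First I would fix $l$ large enough that $\overline{B_l}$ contains the compact set $K$ of \Cref{def:AC}, identify $X-B_l$ with $\{r\geqslant l\}\subseteq C(\Sigma)$ via $\Upsilon$, and write $\vol_g=\Lambda\, r^{n-1}\,\rd r\wedge\vol_\Sigma$; from $\Upsilon^\ast g=g_C+\mathfrak{e}$ with $|\nabla_C^j\mathfrak{e}|_{g_C}=O(r^{\nu-j})$ one gets $\Lambda=1+O(r^\nu)$ and $|\partial_r\Lambda|=O(r^{\nu-1})$. On the exact cone ($\Lambda\equiv 1$) the inequality holds with $C_H=0$, by Fubini applied to the pointwise-on-$\Sigma$ identity obtained from the substitution $\xi=r^{-(n-2)/2}u$:
\begin{equation}
	\int_l^\infty (\partial_r\xi)^2\, r^{n-1}\,\rd r \;=\; \tfrac{(n-2)^2}{4}\int_l^\infty (r^{-1}\xi)^2\, r^{n-1}\,\rd r \;+\; \int_l^\infty (\partial_r u)^2\, r\,\rd r \;+\; \tfrac{n-2}{2}\,u(l)^2 ,
\end{equation}
in which the last two terms are nonnegative and are discarded (the middle one coming from integrating $-(n-2)\int u\,\partial_r u\,\rd r$ by parts).

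Next I would repeat the substitution on $X$ with the genuine density $\Lambda$. The only new contribution comes from the cross-term $-(n-2)\int_l^\infty\Lambda\,u\,\partial_r u\,\rd r$: integrating by parts it produces, besides a boundary term $\tfrac{n-2}{2}\Lambda(l)u(l)^2\geqslant 0$, the quantity $\tfrac{n-2}{2}\int_l^\infty(\partial_r\Lambda)\,u^2\,\rd r$, which by $|\partial_r\Lambda|=O(r^{\nu-1})$ and $u^2=r^{n-2}\xi^2$ is $\lesssim\int_l^\infty r^{n-3+\nu}\xi^2\,\rd r$ and, after integrating over $\Sigma$ and absorbing the bounded factor $\Lambda^{\pm 1}$, is $\lesssim\|r^{-1+\nu}\xi\|_{L^2(X-B_l)}^2$. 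The main term reproduces $\tfrac{(n-2)^2}{4}\|r^{-1}\xi\|_{L^2(X-B_l)}^2$, and the terms $\int\Lambda(\partial_r u)^2 r\,\rd r$ and the boundary terms are discarded; this gives \cref{ineq:Hardy_2}. Passing from functions to general $\xi\in H^1$ is again Kato's inequality together with $\|r^{-1}|\xi|\|=\|r^{-1}\xi\|$.

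The main obstacle is the bookkeeping in this last step: one must arrange that every place where $g$ departs from $g_C$ falls on a $\xi^2$-term with a weight strictly more decaying than $r^{-2}$, so that it stays inside the separate $C_H$-error and never perturbs the sharp constant $\left(\tfrac{n-2}{2}\right)^2$; and one must check that every boundary contribution at $r=l$ has the favourable sign. This is possible precisely because, for scalars, $\|\nabla_{\partial_r}\xi\|$ is metric-independent, so the only metric input is the volume density $\Lambda$, whose radial derivative is one power of $r$ better than $\Lambda$ itself — it is this gain that turns the asymptotically-conical corrections into a genuinely lower-order error rather than a perturbation of the constant.
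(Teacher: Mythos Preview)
The paper does not supply its own proof here—it simply says ``the proof of this result is exactly the same as that in \cite{degeratu2013witten}''—so there is nothing to compare your argument against beyond the method. Your approach is the natural one and is essentially correct: reduce to scalar $\xi$ via the directional Kato inequality, pass to cone coordinates, run the sharp one-dimensional Hardy identity through the substitution $\xi=r^{-(n-2)/2}u$, and observe that for scalars the only metric dependence is the volume density $\Lambda=1+O(r^{\nu})$. The sign checks on the boundary terms and on the discarded $\int\Lambda(\partial_r u)^2 r\,\rd r$ are right.

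There is, however, a bookkeeping slip in your final error claim. From $|\partial_r\Lambda|=O(r^{\nu-1})$ and $u^2=r^{n-2}\xi^2$ you correctly bound the cross term by $\int r^{n-3+\nu}\xi^2\,\rd r\,\vol_\Sigma$, but this quantity is $\|r^{-1+\nu/2}\xi\|_{L^2}^2$, not $\|r^{-1+\nu}\xi\|_{L^2}^2$: writing $\|r^{\beta}\xi\|_{L^2}^2\sim\int r^{2\beta+n-1}\xi^2\,\rd r\,\vol_\Sigma$ forces $2\beta+n-1=n-3+\nu$, i.e.\ $\beta=-1+\nu/2$. Since $\nu<0$, the error you actually produce is \emph{larger} than $\|r^{-1+\nu}\xi\|^2$, so your argument as written proves the inequality only with the weaker remainder $C_H\|r^{-1+\nu/2}\xi\|^2$. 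For every application in this paper only strict lower-ordering of the error relative to $\|r^{-1}\xi\|^2$ is used, so the discrepancy is harmless there; but it does not match the statement exactly, and you should either adjust the claimed exponent or note that the remainder is of the form $C_H\|r^{-1+\nu'}\xi\|^2$ for some $\nu'\in(\nu,0)$.
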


The proof of this result is exactly the same as that in \cite{degeratu2013witten} and so we jump into the proof of the main result of this section.

\begin{proof}[Proof of \Cref{prop:Decay_2}]
	Let $r_{l, L}$ as in \cref{eq:rlL_def} and $\alpha > 0$, to be determined later. Applying the improved Hardy \cref{ineq:Hardy_2} to the function $r_{l, L}^\alpha\sqrt{r} |\nabla \Phi|$ and using Kato's inequality gives us
	\begin{equation}
		\frac{(n-2)^2}{4} \ \| r_{l, L}^\alpha r^{-1/2} \nabla \Phi \|_{L^2 (X - B_l)}^2 \leqslant \| \nabla_{\partial_r} (r_{l, L}^\alpha \sqrt{r} \nabla \Phi ) \|_{L^2 (X - B_l)}^2 + C_H \|r_{l, L}^\alpha r^{-1/2 + \nu} \nabla \Phi \|_{L^2 (X - B_l)}^2, \label[ineq]{ineq:Hardy_rewrite_1}
	\end{equation}
	which we now combine with the previously used strategy, now with the goal of proving \Cref{prop:Decay_2}.

	Note that there is a number $C = C (l, \nabla, \Phi)$, such that
	\begin{equation}
		\| r^{\alpha - 1/2} \nabla \Phi \|_{L^2 (X)}^2 = C + \lim\limits_{L \rightarrow \infty} \| r_{l, L}^{\alpha - 1/2} \nabla \Phi \|_{L^2 (X)}^2,
	\end{equation}
	so it is enough to show \cref{ineq:Inequality_Decay_2} with the left-hand side replaced by $\| r_{l, L}^{\alpha - 1/2} \nabla \Phi \|_{L^2 (X)}^2$.

	First we prove that for all $\alpha < \tfrac{n - 1}{2}$, $\| r_{l, L}^{\alpha - 1/2} \nabla \Phi \|_{L^2 (X)}$ stays bounded as $L \rightarrow \infty$. We use proof by contradiction. Thus let us assume that $\alpha$ is such that $\alpha < \tfrac{n - 1}{2}$ and
	\begin{equation}
		\lim\limits_{L \rightarrow \infty} \| r_{l, L}^{\alpha - 1/2} \nabla \Phi \|_{L^2 (X)}^2 = \infty. \label{eq:PbC}
	\end{equation}
	Let $\nabla^\Sigma$ denote covariant differentiation, using the connection $\nabla$, in the directions along the kernel of $\rd r$, so that $\nabla = \nabla_{\partial_r} \otimes \rd r + \nabla^\Sigma$. Note that $\nabla^\Sigma r = 0$, and thus $\nabla^\Sigma r_{l, L} = 0$. Then, using the computation in the proof of \Cref{lem:Agmon_first_step}, we have
	\begin{align}
		\|	\nabla \left( r_{l, L}^\alpha \sqrt{r} \nabla \Phi \right)\|_{L^2 (X - B_l)} 
		&= \| \nabla_{\partial_r} \left( r_{l, L}^\alpha \sqrt{r} \nabla \Phi \right) \|_{L^2 (X - B_l)}^2 + \| r_{l, L}^\alpha \sqrt{r} \nabla^\Sigma \nabla \Phi \|_{L^2 (X - B_l)}^2 \\
		&\lesssim \| \rd \left( r_{l, L}^\alpha \sqrt{r} \right) \otimes \nabla \Phi \|_{L^2 (X - B_l)} + \int\limits_{X - B_l} r_{l, L}^{2 \alpha} r e^{-cmr} |\nabla \Phi|^2 \vol_X. \label[ineq]{ineq:Hardy_rewrite_2}
	\end{align}
	 Combining \cref{ineq:Hardy_rewrite_1,ineq:Hardy_rewrite_2}, we get
	\begin{align}
		\frac{(n-2)^2}{4} \ \| r_{l, L}^\alpha r^{-1/2} \nabla \Phi \|_{L^2 (X - B_l)}^2 + \| r_{l, L}^\alpha \sqrt{r} \nabla^\Sigma \nabla \Phi \|_{L^2 (X - B_l)}^2 &\lesssim \| \rd (r_{l, L}^\alpha \sqrt{r}) \otimes \nabla \Phi \|_{L^2 (X)}^2 \\
			& \quad +\|r_{l, L}^\alpha r^{-1/2 + \nu} \nabla \Phi \|_{L^2 (X - B_l)}^2 \\
			& \quad +O (1), \label[ineq]{ineq:Intermediate_Improved_Main_Computation}
	\end{align}
	where $O (1)$ corresponds to the rightmost term of the right hand side of \cref{ineq:Hardy_rewrite_2}, which is bounded independently of $L$ for any fixed $\alpha$.

	Now let
	\begin{align}
		t_L = t_L (l, \alpha) &:= \frac{\| r_{l, L}^\alpha r^{-1/2} \nabla \Phi \|_{L^2 (B_L - B_l)}^2}{\| r_{l, L}^\alpha r^{-1/2} \nabla \Phi \|_{L^2 (X - B_l)}^2} \in (0, 1), \label{eq:t_L_def} \\
		t = t (l, \alpha) &:= \limsup\limits_{L \rightarrow \infty} t_L \in (0, 1]. \label{eq:t_def}
	\end{align}
	Using \cref{eq:rlL_def,eq:t_def}, we get, as $L \rightarrow \infty$, that
	\begin{equation}
		\| \rd (r_{l, L}^\alpha \sqrt{r}) \otimes \nabla \Phi \|_{L^2 (X)}^2 = \left( (\alpha^2 + \alpha) t_L + \frac{1}{4} \right) \ \| r_{l, L}^\alpha r^{- 1/2} \nabla \Phi \|_{L^2 (X)}^2 + O (1), \label{eq:Hardy_1st_term}
	\end{equation}
	where this $O (1)$ term depends only on $l, \alpha$ and the $L^2 (X)$-norm of $\nabla \Phi$ in $B_{2l} - B_l$; in particular, for any fixed $\alpha$, it remains bounded as $L \to \infty$.

	Next, we obtain a lower bound for the term $\|r_{l, L}^\alpha \sqrt{r}\nabla^\Sigma \nabla \Phi \|_{L^2 (X)}^2$ appearing in the left hand side of \cref{ineq:Intermediate_Improved_Main_Computation}. First, we compute
	\begin{equation}
		0 = \int\limits_X \rd \left( r_{l, L}^{2 \alpha} |\nabla \Phi|^2 \iota_{\partial_r} (\vol_X) \right) = \int\limits_{X - B_l} \left( \partial_r \left( r_{l, L}^{2 \alpha} \right) - r_{l, L}^{2 \alpha} \Delta r) |\nabla \Phi|^2 + r_{l, L}^{2 \alpha} \partial_r (|\nabla \Phi|^2) \right) \vol_X. \label{eq:Agmon--Stern_trick_1}
	\end{equation}
	The first equality follows from an extension of Stokes' theorem due to \cite{Gaffney54}, which states that for an orientable complete Riemannian $n$-manifold $(M^n,g)$, one has $\int_M \rd\gamma = 0$ provided that $|\gamma|, |\rd \gamma| \in L^1 (X)$. Here $\gamma := r_{l, L}^{2 \alpha} |\nabla \Phi|^2 \iota_{\partial_r} (\vol_X)$ satisfies these conditions. In the second equality, we use the fact that $r_{l, L}|_{B_l} = 0$ and $\rd (\iota_{\partial_r} \vol_X) = - (\Delta r) \vol_X$. Using \cref{eq:drlL_def}, we can further expand and rearrange \cref{eq:Agmon--Stern_trick_1}, to get that there is $C_1 = O \left( \alpha l^{2 \alpha} \| \nabla \Phi \|_{L^2 (B_{2l} - B_l)}^2 \right)$, bounded independently of $L$, such that
	\begin{equation}
		\int\limits_{X - B_l} \left( \alpha t_L r_{l, L}^{2 \alpha} r^{- 1} - \frac{1}{2} r_{l, L}^{2 \alpha} \Delta r) |\nabla \Phi|^2 \right) \vol_X = - \int\limits_{X - B_l} r_{l, L}^{2 \alpha} \langle \nabla \Phi, \nabla_{\partial_r} \nabla \Phi \rangle \: \vol_X + C_1. \label{eq:Agmon--Stern_trick_2}
	\end{equation}
	Let us rewrite the right hand side of \cref{eq:Agmon--Stern_trick_2} first. Using a normal frame, whose first element is $\partial_r$, and the harmonicity of $\Phi$, we show that
	\begin{align}
		- \langle \nabla \Phi, \nabla_{\partial_r} \nabla \Phi \rangle &= - \langle \nabla_{\partial_r} \Phi, \nabla_{\partial_r} \nabla_{\partial_r} \Phi \rangle - \sum\limits_{i = 2}^n \langle \nabla_i \Phi, \nabla_{\partial_r} \nabla_i \Phi \rangle \\
			&= \sum\limits_{i = 2}^n \left( \langle \nabla_{\partial_r} \Phi, \nabla_i \nabla_i \Phi \rangle - \langle \nabla_i \Phi, \nabla_i \nabla_{\partial_r} \Phi + [F_{ri}, \Phi] \rangle \right) \\
			&\leqslant \left| \nabla_{\partial_r} \Phi \right| \sum\limits_{i = 2}^n \left| \nabla_i \nabla_i \Phi \right| + \sum\limits_{i = 2}^n  \left| \nabla_i \Phi \right| \left| \nabla_i \nabla_{\partial_r} \Phi \right| - \sum\limits_{i = 2}^n \langle \nabla_i \Phi, [F_{ri}, \Phi] \rangle \\
			&\leqslant \left| \nabla_{\partial_r} \Phi \right| \sqrt{n - 1} \left( \sum\limits_{i = 2}^n \left| \nabla_i \nabla_i \Phi \right|^2 \right)^\frac{1}{2} + \left| \nabla^\Sigma \Phi \right| \left( \sum\limits_{i = 2}^n \left| \nabla_i \nabla_{\partial_r} \Phi \right|^2 \right)^\frac{1}{2} \\
			& \quad - \sum\limits_{i = 2}^n \langle \nabla_i \Phi, [F_{ri}, \Phi] \rangle.
	\end{align}
	Using that $n - 1 = 6 > 1$ and the Cauchy--Schwarz inequality, we get
	\begin{align}
		- \langle \nabla \Phi, \nabla_{\partial_r} \nabla \Phi \rangle &\leqslant \sqrt{n - 1} \left( \left| \nabla_{\partial_r} \Phi \right| \left( \sum\limits_{i = 2}^n \left| \nabla_i \nabla_i \Phi \right|^2 \right)^\frac{1}{2} + \left| \nabla^\Sigma \Phi \right| \left( \sum\limits_{i = 2}^n \left| \nabla_i \nabla_{\partial_r} \Phi \right|^2 \right)^\frac{1}{2} \right) \\
		& \quad - \sum\limits_{i = 2}^n \langle \nabla_i \Phi, [F_{ri}, \Phi] \rangle \\
			&\leqslant \sqrt{n - 1} |\nabla \Phi| |\nabla^\Sigma \nabla \Phi| - \sum\limits_{i = 2}^n \langle \nabla_i \Phi, [F_{ri}, \Phi] \rangle.
	\end{align}
	Next, using that $F_{ri} = F_{ri}^{14} + F_{ri}^7$ and $F_{ri}^7 = \sum_{j = 2}^n \varphi_{rij} \nabla_j \Phi$ (which we get by the $\rG_2$-monopole \cref{eq:Monopole}), we get that
	\begin{align}
		- \sum\limits_{i = 2}^n \langle \nabla_i \Phi, [F_{ri}, \Phi] \rangle	&= - \sum\limits_{i = 2}^n \langle \nabla_i \Phi, [F_{ri}^{14}, \Phi] \rangle - \sum\limits_{i = 2}^n \sum\limits_{j = 2}^n \varphi_{rij} \langle \nabla_i \Phi, [\nabla_j \Phi, \Phi] \rangle \\
			&= \sum\limits_{i = 2}^n \langle [\nabla_i \Phi, \Phi], F_{ri}^{14} \rangle - \sum\limits_{j = 2}^n \varphi_{rij} \langle \nabla_i \Phi, [(\nabla_j \Phi)^\perp, \Phi] \rangle \\
			&= \sum\limits_{i = 2}^n \langle [(\nabla_i \Phi)^\perp, \Phi], F_{ri}^{14} \rangle + \sum\limits_{i = 2}^n \sum\limits_{j = 2}^n \varphi_{rij} \langle [\nabla_i \Phi, \Phi], (\nabla_j \Phi)^\perp \rangle \\
			&= \sum\limits_{i = 2}^n \langle [(\nabla_i \Phi)^\perp, \Phi], F_{ri}^{14} \rangle + \sum\limits_{i = 2}^n \sum\limits_{j = 2}^n \varphi_{rij} \langle [(\nabla_i \Phi)^\perp, \Phi], (\nabla_j \Phi)^\perp \rangle \\
			&\leqslant O \left( |(\nabla \Phi)^\perp| \left( |F_{ri}^{14}| + |(\nabla \Phi)^\perp| \right) \right).
	\end{align}
	Hence finally, we get
	\begin{equation}
		- \langle \nabla \Phi, \nabla_{\partial_r} \nabla \Phi \rangle \leqslant \sqrt{n - 1} |\nabla \Phi| |\nabla^\Sigma \nabla \Phi| + O \left( |(\nabla \Phi)^\perp| |F_{ri}^{14}| + |(\nabla \Phi)^\perp|^2 \right).
	\end{equation}
	Thus, using also \Cref{prop:Exponential_Decay_Transverse}, for the right hand side of \cref{eq:Agmon--Stern_trick_2}, we get that (for another $C_2>0$, bounded independently of $L$ or $\alpha$)
	\begin{equation}
		- \int\limits_{X - B_l} r_{l, L}^{2 \alpha} \langle \nabla \Phi, \nabla_{\partial_r} \nabla \Phi \rangle \: \vol_X \leqslant \sqrt{n - 1} \| r_{l, L}^\alpha r^{- 1/2} \nabla \Phi \|_{L^2 (X)} \| r_{l, L}^\alpha \sqrt{r} \nabla^\Sigma \nabla \Phi \|_{L^2 (X)} + C_2. \label[ineq]{ineq:Agmon--Stern_trick_4}
	\end{equation}
	Next we estimate the left hand side of \cref{eq:Agmon--Stern_trick_2}. Recall that for some $\nu^\prime > 0$, the radial function $r$ satisfies
	\begin{equation}
		\frac{n - 1}{r} - O \left( r^{- 1 - \nu^\prime} \right) \leqslant - \Delta r \leqslant \frac{n - 1}{r},
	\end{equation}
	with the lower bound following from the assumption that $(X,\varphi)$ is AC and the upper bound from the Laplacian comparison theorem together with the Ricci-flatness. Using this, we get (for some $C_3 > 0$) that
	\begin{equation}
		\int\limits_{X - B_l} \left( \alpha t_L r_{l, L}^{2 \alpha} r^{- 1} - \frac{1}{2} r_{l, L}^{2 \alpha} \Delta r) |\nabla \Phi|^2 \right) \vol_X \geqslant \left( \alpha t_L + \frac{n - 1}{2} - C_3 l^{- \nu^\prime} \right) \ \| r_{l, L}^\alpha r^{- 1/2} \nabla \Phi \|_{L^2 (X)}^2. \label[ineq]{ineq:Agmon--Stern_trick_5}
	\end{equation}
	Combining \cref{eq:Agmon--Stern_trick_2,ineq:Agmon--Stern_trick_4,ineq:Agmon--Stern_trick_5}, we get
	\begin{multline}
		\sqrt{n - 1} \| r_{l, L}^\alpha r^{- 1/2} \nabla \Phi \|_{L^2 (X)} \| r_{l, L}^\alpha \sqrt{r} \nabla^\Sigma \nabla \Phi \|_{L^2 (X)} + C_2 \geqslant \\ \left( \alpha t_L + \frac{n - 1}{2} - C_3 l^{- \nu^\prime} \right) \ \| r_{l, L}^\alpha r^{- 1/2} \nabla \Phi \|_{L^2 (X)}^2.
	\end{multline}
	Let us subtract $C_2$, divide by $\sqrt{n - 1} \| r_{l, L}^\alpha r^{- 1/2} \nabla \Phi \|_{L^2 (X)}$, and square to get
	\begin{multline}
		\| r_{l, L}^\alpha \sqrt{r} \nabla^\Sigma \nabla \Phi \|_{L^2 (X)}^2 \geqslant \\ \frac{1}{n - 1} \left( \alpha t_L + \frac{n - 1}{2} - C_3 l^{- \nu^\prime} \right)^2 \ \| r_{l, L}^\alpha r^{- 1/2} \nabla \Phi \|_{L^2 (X)}^2 \\
		- \frac{2 C_2 \left( \alpha t_L + \frac{n - 1}{2} - C_3 l^{- \nu^\prime} \right)}{\sqrt{n - 1}} + \frac{1}{(n - 1) \| r_{l, L}^\alpha r^{- 1/2} \nabla \Phi \|_{L^2 (X)}^2}.
	\end{multline}
	By \cref{eq:PbC}, we last term is bounded above for large $L$, and thus we get
	\begin{equation}\label[ineq]{ineq:Hardy_2st_term_LHS}
		\frac{1}{n - 1} \left( \alpha t_L + \frac{n - 1}{2} - C_3 l^{- \nu^\prime} \right)^2 \| r_{l, L}^\alpha r^{- 1/2} \nabla \Phi \|_{L^2 (X)}^2 \leqslant \| r_{l, L}^\alpha \sqrt{r} \nabla^\Sigma \nabla \Phi \|_{L^2 (X)}^2 + O (1).
	\end{equation}
	Hence inserting \cref{eq:Hardy_1st_term,ineq:Hardy_2st_term_LHS} into \cref{ineq:Intermediate_Improved_Main_Computation}, gives
	\begin{eqnarray}\label[ineq]{ineq:Hardy_final}
		\left( \frac{(n - 2)^2}{4} - \left( (\alpha^2 + \alpha) t + \frac{1}{4} \right) + \frac{1}{n - 1} \left( \alpha t_L + \frac{n - 1}{2} - C_3 l^{- \nu^\prime} \right)^2 \right) \| r_{l, L}^\alpha r^{- 1/2} \nabla \Phi \|_{L^2 (X)}^2\nonumber\\
		\leqslant \| r_{l, L}^\alpha r^{- 1/2+\nu} \nabla \Phi \|_{L^2 (X)}^2 + O (1).
	\end{eqnarray}
	Thus $r_{l, L}^\alpha r^{- 1/2} \nabla \Phi $ is bounded in $L^2 (X)$ independently of $L$, as long as $r^{\alpha-1/2+\nu}\nabla\Phi\in L^2(X)$ and the quantity in the parentheses above is positive, this last condition being equivalent in the $L \rightarrow \infty$ limit to
	\begin{equation}
		\left( \alpha t + \frac{c l^{- \nu^\prime}}{n - 1} \right)^2 < \frac{(n - 1)^2}{4} + \frac{n c^2 l^{- 2 \nu^\prime}}{(n - 1)^2} - c (n - 1) l^{- \nu^\prime}.
	\end{equation}
	Since $t \in (0, 1]$, the above inequality is definitely satisfied for $\alpha = \tfrac{n - 1}{2} - O \left( l^{- \nu^\prime/2} \right)$. For such $\alpha$, in case $\nu \leqslant -1$, by \Cref{lem:First_time_Hardy} we do have that $r^{\alpha-1/2+\nu}\nabla\Phi\in L^2(X)$ and therefore that $r^{\alpha - 1/2} \nabla \Phi\in L^2 (X)$. In case $- 1 < \nu < 0$ we can still achieve the desired integrability by a finite iteration of the above argument, in the same way as we did in the final part of the proof of \Cref{lem:First_time_Hardy}. This finishes the proof by contradiction argument and, in fact, yields \cref{ineq:Inequality_Decay_2}.

	Combining this with Moser iteration (as in the proof of \Cref{prop:Decay_1}), we get the bound
	\begin{equation}
		|\nabla \Phi|^2 \leqslant C r^{- 2 (n-1) + O \left( l^{- \nu^\prime/2} \right)}.
	\end{equation}
	Using this, we can bootstrap the previous computation, since for $l$ large (but finite), we can now replace \cref{ineq:Agmon--Stern_trick_5} with
	\begin{align}
		\int\limits_{X - B_l} \left( \alpha t r_{l, L}^{2 \alpha} r^{- 1} - \frac{1}{2} r_{l, L}^{2 \alpha} \Delta r) |\nabla \Phi|^2 \right) \vol_X &\geqslant \left( \alpha t + \frac{n - 1}{2} \right) \ \| r_{l, L}^\alpha r^{- 1/2} \nabla \Phi \|_{L^2 (X)}^2 \\
		& \quad - c \| r_{l, L}^\alpha r^{- (1+\nu')/2} \nabla \Phi \|_{L^2 (X)}, \label[ineq]{ineq:Agmon--Stern_trick_5b}
	\end{align}
	as long as $\alpha < \tfrac{n - 1}{2}$. After going through the remaining steps in an analogous way, we get the improved inequality
	\begin{equation}
		\left( \frac{(n - 2)^2}{4} - \left( (\alpha^2 + \alpha) t + \frac{1}{4} \right) + \frac{1}{n - 1} \left( \alpha t + \frac{n - 1}{2} \right)^2 \right) \| r_{l, L}^\alpha r^{- 1/2} \nabla \Phi \|_{L^2 (X)}^2 \leqslant O (1),
	\end{equation}
	which implies that, as long as $\alpha < \tfrac{n - 1}{2}$, we have
	\begin{equation}
		\| r^{\alpha - 1/2} \nabla \Phi \|_{L^2 (X)}^2 \leqslant c_1 + \lim\limits_{L \rightarrow \infty} \| r_{l, L}^\alpha r^{- 1/2} \nabla \Phi \|_{L^2 (X)}^2 \leqslant c_1 + c_2 (n - 1 - 2 \alpha t)^{- 1}.
	\end{equation}
	Using Moser iteration again, together with $t \in (0, 1)$, we get that on $X - B_l$ and for all $\epsilon > 0$
	\begin{equation}
		|\nabla \Phi| \leqslant \frac{C_l}\epsilon r^{- (n - 1) + \epsilon},
	\end{equation}
	which proves \cref{eq:Psi_bound_nonoptimal} and thus concludes the proof.
\end{proof}

\smallskip

\subsection{The final estimate}
\label{sec:final_est}

In this section we finish the proof of \Cref{thm:First_part_of_Main_Theorem_2}, giving a proof of its part (ii).

We start by recalling \cref{ineq:Inequality_Decay_2} stated in \Cref{prop:Decay_2}. This states that there are positive constants $c_1$ and $c_2$ such that
\begin{equation}
	\| r^{\alpha - 1/2} \nabla \Phi \|_{L^2 (X)}^2 \leqslant c_1 + c_2 (n - 1 - 2 \alpha)^{- 1}.
\end{equation}
Let $i \in \mathbb{N}$ be such that $i \gg c_1+c_2$ and choose $\alpha = \tfrac{n - 1}{2} - \tfrac{1}{i}$ and $t = 1 - \tfrac{1}{i}$. Then we have
\begin{equation}
	\int\limits_X |r^{\frac{n-2}{2}-\frac{1}{i}} \nabla \Phi|^2 \vol_X \lesssim i,
\end{equation}
and given that in $\{ x \in X : 1 \leqslant r (x) \leqslant R^i \}$ we have $r^{-1/i} \geqslant R^{-1}$ we find that
\begin{equation}
	\int\limits_{\{ x \in X : 1 \leqslant r (x) \leqslant R^i \}} |r^{\frac{n-2}{2}} \nabla \Phi |^2 \vol_X \lesssim i R.
\end{equation}
This may be also regarded as the average of the $L^2$-norm of $r^{\frac{n-2}{2}} \nabla \Phi $ on the $i$-cylinders $C_k = \{ x \in X : R^k \leqslant r (x) \leqslant R^{k + 1} \}$ for $k = 0, 1, \ldots, i - 1$, which is therefore uniformly bounded independently of $i$. As a consequence, there must be an infinite sequence of cylinders $\lbrace C_{i_j} \rbrace_j$ with $i_j \nearrow \infty$ for which 
\begin{equation}
	\int\limits_{C_{i_j}} | \nabla \Phi |^2 \vol_X \lesssim R r^{-(n-2)}.
\end{equation}
Then, using Moser iteration in a ball $B_\rho(x_{i_j}) \subseteq C_{i_j}$ gives that
\begin{equation}
	|\nabla \Phi (x_{i_j})|^2 \lesssim \rho^{-n} \int\limits_{B_\rho (x_{i_j})} |\nabla \Phi|^2 \vol_X \lesssim \rho^{-n} Rr^{-(n-2)}.
\end{equation}
In case, $R >2$ we can set $\rho = R^{i_j-1}$. Then, $R^2 r^{- 1} \geqslant \rho^{-1} \geqslant R r^{- 1}$ in $C_{i_j}$ and the Moser iteration above yields a bound at all $x_{i_j}$ in the sub-cylinders $C_{i_j}' = \{ x \in X : R^{i_j}+R^{i_j-1} \leqslant r (x) \leqslant R^{i_j+1}-R^{i_j-1} \} \subseteq C_{i_j}$ of the form
\begin{equation}\label[ineq]{ineq:Decay_x_ni}
	\sup_{x_{i_j} \in C_{i_j}' } |\nabla \Phi (x_{i_j})|^2 \lesssim r^{-n}(R^2-1)^n Rr^{-(n-2)} \lesssim R(R^2-1)^n r^{-2(n-1)}.
\end{equation}
We have thus obtained a sequence of cylinders $C_{i_j}'$ going off to infinity and where the \cref{ineq:Decay_x_ni} holds. As a consequence of this, if it was not true that $|\nabla \Phi |^2 = O(r^{-2(n-1)})$ for $r \gg 1$, there must a sequence $y_i$ with $r(y_i) \nearrow \infty$ at which $r^{2(n-1)}|\nabla \Phi |^2$ attains a local maxima and 
\begin{equation}
	\limsup_{i \to \infty} r(y_i)^{2(n-1)} |\nabla \Phi (y_i) |^2 = \infty.
\end{equation}
From the condition that $r^{2(n-1)}|\nabla \Phi |^2$ attains a local maxima at the $y_i$ we find, by differentiating in the $r$-direction, that at $y_i$
\begin{equation}
	2(n-1) |\nabla \Phi|^2 + r \partial_r \left( |\nabla \Phi|^2 \right) = 0.
\end{equation}
On the other hand, the second derivative test at $y_i$ yields
\begin{align}
	0	&\leqslant \Delta \left( r^{2(n-1)} |\nabla \Phi |^2 \right) \\
		&= - (2n-2) (3n-4) r^{2(n-2)} |\nabla \Phi |^2 - (2n-2) r^{2n-3} \partial_r \left( |\nabla \Phi|^2 \right) + r^{2(n-1)} \Delta |\nabla \Phi|^2 + \ldots ,
\end{align}
with the $\ldots$ denotes lower order terms in comparison to $r^{2(n-2)}|\nabla\Phi|^2$. Inserting the first order equation above we find that
\begin{align}
	0	&\leqslant - (2n-2) (3n-4) r^{2(n-1)} |\nabla \Phi |^2 + (2n-2)^2 r^{2(n-1)} |\nabla \Phi|^2 + r^{2(n-1)} \Delta |\nabla \Phi|^2 + \ldots \\
		&= - 2(n-1)(n-2) r^{2(n-2)} |\nabla \Phi |^2 + r^{2(n-1)} \Delta |\nabla \Phi|^2 + \ldots ,
\end{align}
which we can rewrite as
\begin{equation}
	2(n-1)(n-2) r^{-2} |\nabla \Phi |^2 \leqslant \Delta |\nabla \Phi|^2 + \ldots .
\end{equation}
Thus, using the improved Bochner inequality in \Cref{lem:Improved_Bochner_Delta_Nabla_Phi} we find that
\begin{equation}
	|\nabla \Phi|^2 \lesssim r^2 |(F_\nabla)^\perp|^2 |(\nabla \Phi)^{||}|^2 + o (|\nabla \Phi|^2).
\end{equation}
Given that $(F_\nabla)^\perp$ decays exponentially, as shown in \Cref{prop:Exponential_Decay_Transverse}, this is impossible unless $|\nabla \Phi(y_i)| = 0$ which would contradict $r^{2(n-1)}|\nabla \Phi|^2(y_i) \nearrow \infty$. This completes the proof of \Cref{thm:First_part_of_Main_Theorem_2}.

\smallskip

\subsection{Asymptotic decay rate of $m^2-|\Phi|^2$}\label{sec:decay_rate_Phi}

In this brief subsection we state and prove an easy consequence of part (ii) of \Cref{thm:First_part_of_Main_Theorem_2}, giving the precise polynomial asymptotic decay rate of $m^2-|\Phi|^2$ on this AC case.

\begin{corollary}\label{cor:refined_AC_asymp_Phi}
	Under the hypothesis of \Cref{thm:First_part_of_Main_Theorem_2}, let $m$ be the mass of $(\nabla,\Phi)$, given by \Cref{thm:Finite_Mass}. Then along the end of $X$ we have
	\begin{equation}
		m^2-|\Phi|^2 \sim r^{2-n}.
	\end{equation}
\end{corollary}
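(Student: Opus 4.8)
The plan is to combine the Green's function representation of $w := m^2 - |\Phi|^2$ from \Cref{thm:Finite_Mass} with the sharp gradient decay coming from \Cref{thm:First_part_of_Main_Theorem_2}(ii). Since any AC $\rG_2$-manifold is nonparabolic and of bounded geometry (\Cref{rem:AC_vol_growth}), and the hypotheses of \Cref{thm:First_part_of_Main_Theorem_2} cover those of \Cref{thm:Finite_Mass} (one has $|\nabla\Phi|^2\in L^1(X)$, and the curvature bounds follow as in \Cref{cor:Lpbounds}), the latter gives
\[
	w(x) = m^2 - |\Phi(x)|^2 = 2\int_X G(x,y)\,|\nabla\Phi(y)|^2\,\vol_X(y)\geqslant 0,
\]
with $G$ the minimal positive Green's function of $(X,g_\varphi)$. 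The lower bound $w\gtrsim r^{2-n}$ is then immediate: an AC $\rG_2$-manifold has maximal volume growth $\Vol(B_r(x))\sim r^n$, so \Cref{cor:Asymptotics_Phi_Square} applies with $k=0$ and yields $|\Phi(x)|^2\leqslant m^2 - c\,\dist(o,x)^{2-n}$ along the end, which reads $w\gtrsim r^{2-n}$ since $r\asymp\dist(o,\cdot)$ there.

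For the upper bound I would use two ingredients: first, the Li--Yau estimate \eqref{eq:Green_function_bounds} together with $\Vol(B_t(x))\gtrsim t^n$ gives the global bound $G(x,y)\lesssim\dist(x,y)^{2-n}$; second, \Cref{thm:First_part_of_Main_Theorem_2}(ii) gives $|\nabla\Phi(y)|^2\lesssim (1+r(y))^{-2(n-1)}$. Fixing $x$ far out along the end and writing $\rho:=r(x)$, I would split the integral into three regions: (a) a geodesic ball $B$ of radius $\sim\rho$ about $x$, where $r(y)\asymp\rho$ so $|\nabla\Phi|^2\lesssim\rho^{-2(n-1)}$ and, by Bishop volume comparison on the Ricci-flat $(X,g_\varphi)$, $\int_B\dist(x,y)^{2-n}\vol_X\lesssim\rho^2$; (b) the region $\{r(y)\ll\rho\}$, where $\dist(x,y)\gtrsim\rho$ so $G(x,y)\lesssim\rho^{2-n}$ and $\int_{\{r(y)\ll\rho\}}|\nabla\Phi|^2\vol_X\leqslant\|\nabla\Phi\|_{L^2(X)}^2<\infty$; and (c) the remainder $\{r(y)\gtrsim\rho\}\setminus B$, which I would break again at $r(y)=2\rho$, using $|\nabla\Phi|^2\lesssim\rho^{-2(n-1)}$ and the same annular bound $\int\dist(x,y)^{2-n}\vol_X\lesssim\rho^2$ for the inner part, and $\dist(x,y)\asymp r(y)$ together with $\int_{2\rho}^\infty s^{3-2n}\,ds\lesssim\rho^{4-2n}$ for the outer part. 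Region (b) contributes $\lesssim\rho^{2-n}$ and regions (a), (c) contribute $\lesssim\rho^{4-2n}$; since $4-2n\leqslant 2-n$, summing yields $w(x)\lesssim r(x)^{2-n}$, and together with the lower bound this is the claimed asymptotics.

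The only genuinely delicate point is the bookkeeping in step (c): one must verify that the outer region $\{r(y)>2\rho\}$ contributes no more than $\rho^{2-n}$, and this is exactly where the numerology $2(n-1)>n$ — equivalently the convergence of $\int^\infty s^{3-2n}\,ds$ at infinity — is used. In particular, any decay rate $|\nabla\Phi|=O(r^{-(n-1)+\epsilon})$ with $\epsilon<n-2$ already suffices, so even \Cref{prop:Decay_2} would do and the sharp rate of \Cref{thm:First_part_of_Main_Theorem_2}(ii) is not strictly needed. Everything else is routine volume comparison and splitting of the Green's convolution.
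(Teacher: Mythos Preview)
Your proof is correct and follows essentially the same approach as the paper: the lower bound via \Cref{cor:Asymptotics_Phi_Square} with $k=0$, and the upper bound by convolving the Green's function bound $G(x,y)\lesssim\dist(x,y)^{2-n}$ against the decay $|\nabla\Phi|^2\lesssim r^{-2(n-1)}$ from \Cref{thm:First_part_of_Main_Theorem_2}(ii). The paper defers the details of the convolution splitting to \cite{van2009regularity}*{Theorem~2.30}, whereas you carry them out explicitly; your added remark that the sub-sharp rate from \Cref{prop:Decay_2} already suffices is correct and not noted in the paper.
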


\begin{proof}
	By \Cref{cor:Asymptotics_Phi_Square} and \Cref{rem:AC_vol_growth}, we already know that $m^2-|\Phi|^2\gtrsim r^{2-n}$. To prove that $m^2-|\Phi|^2\lesssim r^{2-n}$, we first recall from the result of \Cref{thm:Finite_Mass} that
	\begin{equation}
		(m^2-|\Phi|^2)(x) = 2\int_X G(x,y)|\nabla\Phi|^2(y)\vol_X(y),
	\end{equation}
	and then conclude by using the sharp decay estimate $|\nabla\Phi|^2\lesssim r^{-2(n-1)}$ of part (ii) of \Cref{thm:First_part_of_Main_Theorem_2} combined with the Green's function behavior $G(x,y)\sim\dist(x,y)^{2-n}$, as in the argument in \cite{van2009regularity}*{(first part of the proof of) Theorem~2.30}.
\end{proof}

\smallskip

\subsection{Bounds on higher order covariant derivatives of $\Phi$}

In this subsection, we assume the hypothesis of \Cref{thm:First_part_of_Main_Theorem_2}. The arguments here follow from an inductive procedure based on \cite{stern2010geometry}*{Section~2}.

\begin{lemma}\label{lem:higher_derivatives_l2}
    $\nabla^{j+1}\Phi\in L^2 (X)$ for all $j\in\mathbb{N}$.
\end{lemma}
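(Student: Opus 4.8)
\textbf{Proof proposal for \Cref{lem:higher_derivatives_l2}.}

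The plan is to prove the statement by induction on $j\in\mathbb{N}$, following the inductive scheme of \cite{stern2010geometry}*{Section~2}, where the base case $j=0$ (i.e. $\nabla\Phi\in L^2(X)$) is exactly the hypothesis $|\nabla\Phi|^2\in L^1(X)$. For the inductive step, suppose we already know $\nabla^{k+1}\Phi\in L^2(X)$ for all $k\le j-1$; we want $\nabla^{j+1}\Phi\in L^2(X)$. The natural tool is a Bochner--Weitzenb\"ock identity for the higher tensor $T_j:=\nabla^{j}(\nabla\Phi)$. Differentiating the identity $\nabla^*\nabla(\nabla\Phi)=[[\nabla\Phi,\Phi],\Phi]-2\ast[\ast F_\nabla\wedge\nabla\Phi]$ from \Cref{lem:Bochner_Delta_nabla_Phi_0} a total of $j$ times, and commuting covariant derivatives (which introduces curvature terms $\mathrm{Riem}$, $\nabla^a\mathrm{Riem}$, $F_\nabla$, $\nabla^a F_\nabla$ contracted against lower-order $\nabla^b\Phi$), yields a schematic identity
\begin{equation}
	\tfrac{1}{2}\Delta|T_j|^2+|\nabla T_j|^2 \;=\; -\,|(\ad_\Phi)T_j|^2 \;+\; \langle T_j,\, \mathcal{R}_j\rangle,
\end{equation}
where $\mathcal{R}_j$ is a sum of terms each of which is a contraction of $\mathrm{Riem}$ (or its derivatives), $F_\nabla$ (or its derivatives), and at most $j$ derivatives of $\Phi$. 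Since $(X,\varphi)$ is AC, hence of bounded geometry, $|\nabla^a\mathrm{Riem}|$ is bounded for all $a$; and by \Cref{cor:Lpbounds} (together with \Cref{cor:allderivativesdecay}), $|\nabla^a F_\nabla|$ and $|\nabla^{b+1}\Phi|$ are bounded and decay uniformly along the end for all $a,b$. Thus $|\mathcal{R}_j|\lesssim |T_j| + (\text{lower-order terms in }L^2)$, the key point being that the worst term in $\mathcal{R}_j$ is linear in $T_j$ with a bounded coefficient.

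Next I would integrate this Weitzenb\"ock identity against a cutoff $\chi_R^2$ (where $\chi_R$ is $1$ on $B_R$, supported in $B_{2R}$, with $|\rd\chi_R|\lesssim R^{-1}$) and let $R\to\infty$. The $-|(\ad_\Phi)T_j|^2$ term has a favorable sign and can be discarded; the $\langle T_j,\mathcal{R}_j\rangle$ term is estimated using $|\mathcal{R}_j|\lesssim |T_j|+g_j$ with $g_j\in L^2(X)$ (the lower-order pieces, built from $\nabla^{\le j}\Phi$, $F_\nabla$ and derivatives, all of which decay and lie in $L^2$ by the inductive hypothesis and \Cref{cor:Lpbounds}). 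The $\Delta|T_j|^2$ term integrates by parts to produce $\int |\nabla|T_j||^2\,\chi_R^2$ minus boundary-type error terms controlled by $\int|\rd\chi_R|^2|T_j|^2\lesssim R^{-2}\|T_j\|_{L^2(B_{2R})}^2$. The subtlety is that a priori we only know $T_{j-1}=\nabla^{j-1}(\nabla\Phi)\in L^2$, not yet $T_j\in L^2$; so the argument must be organized so that $\|T_j\|_{L^2}$ appears on both sides and can be absorbed, using the good $|\nabla T_j|^2$ term on the left and Kato's inequality $|\nabla|T_j||\le|\nabla T_j|$ together with a Hardy- or Sobolev-type inequality on the AC manifold to dominate $\|T_j\|_{L^2(B_R)}^2$ (or rather its $R^{-2}$-weighted version) by $\epsilon\|\nabla T_j\|_{L^2}^2 + C$. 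Concretely, one can follow Stern's device of first establishing $r^{-1}T_j\in L^2$ via Hardy's inequality (\Cref{lem:Hardy}) applied to $\chi_R |T_j|$, which makes the cutoff error terms vanish in the limit and closes the estimate.

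The main obstacle I anticipate is precisely this bootstrapping of integrability: the Weitzenb\"ock inequality only controls $\|\nabla T_j\|_{L^2}$ in terms of quantities that themselves involve $\|T_j\|_{L^2}$, so one cannot conclude naively. The resolution is to run the cutoff/Hardy argument carefully on the end $X-B_l$ (where, by \Cref{thm:First_part_of_Main_Theorem_2} and \Cref{cor:refined_AC_asymp_Phi}, $|\nabla\Phi|$ and all the forcing terms decay polynomially, with the transverse components decaying exponentially by \Cref{prop:Exponential_Decay_Transverse}), so that the right-hand side forcing $g_j$ is not merely in $L^2$ but decays fast enough that the $R^{-2}$-weighted cutoff errors and the Hardy remainder terms are all finite and uniformly bounded in $R$. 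Then letting $R\to\infty$ gives $\|\nabla T_j\|_{L^2(X)}<\infty$, i.e. $\nabla^{j+1}\Phi = \nabla T_j$ (up to lower-order terms already in $L^2$ by induction) is square integrable, completing the induction. Everything else is routine: the commutator bookkeeping for $\mathcal{R}_j$, and the standard cutoff integration-by-parts, are straightforward given the decay estimates already proved in \Cref{sec:Moser_e-reg,sec:AC}.
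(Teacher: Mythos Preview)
Your approach—differentiate the Bochner identity \eqref{eq:Bochner_identity_nablaPhi} and integrate against a cutoff—is exactly the paper's approach, but you have introduced an off-by-one error in the induction that creates a phantom obstacle. You set $T_j=\nabla^j(\nabla\Phi)$, assume $T_0,\ldots,T_{j-1}\in L^2$, and then write the Weitzenb\"ock identity for $T_j$; from this you try to conclude $\nabla T_j\in L^2$ and identify it with $\nabla^{j+1}\Phi$. But $\nabla T_j=\nabla^{j+1}(\nabla\Phi)$ carries $j+2$ derivatives of $\Phi$, not $j+1$, so the final identification is wrong. Correspondingly, the ``subtlety'' you flag—that the cutoff error involves $\|T_j\|_{L^2}$, which is not yet known—is an artifact of this mismatch.

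The paper organizes the induction so that the hypothesis is $T_0,\ldots,T_j\in L^2$ (the base case $T_1=\nabla^2\Phi\in L^2$ coming from \Cref{lem:Agmon_first_step}), and the goal is $T_{j+1}=\nabla T_j\in L^2$. One then pairs the $j$-times differentiated Bochner identity with $f_k^2 T_j$, where the $f_k$ are merely uniformly $C^j$-bounded cutoffs exhausting $X$ (no $R^{-1}$ decay of $\rd f_k$ is needed). The cutoff error $\|\rd f_k\otimes T_j\|_{L^2}^2$ is bounded directly by the inductive hypothesis $T_j\in L^2$, and all commutator terms are controlled by $\sum_{l\le j}c_l\|T_l\|_{L^2}^2$ using only the $L^\infty$ bounds on $\nabla^iF_\nabla$, $\nabla^{i+1}\Phi$, $\Phi$ from \Cref{cor:Lpbounds} and \Cref{thm:Finite_Mass}. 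Letting $k\to\infty$ gives $\nabla(f_kT_j)\to\nabla T_j$ in $L^2$. No Hardy inequality, no polynomial or exponential decay from \Cref{sec:AC}, and no absorption argument is required; once the index is aligned, the step is a two-line integration by parts.
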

\begin{proof}
    By assumption $\nabla\Phi\in L^2 (X)$, and by \Cref{lem:Agmon_first_step} we know that $\nabla^2\Phi\in L^2 (X)$. We now induct. Suppose that $\nabla^i(\nabla\Phi)\in L^2 (X)$ for all $i \leqslant j$. Recall from either \Cref{cor:allderivativesdecay,cor:Lpbounds} that $\nabla^i F_\nabla,\nabla^{i+1}\Phi\in L^\infty(X)$ for all $i$, as well as $\|\Phi\|_{L^\infty} \leqslant m$ by \Cref{thm:Finite_Mass} and \Cref{rem:Finite_Mass}. Let $\{f_k:X\to [0,1]\}$ be a sequence of uniformly $C^j$-bounded functions, with $\lim_{k\to\infty} f_k = 1$ pointwise. Differentiating the Bochner identity of \cref{eq:Bochner_identity_nablaPhi} $j$ times and taking the $L^2 (X)$-inner product with $f_k^2\nabla^j(\nabla\Phi)$ gives:
    \begin{align}
        0 &= \langle \nabla^j\left(\nabla^\ast\nabla + 2\ast[\ast F_\nabla\wedge\cdot{}] + \text{ad}(\Phi)^2(\cdot{})\right)(\nabla\Phi),  f_k^2\nabla^j(\nabla\Phi)\rangle_{L^2 (X)}\\
        &\geqslant \|\nabla(f_k\nabla^j(\nabla\Phi))\|_{L^2 (X)}^2 - \|\rd f_k\otimes\nabla^j(\nabla\Phi)\|_{L^2 (X)}^2 - \sum\limits_{0 \leqslant l \leqslant j} c_l \|\nabla^l(\nabla\Phi)\|_{L^2 (X)}^2,
    \end{align} for constants $c_l>0$ that are determined by the sup norms of $\nabla^i F_\nabla,\nabla^{i+1}\Phi$, $i \leqslant j$, the mass $m \geqslant 0$ of $(\nabla,\Phi)$, and the other coefficients of $[\nabla^j,\nabla^\ast]$. Taking the limit as $k\to\infty$ gives $\nabla^{j+1}(\nabla\Phi)\in L^2 (X)$.  
\end{proof}

\begin{corollary}\label{cor:higher_order_der_bounds}
	$\nabla^{j+1}\Phi\in L_1^{p}(X)$ for all $p\in [2,2n]$ and $j\in\mathbb{N}$.
\end{corollary}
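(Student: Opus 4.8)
The plan is to combine the $L^2$-bounds on all covariant derivatives of $\nabla\Phi$ from \Cref{lem:higher_derivatives_l2} with the uniform $L^\infty$-bounds on $\nabla^i F_\nabla$ and $\nabla^{i+1}\Phi$ coming from \Cref{cor:Lpbounds} (equivalently \Cref{cor:allderivativesdecay}), feeding these into the Sobolev embedding on a manifold of bounded geometry with maximal volume growth. First I would record that, because $(X,\varphi)$ is AC, it has bounded geometry and Euclidean volume growth (\Cref{rem:AC_vol_growth}), so the Sobolev inequality $\|u\|_{L^{2n/(n-2)}} \lesssim \|\nabla u\|_{L^2} + \|u\|_{L^2}$ holds globally, as does the full scale of Gagliardo--Nirenberg--Sobolev inequalities $W^{1,p}\hookrightarrow L^{p^*}$ with $p^* = np/(n-p)$ for $p<n$, and $W^{1,p}\hookrightarrow L^q$ for all $q<\infty$ when $p\geq n$. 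Applying the first embedding to $u = |\nabla^{i+1}\Phi|$ (using Kato's inequality $|\nabla|\nabla^{i+1}\Phi|| \leq |\nabla^{i+2}\Phi|$) and \Cref{lem:higher_derivatives_l2} gives $\nabla^{i+1}\Phi \in L^{2n/(n-2)}(X)$ for every $i$; bootstrapping this Sobolev step a finite number of times raises the integrability exponent from $2$ past $n$ and then to every finite $p$, so in particular $\nabla^{i+1}\Phi\in L^p(X)$ for all $p\in[2,\infty)$ and all $i\in\mathbb N$.

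With all covariant derivatives of $\Phi$ lying in every $L^p$, $p<\infty$, and also in $L^\infty$, the conclusion $\nabla^{j+1}\Phi\in L^p_1(X)$ — i.e. both $\nabla^{j+1}\Phi$ and $\nabla^{j+2}\Phi$ are in $L^p$ — is then immediate for each fixed $j$ and each $p\in[2,2n]$ (the upper endpoint $2n$ being the only value actually needed later in the moduli theory, though any finite exponent works). The only subtlety is the bootstrap at the borderline $p$ near $n=7$: the embedding $W^{1,p}\hookrightarrow L^q$ degenerates as $p\uparrow n$, so I would be careful to take a finite increasing sequence of exponents $p_0=2 < p_1 < \cdots$ with $p_{k+1} \leq p_k^* = np_k/(n-p_k)$ as long as $p_k<n$, note that this sequence reaches a value $>n$ in finitely many steps (since $p_k^* - p_k = p_k^2/(n-p_k)$ is bounded below by a positive constant while $p_k<n-1$, say), and once $p_k > n$ one obtains $L^q$ for all finite $q$ in one more step.

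The main obstacle is essentially bookkeeping rather than a genuine analytic difficulty: one must verify that the constants in the iterated Sobolev inequalities are uniform (guaranteed by bounded geometry, so the local Sobolev constants do not degenerate at infinity and the global inequality holds with the volume-growth hypothesis supplying the needed lower volume bound $\mathrm{Vol}(B_r(x))\gtrsim r^n$), and that differentiating the Bochner identity \eqref{eq:Bochner_identity_nablaPhi} $j$ times only introduces curvature terms and lower-order derivatives of $\Phi$, all of which are already controlled in $L^\infty\cap L^2$ by the previous results; this is exactly the inductive structure used in \Cref{lem:higher_derivatives_l2} and in \cite{stern2010geometry}*{Section~2}. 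No new estimate beyond Sobolev embedding, Kato's inequality, and the already-established $L^2$ and $L^\infty$ bounds is required.
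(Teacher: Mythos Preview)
Your proposal is correct and follows essentially the same route as the paper: combine \Cref{lem:higher_derivatives_l2} with Kato's inequality to get $|\nabla^{j+1}\Phi|\in L_1^2(X)$ for every $j$, then bootstrap via the Sobolev embedding $L_1^p\hookrightarrow L^{np/(n-p)}$ (together with H\"older) to reach all $p\in[2,2n]$. Your write-up is in fact more careful than the paper's two-line proof, in that you explicitly justify the global Sobolev inequality via bounded geometry and Euclidean volume growth and spell out the finite iteration past the borderline $p=n$.
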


\begin{proof}
    Using Kato's inequality we get that if $\nabla^{j+2}\Phi\in L^p(X)$, then $\rd |\nabla^{j+1}\Phi|\in L^p(X)$. Hence, using \Cref{lem:higher_derivatives_l2}, we get $|\nabla^{j+1}\Phi|\in L_1^2(X)$ for all $j\in\mathbb{N}$. Now iterate this argument using the Sobolev Embeddings $L_1^p(X)\hookrightarrow L^{\frac{np}{n-p}}(X)$ valid for all $p<n$, together with H\"older's inequality. 
\end{proof}


\bigskip

\section{Boundary data}\label{sec:Boundary_data}

In this section we prove the second part of \Cref{thm:Main_Theorem_2}.

\begin{theorem}\label{thm:main3}
	Let $(X, \varphi)$ be an irreducible AC $\rG_2$-manifold and $P \rightarrow X$ be a principal $\SU (2)$-bundle. Assume that $(\nabla, \Phi)$ satisfies the second order \cref{eq:2nd_Order_Eq_1,eq:2nd_Order_Eq_2} with $\nabla \Phi \in L^2 (X)$, and either $|F_\nabla|$ decays quadratically along the end or $(\nabla, \Phi)$ is a $\rG_2$-monopole such that $|F_\nabla^{14}|$ decays quadratically along the end.

	Then there is a principal $\SU (2)$-bundle, $P_\infty \rightarrow \Sigma$ and a pair $(\nabla_\infty, \Phi_\infty)$, where $\nabla_\infty$ is a connection on $P_\infty$ and $\Phi_\infty$ is a $\nabla_\infty$-parallel section of the adjoint bundle $\mathfrak{g}_{P_\infty}$ over $\Sigma$, such that $(\nabla, \Phi) |_{\Sigma_R}$ converges uniformly to $(\nabla_\infty , \Phi_\infty)$ as $R \to \infty$. Moreover, if $(\nabla, \Phi)$ is a $\rG_2$-monopole, then $\nabla_\infty$ is a pseudo-Hermitian--Yang--Mills connection on $P_\infty$ with respect to the nearly K\"ahler structure on $\Sigma$.
\end{theorem}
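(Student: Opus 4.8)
The plan is to transport the decay estimates of Sections~\ref{sec:Moser_e-reg}--\ref{sec:AC} to the family of restrictions $(\nabla,\Phi)|_{\Sigma_R}$, rescaled so as to live on the fixed nearly K\"ahler link $(\Sigma,g_\Sigma)$, and then pass to a limit.

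\textbf{Topology and temporal gauge.} Since the end $X-K\cong C(\Sigma)=(1,\infty)_r\times\Sigma$ retracts onto $\Sigma$, the bundle $P|_{X-K}$ is isomorphic to $\pi^*P_\infty$ for some principal $\SU(2)$-bundle $P_\infty\to\Sigma$, where $\pi:C(\Sigma)\to\Sigma$. Fixing such an isomorphism and solving a linear ODE along the $r$-lines, I would put $\nabla$ in \emph{temporal gauge} on the end: $\nabla=\rd+A(r)$ with $A(r)$ a path of connections on $P_\infty$ and $\Phi=\Phi(r)$ a path of sections of $\mathfrak g_{P_\infty}$. In this gauge $\partial_r\Phi=\nabla_{\partial_r}\Phi$, so $|\partial_r\Phi|\le|\nabla\Phi|\lesssim r^{-(n-1)}$ by \Cref{thm:First_part_of_Main_Theorem_2}(ii); hence $\Phi(r)$ is Cauchy as $r\to\infty$ and converges uniformly on $\Sigma$ to some $\Phi_\infty$. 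Since the tangential covariant derivative satisfies $r\,|\nabla^\Sigma\Phi|_{g_\Sigma}=r^2\,|\nabla^{\Sigma_r}\Phi|_{g_X}\lesssim r^{2-(n-1)}\to0$, and using the higher derivative decay from \Cref{cor:Lpbounds}, one gets $\nabla_\infty\Phi_\infty=0$ together with $C^\infty(\Sigma)$ convergence of $\Phi|_{\Sigma_R}$.

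\textbf{Subsequential convergence of the connection.} Let $\widehat F_R$ denote the curvature of $\nabla|_{\Sigma_R}$ measured with respect to $g_\Sigma$. Since a tangential $2$-form picks up two powers of $R$ under $(\Sigma_R,g_X)\cong(\Sigma,R^2 g_\Sigma)$, we have $|\widehat F_R|_{g_\Sigma}\lesssim R^2\bigl(|F_\nabla^{14}|+|F_\nabla^7|\bigr)\lesssim R^2\bigl(R^{-2}+R^{-(n-1)}\bigr)\lesssim1$, using the quadratic decay of $F_\nabla^{14}$ and $|F_\nabla^7|\lesssim|\nabla\Phi|\lesssim r^{-(n-1)}$. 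Thus $\{\widehat F_R\}$ is bounded in $L^p(\Sigma)$ for every $p$, so by Uhlenbeck weak compactness together with elliptic bootstrapping (in a Coulomb gauge over geodesic balls of $\Sigma$ the reduction of the monopole equation to $\Sigma_R$ is, to leading order in $R^{-1}$, an elliptic system for $\nabla|_{\Sigma_R}$ with errors controlled by $R^2|\nabla\Phi|$ and $|\Upsilon^*\psi-\psi_C|$) there are $R_j\to\infty$ and gauge transformations $g_j$ with $g_j\cdot\nabla|_{\Sigma_{R_j}}\to\nabla_\infty$ in $C^\infty(\Sigma)$ and $g_j\cdot\Phi|_{\Sigma_{R_j}}$ converging to a $\nabla_\infty$-parallel section, which we again denote $\Phi_\infty$. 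Passing to the limit in the $\rG_2$-monopole \cref{eq:Monopole} restricted to $\Sigma_R$ — using $\Upsilon^*\psi\to\psi_C$ with all derivatives, the explicit shape $\psi_C=r^4\tfrac{\omega^2}{2}-r^3\rd r\wedge\Omega_2$, and that the $F_\nabla^7$ contribution is $O(r^{-(n-1)})$ and so dies under the $R^2$ rescaling — identifies the pull-back of $\nabla_\infty$ as a $\rG_2$-instanton on the metric cone, equivalently $F_{\nabla_\infty}^{0,2}=0$ and $\Lambda F_{\nabla_\infty}=0$ for the nearly K\"ahler structure on $\Sigma$.

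\textbf{Upgrading to the whole family.} This is the main obstacle, and I see two routes. (i) \emph{Integrate the radial curvature.} In temporal gauge $\partial_r A=\iota_{\partial_r}F_\nabla$, and a tangential $1$-form on $\Sigma_r$ has $g_\Sigma$-norm equal to $r$ times its $g_X$-norm, so it suffices to show $\int^\infty r\,|\iota_{\partial_r}F_\nabla|_{g_X}\,\rd r<\infty$. For $F_\nabla^7$ this is immediate, since $\iota_{\partial_r}F_\nabla^7$ has components $\sum_j\varphi_{rij}\nabla_j\Phi$, whence $|\iota_{\partial_r}F_\nabla^7|_{g_X}\lesssim|\nabla\Phi|\lesssim r^{-(n-1)}$. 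For $F_\nabla^{14}$ the a~priori bound is only $O(r^{-2})$ and must be improved to $o(r^{-2})$ on its $\rd r$-components: the transverse part $(F_\nabla^{14})^\perp$ already decays exponentially by \Cref{prop:Exponential_Decay_Transverse}, while the $\langle\Phi\rangle$-component is an almost harmonic $\mathfrak g_P^{||}$-valued $2$-form (Bianchi gives $\rd_\nabla F_\nabla=0$ and $\rd_\nabla^*F_\nabla=[\nabla\Phi,\Phi]$ is exponentially small), so a separation-of-variables analysis on the cone — comparing indicial roots for the $\rd r\wedge(\cdot)$ components against the tangential ones — or, alternatively, a Bochner/maximum-principle argument in the spirit of \Cref{sec:AC}, should force the radial components to decay faster, yielding the integrability and hence uniform (then $C^\infty$) convergence of $A(r)$. (ii) \emph{Rigidity when $m>0$.} The $\omega$-limit set of the precompact path $R\mapsto[\nabla|_{\Sigma_R}]$ is nonempty, compact and connected, and each of its points is a pseudo-Hermitian--Yang--Mills connection carrying the fixed parallel section $\Phi_\infty\neq0$, hence reduces to a fixed $\rU(1)$-bundle $L$ (the class $\beta=c_1(L)$ being locally constant along the path); since a compact nearly K\"ahler $6$-manifold is Einstein with positive scalar curvature and so has $b_1(\Sigma)=0$, the abelian pseudo-HYM connection on $L$ is unique up to gauge, so the $\omega$-limit set is a single point and the whole family converges. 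In either case, combining with the convergence of $\Phi$ from the first step finishes the proof; the delicate points are the extra radial decay in route~(i) (respectively the precompactness-plus-connectedness bookkeeping in route~(ii)) and verifying that the $\Lambda^2_{14}$-condition on the cone restricts exactly to the pseudo-HYM equations on the link.
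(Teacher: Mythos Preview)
Your strategy parallels the paper's proof almost step for step: temporal/radial gauge on the end, Uhlenbeck compactness on the rescaled cylinders $(C_R,h)\cong([0,1]\times\Sigma,\rd t^2+g_\Sigma)$ where $|F_\nabla|_h^2\lesssim 1$, the $r^{-(n-1)}$ decay of $|\nabla\Phi|$ from \Cref{thm:First_part_of_Main_Theorem_2}(ii) for the existence and $\nabla_\infty$-parallelism of $\Phi_\infty$, and the pseudo-HYM conclusion read off from $|F_\nabla^7|_h\to0$ (equivalently $F_{\nabla_\infty}\wedge\psi_C=0$) together with $\partial_r a_\infty=0$.

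The one substantive divergence is at your ``main obstacle.'' The paper dispatches the upgrade from subsequential to full convergence in a single stroke, without splitting into $F^7$ and $F^{14}$ contributions: in radial gauge $|\partial_r a|_g\leqslant|F_\nabla|_g\lesssim r^{-2}$, hence $\int_R^{eR}|\partial_r a|_g\,\rd r\lesssim R^{-1}\to0$, and the paper takes this (measured in the conical $g$-norm rather than the fixed $g_\Sigma$-norm you work with) as showing simultaneously that $\lim_{r\to\infty}\nabla(r)$ exists and that it is pulled back from $\Sigma$. No improved radial decay of $F_\nabla^{14}$ beyond $r^{-2}$ and no rigidity is invoked. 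Your route~(ii)---uniqueness of the abelian pseudo-HYM connection on a fixed line bundle over $\Sigma$, using $b_1(\Sigma)=0$ for a positive-Einstein nearly K\"ahler link---is a clean alternative that the paper does not use in this proof (its content reappears later as \Cref{rem:Unique_nabla}), though as you yourself note it only addresses the $\rG_2$-monopole case with $m>0$, not the general second-order statement.
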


We prove the several statements in this theorem as a result of three Lemmata stated and proved bellow.

\begin{lemma}[Existence of $\nabla_\infty$]\label{lem:Existence_of_A_infinity}
	Under the hypotheses of \Cref{thm:main3}, there is a connection $\nabla_\infty$ on a bundle over the link $\Sigma$ such that $\nabla|_{\Sigma_R}$ converges uniformly to $\nabla_\infty$ as $R \to \infty$.
\end{lemma}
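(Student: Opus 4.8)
The plan is to extract $\nabla_\infty$ as a limit of the restrictions $\nabla|_{\Sigma_R}$ using a standard Uhlenbeck-compactness argument combined with the curvature decay and the $L^p$ bounds on covariant derivatives established earlier. Concretely, I would first rescale: on the end $X-K\cong C(\Sigma)=(1,\infty)_r\times\Sigma$ with metric $g_C=\rd r^2+r^2g_\Sigma$, introduce $t=\log r$ so that the end is modelled on a half-cylinder $[\log R_0,\infty)_t\times\Sigma$ with metric conformal to $\rd t^2+g_\Sigma$. In these coordinates the curvature decay $|F_\nabla|_{g_C}\lesssim r^{-2}$ (or, in the monopole case, $|F_\nabla^{14}|\lesssim r^{-2}$ together with $|F_\nabla^7|\lesssim|\nabla\Phi|\lesssim r^{-(n-1)}$ from \Cref{thm:First_part_of_Main_Theorem_2}) translates into the statement that the cylindrical-gauge curvature of $\nabla$ restricted to $\{t\}\times\Sigma$ tends to $0$ in $C^0$, hence in $L^p(\Sigma)$ for every $p$ by the derivative bounds of \Cref{cor:Lpbounds} and \Cref{cor:higher_order_der_bounds}.

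Next I would run Uhlenbeck compactness on the sequence of restricted connections $\nabla_R:=\nabla|_{\Sigma_R}$. Since $\|F_{\nabla_R}\|_{L^p(\Sigma)}\to 0$ for $p>n/2$ (here $\dim\Sigma=6$, so take $p>3$), for $R$ large each $\nabla_R$ lies in the small-curvature regime; by Uhlenbeck's theorem (applied with a fixed finite cover of $\Sigma$ by balls, using that the metrics $g_{\Sigma_R}/R^2$ converge in $C^\infty$ to $g_\Sigma$) there are gauge transformations $u_R$ such that $u_R(\nabla_R)$ is in Coulomb gauge relative to a fixed reference connection on a fixed bundle $P_\infty\to\Sigma$, with $\|u_R(\nabla_R)-\nabla^{\mathrm{ref}}\|_{L^p_1(\Sigma)}\lesssim\|F_{\nabla_R}\|_{L^p(\Sigma)}\to 0$. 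The bundle $P_\infty$ is forced to be topologically trivial-or-not independently of large $R$ because the isomorphism type of $\Sigma_R$ stabilises (\Cref{def:AC}) and the restriction bundles $P|_{\Sigma_R}$ are all isomorphic for $R\gg1$. Having bounded $L^p_1$-norms plus the higher derivative bounds from \Cref{cor:higher_order_der_bounds} (which survive the rescaling and the Coulomb gauge fixing via elliptic bootstrapping on the second-order equations, exactly as in \Cref{prop:total_epsilon_regularity}), Rellich compactness upgrades weak $L^p_1$-convergence of a subsequence to $C^0$-convergence, and the limit $\nabla_\infty$ is a smooth connection on $P_\infty$. Finally, to promote subsequential convergence to genuine convergence as $R\to\infty$, I would invoke an energy/monotonicity argument: the $L^2$-energy of $F_\nabla$ on the annulus $B_{2R}-B_R$ tends to $0$, so a translation-invariant Łojasiewicz-type or Simon-type argument along the cylinder (or more elementarily, uniqueness of the limit combined with connectedness of the set of subsequential limits in $C^0$) shows all restrictions converge to the same $\nabla_\infty$; the stated quadratic decay of $F_\nabla^{14}$ gives an integrable rate and rules out the connection "spinning" around a circle of gauge-equivalent limits without converging.

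The main obstacle I expect is the passage from subsequential convergence (cheap, from Uhlenbeck plus Rellich) to convergence of the full family $\{\nabla|_{\Sigma_R}\}_{R\to\infty}$, i.e. ruling out oscillation of the boundary data. This is the usual difficulty in cylindrical-end gauge theory: without a rate one only gets that $\nabla|_{\Sigma_R}$ approaches the space of flat-on-the-cross-section (or here, pseudo-HYM) connections, not that it converges to a single point of that space. The fix should come from the quadratic decay hypothesis on $F_\nabla^{14}$ combined with the sharp $r^{-(n-1)}$ decay of $\nabla\Phi$ (hence of $F_\nabla^7$): together these make $\|F_\nabla\|_{L^2(B_{2R}\setminus B_R)}$ decay fast enough that the "angular velocity" of $\nabla|_{\Sigma_R}$ in gauge-orbit space is integrable in $\log R$, forcing a Cauchy property and hence convergence. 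I would structure this as: (i) fix the Coulomb gauge once and for all on a fixed large annulus and extend it along the end by parallel transport in the $r$-direction (temporal gauge), (ii) in this gauge the monopole equation becomes a gradient-flow-type equation in $t=\log r$ whose "velocity" $\partial_t\nabla$ is controlled by $F_\nabla$ and $\nabla\Phi$ on the slice, (iii) integrate. The remaining statements — that $\Phi_\infty$ is $\nabla_\infty$-parallel (immediate from $|\nabla\Phi|\to 0$ on the rescaled slices, passing to the limit) and that $\nabla_\infty$ is pseudo-Hermitian–Yang–Mills (obtained by restricting the $\rG_2$-monopole equation $\ast(F_\nabla\wedge\psi)=\nabla\Phi$ to $\Sigma_R$, using $\psi\to\psi_C=r^4\omega^2/2-r^3\rd r\wedge\Omega_2$ from \Cref{lem:cone_G2 = linkNK}, and reading off $F_\nabla^{0,2}\to0$ and $\Lambda F_\nabla\to 0$ in the limit since the right-hand side decays faster) — are then routine and I would treat them in the two subsequent lemmata as the excerpt indicates.
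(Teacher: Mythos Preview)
Your approach is essentially the same as the paper's: rescale to the cylinder, apply Uhlenbeck compactness for subsequential convergence, then use radial (temporal) gauge and integrate the radial derivative of the connection to upgrade to full convergence. The one place you overcomplicate is the uniqueness step: you float \L ojasiewicz--Simon or a gradient-flow interpretation of the monopole equation, but none of that is needed. In radial gauge $\nabla = a(r)$, the identity $F_\nabla = \rd r \wedge \partial_r a + F_a$ is just the definition of curvature, so $|\partial_r a|_g \leqslant |F_\nabla|_g \lesssim r^{-2}$ directly; integrating gives $\int_R^{eR} |\partial_r a|\,\rd r \lesssim R^{-1} \to 0$, hence the limit exists and is $r$-independent. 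The monopole equation plays no role in this step, and the quadratic curvature decay alone (not merely integrability in $\log r$) gives the Cauchy property without any appeal to the structure of the limit set.
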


\begin{proof}
	By hypothesis, we know that
	\begin{equation}
		|F_\nabla|_g^2 \lesssim r^{-4}.
	\end{equation}
	For the $\rG_2$-monopole case, in which we merely assume quadratic decay of $F_\nabla^{14}$, the quadratic decay of the full $F_\nabla$ follows from \Cref{thm:First_part_of_Main_Theorem_2} (ii):
	\begin{equation}
		3 |F_\nabla^7|_g^2 = |\nabla \Phi|_g^2 \lesssim r^{-2(n-1)}.
	\end{equation}
	Now consider the cylinders $C_R = \left\{ x \in X : r (x) \in [R, eR] \right\}$ with the conical metric $g_C$ which for large $R$ approximates well the $\rG_2$-metric $g$. Then, we rescale it by $r^{-2}$ to obtain the cylindrical metric 
	\begin{equation}
		h = r^{-2}g_C = \rd t^2 + g_\Sigma,
	\end{equation}
	where $t = \log (r)$. With respect to this translation invariant metric we can identify all the cylinders $C_R$ with $(C = [0,1]_t \times \Sigma,h)$. As from the above, we have
	\begin{equation}
		|\nabla\Phi|_h^2 \lesssim e^{-2nt} \quad \& \quad |F_\nabla|_h^2 \lesssim 1,
	\end{equation}
	we find that the restriction of $\nabla_i = \nabla|_{C_i}$ seen as a connection over $C$ has uniformly bounded curvature with respect to $h$. Thus, Uhlenbeck's compactness results \cite{Uhlenbeck1982a} apply and by possibly passing to a subsequence, $A_i$ convergences modulo gauge, as $i \to \infty$, to a well defined connection $\nabla_\infty$ on $C$.
	
	We must now argue that such a connection is unique and does not depend on the subsequence. For that consider $\nabla_i$ on $C_i$ written in radial gauge with respect to $r$, i.e. $\nabla_i = a_i (r)$ with $a_i (\cdot)$ a 1-parameter family of connections over $\Sigma$ parametrized by $r \in [R, e R]$, then $F_{\nabla_i} = \rd r \wedge \del_r a_i(r) + F_{a_i}(r)$ where $F_{a_i}(r)$ is the curvature of $a_i(r)$ over $\lbrace r \rbrace \times \Sigma$. Using this, we find $|\del_r \nabla_i|_g \leqslant |F_{\nabla_i}|_r \lesssim r^{-2}$ and so
	\begin{equation}
		\int\limits_R^{eR} |\del_r \nabla_i|_g \ \rd r \lesssim R^{-1},
	\end{equation}
	which decays as $R \to \infty$. This not only shows that the limit 
	\begin{equation}
		\nabla_\infty = \lim\limits_{r \to \infty} \nabla (r),
	\end{equation}
	exists, as proves it is independent of the coordinate $r$ and so a pullback from a connection over $\Sigma$. Thus, it agrees with the connection $\nabla_\infty$ obtained as the uniform limit of the $\nabla_i$ which is therefore pullback from $\Sigma$.
\end{proof}

\begin{lemma}[$\nabla_\infty$ is pseudo-Hermitian--Yang--Mills]
	In case $(\nabla, \Phi)$ is a $\rG_2$-monopole, the connection $\nabla_\infty$ from \Cref{lem:Existence_of_A_infinity} is pseudo-Hermitian--Yang--Mills, i.e. it satisfies 
	\begin{equation}
		\Lambda F_{\nabla_\infty} = 0 = F_{\nabla_\infty}^{0,2},
	\end{equation}
	with respect to the nearly K\"ahler structure on $\Sigma$ induced by the conical $\rG_2$-structure.
\end{lemma}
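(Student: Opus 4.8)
The plan is to pass to the limit in the $\rG_2$-monopole equation along the conical end, where the Higgs term becomes negligible compared with the curvature, so that $\nabla_\infty$ inherits the $\rG_2$-instanton equation of the asymptotic cone; nearly K\"ahler linear algebra on the link then splits this into the two pseudo-Hermitian--Yang--Mills conditions. Set $t=\log r$ and write $\psi_C = r^4\hat\psi$, where $\hat\psi := \tfrac12\omega^2 - \rd t\wedge\Omega_2$ is a translation-invariant $4$-form on the cylinder $C = [0,1]_t\times\Sigma$; since $(X,\varphi)$ is AC there is, along the end, a $4$-form $\eta$ with $|\eta|_{g_C} = O(r^\nu)$ together with all covariant derivatives, such that $\psi = r^4\hat\psi + \eta$. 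Applying $\ast$ to the $\rG_2$-monopole \cref{eq:Monopole} gives $F_\nabla\wedge\psi = \ast\nabla\Phi$, which rearranges to
\begin{equation*}
	F_\nabla\wedge\hat\psi \;=\; r^{-4}\ast\nabla\Phi \;-\; r^{-4}\,F_\nabla\wedge\eta .
\end{equation*}

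First I would restrict this identity to the cylinders $C_i = \{\, r\in[e^i,e^{i+1}] \,\}$, identify each with a fixed copy of $C$ by the translation $t\mapsto t-i$, and endow it with the rescaled metric $h_i = r^{-2}g$, which converges to the cylindrical metric $h = \rd t^2 + g_\Sigma = r^{-2}g_C$. Keeping track of the conformal weights under $g_C = r^2 h$ (a $k$-form has $h$-norm $r^k$ times its $g_C$-norm), and using the quadratic curvature decay $|F_\nabla|_g\lesssim r^{-2}$ together with the sharp decay $|\nabla\Phi|_g\lesssim r^{-(n-1)}$ from \Cref{thm:First_part_of_Main_Theorem_2}(ii), one finds that in the $h$-norm the right-hand side above is $O(r^{2-(n-1)}) + O(r^\nu)$, hence tends to $0$ uniformly on $C_i$ as $i\to\infty$. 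On the other hand, by \Cref{cor:Lpbounds} all covariant derivatives of $F_\nabla$ decay, so the rescaled curvatures of $\nabla_i := \nabla|_{C_i}$ are uniformly $C^\infty$-bounded on $(C,h)$; together with the (gauge-fixed) convergence $\nabla_i\to\nabla_\infty$ of \Cref{lem:Existence_of_A_infinity} this upgrades to $F_{\nabla_i}\to F_{\nabla_\infty}$ in $C^\infty_{\mathrm{loc}}(C)$. Since $\hat\psi$ is a fixed form, passing to the limit in the displayed identity yields $F_{\nabla_\infty}\wedge\hat\psi = 0$ --- the $\rG_2$-instanton equation of the cone.

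Finally, by \Cref{lem:Existence_of_A_infinity} the connection $\nabla_\infty$ is pulled back from a connection $a_\infty$ on the bundle $P_\infty\to\Sigma$, so $F_{\nabla_\infty} = p^\ast F_{a_\infty}$ with $F_{a_\infty}\in\Omega^2(\Sigma,\mathfrak{g}_{P_\infty})$, and
\begin{equation*}
	0 \;=\; F_{a_\infty}\wedge\hat\psi \;=\; \tfrac12\,F_{a_\infty}\wedge\omega^2 \;-\; \rd t\wedge\bigl(F_{a_\infty}\wedge\Omega_2\bigr).
\end{equation*}
The two summands lie in the complementary subspaces $\Omega^6(\Sigma)$ and $\rd t\wedge\Omega^5(\Sigma)$ of $\Omega^6(C)$, so each vanishes. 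On the nearly K\"ahler $6$-manifold one has $F_{a_\infty}\wedge\omega^2 = \tfrac13(\Lambda F_{a_\infty})\,\omega^3$, whence $\Lambda F_{a_\infty} = 0$; and decomposing $F_{a_\infty} = F^{2,0} + F^{1,1} + F^{0,2}$ into $J$-types, one has $F^{1,1}\wedge\Omega_2 = 0$ for bidegree reasons, while $F_{a_\infty}\wedge\Omega_2 = 0$ separately kills its $(3,2)$-part $F^{0,2}\wedge\Omega$ and its $(2,3)$-part $F^{2,0}\wedge\bar\Omega$; since $\Omega$ is nowhere vanishing, $F^{0,2}\wedge\Omega = 0$ forces $F_{a_\infty}^{0,2} = 0$. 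Thus $\Lambda F_{\nabla_\infty} = 0 = F_{\nabla_\infty}^{0,2}$, i.e. $\nabla_\infty$ is pseudo-Hermitian--Yang--Mills.

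The main obstacle is the limiting argument of the second paragraph: one must be careful with the conformal weights to ensure that \emph{both} the $\nabla\Phi$-term and the AC-error term $F_\nabla\wedge\eta$ are negligible relative to $F_\nabla\wedge\hat\psi$ --- this is exactly where the sharp $O(r^{-(n-1)})$ decay of $\nabla\Phi$, rather than a merely sub-quadratic rate, is used --- and one must promote the modulo-gauge convergence of \Cref{lem:Existence_of_A_infinity} to genuine $C^\infty$ convergence of curvatures, which is supplied by the all-order derivative decay of \Cref{cor:Lpbounds}. The remaining nearly K\"ahler linear algebra is routine.
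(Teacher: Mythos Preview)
Your argument is correct and follows the same strategy as the paper: establish $F_{\nabla_\infty}\wedge\psi_C=0$ on the cone and then split this via nearly K\"ahler linear algebra into $\Lambda F=0$ and $F^{0,2}=0$. The paper streamlines the limiting step by using the pointwise identity $3|F_\nabla^7|_g^2=|\nabla\Phi|_g^2$, valid in the \emph{actual} metric $g$, so that $|F_\nabla^7|_h=r^2|F_\nabla^7|_g\to 0$ directly---there is no need to decompose $\psi=\psi_C+\eta$ and bound the AC error $F_\nabla\wedge\eta$ separately. One small caveat in your write-up: \Cref{cor:Lpbounds} only gives $|\nabla^j F_\nabla|_g\to 0$, not the rate $|\nabla^j F_\nabla|_g\lesssim r^{-(j+2)}$ you would need for uniform $C^\infty$-bounds in the cylindrical metric $h$; the curvature convergence $F_{\nabla_i}\to F_{\nabla_\infty}$ you actually need is supplied by the Uhlenbeck compactness and elliptic regularity already inside the proof of \Cref{lem:Existence_of_A_infinity} (or by a scale-invariant application of \Cref{prop:total_epsilon_regularity} on balls of radius comparable to $r$).
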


\begin{proof}
	With respect to the metric $g_C$ and the coordinate $r$ we can view 
	\begin{equation}
		(C, g_C) = \left( [1,e]_r \times \Sigma , \rd r^2 + r^2 g_\Sigma \right),
	\end{equation}
	that is, $g_C$ is a conical metric on the fixed cylinder $C$. This metric has holonomy $\rG_2$ and $\varphi_C = \rd r \wedge \omega + \Re(\Omega)$ with $(\omega,\Re(\Omega))$ determining the nearly K\"ahler structure on the cone. Hence, we can decompose the curvature of $\nabla_\infty$ as
	\begin{equation}
		F_{\nabla_\infty} = F_{\nabla_\infty}^7 + F_{\nabla_\infty}^{14},
	\end{equation}
	according to types with respect to $\varphi_C$.

	On the other hand, as $|F_\nabla^7|_h \lesssim e^{-2nt} \to 0$ uniformly, we conclude that any $\nabla_\infty$ must satisfy
	\begin{equation}
		F_{\nabla_\infty}^7 = 0.
	\end{equation}
	Now, write $\nabla_\infty$ in radial gauge in $C$, i.e. as $\nabla_\infty = a_\infty(r)$ with $a_\infty(\cdot)$ a $1$-parameter family of connections over $\Sigma$ parametrized by $r \in [1,e]$. Then, its curvature can be written as
	\begin{equation}
		F_{\nabla_\infty} = F_{a_\infty} + \rd r \wedge \del_r a_\infty .
	\end{equation}
	Then, the condition $F_{\nabla_\infty}^7 = 0$ can equivalently be written as $F_{\nabla_\infty} \wedge \psi_C = 0$ where $\psi_C = \tfrac{\omega^2}{2} - \rd r \wedge \Im(\Omega)$, which then yields
	\begin{align}
		\del_r a_\infty \wedge \frac{\omega^2}{2}	&= F_{a_\infty} \wedge \Im(\Omega), \\
		F_{a_\infty} \wedge \frac{\omega^2}{2}		&= 0.
	\end{align}
	The result follows from observing that $\partial_r a_\infty = 0$.
\end{proof}

\begin{lemma}[Existence of $\Phi_\infty$]
	Under the hypotheses of \Cref{thm:main3}, there is a $\nabla_\infty$-parallel section $\Phi_\infty$ of the adjoint bundle $\mathfrak{g}_{P_\infty}$ over $\Sigma$ such that $\Phi (R) = \Phi |_{\Sigma_R}$ converges uniformly to $\Phi_\infty$ as $R \to \infty$.
\end{lemma}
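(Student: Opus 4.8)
The plan is to run the entire argument in the radial gauge over the end that was already used in the proof of \Cref{lem:Existence_of_A_infinity}, feeding in the sharp decay $|\nabla \Phi| = O(r^{-(n-1)})$ established in \Cref{thm:First_part_of_Main_Theorem_2}(ii). Write $\nabla = a(r)$, so that in this gauge the connection has no $\rd r$-component, and, since for $R$ large the links $\Sigma_R$ all have the fixed diffeomorphism type of \Cref{def:AC}, regard $\Phi(r) \coloneqq \Phi|_{\Sigma_R}$ as a curve of sections of $\mathfrak{g}_{P_\infty}$, where $P_\infty$ is the bundle over $\Sigma$ underlying $\nabla_\infty$. Then $\partial_r \Phi = \nabla_{\partial_r}\Phi$, so $|\partial_r \Phi|_g \leqslant |\nabla \Phi|_g \lesssim r^{-(n-1)}$; since $n-1 > 1$ this is integrable on $[R,\infty)$ with $\int_R^\infty |\partial_r \Phi|_g \, \rd r \lesssim R^{-(n-2)} \to 0$. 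Hence $\Phi(r)$ is uniformly Cauchy as $r \to \infty$ and converges in $C^0(\Sigma, \mathfrak{g}_{P_\infty})$ to a continuous section $\Phi_\infty$.

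The second step promotes this $C^0$ statement to $C^\infty$. By \Cref{cor:Lpbounds} (or \Cref{cor:higher_order_der_bounds}) every covariant derivative $\nabla^{j+1}\Phi$ is bounded along the end, so in the cylindrical picture the family $\{\Phi(r)\}$ is bounded in every $C^k(\Sigma)$; combined with the $C^0$-convergence and Arzel\`a--Ascoli, $\Phi(r) \to \Phi_\infty$ in $C^\infty(\Sigma)$, and in particular $\Phi_\infty$ is smooth. In the \emph{same} radial gauge $a(r) \to \nabla_\infty$ in $C^\infty$ by \Cref{lem:Existence_of_A_infinity} together with the elliptic and $\epsilon$-regularity estimates invoked there, so the pair converges: $(\nabla, \Phi)|_{\Sigma_R} \to (\nabla_\infty, \Phi_\infty)$ uniformly as $R \to \infty$.

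It remains to see that $\Phi_\infty$ is $\nabla_\infty$-parallel. Write $\nabla = \nabla_{\partial_r} \otimes \rd r + \nabla^\Sigma$, with $\nabla^\Sigma$ covariant differentiation in the directions tangent to $\Sigma_r$. A $1$-form pulled back from $(\Sigma, g_\Sigma)$ has $g_C$-norm equal to $r^{-1}$ times its $g_\Sigma$-norm, and $g$ is $C^\infty$-close to $g_C$ with the AC rate, so on $\Sigma_r$ one has $|\nabla^\Sigma_{a(r)} \Phi(r)|_{g_\Sigma} = r\,|\nabla^\Sigma \Phi|_g\,(1 + O(r^\nu)) \leqslant r\,|\nabla \Phi|_g\,(1 + O(r^\nu)) \lesssim r^{-(n-2)} \to 0$ uniformly on $\Sigma$. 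Passing to the limit using the $C^0$-convergence of $a(r)$ and the $C^1$-convergence of $\Phi(r)$ gives $\nabla_\infty \Phi_\infty = \lim_{r\to\infty} \nabla^\Sigma_{a(r)} \Phi(r) = 0$.

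The only point that needs care is that the gauge in which $\nabla(r)$ converges coincides with the one in which $\Phi(r)$ converges --- which the radial gauge handles simultaneously --- and that the convergence is strong enough ($C^1$ in $\Phi$, $C^0$ in the connection) to pass to the limit in $\nabla^\Sigma \Phi$; both follow immediately from the uniform derivative bounds already proved, so no genuinely new analysis is needed.
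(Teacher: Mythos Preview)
Your argument is correct and essentially the same as the paper's: both feed the sharp decay $|\nabla\Phi| = O(r^{-(n-1)})$ from \Cref{thm:First_part_of_Main_Theorem_2}(ii) into the radial/cylindrical picture, identify all the links $\Sigma_R$ with a fixed $\Sigma$, and read off uniform convergence of $\Phi(r)$ together with $\nabla_\infty\Phi_\infty = 0$ from the rescaled bound $r\,|\nabla\Phi|_g \lesssim r^{-(n-2)}\to 0$.

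One caveat on your second paragraph: the $C^\infty$ upgrade is not needed for the lemma as stated, and its justification via \Cref{cor:Lpbounds} alone is incomplete---mere boundedness (or decay) of $|\nabla^{j+1}\Phi|_g$ does not give boundedness of $\Phi(r)$ in $C^k(\Sigma,g_\Sigma)$, since passing to the cylindrical picture rescales the $(j{+}1)$-th derivative by $r^{j+1}$. To make that step rigorous you would instead invoke the scaled $\epsilon$-regularity \Cref{prop:total_epsilon_regularity} on balls of radius comparable to $r(x)$, together with the quadratic curvature decay, which yields $|\nabla^{j+1}\Phi|_g \lesssim r^{-2-j}$ and hence the required cylindrical $C^k$ bounds. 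Since steps one and three already suffice for the statement, this is a harmless overreach rather than a gap.
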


\begin{proof}
	Consider $\Phi_R = \Phi|_{C_R}$ where $C_R = \{ x \in X : \log (R) \leqslant t (x) \leqslant \log (R) +1 \}$ is equipped with the cylindrical metric $h$ introduced in the proof of \Cref{lem:Existence_of_A_infinity}. Then, translating the coordinate $t$, we consider $\Phi_R$ as a 1-parameter family of Higgs fields in the fixed cylinder $C = [0,1]_t \times \Sigma$ with the fixed metric $h$. Then, \Cref{thm:First_part_of_Main_Theorem_2} (ii) implies
	\begin{equation}
		|\nabla \Phi|_h^2 \lesssim e^{-2 n t},
	\end{equation}
	for $r \in [R, eR]$, i.e. $t \in [\log (R) , \log (R) +1]$. Thus, translating this back to analyze the sequence $\Phi_R$ in the cylinder $C$ we have
	\begin{equation}
		|\nabla \Phi_R|_h^2 \lesssim R^{-2(n-1)} ,
	\end{equation}
	which converges to zero as $R \to \infty$. This together with $|\Phi_R | \lesssim m$ shows that $\Phi_R\to \Phi_\infty$ uniformly over $C$ with $\nabla_\infty \Phi_\infty = 0$. In particular, $\del_t \Phi_\infty = 0$ and so $\Phi_\infty$ is independent of $t$, or $r$, and so is pulled back from $\Sigma$.
\end{proof}

\bigskip

\section{Bogomolny trick for the intermediate energy}\label{sec:Bogomolny_Trick}

In this section we use the outcome of our second main result \Cref{thm:Main_Theorem_2} to deduce a formula for the intermediate energy of critical point configurations, showing in particular that $\rG_2-$monopoles minimize such functional. Before stating the main result we recall a few basic features of our setup. Suppose $(X,\varphi)$ is an (irreducible) AC $\rG_2-$manifold. Then for all sufficiently large $r\gg 1$, the cohomology class $[\Upsilon^*\psi|_{\lbrace r \rbrace \times \Sigma}] \in H^4(\Sigma, \mathbb{R})$ is independent of $r$. In \Cref{def:AC_Cohomology} we have called this the asymptotic cohomology class of $(X, \varphi)$ and we denote it by $\Psi_\infty$.\\
Now let $(\nabla,\Phi)$ be as in \Cref{thm:main3}, with finite mass $m$ (by \Cref{thm:Finite_Mass}), and suppose that $m\neq 0$. Then $(\nabla,\Phi)$ converges along the end to $(\nabla_\infty,\Phi_\infty)$ with $\Phi_\infty \neq 0$ and $\nabla_\infty \Phi_\infty = 0$. Hence, assuming $\rG = \SU (2)$, the holonomy of $\nabla_\infty$ reduces to $\rU (1)\subseteq \SU (2)$ and $\nabla_\infty$ is reducible to a connection on a $\rU (1)$-subbundle $Q_\infty$ of $P_\infty$. It follows from $\SU (2)$ representation theory that $\mathfrak{g}_{P_\infty}\otimes\mathbb{C} \cong \underline{\mathbb{C}} \oplus L \oplus L^\ast$, for a complex line bundle $L\to\Sigma$. We now define what the higher dimensional analog of what in 3 dimensions is known as the monopole charge.

\begin{definition}\label{def:Monopole_Class}
	Given a $\rG_2$-monopole $(\nabla,\Phi)$ on an asymptotically conical manifold as above, the class $\beta = c_1(L) \in H^2(\Sigma, \mathbb{Z})$ is called a monopole class of $(\nabla,\Phi)$.
\end{definition}

\begin{remark}\label{rem:Unique_nabla}
    Given a monopole class $\beta$, there is a unique pseudo-Hermitian--Yang--Mills connection on a complex line bundle $L$ with $c_1(L) = \beta$, see  in \cite{Foscolo2017}*{Remark 3.25}. This is precisely the unique connection whose curvature is the harmonic representative of $-2\pi i \beta$. The connection $\nabla_\infty$, to which $\nabla$ is asymptotic, is induced by this unique connection.
\end{remark}

We are now in position to state our energy formula.

\begin{theorem}\label{thm:Energy_Formula}
	Let $(X^7,\varphi)$ be an irreducible AC $\rG_2$-manifold, with asymptotic cohomology class $\Psi_\infty \in H^4(\Sigma, \mathbb{R})$ (cf. \Cref{def:AC_Cohomology}), and $P\to X$ be a principal $\SU (2)$-bundle. Suppose that $(\nabla,\Phi)$ is a configuration satisfying the second order \cref{eq:2nd_Order_Eq_1,eq:2nd_Order_Eq_2} with $\nabla \Phi \in L^2 (X)$, and either $|F_\nabla|$ decays quadratically along the end or $(\nabla, \Phi)$ is a $\rG_2$-monopole such that $|F_\nabla^{14}|$ decays quadratically along the end. Let $m$ be the mass of $(\nabla, \Phi)$, assume that $m\neq 0$, and let $\beta \in H^2(\Sigma, \mathbb{Z})$ be the monopole class of $(\nabla,\Phi)$. If $(\nabla,\Phi)$ has finite intermediate energy $\cE^{\psi}(\nabla,\Phi)<\infty$, then
	\begin{equation}\label{eq:Energy_Bogomolny}
		\cE^{\psi}(\nabla, \Phi) = 4\pi m \langle \beta \cup \Psi_\infty , [\Sigma]  \rangle + \frac{1}{2}\|F_{\nabla}\wedge\psi - \ast\nabla\Phi\|_{L^2 (X)}^2.
	\end{equation}
	In particular, when $(\nabla, \Phi)$ is a $\rG_2$-monopole, then
	\begin{equation}
		\cE^{\psi}(\nabla, \Phi) = 4 \pi m \langle \beta \cup \Psi_\infty , [\Sigma]  \rangle.
	\end{equation}
\end{theorem}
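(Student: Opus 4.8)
The plan is to run the classical Bogomolny rearrangement: complete the square in $\cE^\psi$, so that the cross term becomes an exact divergence, and then identify the resulting boundary integral over $\Sigma_R$ (as $R\to\infty$) with the stated topological pairing. First I would write, using that $e_\psi$ involves $|F_\nabla\wedge\psi|^2 = |{\ast}(F_\nabla\wedge\psi)|^2$ and $\ast^2=\id$ on the relevant forms,
\begin{equation}
\cE^\psi(\nabla,\Phi) = \int_X\left(|{\ast}(F_\nabla\wedge\psi)|^2 + |\nabla\Phi|^2\right)\vol_X = \int_X\left(|{\ast}(F_\nabla\wedge\psi) - \nabla\Phi|^2 + 2\langle {\ast}(F_\nabla\wedge\psi),\nabla\Phi\rangle\right)\vol_X.
\end{equation}
The squared term is exactly $\tfrac12\|F_\nabla\wedge\psi - \ast\nabla\Phi\|_{L^2(X)}^2$ after accounting for the factor of $2$ and the fact that $|{\ast}(F_\nabla\wedge\psi) - \nabla\Phi| = |F_\nabla\wedge\psi - \ast\nabla\Phi|$, so the content is to show
\begin{equation}
\int_X \langle {\ast}(F_\nabla\wedge\psi),\nabla\Phi\rangle\,\vol_X = 2\pi m\,\langle \beta\cup\Psi_\infty,[\Sigma]\rangle.
\end{equation}

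Next I would massage the integrand into an exact form. Pointwise $\langle {\ast}(F_\nabla\wedge\psi),\nabla\Phi\rangle\,\vol_X = \langle F_\nabla\wedge\psi,\ast\nabla\Phi\rangle\,\vol_X = \tr\!\big((F_\nabla\wedge\psi)\wedge\nabla\Phi\big)$ up to sign conventions (here $\tr$ denotes the Killing form pairing on $\fg_P$), and since $\psi$ and $F_\nabla$ are both closed ($\rd\psi=0$, Bianchi) and $\tr$ is $\nabla$-invariant, we get $\rd\,\tr(\Phi\,F_\nabla\wedge\psi) = \tr(\nabla\Phi\wedge F_\nabla\wedge\psi) = \tr(F_\nabla\wedge\psi\wedge\nabla\Phi)$ (the wedge-degree reshuffling of the $1$-form $\nabla\Phi$ past the $6$-form $F_\nabla\wedge\psi$ contributes no sign). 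Hence the cross term is $\int_X \rd\,\tr(\Phi\,F_\nabla\wedge\psi)$. Now I invoke Stokes' theorem on $B_R$ — justified for $R\to\infty$ because $\nabla\Phi\in L^2$ and, by \Cref{cor:Lpbounds} and \Cref{thm:First_part_of_Main_Theorem_2}, $|\nabla\Phi|,|F_\nabla|$ decay fast enough that the $5$-form $\tr(\Phi\,F_\nabla\wedge\psi)$ and its differential are integrable, so that Gaffney's theorem (as used in the proof of \Cref{prop:Decay_2}) applies. This gives $\int_X \langle {\ast}(F_\nabla\wedge\psi),\nabla\Phi\rangle\,\vol_X = \lim_{R\to\infty}\int_{\Sigma_R}\tr(\Phi\,F_\nabla\wedge\psi)$.

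Finally I would evaluate the boundary limit using \Cref{thm:main3}: along the end $(\nabla,\Phi)$ converges uniformly to $(\nabla_\infty,\Phi_\infty)$ with $\nabla_\infty\Phi_\infty=0$, $|\Phi_\infty|=m$, and $\nabla_\infty$ reducing to the unique pseudo-Hermitian--Yang--Mills connection on the $\rU(1)$-bundle $Q_\infty$ with $c_1=\beta$ (\Cref{rem:Unique_nabla}). Writing $\Phi_\infty = m\,\xi_\infty$ for the unit-length covariantly constant $\xi_\infty$, and noting $\tr(\xi_\infty F_{\nabla_\infty}) = F_{Q_\infty}$ is (a constant multiple of) the curvature of the abelian connection, standard Chern--Weil gives $\big[\tfrac{i}{2\pi}F_{Q_\infty}|_{\Sigma_R}\big] = \beta$. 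Combining with $[\psi|_{\Sigma_R}] = \Psi_\infty$ for large $R$ and the fact that $F_\nabla\to F_{\nabla_\infty}$, $\Phi\to\Phi_\infty$ uniformly on $\Sigma_R$, while the cohomology classes are $R$-independent in the limit, yields
\begin{equation}
\lim_{R\to\infty}\int_{\Sigma_R}\tr(\Phi\,F_\nabla\wedge\psi) = m\int_{\Sigma}\tr(\xi_\infty F_{\nabla_\infty})\wedge\psi|_\Sigma = 2\pi m\,\langle\beta\cup\Psi_\infty,[\Sigma]\rangle,
\end{equation}
after fixing normalization constants. Plugging back gives \eqref{eq:Energy_Bogomolny}, and the monopole case is immediate since then $F_\nabla\wedge\psi = \ast\nabla\Phi$ pointwise, killing the $L^2$ remainder. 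The main obstacle I anticipate is twofold: (i) pinning down all the sign and normalization constants so that the coefficient comes out as exactly $4\pi m$ (the factor of $2$ from completing the square versus the $2\pi$ from Chern--Weil must be tracked with the chosen inner-product conventions on $\fg_P$), and (ii) rigorously justifying the Stokes/Gaffney step and the interchange of the $R\to\infty$ limit with the cohomological identification — this is where the sharp decay estimates of \Cref{thm:First_part_of_Main_Theorem_2} and the uniform convergence of \Cref{thm:main3} are essential, since a priori the $5$-form only decays polynomially.
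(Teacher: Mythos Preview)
Your approach is essentially identical to the paper's: complete the square, recognize the cross term as the exact form $\rd\langle\Phi,F_\nabla\rangle\wedge\psi$ via Bianchi and $\rd\psi=0$, apply Stokes on $B_R$, and evaluate the boundary integral in the limit. (The paper works directly on the exhaustion $B_R$ rather than invoking Gaffney, since finiteness of $\cE^\psi$ already makes $R\mapsto\cE^\psi_{B_R}$ bounded and monotone.)

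The obstacle you flag in (ii) is indeed the heart of the matter, and your sketch of it is not quite complete: knowing only that $[\psi|_{\Sigma_R}]=\Psi_\infty$ does not suffice, because $\langle\Phi,F_\nabla\rangle$ is \emph{not} closed on $\Sigma_R$ --- its exterior derivative is $\langle\nabla\Phi\wedge F_\nabla\rangle$. The paper's device is to write $\psi|_{\Sigma_R}=\psi_\infty+\rd\gamma$ with $\psi_\infty$ the fixed harmonic representative of $\Psi_\infty$ and $|\gamma|=O(R)$ in the conical metric; integrating the $\rd\gamma$ piece by parts on the closed manifold $\Sigma_R$ produces $\int_{\Sigma_R}\langle\nabla\Phi\wedge F_\nabla\rangle\wedge\gamma$, which is then bounded by
\[
\Vol(\Sigma_R)\cdot\sup_{\Sigma_R}|\nabla\Phi|\,|F_\nabla|\,|\gamma|\;\lesssim\; R^{6}\cdot R^{-6}\cdot R^{-2}\cdot R \;=\; R^{-1}\to 0.
\]
This is exactly where the sharp $|\nabla\Phi|=O(r^{-6})$ decay from \Cref{thm:First_part_of_Main_Theorem_2} enters; any weaker rate would leave a divergent contribution. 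Once the $\rd\gamma$ term is disposed of, the remaining integral $\int_{\Sigma_R}\langle\Phi,F_\nabla\rangle\wedge\psi_\infty$ involves the fixed form $\psi_\infty$, and the uniform convergence $(\nabla,\Phi)\to(\nabla_\infty,\Phi_\infty)$ from \Cref{thm:main3} finishes the job. Your point (i) on constants is real but routine; the paper resolves it by writing $\Phi_\infty$ and $F_{\nabla_\infty}$ explicitly as diagonal $\mathfrak{su}(2)$-matrices.
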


\begin{proof}
	Let $U \subseteq X$ be a precompact open set with smooth boundary $\partial \overline{U}$. Then the intermediate energy of $(\nabla,\Phi)$ over $U$ can be written as
	\begin{equation}
	    \cE_U^{\psi}(\nabla,\Phi) \coloneqq \frac{1}{2}\left(\|F_{\nabla}\wedge\psi\|_{L^2 (U)}^2 + \|\nabla \Phi\|_{L^2 (U)}^2 \right) = \frac{1}{2}\|F_{\nabla}\wedge\psi - \ast\nabla\Phi\|_{L^2 (U)}^2 + \langle F_{\nabla}\wedge\psi,\ast\nabla\Phi\rangle_{L^2 (U)},
	\end{equation}
	and
	\begin{equation}
		\langle F_{\nabla}\wedge\psi,\ast\nabla\Phi\rangle_{L^2 (U)} = \int\limits_U \langle F_{\nabla}\wedge\psi\wedge\nabla\Phi\rangle = \int\limits_U \rd\left(\langle \Phi , F_\nabla \rangle \wedge \psi\right) = \int\limits_{\partial \overline{U}} \langle \Phi , F_\nabla \rangle \wedge \psi,
	\end{equation}
	where we have used the Bianchi identity $d_{\nabla}F_{\nabla} = 0$ and the fact that $\text{d}\psi = 0$ on the second equality and Stokes' theorem in the last. Thus
	\begin{equation}
	    \cE_U^{\psi}(\nabla,\Phi) = \int\limits_{\partial \overline{U}} \langle \Phi , F_\nabla \rangle \wedge \psi + \frac{1}{2}\|F_{\nabla}\wedge\psi - \ast\nabla\Phi\|_{L^2 (U)}^2.
	\end{equation}
	Now, the finiteness of the intermediate energy implies that the function
	\begin{equation}
	    f (R) \coloneqq \cE_{B_R}^\psi (\nabla, \Phi) = \int\limits_{\Sigma_R} \langle \Phi, F_\nabla \rangle \wedge \psi + \frac{1}{2}\|F_{\nabla}\wedge\psi - \ast\nabla\Phi\|_{L^2 (B_R)}^2
	\end{equation}
	is bounded, nondecreasing and converges to the intermediate energy of $(\nabla, \Phi)$. Thus, we are done if we show that
	\begin{equation}\label{eq:partial_energy_formula}
	   \lim_{R \to \infty} \int\limits_{\Sigma_R} \langle \Phi, F_\nabla \rangle \wedge \psi = 4\pi m \langle c_1(L) \cup \Psi_\infty , [\Sigma]  \rangle. 
	\end{equation}
	Let $\Psi_\infty$ denote the asymptotic cohomology class of $[\psi|_{\Sigma_R}]$ as in \Cref{def:AC_Cohomology} and $\psi_\infty$ its harmonic representative. Then, we can write $\psi|_{\Sigma_R} = \psi_\infty + \rd \gamma$ for some $|\gamma| = O(R)$ with respect to the conical metric. Using this we find,
	\begin{equation}
	    \int\limits_{\Sigma_R} \langle \Phi, F_\nabla \rangle \wedge \rd \gamma = \int\limits_{\Sigma_R} \langle \nabla\Phi \wedge F_\nabla \rangle \wedge \gamma \lesssim \Vol_{R^2 g_\Sigma}( \lbrace R \rbrace \times \Sigma)\sup_{r = R} |\nabla \Phi||F_\nabla||\gamma| \lesssim R^{-1},
	\end{equation}
	where we have used the fact that $|\nabla \Phi| = O (r^{- 6})$ by \Cref{thm:First_part_of_Main_Theorem_2}, $|F_\nabla| = O(r^{-2})$ (cf. proof of \Cref{lem:Existence_of_A_infinity}) and $|\gamma| = O(r)$. Hence, 
	\begin{align}\label{eq:Intermediate_Energy_Computation}
		\lim_{R \to \infty} \int\limits_{\Sigma_R} \langle \Phi, F_\nabla \rangle \wedge \psi = \lim_{R \to \infty} \int\limits_{\Sigma_R} \langle \Phi, F_\nabla \rangle \wedge \psi_\infty = \int\limits_\Sigma \langle \Phi_\infty, F_{\nabla_\infty} \rangle \wedge \psi_\infty.
	\end{align}
	In the case at hand we have $\rG = \SU (2)$, and as $\Phi_\infty \neq 0$ while $\nabla_\infty \Phi_\infty = 0$, the bundle $P_\infty$ reduces from $\SU (2)$ to $\rU (1) \subseteq \SU (2)$ and thus $\mathfrak{g}_{P_\infty} = \underline{\mathbb{R}} \oplus L$, for a complex line bundle, $L$, such that the complex rank $2$ vector bundle associated with the standard representation can be written as $P_\infty \times_{\SU (2)} \mathbb{C}^2 \cong L \oplus L^*$. Using this splitting one writes
	\begin{equation}
		\Phi_\infty = \begin{pmatrix}
			i m & 0 \\
			0  & - i m
		\end{pmatrix} \ , \ \text{and}  \  
		F_{\nabla_\infty} = \begin{pmatrix}
			F_L & 0 \\
			0  & - F_L
		\end{pmatrix},
	\end{equation}
	with $F_L \in -2 \pi i c_1(L) \in H^2(\Sigma, -2 \pi i \mathbb{Z})$ and $m = | \Phi_\infty |$ is the mass of $(\nabla,\Phi)$. Then, inserting into \cref{eq:Intermediate_Energy_Computation} we find \cref{eq:partial_energy_formula} as we wanted. 
\end{proof}

Under the hypothesis of \Cref{thm:Energy_Formula}, the energy formula \eqref{eq:Energy_Bogomolny} writes the intermediate energy as a sum of a topological/geometrical term, determined by the asymptotic geometry of the configuration and the manifold, and a quantity which is always greater than or equal to zero, with equality if and only if $(\nabla,\Phi)$ is a $\rG_2$-monopole. Thus showing, in particular, that keeping both the monopole class $\beta$ and the mass $m$ fixed, $\rG_2$-monopoles minimize the intermediate energy amongst such configurations.

This result also has interesting applications to the existence of $\rG_2$-monopoles on certain asymptotically conical $\rG_2$-manifolds. The following corollary is adapted from \cite{Oliveira2014}, where it was applied using a different hypothesis on the asymptotic behavior of the $\rG_2$-monopoles. Thus our next result may be regarded as a slight improvement on the aforementioned result in \cite{Oliveira2014}.

\begin{corollary}\label{cor:Vanishing}
	Let $(X^7,\varphi)$ be an irreducible AC $\rG_2$-manifold, with vanishing asymptotic cohomology class $\Psi_\infty \in H^4(\Sigma, \mathbb{R})$, e.g. if $H^2(\Sigma,\mathbb{Z})$ vanishes as is the case for the Bryant--Salamon metric on $\mathbb{R}^4 \times S^3$ for which $\Sigma = S^3 \times S^3$. Then, there are no irreducible $\rG_2$-monopoles $(\nabla,\Phi)$ on $(X^7, \varphi)$ with $|F_\nabla^{14}|$ quadratically decaying and finite nonzero intermediate energy.
\end{corollary}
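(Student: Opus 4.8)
The plan is to derive this directly from the energy formula in \Cref{thm:Energy_Formula}. Suppose, for contradiction, that $(\nabla,\Phi)$ is an irreducible $\rG_2$-monopole on $(X^7,\varphi)$ with $|F_\nabla^{14}|$ quadratically decaying and $0 < \cE^\psi(\nabla,\Phi) < \infty$. Since the intermediate energy of a monopole equals $\|\nabla\Phi\|_{L^2(X)}^2$, finiteness of $\cE^\psi$ is exactly the hypothesis $\nabla\Phi\in L^2(X)$, so \Cref{thm:Finite_Mass} applies and $(\nabla,\Phi)$ has a well-defined mass $m\in[0,\infty)$.

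First I would dispose of the case $m = 0$: by \Cref{rem:Finite_Mass}, $|\Phi|^2$ is subharmonic and bounded with $\lim_{\dist(x,o)\to\infty}|\Phi(x)| = 0$, so the maximum principle forces $\Phi\equiv 0$, hence $\nabla\Phi\equiv 0$ and $\cE^\psi(\nabla,\Phi) = 0$, contradicting the assumption of nonzero intermediate energy. So $m > 0$, and in particular $(\nabla,\Phi)$ is irreducible in the sense that $\nabla\Phi\neq 0$. Now all the hypotheses of \Cref{thm:Energy_Formula} are met (quadratic decay of $|F_\nabla^{14}|$ is precisely what is assumed, $\nabla\Phi\in L^2(X)$, and $m\neq 0$), so the monopole case of that theorem gives
\begin{equation}
	\cE^\psi(\nabla,\Phi) = 4\pi m \langle \beta \cup \Psi_\infty, [\Sigma]\rangle,
\end{equation}
where $\beta\in H^2(\Sigma,\mathbb{Z})$ is the monopole class. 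But by hypothesis $\Psi_\infty = 0$ in $H^4(\Sigma,\mathbb{R})$, so the cup product $\beta\cup\Psi_\infty$ vanishes in $H^6(\Sigma,\mathbb{R})$ and the pairing with $[\Sigma]$ is zero. Hence $\cE^\psi(\nabla,\Phi) = 0$, contradicting the assumption that the intermediate energy is nonzero. This proves there is no such monopole.

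For the parenthetical remark about the Bryant--Salamon metric: when $H^2(\Sigma,\mathbb{Z}) = 0$ one has (after tensoring with $\mathbb{R}$, using that $H^4(\Sigma,\mathbb{R})\cong H^2(\Sigma,\mathbb{R})$ by Poincar\'e duality on the closed oriented $6$-manifold $\Sigma$, or simply that $\Psi_\infty$ lies in the image of a map from a trivial group in the relevant diagram) that $H^4(\Sigma,\mathbb{R}) = 0$, so trivially $\Psi_\infty = 0$; for $\Sigma = S^3\times S^3$ this is immediate from the K\"unneth formula since $H^2(S^3\times S^3) = 0$. There is essentially no obstacle here --- the entire content of the corollary has been front-loaded into \Cref{thm:Main_Theorem_2} and the energy formula \Cref{thm:Energy_Formula}; the only mild point worth stating carefully is the reduction to $m > 0$, which is where one uses that ``irreducible'' for a finite-mass monopole with $m = 0$ would be vacuous.
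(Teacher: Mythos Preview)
Your proof is correct and follows the same route as the paper, which simply says that the energy formula from \Cref{thm:Energy_Formula} applies and forces the intermediate energy to vanish. Your treatment is in fact more careful than the paper's one-line argument, since you explicitly dispose of the $m=0$ case (needed because \Cref{thm:Energy_Formula} assumes $m\neq 0$) before invoking the energy formula.
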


\begin{proof}
	In this situation, the energy formula from \Cref{thm:Energy_Formula} applies and we find that the intermediate energy vanishes.
\end{proof}

\bigskip

\section{Decay of linearized solutions}\label{sec:Linearized_Solutions}

In this section we make further use of the methods of the previous sections, and give decay estimates to finite energy solutions to the linearization of the $\rG_2$-monopole equation, with appropriate gauge fixing.

\smallskip

\subsection{The linearized equation}\label{ss:Linearized_Eq}

First we compute the (formal) linearization of the $\rG_2$-monopole \cref{eq:Monopole}.

Let the configuration space be $\cC_P = \mathrm{Conn}_P \times \Omega_{\mathfrak{g}_P}^0$, that is, the space of (Sobolev) pairs of connections on $P$ and sections of $\mathfrak{g}_P$. Consider the (smooth) maps
\begin{align}
	\mon^\pm : \cC_P			&\rightarrow \Omega_{\mathfrak{g}_P}^1; \\
	\left( \nabla, \Phi \right)	&\mapsto \ast (F_\nabla \wedge \psi) \pm \nabla \Phi,
\end{align} 
that, in case of a $\rG_2$-monopole, satisfy
\begin{align}
	\mon^- (\nabla, \Phi)	&= 0, \\
	\mon^+ (\nabla, \Phi)	&= 2 \nabla \Phi.
\end{align}
For simplicity we set $\mon \coloneqq \mon^-$ for the rest of the paper.

Since $\cC_P$ is an affine space---in fact, an affine Hilbert manifold---the tangent spaces of $\cC_P$ at any point can canonically be identified with $\Omega_{\mathfrak{g}_P}^1 \oplus \Omega_{\mathfrak{g}_P}^0$, as Hilbert spaces. We can now compute the first derivative of $\mon$. If $X = (a, \upphi) \in T_{(\nabla, \Phi)} \cC_P$, then
\begin{equation}
	\left( \mathrm{T}_{(\nabla, \Phi)} \mon \right) (a, \upphi) = \ast \left( \rd_\nabla a \wedge \psi \right) - \nabla \upphi - [a, \Phi]. \label{eq:Tmon-}
\end{equation}
We also impose the standard, Coulomb type gauge fixing condition that we only consider tangent vectors $(a, \upphi) \in T_{(\nabla, \Phi)} \cC_P$ that are perpendicular to the gauge orbit through $(\nabla, \Phi)$, which amounts to the following PDE:
\begin{equation}
	\rd_\nabla^* a + [\upphi, \Phi] = 0. \label{eq:Coulomb}
\end{equation}
We can organize \cref{eq:Tmon-,eq:Coulomb} into a single equation. Let
\begin{subequations}
\begin{align}
	\D (a, \upphi)	&= \left( \ast (\rd_\nabla a \wedge \psi) - \nabla \upphi, - \rd_\nabla^* a \right), \label{eq:D} \\
	q (a, \upphi)	&= \left( [\Phi, a], [\Phi, \upphi] \right), \label{eq:q}
\end{align}
\end{subequations}
and let $\cL = \D + q$. Then \cref{eq:Tmon-,eq:Coulomb} are equivalent to
\begin{equation}
	\cL (a, \upphi) = 0. \label{eq:linG2mono}
\end{equation}
We call \cref{eq:linG2mono} (or \cref{eq:Tmon-,eq:Coulomb} together) the \emph{linearized $\rG_2$-monopole equation}.

\smallskip

\subsection{A priori properties of the linearized $\rG_2$-monopole equation}

First we prove two Weitzenb\"ock type formulas, one for $\cL$ and one for $\cL^*$.

\begin{lemma}
	\label{lem:2nd_order_linearized}
	Let $(\nabla, \Phi) \in \cC_P$ be any pair. For any $(a, \upphi) \in T_{(\nabla, \Phi)} \cC_P$ smooth pair let us define
	\begin{align}
		I_+ (a, \upphi)	&= \left( 2 \ast \left[ \left( \ast F_\nabla \right) \wedge a \right] - \left[ \mon^+ (\nabla, \Phi), \upphi \right], \ast \left[ \mon^+ (\nabla, \Phi) \wedge \ast a \right] \right), \\
		I_- (a, \upphi)	&= \left( - 2 \ast \left( \varphi \wedge \left[ F_\nabla \wedge a \right] \right) - \left[ \mon^- (\nabla, \Phi), \upphi \right], \ast \left[ \mon^- (\nabla, \Phi) \wedge \ast a \right] \right).
	\end{align}
	Then we have
	\begin{align}
		\cL^* \cL	&= \nabla^* \nabla + I_+ - q^2, \\
		\cL \cL^*	&= \nabla^* \nabla + I_- - q^2.
	\end{align}
\end{lemma}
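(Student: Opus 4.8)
The plan is to compute $\cL^*\cL$ and $\cL\cL^*$ directly by expanding $\cL = \D + q$ and its adjoint $\cL^* = \D^* + q$ (note $q$ is formally self-adjoint, being pointwise given by $\ad_\Phi$ acting diagonally). Thus
\begin{equation}
	\cL^*\cL = \D^*\D + \D^* q + q\D + q^2, \qquad \cL\cL^* = \D\D^* + \D q + q\D^* + q^2.
\end{equation}
Wait — I need to be careful with signs: since $q^2 = (\ad_\Phi)^2$ acting on each factor is a \emph{negative} operator (for $\SU(2)$, $\ad_\Phi^2$ has eigenvalues $0$ and $-|\Phi|^2$), the statement writes $-q^2$ for what is literally $q^2$ with $q$ defined as in \cref{eq:q}; I will keep the bookkeeping consistent with the paper's sign convention and just record the literal $q\circ q$ term, then identify it with the ``$-q^2$'' in the statement. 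The three ingredients I need are: (1) a Weitzenböck-type identity $\D^*\D = \nabla^*\nabla + (\text{zeroth order curvature terms})$, (2) the analogous one for $\D\D^*$, and (3) the computation of the cross terms $\D^* q + q\D$ (resp.\ $\D q + q\D^*$), which produce the remaining pieces of $I_\pm$ involving $\mon^\pm(\nabla,\Phi)$ and $\ast F_\nabla^7$.

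\textbf{Key steps in order.} First I would compute $\D^*$ explicitly from \cref{eq:D}: for $\beta \in \Omega^1_{\mathfrak{g}_P}$ and $g \in \Omega^0_{\mathfrak{g}_P}$ one has $\D^*(\beta, g) = \big(\ast(\rd_\nabla \ast (\beta \wedge \psi)) - \rd_\nabla g,\ -\rd_\nabla^* \beta\big)$ up to the usual identifications (using $\ast(\cdot \wedge \psi)$ is, on $2$-forms, essentially self-adjoint because $\psi$ is parallel and $\ast$-closed, and $\nabla = \rd_\nabla$ on $\Omega^0$ has formal adjoint $\nabla^*=\rd_\nabla^*$). Second, I would compute $\D^*\D$. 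The $\Omega^1$-component involves $\rd_\nabla^*\rd_\nabla a + \nabla\nabla^* a$ type terms together with the mixed contributions from the two blocks; after using the standard Bochner–Weitzenböck formula $\rd_\nabla^*\rd_\nabla + \rd_\nabla\rd_\nabla^* = \nabla^*\nabla + \mathrm{Riem} + \ast[\ast F_\nabla \wedge \cdot]$ on $1$-forms, and crucially the Ricci-flatness of the $\rG_2$-metric (so $\mathrm{Riem}$ acts through $W$ only, and in fact the relevant curvature contraction collapses onto $\ast F_\nabla^7$ by the same linear-algebra identity $F_\nabla \wedge \psi = F_\nabla^7 \wedge \psi$ used in \Cref{lem:Second_Order_Eqs}), I expect the curvature term to reduce exactly to a multiple of $\ast[\ast F_\nabla^7 \wedge a]$. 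This is where the coefficient $(\pm 1 - 3)$ will emerge: the ``$-3$'' from the generic Weitzenböck curvature term and the ``$\pm 1$'' from how the $\ast(\rd_\nabla a \wedge \psi)$ block interacts with itself versus against the $\nabla\upphi$ block. Third, I would expand the cross terms $\D^* q + q \D$. Since $q(a,\upphi) = ([\Phi,a],[\Phi,\upphi])$, applying $\D$ and commuting $\rd_\nabla$ past $\ad_\Phi$ produces terms $[\nabla\Phi \wedge a]$, $[F_\nabla, \ldots]$-type commutators, and the combination precisely assembles into $-[\mon^\pm(\nabla,\Phi), \upphi]$ in the first slot and $\ast[\mon^\pm(\nabla,\Phi) \wedge \ast a]$ in the second, once one recognizes $\ast(F_\nabla \wedge \psi) \pm \nabla\Phi = \mon^\pm(\nabla,\Phi)$. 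Finally, summing everything and collecting gives $\cL^*\cL = \nabla^*\nabla + I_+ - q^2$; the computation for $\cL\cL^*$ is identical with $\D$ and $\D^*$ swapped, which flips the relative sign in one block and hence turns $\mon^+$ into $\mon^-$, yielding $I_-$.

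\textbf{Main obstacle.} The hard part will be the careful sign- and coefficient-tracking in the curvature term of $\D^*\D$ — specifically, verifying that the generic Bochner curvature operator on $\mathfrak{g}_P$-valued $1$-forms, after imposing Ricci-flatness and using the $\rG_2$-representation-theoretic fact that $\ast(\ast(\cdot \wedge \psi)\wedge\psi)$ is $3$ times the projection onto $\Lambda^2_7$, collapses exactly to the coefficient $(\pm 1 - 3)$ multiplying $\ast[\ast F_\nabla^7 \wedge a]$ and nothing else survives (in particular, all terms involving $F_\nabla^{14}$ or the full Riemann tensor must cancel or be absorbed). A secondary bookkeeping subtlety is the precise meaning of $q^2$: one must check that $q \circ q$ acting on $(a,\upphi)$ gives $([\Phi,[\Phi,a]], [\Phi,[\Phi,\upphi]]) = -(\text{the ``}q^2\text{'' of the statement})$ consistently with the overall sign convention, so that the final formula reads with $-q^2$ as written. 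Both of these are finite pointwise computations in a local frame (a normal frame adapted to $\Phi$, as in \Cref{sec:BW_formulas}), so no new idea is needed beyond organizing the algebra.
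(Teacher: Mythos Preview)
Your overall strategy is right, but there is a genuine sign error in your setup that would derail the computation. You claim that $q$ is formally self-adjoint. It is not: for an ad-invariant inner product one has $\langle [\Phi,X],Y\rangle = -\langle X,[\Phi,Y]\rangle$, so $q^* = -q$. Combined with the fact that $\D$ is the twisted Dirac operator on the (odd-dimensional) spinor bundle $\cS \cong (\Lambda^0\oplus\Lambda^1)$ and is formally \emph{self}-adjoint, one gets $\cL^* = \D - q$. This is exactly how the paper proceeds: it writes
\[
\cL^*\cL = \D^2 + [\D,q] - q^2, \qquad \cL\cL^* = \D^2 - [\D,q] - q^2,
\]
so the $-q^2$ is not a convention but the literal $-q\circ q$ coming from $(\D-q)(\D+q)$. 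Your plan to compute $\D^*\D$ and $\D\D^*$ separately is unnecessary --- they coincide --- and your interpretation of $-q^2$ as a bookkeeping artifact is incorrect.

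Your attribution of the coefficient $(\pm 1 - 3)$ is also off. The $-3$ does not come from a ``generic Weitzenb\"ock curvature term'': after applying the Ricci-flat Weitzenb\"ock formula to the first slot of $\D^2$, the curvature contribution is $-\ast[(\ast F_\nabla + F_\nabla\wedge\varphi)\wedge a]$, and the pointwise $\rG_2$-identity $\omega\wedge\varphi = 2\ast\omega^7 - \ast\omega^{14}$ gives $\ast F_\nabla + F_\nabla\wedge\varphi = 3\ast F_\nabla^7$ directly --- the $F_\nabla^{14}$ parts cancel algebraically, which resolves your stated ``main obstacle''. The $\pm 1$ then comes from the commutator $[\D,q]$, whose first component contains $\ast[(\nabla\Phi\wedge\psi)\wedge a]$; rewriting $\nabla\Phi\wedge\psi$ (via the monopole equation, which the paper invokes at this last step) as $\ast F_\nabla^7$ produces the $\pm 1$. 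The remaining pieces of $[\D,q]$ assemble into the $\mon^\pm$ terms as you anticipated.
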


\begin{proof}
	We start by noticing that $\cL = \D + q$ and $\cL^* = \D - q$, then we can write
	\begin{align}
		\cL^* \cL	&= \D^2 + [\D, q] - q^2, \\
		\cL \cL^*	&= \D^2 - [\D, q] - q^2.
	\end{align}
	Let us first rewrite $\D^2$:
	\begin{align}
		\D^2 (a, \upphi)	&= \D \left( \ast (\rd_\nabla a \wedge \psi) - \nabla \upphi, - \rd_\nabla^* a \right) \\
							&= \left( \ast (\rd_\nabla \left( \ast (\rd_\nabla a \wedge \psi) - \nabla \upphi \right) \wedge \psi) - \nabla \left( - \rd_\nabla^* a \right), - \rd_\nabla^* \left( \ast (\rd_\nabla a \wedge \psi) - \nabla \upphi \right) \right) \\
							&= \left( \rd_\nabla \rd_\nabla^* a + \ast \left( \rd_\nabla \left( \ast \left( \rd_\nabla a \wedge \psi \right) \right) \wedge \psi \right) - \ast \left( \left( \rd_\nabla^2 \upphi \right) \wedge \psi \right), \nabla^* \nabla \upphi - \ast \left( (\rd_\nabla^2 a) \wedge \psi \right) \right) \\
							&= \left( \rd_\nabla \rd_\nabla^* a + \ast \left( \rd_\nabla \left( \ast \left( \rd_\nabla a \wedge \psi \right) \right) \wedge \psi \right) - [\ast \left( F_\nabla \wedge \psi \right), \upphi], \nabla^* \nabla \upphi - \ast \left( [F_\nabla \wedge a] \wedge \psi \right) \right)
	\end{align}
	Let $\rd_\nabla^7 := \Pi_7 \circ \rd_\nabla : \Omega^1 \rightarrow \Omega_7^2$, where $\Pi_7 : \bigwedge^2 \rightarrow \bigwedge_7^2$ is the pointwise orthogonal projection. Since for 2-form, $\omega$, we have that $\ast \left( \omega \wedge \psi \right) = \omega^7$, we get that
	\begin{equation}
		\ast \left( \rd_\nabla \left( \ast \left( \rd_\nabla a \wedge \psi \right) \right) \wedge \psi \right) = 3 \rd_\nabla^* \rd_\nabla^7 a = \rd_\nabla^*\rd_\nabla a - \ast \left[ \left( F_\nabla \wedge \varphi \right) \wedge a \right].
	\end{equation}
	Using also $\ast \left( F_\nabla \wedge \psi \right) = \nabla \Phi$ and the Jacobi identity, we get that
	\begin{equation}
		\D^2 (a, \upphi) = \left( \rd_\nabla \rd_\nabla^* a + \rd_\nabla^*\rd_\nabla a - \ast \left[ \left( F_\nabla \wedge \varphi \right) \wedge a \right] - [\nabla \Phi, \upphi], \nabla^* \nabla \upphi + \ast [(\ast \nabla \Phi) \wedge a] \right).
	\end{equation}
	Now recall the Weitzenb\"ock identity for bundle-valued 1-forms on a Ricci-flat manifold:
	\begin{equation}
		\rd_\nabla \rd_\nabla^* a + \rd_\nabla^*\rd_\nabla a = \nabla^* \nabla a + \ast [(\ast F_\nabla) \wedge a].
	\end{equation}
	Combining these we get	
	\begin{equation}
		\D^2 (a, \upphi) = \left( \nabla^*\nabla a + \ast [(\ast F_\nabla - F_\nabla \wedge \varphi) \wedge a] - [\nabla \Phi, \upphi], \nabla^* \nabla \upphi + \ast [(F_\nabla \wedge \psi) \wedge a] \right).
	\end{equation}

	Let us now compute $[\D, q]$.
	\begin{align}
		[\D, q] (a, \upphi)	&= \D \left( [\Phi, a], [\Phi, \upphi] \right) - q \left( \ast (\rd_\nabla a \wedge \psi) - \nabla \upphi, - \rd_\nabla^* a \right) \\
							&= \left( \ast \left( \rd_\nabla \left( [\Phi, a] \right) \wedge \psi \right) - \nabla \left( [\Phi, \upphi] \right), - \rd_\nabla^* \left( [\Phi, a] \right) \right) \\
							& \quad - \left( [\Phi, \ast (\rd_\nabla a \wedge \psi) - \nabla \upphi], [\Phi, - \rd_\nabla^* a] \right) \\
							&= \left( \ast [(\nabla \Phi \wedge \psi) \wedge a] + [\Phi, \ast (\rd_\nabla a \wedge \psi)] - [\nabla \Phi, \upphi] - [\Phi, \nabla \upphi], \ast [\nabla \Phi \wedge \ast a] - [\Phi, \rd_\nabla^* a] \right) \\
							& \quad - \left( [\Phi, \ast (\rd_\nabla a \wedge \psi)] - [\Phi, \nabla \upphi], [\Phi, - \rd_\nabla^* a] \right) \\
							&= \left( \ast [(\nabla \Phi \wedge \psi) \wedge a] - [\nabla \Phi, \upphi], \ast [\nabla \Phi \wedge \ast a] \right).
	\end{align}
	Thus, after some simplification, we get
	\begin{multline}
		\left( \D^2 \pm [\D, q] - \nabla^* \nabla \right) (a, \upphi) = \\
		\left( \ast [(\pm \nabla \Phi \wedge \psi + \ast F_\nabla - F_\nabla \wedge \varphi) \wedge a] - [\mon^\pm (\nabla, \Phi), \upphi], \ast [\mon^\pm (\nabla, \Phi) \wedge \ast a] \right).
	\end{multline}
	Using the $\rG_2$-monopole \cref{eq:Monopole} and the following identity
	\begin{equation}
		\omega \wedge \varphi = 2 \ast \omega^7 - \ast \omega^{14}, \label{eq:omega_wedge_varphi}
	\end{equation}
	that holds for any $\omega = \omega^7 + \omega^{14} \in \Lambda^2 X = \Lambda_7^2 X \oplus \Lambda_{14}^2 X$, we can rewrite $I_\pm (a, \upphi)$ as
	\begin{equation}
		\left( \D^2 \pm [\D, q] - \nabla^* \nabla \right) (a, \upphi) = \begin{pmatrix} \ast \left[ \left( (- 1 \pm 3) \ast F_\nabla^7 + 2 \ast F_\nabla^{14} \right) \wedge a \right] - \left[ \mon^\pm (\nabla, \Phi), \upphi \right] \\ \ast \left[ \mon^\pm (\nabla, \Phi) \wedge \ast a \right)] \end{pmatrix}^T,
	\end{equation}
	which completes the proof (after using \cref{eq:omega_wedge_varphi} again in negative sign case).
\end{proof}

Using Moser iteration, we get the following immediate corollary.

\begin{corollary}
	Let $\left( \nabla, \Phi \right) \in \cC_P$ be a pair, such that $r^2 \left( |F_\nabla| + |\nabla \Phi| \right) \in L^\infty (X)$. Let $V = (a, \upphi) \in T_{\left( \nabla, \Phi \right)} \cC_P$ be in the $L^2 (X)$-kernel of either $\cL$ or $\cL^*$. Then $r^n |V|^2 \in L^\infty (X)$.
\end{corollary}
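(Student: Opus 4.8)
The plan is to exploit the Weitzenb\"ock formulas of \Cref{lem:2nd_order_linearized} exactly as we exploited the Bochner identities of \Cref{lem:Bochner_Delta_nabla_Phi_0,lem:Bochner_Delta_nabla_Phi_1} in the earlier sections, namely: turn the vanishing of $\cL V$ (or $\cL^* V$) into a differential inequality for $|V|^2$ with a coefficient controlled by the curvature and Higgs field, then feed that inequality into Moser iteration (\Cref{prop:Moser}). First I would record that, since $V$ lies in the $L^2$-kernel of $\cL$, say, applying $\cL^*$ gives $\cL^*\cL V = 0$, so by \Cref{lem:2nd_order_linearized}
\begin{equation}
	\nabla^*\nabla V = q^2 V - I_+ V,
\end{equation}
where (on solutions of the $\rG_2$-monopole equation) $\mon^+(\nabla,\Phi) = 2\nabla\Phi$ and $\mon^-(\nabla,\Phi)=0$, so $I_+(a,\upphi)$ is a sum of terms each bounded pointwise by $|F_\nabla^7|\,|a|$ and $|\nabla\Phi|\,|\upphi|$ and $|\nabla\Phi|\,|a|$; similarly $q^2 V$ is bounded by $|\Phi|^2|V|$, which is $O(1)$ by finite mass. (The same works for $\cL^*$ using the $\cL\cL^*$ identity and $I_-$, which is even simpler since $\mon^-=0$.) Pairing with $V$ and using Kato's inequality then yields, on the end,
\begin{equation}
	\tfrac12\Delta |V|^2 + |\nabla V|^2 \lesssim \left( |\Phi|^2 + |F_\nabla| + |\nabla\Phi| \right)|V|^2 \lesssim |V|^2,
\end{equation}
where in the last step I use the standing hypothesis $r^2(|F_\nabla|+|\nabla\Phi|)\in L^\infty(X)$ (in particular these decay) together with $|\Phi|\le m$, so the coefficient is a bounded function $c_0$.

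Next I would run Moser iteration on balls $B_\rho(x)$ of a fixed small radius $\rho\le r_0$ (bounded geometry, available since $(X,\varphi)$ is AC, cf. \Cref{rem:AC_vol_growth}); \Cref{prop:Moser} with $\Delta|V|^2\le 2c_0|V|^2$ gives
\begin{equation}
	\sup_{B_{\rho/2}(x)}|V|^2 \lesssim \rho^{-n}\int_{B_\rho(x)}|V|^2\,\vol_X.
\end{equation}
Since $V\in L^2(X)$, the right-hand side tends to zero as $r(x)\to\infty$, but that only gives decay, not the rate. To get the rate $r^{-n}$ I would instead refine the differential inequality: because $|F_\nabla|+|\nabla\Phi|\lesssim r^{-2}$ and $|\Phi|^2|V|$ — the only $O(1)$ term — comes from $q^2$, and $q^2 V = (\mathrm{ad}_\Phi^2 a,\mathrm{ad}_\Phi^2\upphi)$ acts nonpositively (it is $-|\,[\Phi,\cdot]\,|^2$ when paired with $V$), the actual inequality reads
\begin{equation}
	\tfrac12\Delta|V|^2 + |\nabla V|^2 + |[\Phi,a]|^2 + |[\Phi,\upphi]|^2 \lesssim r^{-2}|V|^2 + |\nabla\Phi|\,|\upphi|\,|a|,
\end{equation}
so outside a large compact set $\Delta|V|^2\lesssim r^{-2}|V|^2$, an inequality with a coefficient decaying like that of the model operator on the cone. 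One then applies Moser iteration on balls $B_{r(x)/4}(x)$ (whose radius grows linearly), on which the coefficient $c_0\lesssim r(x)^{-2}$ contributes $c_0^{n/2}\lesssim r(x)^{-n}$ which is dominated by $\rho^{-n}\sim r(x)^{-n}$; thus
\begin{equation}
	|V(x)|^2 \lesssim r(x)^{-n}\int_{B_{r(x)/4}(x)}|V|^2\,\vol_X \lesssim r(x)^{-n}\|V\|_{L^2(X)}^2,
\end{equation}
which is precisely $r^n|V|^2\in L^\infty(X)$.

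The main obstacle I anticipate is bookkeeping the cross term $|\nabla\Phi|\,|\upphi|\,|a|$ and the $I_\pm$ terms carefully enough to see that, after Young's inequality, everything is either absorbed into $|[\Phi,a]|^2+|[\Phi,\upphi]|^2$ (using $|[\Phi,\cdot]|^2\gtrsim m^2|(\cdot)^\perp|^2$ on the end, as in \Cref{lem:Improved_Bochner_Delta_Nabla_Phi}) or carries an $r^{-2}$ (or faster) coefficient; the parallel component of $\upphi$ and $a$ must be handled by the $|\nabla\Phi|$-decay rather than by $q^2$. A mild additional point is that Moser iteration with balls of radius comparable to $r(x)$ requires the geometry on those balls to be uniformly controlled, which holds because $(X,\varphi)$ is AC and the rescaled metrics converge to the cone; alternatively one can iterate on unit balls to first get $|V|^2\lesssim r^{-2(n-2)}\|V\|_{L^2}^2$ via the Hardy/Agmon machinery of \Cref{subsec:Hardy1} applied to $|V|$ in place of $|\nabla\Phi|$, and then one extra Moser step on growing balls upgrades this to $r^{-n}$ — this is the same two-stage scheme already used to prove \Cref{thm:First_part_of_Main_Theorem_2}, so no genuinely new idea is needed, only the verification that $|V|$ satisfies an inequality of the same shape as $|\nabla\Phi|$ does.
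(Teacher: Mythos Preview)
Your approach is essentially the paper's: derive $\Delta|V|^2 \lesssim r^{-2}|V|^2$ from the Weitzenb\"ock formula of \Cref{lem:2nd_order_linearized} (using that $\langle V, q^2 V\rangle = -|[\Phi,V]|^2\le 0$, so the only surviving zeroth-order terms come from $I_\pm$), then apply Moser iteration on balls of radius $\sim r(x)$ using that the injectivity radius grows linearly on an AC manifold. The paper's proof is exactly this two-line argument, without the detours.

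One clarification: you invoke the monopole equation to evaluate $\mon^\pm$, but the corollary is stated for \emph{any} pair with $r^2(|F_\nabla|+|\nabla\Phi|)\in L^\infty$, and the argument does not need $\mon^-=0$. Directly from the definition, $|\mon^\pm(\nabla,\Phi)| = |\ast(F_\nabla\wedge\psi)\pm\nabla\Phi| \lesssim |F_\nabla|+|\nabla\Phi| \lesssim r^{-2}$, and $|F_\nabla^7|\lesssim|F_\nabla|\lesssim r^{-2}$, so $|I_\pm(V)|\lesssim r^{-2}|V|$ regardless. Likewise you do not need finite mass: the $|\Phi|^2$ term has the good sign and is simply dropped. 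Your discussion of the Hardy/Agmon alternative is unnecessary here---that machinery is reserved for the sharper $r^{-2(n-1)}$ decay in the next theorem.
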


\begin{proof}
	By elliptic regularity, $V$ is smooth. Using the results of \Cref{lem:2nd_order_linearized}, we get (in both cases) and outside of $B_R$ (for $R$ large enough) that
	\begin{equation}
		\Delta |V|^2 \leqslant \frac{C}{r^2} |V|^2.
	\end{equation}
	In fact, we have faster than quadratic decay for the coefficient, but that does not change what follows. We can now use Moser iteration (cf. \Cref{prop:Moser}). Since the injectivity radius satisfies $\mathrm{inj} (x) \geqslant c r (x)$, we have (with a potentially different constant) that
	\begin{equation}
		|V (x)|^2 \leqslant \frac{C}{r (x)^n} \int\limits_{B_{r (x)/2} (x)} |V|^2 \vol_X, \label[ineq]{ineq:Moser_for_V}
	\end{equation}
	which completes the proof.
\end{proof}

\smallskip

\subsection{Decay properties of solutions to the linearized equation}

\begin{theorem}
	Let $\left( \nabla, \Phi \right) \in \cC_P$ be a pair, such that $r^2 \left( |F_\nabla| + |\nabla \Phi| \right) \in L^\infty (X)$. Let $(a, \upphi) \in T_{\left( \nabla, \Phi \right)} \cC_P$ be in the kernel of either $\cL$ or $\cL^*$, $V = (a, \upphi)$, and $|V|^2 = |a|^2 + |\upphi|^2$. Then for all $\epsilon > 0$, we have that $r^{2(n - 1)} |V|^2 \in L^\infty (X)$.
\end{theorem}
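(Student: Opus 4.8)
The plan is to transplant the decay machinery developed in \Cref{sec:AC} for $\nabla\Phi$ to the pair $V=(a,\upphi)$, using the second--order identity of \Cref{lem:2nd_order_linearized} in place of the Bochner formula for $\nabla\Phi$. Throughout, $(\nabla,\Phi)$ is the $\rG_2$--monopole at which we linearize, for which (under the standing hypotheses, together with finite intermediate energy) \Cref{thm:First_part_of_Main_Theorem_2}(ii) gives the sharp decay $|\nabla\Phi|\lesssim r^{-(n-1)}$ along the end and $|\Phi|\to m>0$ there by \Cref{thm:Finite_Mass}. First I would establish the basic Bochner inequality. Since $V$ lies in the $L^2$--kernel of $\cL$ or of $\cL^*$, \Cref{lem:2nd_order_linearized} gives $\nabla^*\nabla V=q^2V-I_\pm V$; pairing with $V$ and using that $q$ is skew--adjoint (so $\langle V,q^2V\rangle=-|qV|^2\le 0$) yields
\[
  \tfrac12\Delta|V|^2+|\nabla V|^2+|qV|^2=-\langle V,I_\pm V\rangle.
\]
For a $\rG_2$--monopole one has $\mon^-(\nabla,\Phi)=0$, $\mon^+(\nabla,\Phi)=2\nabla\Phi$ and $3\ast F_\nabla^7=\nabla\Phi\wedge\psi$ (cf. the proof of \Cref{lem:Bochner_Delta_nabla_Phi_1}), so $|I_\pm V|\lesssim|\nabla\Phi|\,|V|$; hence $\langle V,\nabla^*\nabla V\rangle\le|\langle V,I_\pm V\rangle|\lesssim|\nabla\Phi|\,|V|^2\lesssim r^{-(n-1)}|V|^2$ and, in particular, $\Delta|V|^2\lesssim r^{-(n-1)}|V|^2$. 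Since $V$ is smooth by elliptic regularity and $r$ is bounded below, it suffices to estimate $|V|^2$ along the end.

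Next I would run the two weighted $L^2$ estimates. \emph{Step 2:} exactly as in \Cref{lem:Agmon_first_step,lem:First_time_Hardy}, for $f\in L_1^\infty(X)$ supported in $X-B_l$ one has the Agmon identity $\|\nabla(fV)\|_{L^2}^2=\|\rd f\otimes V\|_{L^2}^2+\int_{X-B_l}f^2\langle V,\nabla^*\nabla V\rangle$, valid because $\langle V,\nabla^*\nabla V\rangle$ is integrable by Step 1 and $V\in L^2$. Combining this with $f=r_{l,L}^\alpha$ from \cref{eq:rlL_def}, Kato's inequality, and Hardy's \cref{ineq:Hardy} applied to $f|V|$, the only change from \Cref{lem:First_time_Hardy} is that the exponentially small error term $\int f^2e^{-(m-\mu)r}|\nabla\Phi|^2$ is now replaced by $\int f^2|\nabla\Phi|\,|V|^2\lesssim\int r_{l,L}^{2\alpha}r^{-(n-1)}|V|^2\lesssim\|V\|_{L^2}^2$, which holds for all $\alpha\le(n-1)/2$ because $r_{l,L}^{2\alpha}r^{-(n-1)}\lesssim 1$ there; the usual absorption then gives $\|r^{\alpha-1}V\|_{L^2(X-B_l)}^2\lesssim\bigl((n-2)/2-\alpha\bigr)^{-1}\|V\|_{L^2(X-B_l)}^2$ for $\alpha<(n-2)/2$ (with a finite iteration when $\nu\in(-1,0)$), and Moser iteration (\Cref{prop:Moser}) on $\Delta|V|^2\lesssim r^{-(n-1)}|V|^2$ yields $|V|^2\lesssim r^{-2(n-2)+\epsilon}$. \emph{Step 3:} following \Cref{prop:Decay_2}, replace ordinary Hardy by the improved Hardy inequality \Cref{lem:Hardy_2} applied to $r_{l,L}^\alpha\sqrt r\,|V|$, and run the Agmon--Stern integration by parts: $\rd\bigl(r_{l,L}^{2\alpha}|V|^2\,\iota_{\partial_r}\vol_X\bigr)=0$ by Gaffney's theorem, expanded using $-\Delta r=(n-1)r^{-1}+O(r^{-1-\nu'})$ together with $\nabla^*\nabla V=q^2V-I_\pm V$ to rewrite $\langle V,\nabla_{\partial_r}\nabla V\rangle$ in a radial normal frame. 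This produces $\|r^{\alpha-1/2}V\|_{L^2(X)}^2\le c_1+c_2(n-1-2\alpha)^{-1}$ for $\alpha<(n-1)/2$ and, by Moser, $|V|^2=O(r^{-2(n-1)+\epsilon})$ for every $\epsilon>0$.

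Finally I would remove the $\epsilon$ as in \Cref{sec:final_est}: if $|V|^2$ were not $O(r^{-2(n-1)})$, then $r^{2(n-1)}|V|^2$ has local maxima at points $y_i$ with $r(y_i)\to\infty$ and $r(y_i)^{2(n-1)}|V(y_i)|^2\to\infty$; at such a point the first--order condition $2(n-1)|V|^2+r\,\partial_r|V|^2=0$ and the second--derivative test, using $-\Delta r\le(n-1)r^{-1}$, give $2(n-1)(n-2)r^{-2}|V|^2\lesssim\Delta|V|^2+(\text{lower order})$, while Step 1 bounds the right--hand side by $\lesssim r^{-(n-1)}|V|^2$; this forces $r^{-2}\lesssim r^{-(n-1)}$ at $y_i$, impossible for large $r$ since $n-1=6>2$, unless $V(y_i)=0$, contradicting $r(y_i)^{2(n-1)}|V(y_i)|^2\to\infty$. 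Hence $|V|^2\lesssim r^{-2(n-1)}$ along the end, and combined with the interior bound this gives $r^{2(n-1)}|V|^2\in L^\infty(X)$ (in fact the sharp, $\epsilon$--free, statement).

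The main obstacle is Step 3's Agmon--Stern computation. In \Cref{prop:Decay_2} the term $\langle\nabla\Phi,\nabla_{\partial_r}\nabla\Phi\rangle$ is controlled using the harmonicity of $\Phi$ (which kills $\langle\nabla_{\partial_r}\Phi,\nabla_i\nabla_i\Phi\rangle$) and the monopole relation $F_{ri}^7=\sum_j\varphi_{rij}\nabla_j\Phi$; here $\sum_i\nabla_i\nabla_iV=-\nabla^*\nabla V=-q^2V+I_\pm V$ is nonzero, so $\langle V,\nabla_{\partial_r}\nabla V\rangle$ acquires an extra contribution $\lesssim|\nabla V|\bigl(|qV|+|\nabla\Phi|\,|V|\bigr)$ together with further mixed $\nabla^\Sigma$--derivative terms. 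Showing these are absorbable — which is the linearized analogue of the computation around \cref{eq:Agmon--Stern_trick_2} — requires first proving that the $\Phi$--transverse part of $V$ decays exponentially (a coupled Bochner inequality for $qV$, in the spirit of \Cref{prop:Exponential_Decay_Transverse}), and then checking that the bundle terms coming from $I_\pm$ enter only through factors of $|\nabla\Phi|\lesssim r^{-(n-1)}$, so that all the resulting error integrals are bounded independently of $L$ and the constant in the final Hardy inequality stays positive precisely up to $\alpha=(n-1)/2$.
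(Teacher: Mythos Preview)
Your outline matches the paper's: transplant the Bochner/Hardy/improved-Hardy/final-estimate machinery of \Cref{subsec:Hardy1,subsec:Hardy2,sec:final_est} to $V$, using \Cref{lem:2nd_order_linearized} in place of \Cref{lem:Bochner_Delta_nabla_Phi_0}. Steps 1, 2 and the final contradiction argument are correctly set up.

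The one point where you drift from the paper is the Agmon--Stern step. In \Cref{prop:Decay_2} the controlled quantity is $\langle\nabla\Phi,\nabla_{\partial_r}\nabla\Phi\rangle=\tfrac12\partial_r|\nabla\Phi|^2$; since $V$ now plays the role of $\nabla\Phi$ (not of $\Phi$), the analogue is $\langle V,\nabla_{\partial_r}V\rangle=\tfrac12\partial_r|V|^2$, involving only \emph{first} derivatives of $V$. You instead analyze ``$\langle V,\nabla_{\partial_r}\nabla V\rangle$'' via the second-order identity $\nabla^*\nabla V=q^2V-I_\pm V$, and this is what introduces the $q^2V$ term that leads you to postulate exponential decay of $V^\perp$. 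The paper's ``easier'' route is to use the \emph{first-order} equation $\cL V=0$: writing $\slashed D=\sigma(\rd r)\nabla_{\partial_r}+\slashed D^\Sigma$ one gets $\nabla_{\partial_r}V=-\sigma(\rd r)^{-1}(\slashed D^\Sigma V+qV)$, hence
\[
-\langle V,\nabla_{\partial_r}V\rangle\le\sqrt{n-1}\,|V|\,|\nabla^\Sigma V|+|\langle V,\sigma(\rd r)^{-1}qV\rangle|.
\]
Since $qV\in\cS_E^\perp$ the last term sees only $V^\perp$ and is $\lesssim m^{-1}|qV|^2$; integrated against $r_{l,L}^{2\alpha}$ this is $\lesssim l^{-1}\|r_{l,L}^\alpha\sqrt r\,qV\|_{L^2}^2$, which for $l$ large is absorbed by the favourable term $-\int r_{l,L}^{2\alpha}r|qV|^2$ already on the right of the Agmon identity (coming from $\langle V,q^2V\rangle=-|qV|^2$). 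One then recovers exactly the constant $\tfrac{1}{n-1}(\alpha t+\tfrac{n-1}{2})^2$ of \cref{ineq:Hardy_final}, so the critical threshold $\alpha<\tfrac{n-1}{2}$ is reached without any separate decay for $V^\perp$, and the curvature errors replacing $[F_{ri},\Phi]$ are $\lesssim r^{-2}|V|$, harmless. Your alternative route---prove a coupled Bochner inequality for $qV$ and deduce exponential decay, then proceed---is also valid (and the Bochner inequality for $qV$ does hold, by the same Leibniz computation as in \Cref{lem:Bochner1}), but it is more than the paper's sketch requires.
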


\begin{proof}
	To prove the claim, we now bound the integrals $\int_{B_{r (x)} (x)} |V|^2 \vol_X$ in \cref{ineq:Moser_for_V}, using the improved Hardy's \cref{ineq:Hardy_2} in a similar way as in \cref{ineq:Hardy_rewrite_1,ineq:Hardy_rewrite_2}. More concretely, we show that for all $\alpha < \tfrac{n - 1}{2}$:
	\begin{equation}
		\| r^{\alpha - \frac{1}{2}} V \|_{L^2 (X)}^2 \leqslant \frac{C}{n - 1 - 2 \alpha} \| V \|_{L^2 (X)}^2.  \label[ineq]{ineq:V_int_bound}
	\end{equation}
	We again use \cref{ineq:Hardy_2} together with Kato's inequality, but now with $f = r_{l, L}^\alpha r^{-1/2} |V|$, to get
	\begin{multline}
		\frac{(n-2)^2}{4} \ \| r_{l, L}^\alpha r^{-1/2} V \|_{L^2 (X - B_l)}^2 + \| r_{l, L}^\alpha \sqrt{r} \nabla^\Sigma \nabla \Phi \|_{L^2 (X - B_l)}^2 \leqslant \| \nabla (r_{l, L}^\alpha \sqrt{r} \nabla \Phi ) \|_{L^2 (X - B_l)}^2 \\
		+ C_H \|r_{l, L}^\alpha r^{-1/2 + \nu} \nabla \Phi \|_{L^2 (X - B_l)}^2.
	\end{multline}
	From here we can proceed almost identically to proof of \Cref{prop:Decay_2} (in fact, the computations are now easier as the equations are linear, and we already have the decay result for $(\nabla, \Phi)$ by \Cref{thm:Main_Theorem_2}) to get that $r^{2(n - 1) - \epsilon} |V|^2 \in L^\infty (X)$.

	Then, as in \Cref{sec:final_est}, one can show that, in fact, $r^{2(n - 1)} |V|^2 \in L^\infty (X)$.
\end{proof}

\bigskip

\section{Weighted split Sobolev spaces: Fredholm operators}\label{sec:Sobolev_Fredholm}

Let $(X,g)$ be an asymptotically conical spin manifold. In this section we construct Sobolev spaces suitable for a Fredholm theory for $\rG_2$-monopoles. In that setting, the gauge fixed linearization of the $\rG_2$-monopoles equation $\cL = \D + q$, as defined in \Cref{ss:Linearized_Eq}, is the sum of twisted Dirac type operator $\D$ and a potential $q$. Such operators are usually called Callias type operators \cite{Callias1978} and are well known to be related to the deformation theory of monopoles in other dimensions \cites{Kottke10,Kottke15}. Hence, in this section we work with a slightly more general setup than what is needed and the Sobolev spaces constructed are those suitable to analyze Callias type operators.\\
Let $\cS$ be the spinor bundle of $(X,g)$, i.e. the vector bundle associated with the standard $\Spin(n)$ representation, and $E$ an auxiliary metric vector bundle which in the monopole case should be thought of as being $\mathfrak{g}_P$. Denote the tensor product of these two bundles by $\mathcal{S}_E \coloneqq \mathcal{S} \otimes E$ and equip it with a connection $\nabla$ and a metric $\langle \cdot , \cdot \rangle$ both induced by counterparts on $E$ and $\cS$ (equipped with the metric and connection induced by $g$ and the Levi--Civita connection). This section culminates in \Cref{thm:Main_Fredholm_Theorem} with the proof that $\cL$ is Fredholm when operating between some suitable and carefully constructed Banach spaces.

\smallskip

\subsection{Nondegenerate potential and standard Sobolev spaces}\label{ss:Callias}

We start by analyzing the case when the potential $q$ is pointwise invertible at infinity and prove that, in this situation, the operator $\cL = \D + q$ is Fredholm between standard Sobolev spaces. This is the setting worked out in \cites{Ang90,Kottke10} where a formula for the index in a quite general setup is given. Here we give a short proof that 
\begin{equation}
	\cL : L^2_1 (X) \to L^2 (X),
\end{equation}
is Fredholm along the lines motivated by \cites{Taubes1983,Donaldson2002}. We begin by studying the model situation on a cone which we then extend to the AC setting. Before proceeding, we introduce a little notation. Let $\epsilon$ be a smooth positive function on a metric cone $C = \left( (1, \infty) \times \Sigma, g_C = \rd r^2 + r^2 g_\Sigma \right)$ decaying with all derivatives as $r \to \infty$, i.e. such that
\begin{equation}\label{eq:FunctionEps}
	\lim_{r \rightarrow \infty} \vert r^j \nabla^j \epsilon(r) \vert = 0.
\end{equation}

\begin{proposition}\label{prop:ModelInequality}
	In the metric cone as above and $\cL_C = \D_C + q_C$ as before with $q_C$ parallel and bounded by bellow, i.e. $\nabla ( q_C(f) ) = q_C(\nabla f)$ and $\vert q_C(f) \vert^2 \geqslant c \vert f \vert^2$ for some constant $c >0$ and all $f \in \Omega^0(C,\mathcal{S}_E)$. Furthermore, suppose there is a Weitzenb\"ock formula
	\begin{equation}
		\cL_C^* \cL_C = \nabla^* \nabla + W + q_C^* q_C,
	\end{equation}
	with $W$ satisfying $\vert W(f) \vert \leqslant \epsilon^2(r) \vert f \vert$ for some function $\epsilon(r)>0$ as in \cref{eq:FunctionEps}. Then, the following inequality holds
	\begin{equation}\label[ineq]{ineq:ModelInequality}
		\Vert f \Vert_{L^2_1 (X)}^2 \lesssim \Vert \cL_C f \Vert_{L^2 (X)}^2 + \Vert \epsilon(r) f \Vert_{L^2 (X)}^2,
	\end{equation}
	for all compactly supported $f$. 
\end{proposition}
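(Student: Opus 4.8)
The plan is to obtain \cref{ineq:ModelInequality} as an immediate consequence of the hypothesized Weitzenb\"ock formula together with two integrations by parts, both of which are legitimate since $f$ has compact support. First I would test the identity $\cL_C^*\cL_C = \nabla^*\nabla + W + q_C^*q_C$ against $f$: for smooth compactly supported $f$,
\[
	\|\cL_C f\|_{L^2(X)}^2 = \langle \cL_C^*\cL_C f,\, f\rangle_{L^2(X)} = \|\nabla f\|_{L^2(X)}^2 + \langle W f,\, f\rangle_{L^2(X)} + \|q_C f\|_{L^2(X)}^2 ,
\]
where the outer equalities use that $\cL_C^*$ and $\nabla^*$ are the formal adjoints (so no boundary terms appear) and $\langle q_C^*q_C f, f\rangle = \|q_C f\|^2$.

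Next I would estimate the two lower-order terms by the stated pointwise bounds: the hypothesis $|q_C(f)|^2 \geqslant c|f|^2$ gives $\|q_C f\|_{L^2(X)}^2 \geqslant c\,\|f\|_{L^2(X)}^2$, while $|W(f)| \leqslant \epsilon^2(r)|f|$ together with Cauchy--Schwarz gives $|\langle Wf, f\rangle_{L^2(X)}| \leqslant \|\epsilon(r) f\|_{L^2(X)}^2$. Rearranging the displayed identity then yields
\[
	\|\nabla f\|_{L^2(X)}^2 + c\,\|f\|_{L^2(X)}^2 \;\leqslant\; \|\cL_C f\|_{L^2(X)}^2 + \|\epsilon(r) f\|_{L^2(X)}^2 .
\]
Since the left-hand side controls $\min(1,c)\,\|f\|_{L^2_1(X)}^2$, this is precisely \cref{ineq:ModelInequality} for smooth compactly supported $f$, and the general compactly supported case follows by the usual density/approximation argument.

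I do not expect a genuine obstacle in this proposition: the only points requiring (minor) care are the justification of the two integrations by parts, which is automatic from compact support, and the observation that for this estimate one uses only the pointwise bound on $W$ --- the decay hypothesis \cref{eq:FunctionEps} on $\epsilon$ is not needed here, and will only be invoked in the subsequent Fredholm argument (to show the $\|\epsilon(r)f\|_{L^2}$ term is a relatively compact perturbation). In short, the statement is bookkeeping built on the given Weitzenb\"ock formula.
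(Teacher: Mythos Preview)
Your proposal is correct and follows essentially the same approach as the paper: integrate by parts using compact support, apply the Weitzenb\"ock formula to get $\|\cL_C f\|_{L^2}^2 = \|\nabla f\|_{L^2}^2 + \langle Wf,f\rangle_{L^2} + \|q_C f\|_{L^2}^2$, then use the pointwise bounds $|q_C f|^2 \geqslant c|f|^2$ and $|Wf|\leqslant \epsilon^2(r)|f|$ to rearrange. Your added remark that only the pointwise bound on $W$ (not the decay of $\epsilon$) is used here, with the decay entering only in the subsequent compact-embedding step, is correct and worth noting.
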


\begin{proof}
	For compactly supported $f$ one can integrate by parts and use the Weitzenb\"ock formula in the statement
	\begin{align}
		\Vert \cL_C f \Vert^2_{L^2 (X)} &= \langle \cL_C^* \cL_C f , f \rangle_{L^2 (X)} = \Vert \nabla f \Vert^2_{L^2 (X)} + \langle W(f), f \rangle_{L^2 (X)} + \Vert q(f) \Vert^2_{L^2 (X)} \\
		&\geqslant \Vert \nabla f \Vert^2_{L^2 (X)}  - \Vert \epsilon(r) f \Vert_{L^2 (X)}^2 + c\Vert f \Vert_{L^2 (X)}^2.
	\end{align}
	and passing the term $\Vert \epsilon (r) f \Vert_{L^2 (X)}^2$ to the left hand side yields the \cref{ineq:ModelInequality}.
\end{proof}

We now go from the conical case to the asymptotically conical one. 

\begin{lemma}\label{lem:CompactEmbedding}
	Let $\epsilon : X \rightarrow \mathbb{R}_+$ be smooth function satisfying \eqref{eq:FunctionEps} and $\epsilon^{-1} L^2 (X) = \lbrace f \ | \ \epsilon f \in L^2 \rbrace$. Then the embedding $L^2_1 (X) \hookrightarrow \epsilon^{-1} L^2 (X)$ is compact. 
\end{lemma}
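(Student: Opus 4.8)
The statement to prove is that the embedding $L^2_1(X) \hookrightarrow \epsilon^{-1}L^2(X)$ is compact, where $\epsilon$ decays with all derivatives as in \eqref{eq:FunctionEps}. The plan is to exploit a standard ``diagonal + tail'' argument: on compact regions we use Rellich--Kondrachov, and on the conical end we use the smallness of $\epsilon$ to control the tail uniformly.

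First I would fix a reference point $o \in X$ and an exhaustion by geodesic balls $B_R = B_R(o)$, and let $\{u_k\}$ be a bounded sequence in $L^2_1(X)$, say $\|u_k\|_{L^2_1} \leqslant 1$. On each fixed $B_R$, the restriction $L^2_1(B_R) \hookrightarrow L^2(B_R)$ is compact by the Rellich--Kondrachov theorem (applicable since $X$ has bounded geometry and $B_R$ is a precompact domain with, after a slight enlargement, Lipschitz boundary). Since $\epsilon$ is bounded on $B_R$, the norm $\|\epsilon \,\cdot\,\|_{L^2(B_R)}$ is equivalent to $\|\cdot\|_{L^2(B_R)}$ there, so the embedding $L^2_1(B_R) \hookrightarrow \epsilon^{-1}L^2(B_R)$ is also compact. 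A diagonal argument over $R = 1, 2, 3, \ldots$ then produces a subsequence (still denoted $u_k$) that converges in $\epsilon^{-1}L^2(B_R)$ for every fixed $R$; in particular it is Cauchy there.

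The key point is then to estimate the contribution of the end $X \setminus B_R$. Given $\eta > 0$, choose $R$ so large that $\epsilon(x) \leqslant \eta$ for all $x$ with $r(x) \geqslant R$ — this is exactly the content of \eqref{eq:FunctionEps} (the $j=0$ case). Then for any $u \in L^2_1(X)$,
\begin{equation}
	\int_{X \setminus B_R} \epsilon^2 |u|^2 \, \vol_X \leqslant \eta^2 \int_{X \setminus B_R} |u|^2 \, \vol_X \leqslant \eta^2 \|u\|_{L^2_1(X)}^2.
\end{equation}
Applying this to $u = u_k - u_j$ gives $\|\epsilon(u_k - u_j)\|_{L^2(X \setminus B_R)}^2 \leqslant 4\eta^2$ uniformly in $j, k$. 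Combining the end estimate with the Cauchy property on $B_R$ from the diagonal argument shows that $\{u_k\}$ is Cauchy in $\epsilon^{-1}L^2(X)$: for $j, k$ large, $\|\epsilon(u_k - u_j)\|_{L^2(X)}^2 \leqslant \|\epsilon(u_k - u_j)\|_{L^2(B_R)}^2 + 4\eta^2 \leqslant 5\eta^2$. Since $\epsilon^{-1}L^2(X)$ is complete (it is the $L^2$ space of the measure $\epsilon^2 \vol_X$, or equivalently a weighted $L^2$ space), the subsequence converges, proving compactness.

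I do not expect a serious obstacle here; the only mild technical point is justifying the local Rellich embedding in a form that applies to geodesic balls in a manifold of bounded geometry — this is standard (one covers $B_R$ by finitely many uniformly-controlled coordinate charts and patches together the Euclidean Rellich theorem), and bounded geometry of AC $\rG_2$-manifolds was already noted in \Cref{rem:AC_vol_growth}. One should also make sure the weighted space $\epsilon^{-1}L^2(X)$ is understood with its natural norm $\|f\| = \|\epsilon f\|_{L^2}$ so that completeness is immediate. Everything else is the routine ``compact on compacta plus small tail'' mechanism.
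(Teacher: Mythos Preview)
Your proposal is correct and follows essentially the same approach as the paper: local Rellich compactness on balls $B_R$ combined with the uniform tail estimate $\|\epsilon(u_k-u_j)\|_{L^2(X\setminus B_R)}^2 \leqslant 4\eta^2$ coming from the decay of $\epsilon$. The only cosmetic difference is that the paper first extracts a weak limit $f\in L^2_1(X)$ and then proves strong convergence $f_i\to f$ in $\epsilon^{-1}L^2(X)$ via the same splitting, whereas you run a diagonal argument and verify the Cauchy property directly; both are standard implementations of the same ``compact on compacta plus small tail'' mechanism.
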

\begin{proof}
	The Banach space $\epsilon^{-1}L^2 (X)$ can equally be defined as the completion of the smooth compactly supported sections in the norm $\Vert f \Vert_{\epsilon^{-1}L^2} = \Vert \epsilon f \Vert_{L^2 (X)}$. Then, we consider a generic sequence $\lbrace f_i  \rbrace \subseteq L^2_1$ satisfying $\Vert f_i \Vert_{L^2_1 (X)}^2 = 1$, which we show must subsequentially converges in $	\epsilon^{-1}L^2 (X)$.\\
	Since $\Vert f_i \Vert_{L^2_1 (X)}^2 = 1$, there is a subsequence converging to a weak limit $f \in L^2_1 (X)$ satisfying $\Vert f \Vert^2_{L^2_1 (X)} \leqslant 1$. We now show that this subsequence converges strongly in $\epsilon^{-1} L^2 (X)$ to $f$. In the following computation let $B_R = B_R (x_0)$. Then,
	\begin{align}
		\Vert \epsilon (f_i - f) \Vert^2_{L^2 (X)} &= \Vert \epsilon (f_i - f) \Vert^2_{L^2 (B_R)} + \Vert \epsilon  (f_i - f) \Vert^2_{L^2 (X - B_R)} \\
		&\leqslant c_1 \Vert f_i - f \Vert^2_{L^2 (B_R)} + \epsilon^{2}(R)  \Vert f_i - f \Vert^2_{L^2 (X - B_R)} \\ \label{eq:estupidaIntIneq}
		&\leqslant c_1 \Vert f_i - f \Vert^2_{L^2 (B_R)} + 4 \epsilon^2(R)  ,
	\end{align}
	where in the last inequality we used
	\begin{equation}
		\Vert f_i - f \Vert^2_{L^2 (X - B_R)}  \leqslant \Vert f_i - f \Vert^2_{L^2_1(X - B_R)} \leqslant 2 \Vert f_i  \Vert^2_{L^2_1(X - B_R)}  + 2  \Vert f \Vert^2_{L^2_1(X - B_R)} \leqslant 4.
	\end{equation}
	The second term in \cref{eq:estupidaIntIneq} is $4 \epsilon^2(R)$ and, by increasing $R$, can be taken to be arbitrarily small. Regarding the first one $\Vert f_i - f \Vert^2_{L^2 (B_R)}$, since the embedding $L^2_1(B_R) \hookrightarrow L^2 (B_R)$ is compact, $f_i$ does converge strongly to $f$ in $L^2 (B_R)$ and the term $\Vert f_i - f \Vert^2_{L^2 (B_R)}$ can also be made arbitrarily small by increasing $i$. 
\end{proof}

\begin{lemma}\label{lem:Standard_Inequality_Fredholm}
	Let $\cL : \Omega^0 (X, \mathcal{S}_E) \rightarrow \Omega^0 (X, \mathcal{S}_E)$ be modeled on a conical operator $\cL_C$ as in \Cref{prop:ModelInequality}. Then, there is a function $\epsilon$ as in \eqref{eq:FunctionEps} such that the inequality
	\begin{equation}\label[ineq]{ineq:NotModelInequality}
		\Vert f \Vert_{L^2_1 (X)}^2 \lesssim \Vert \cL f \Vert_{L^2 (X)}^2 + \Vert f \Vert_{\epsilon^{-1} L^2 (X)}^2 ,
	\end{equation}
	holds for any $f \in L^2_1 (X)$.
\end{lemma}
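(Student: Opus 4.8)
The plan is to split $X$ into a large compact piece, on which $\cL$ is just a first-order elliptic operator and the standard interior estimate applies, and an exterior end, on which $\cL$ is a perturbation of the conical model $\cL_C$ and the argument of \Cref{prop:ModelInequality} can be run directly. First I would fix $R_0 \gg 1$ large enough that $X - B_{R_0}$ is identified, through the asymptotically conical structure, with an end of the model cone $C$; that on $X - B_{R_0}$ the potential $q$ is pointwise invertible with $|q(f)|^2 \geqslant c\,|f|^2$; and that the Weitzenb\"ock identity of \Cref{lem:2nd_order_linearized} holds there in the form $\cL^\ast \cL = \nabla^\ast \nabla + W + q^\ast q$ with $q^\ast q \geqslant 0$ and $|W(f)| \leqslant \epsilon^2(r)\,|f|$ for a fixed positive function $\epsilon$ obeying \eqref{eq:FunctionEps}. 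The existence of such an $\epsilon$ is where the quantitative hypothesis $r^2(|F_\nabla| + |\nabla \Phi|) \in L^\infty(X)$ enters: the curvature/Higgs contribution $I_+$ to $W$ is then $O(r^{-2})\,|f|$, while the remaining pieces of $W$ --- coming from the failure of $q$ to be exactly parallel and from the discrepancy of the AC metric and connection from the exact conical ones --- decay at least as fast. After enlarging $\epsilon$ I may also assume it is everywhere positive and bounded below on every compact set.

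Next I would take a cutoff $\chi \in C_c^\infty(B_{2R_0})$ with $\chi \equiv 1$ on $B_{R_0}$ and $0 \leqslant \chi \leqslant 1$, and estimate $\chi f$ and $(1-\chi)f$ separately. For $\chi f$, supported in the precompact set $B_{2R_0}$, the standard interior estimate for the elliptic operator $\cL$ on $B_{3R_0}$ gives $\|\chi f\|_{L^2_1(X)}^2 \lesssim \|\cL(\chi f)\|_{L^2(X)}^2 + \|\chi f\|_{L^2(X)}^2$; writing $\cL(\chi f) = \chi \cL f + [\cL, \chi] f$ with $[\cL, \chi]$ a zeroth-order operator supported in $B_{2R_0}$, and using that $\epsilon$ is bounded below there, the right-hand side is $\lesssim \|\cL f\|_{L^2(X)}^2 + \|f\|_{\epsilon^{-1} L^2(X)}^2$. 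For $g := (1-\chi)f$, supported in $X - B_{R_0}$, I would integrate by parts --- justified by approximating $g$ in $L^2_1$ by sections compactly supported in $X - B_{R_0}$ and using that $\cL$, $W$, $q$ are first-order resp.\ bounded --- and invoke the Weitzenb\"ock formula exactly as in the proof of \Cref{prop:ModelInequality}: $\|\cL g\|_{L^2}^2 = \|\nabla g\|_{L^2}^2 + \langle W g, g\rangle_{L^2} + \|q g\|_{L^2}^2 \geqslant \|\nabla g\|_{L^2}^2 - \|\epsilon g\|_{L^2}^2 + c\,\|g\|_{L^2}^2$, hence $\|g\|_{L^2_1(X)}^2 \lesssim \|\cL g\|_{L^2(X)}^2 + \|g\|_{\epsilon^{-1} L^2(X)}^2$. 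Since $\cL g = (1-\chi)\cL f - [\cL, \chi] f$ with the commutator term supported in $B_{2R_0}$, we again get $\|\cL g\|_{L^2(X)}^2 \lesssim \|\cL f\|_{L^2(X)}^2 + \|f\|_{\epsilon^{-1} L^2(X)}^2$, and $\|g\|_{\epsilon^{-1} L^2} \leqslant \|f\|_{\epsilon^{-1} L^2}$. Adding the two estimates and using $\|f\|_{L^2_1}^2 \lesssim \|\chi f\|_{L^2_1}^2 + \|(1-\chi)f\|_{L^2_1}^2$ yields \cref{ineq:NotModelInequality}.

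The only genuinely delicate step is the first one: identifying the admissible decay function $\epsilon$ and checking that all of the lower-order remainders in the Weitzenb\"ock formula on the end (the intrinsic curvature term $I_+$, the non-parallel part of $q$, and the AC error between $\cL$ and $\cL_C$) are bounded by $\epsilon^2(r)\,|f|$ with $\epsilon$ as in \eqref{eq:FunctionEps}; after that, the proof is the cutoff-plus-integration-by-parts bookkeeping above, with the compact-region contribution absorbed into $\|f\|_{\epsilon^{-1} L^2(X)}$ precisely because $\epsilon$ is bounded below on compact sets. I expect this step to be the main obstacle, as it is where the hypothesis ``$\cL$ modeled on $\cL_C$'' together with $r^2(|F_\nabla| + |\nabla \Phi|) \in L^\infty(X)$ must be turned into a clean quantitative statement.
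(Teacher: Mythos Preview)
Your proof is correct and follows essentially the same strategy as the paper's: split $X$ via a cutoff into a compact piece, handled by the interior elliptic estimate with the $L^2$ lower-order term absorbed into $\|f\|_{\epsilon^{-1}L^2}$ because $\epsilon$ is bounded below there, and the conical end, handled by the Weitzenb\"ock integration-by-parts of \Cref{prop:ModelInequality}; the paper phrases the end step as ``apply \Cref{prop:ModelInequality} to $\cL_C$ and absorb $\cL-\cL_C=O(r^{-1-\delta})$ into $\epsilon$'', while you run the Weitzenb\"ock argument directly for $\cL$, but these are equivalent. One minor remark: this lemma is stated abstractly for any $\cL$ modeled on a conical $\cL_C$ as in \Cref{prop:ModelInequality}, so the decaying $W$ is already part of that hypothesis---your invocation of the monopole-specific condition $r^2(|F_\nabla|+|\nabla\Phi|)\in L^\infty$ and of $I_+$ from \Cref{lem:2nd_order_linearized} belongs to the later application, not to this proof.
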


\begin{proof}
	We prove this inequality by putting together two similar inequalities computed over: the interior of $X$; and its ends. As in the proof of the previous lemma, let $R \gg 1$ and $B_R \coloneqq B_R (x_0)$. The ellipticity of $L$ on $B_{R+1}$ implies that
	\begin{align}
		\Vert f \Vert_{L^2_1 (B_{R+1})}^2 &\lesssim \Vert \cL f \Vert_{L^2 (B_{R+2})}^2 +  \Vert f \Vert_{L^2 (B_{R+2})}^2 \\  \label{eq:IntermediateInequalityonB_R}
		&\lesssim \Vert \cL f \Vert_{L^2 (B_{R+2})}^2 +  \epsilon(R+2)^{-2} \Vert \epsilon f \Vert_{L^2 (B_{R+2})}^2 ,
	\end{align}
	for all $f$. This deals with the interior of $X$ and we now focus with what happens at its ends., i.e. on $X - B_R$. Notice that by possibly increasing $R$, we can assume that $X - B_R$ is quasi isometric to the metric cone and there is a model conical operator $\cL_C$ on the cone satisfying the hypothesis in \Cref{prop:ModelInequality}, and $\cL - \cL_C = O(r^{-1-\delta})$ for some $\delta >0$. Hence, it follows from \Cref{prop:ModelInequality} that
	\begin{equation}\label{eq:IntermediateInequalityonXminusB_R}
		\Vert f \Vert_{L^2_1 (X - B_R)}^2 \lesssim \Vert \cL f \Vert_{L^2 (X - B_R)}^2 +  \Vert \epsilon' f \Vert_{L^2 (X - B_R)}^2,
	\end{equation}
	for $\epsilon' = \max \lbrace r^{-1-\delta} , \epsilon \rbrace$. The last step is to put this together \eqref{eq:IntermediateInequalityonB_R} and \eqref{eq:IntermediateInequalityonXminusB_R}. For this, fix a bump function $\varphi_R$ be a supported on $B_{R+1}$ which equals $1$ on $B_R$, then
	\begin{align}
		\Vert f \Vert_{L^2_1 (X)}^2 &= \Vert f \Vert_{L^2_1 (B_R)}^2 + \Vert f \Vert_{L^2_1 (X - B_R)}^2 \\
		&\leqslant \Vert \varphi_{R+1}f \Vert_{L^2_1 (B_{R+1})}^2 + \Vert (1 - \varphi_{R})f \Vert_{L^2_1 (X - B_R)}^2 \\
		&\lesssim \Vert \cL f \Vert_{L^2 (X)}^2 +  \Vert \epsilon f \Vert_{L^2 (X)}^2,
	\end{align}
	which is simply \eqref{ineq:NotModelInequality}.
\end{proof}

\begin{corollary}\label{cor:ClosedRange}
	Let $\cL : L^2_1 (X) \rightarrow L^2 (X)$ be as in \Cref{lem:Standard_Inequality_Fredholm}. Then $\cL$ has closed range and finite dimensional kernel.
\end{corollary}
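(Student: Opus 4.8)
The plan is to deduce this from the a priori estimate of \Cref{lem:Standard_Inequality_Fredholm} together with the compact embedding $L^2_1 (X) \hookrightarrow \epsilon^{-1} L^2 (X)$ of \Cref{lem:CompactEmbedding}: this is the classical argument that an elliptic-type estimate modulo a compact term yields a semi-Fredholm operator.

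First I would treat the kernel. For $f \in \ker \cL$, \cref{ineq:NotModelInequality} reduces to $\| f \|_{L^2_1 (X)}^2 \lesssim \| f \|_{\epsilon^{-1} L^2 (X)}^2$, so on $\ker \cL$ the $L^2_1$- and $\epsilon^{-1} L^2$-norms are equivalent (the reverse bound being trivial). Given a sequence $(f_i) \subseteq \ker \cL$ with $\| f_i \|_{L^2_1 (X)} = 1$, \Cref{lem:CompactEmbedding} produces a subsequence Cauchy in $\epsilon^{-1} L^2 (X)$; by the norm equivalence it is then Cauchy in $L^2_1 (X)$, hence convergent. Thus the closed unit ball of $\ker \cL$ is sequentially compact, and $\ker \cL$ is finite-dimensional by the Riesz lemma.

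Next, closed range. Since $\ker \cL$ is finite-dimensional it admits a closed complement $K \subseteq L^2_1 (X)$, so $\cL (L^2_1 (X)) = \cL (K)$ and $\cL|_K$ is injective. I claim $\cL|_K$ is bounded below. If not, choose $f_i \in K$ with $\| f_i \|_{L^2_1 (X)} = 1$ and $\| \cL f_i \|_{L^2 (X)} \to 0$; by \Cref{lem:CompactEmbedding} a subsequence converges in $\epsilon^{-1} L^2 (X)$, and then applying \cref{ineq:NotModelInequality} to the differences $f_i - f_j$ shows that $(f_i)$ is Cauchy in $L^2_1 (X)$, with limit $f \in K$ satisfying $\| f \|_{L^2_1 (X)} = 1$ and $\cL f = 0$, contradicting $\ker \cL \cap K = \{ 0 \}$. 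Hence there is $C > 0$ with $\| f \|_{L^2_1 (X)} \leqslant C \| \cL f \|_{L^2 (X)}$ for all $f \in K$; since $K$ is complete, this forces $\cL (K) = \cL (L^2_1 (X))$ to be closed.

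There is no genuine obstacle at this stage: the analytic content — the a priori inequality \cref{ineq:NotModelInequality} valid on all of $L^2_1 (X)$ rather than merely on compactly supported sections, and the compactness of the weighted embedding — has already been established. The only point needing a little care is that \cref{ineq:NotModelInequality} be applied to \emph{differences} of the elements of the extracted subsequence (legitimate by its linearity in $f$ and its validity on the whole of $L^2_1 (X)$), which is precisely what upgrades the partial convergence furnished by compactness into genuine $L^2_1$-convergence.
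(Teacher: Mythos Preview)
Your proposal is correct and follows essentially the same approach as the paper: both deduce the result from the a priori estimate \cref{ineq:NotModelInequality} together with the compact embedding of \Cref{lem:CompactEmbedding}, first showing the unit ball of $\ker\cL$ is compact and then establishing a lower bound for $\cL$ on a complement of the kernel by contradiction. The only cosmetic difference is that the paper works on the $L^2$-orthogonal complement $(\ker\cL)^\perp$ and passes through a weak limit, whereas you use an abstract closed complement and apply the estimate to differences to extract a Cauchy subsequence directly; both are standard variants of the same argument.
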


\begin{proof}
	To prove that the kernel is finite dimensional we prove that the unit ball in the kernel is compact. So let $\lbrace f_i \rbrace \subseteq \ker(D)$ be a sequence with $\Vert f_i \Vert_{L^2_1 (X)}^2 = 1$. From \Cref{lem:CompactEmbedding}, the embedding $L^2_1 (X) \hookrightarrow \epsilon^{-1} L^2 (X)$ is compact and so there is a subsequence $f_i$, which converges strongly in $\epsilon^{-1} L^2 (X)$ to some $f \in \ker(D)\cap \epsilon^{-1}L^2 (X)$. But then, the \cref{ineq:NotModelInequality} gives $\Vert f_i - f \Vert_{L^2_1 (X)}^2 \leqslant c_2\Vert \epsilon (f_i - f )\Vert_{L^2 (X)}^2 \rightarrow 0$, and so $f_i$ does converge to $f$ strongly in $L^2_1 (X)$.\\ 
	Next we prove that the image is closed, for that it is enough to prove that there is a constant $c>0$, such that for all $f \in (\ker (\cL))^\perp \cap L^2_1 (X)$
	\begin{equation}
		\Vert \cL f \Vert_{L^2 (X)} \geqslant c \Vert f \Vert^2_{L^2_1 (X)}.
	\end{equation}
	Suppose not, then there is a sequence $\lbrace f_i \rbrace \subseteq (\ker (\cL))^\perp \cap L^2_1 (X)$ with $\Vert \cL f_i \Vert^2_{L^2 (X)} \rightarrow 0$ and $\Vert f_i \Vert_{L^2_1 (X)}^2 = 1$. There is a weak limit $f \in L^2_1 (X)$ such that $\cL f = 0$ and from \Cref{lem:CompactEmbedding}, the limit $f$ is strong in $ \epsilon^{-1} L^2 (X)$. In fact $f = 0$ since by assumption it is the limit of the $f_i$'s which are in the orthogonal complement to the kernel. Then \cref{ineq:NotModelInequality} gives 
	\begin{equation}
		1 = \Vert f_i \Vert_{L^2_1 (X)}^2 \leqslant \Vert \cL f_i \Vert_{L^2 (X)}^2 + \Vert \epsilon f_i \Vert_{L^2 (X)}^2,
	\end{equation} 
	as the first term in the right hand side vanishes, while the second one converges to zero this is a contradiction.
\end{proof} 

\begin{corollary}\label{cor:Callias}
	Let $\cL : L^2_1 (X) \rightarrow L^2 (X)$ be as in \Cref{lem:Standard_Inequality_Fredholm}, then it is a Fredholm operator.
\end{corollary}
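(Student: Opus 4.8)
The plan is to combine the a priori estimate from \Cref{lem:Standard_Inequality_Fredholm} with the duality structure of $\cL$. By \Cref{cor:ClosedRange}, we already know that $\cL$ has closed range and finite-dimensional kernel, so the only thing left to establish is that $\coker(\cL)$ is finite-dimensional, i.e. that $(\im \cL)^\perp \subseteq L^2(X)$ is finite-dimensional.

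First I would observe that the formal adjoint $\cL^* = \D - q$ is an operator of exactly the same type as $\cL$: it is again modeled on a conical operator of the form treated in \Cref{prop:ModelInequality}, since $\D$ is (formally) self-adjoint up to a sign on the relevant bundles, $q_C$ is parallel and bounded below by the same constant, and \Cref{lem:2nd_order_linearized} provides the requisite Weitzenb\"ock formula $\cL\cL^* = \nabla^*\nabla + I_- - q^2$ with the zeroth-order term $I_-$ decaying like $\epsilon(r)^2$ at infinity (using that $(\nabla,\Phi)$ is a $\rG_2$-monopole, so $\mon^-(\nabla,\Phi) = 0$ and $\ast F_\nabla^7$ enters only through the curvature, which decays at the required rate under the standing hypothesis $r^2(|F_\nabla| + |\nabla\Phi|)\in L^\infty$). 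Therefore \Cref{lem:Standard_Inequality_Fredholm} applies verbatim with $\cL$ replaced by $\cL^*$, yielding a function $\epsilon'$ as in \eqref{eq:FunctionEps} with
\begin{equation}
	\|f\|_{L^2_1(X)}^2 \lesssim \|\cL^* f\|_{L^2(X)}^2 + \|f\|_{(\epsilon')^{-1}L^2(X)}^2
\end{equation}
for all $f\in L^2_1(X)$. Running the argument of \Cref{cor:ClosedRange} with $\cL^*$ in place of $\cL$ then shows that $\ker(\cL^*)\subseteq L^2_1(X)$ is finite-dimensional and that $\cL^*$ has closed range.

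Next I would identify $\coker(\cL)$ with $\ker(\cL^*)$. Since $\cL: L^2_1(X)\to L^2(X)$ has closed range, $L^2(X) = \im(\cL)\oplus (\im\cL)^\perp$, and any $f\in (\im\cL)^\perp$ satisfies $\langle \cL u, f\rangle_{L^2} = 0$ for all $u\in L^2_1(X)$; testing against compactly supported $u$ shows $\cL^* f = 0$ in the distributional sense, and elliptic regularity (applicable since $\cL^*$ is elliptic, being a Dirac-type operator plus a bounded potential) upgrades $f$ to a smooth $L^2$ solution, which by the decay estimate above actually lies in $L^2_1(X)$; hence $f\in\ker(\cL^*)$. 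Conversely every element of $\ker(\cL^*)\subseteq L^2_1(X)\subseteq L^2(X)$ annihilates $\im(\cL)$ by integration by parts. Thus $(\im\cL)^\perp = \ker(\cL^*)$ is finite-dimensional, so $\cL$ is Fredholm with $\ind(\cL) = \dim\ker(\cL) - \dim\ker(\cL^*)$.

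The main obstacle I anticipate is checking that $\cL^*$ genuinely fits the hypotheses of \Cref{prop:ModelInequality} and \Cref{lem:Standard_Inequality_Fredholm}: one must verify that the conical model of $\cL^*$ has a parallel, uniformly-positive potential and a Weitzenb\"ock remainder decaying like $\epsilon(r)^2$, which is where \Cref{lem:2nd_order_linearized} and the quadratic decay of the curvature (together with part (ii) of \Cref{thm:Main_Theorem_2} giving $|\nabla\Phi| = O(r^{-(n-1)})$) are essential. Once that structural check is in place, everything else is a routine repetition of the closed-range/finite-kernel argument already carried out for $\cL$, combined with the standard elliptic-regularity identification of the cokernel with the kernel of the adjoint.
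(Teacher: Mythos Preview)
Your proposal is correct and follows essentially the same route as the paper: reduce to showing $\ker(\cL^*)\cap L^2$ is finite-dimensional, observe that $\cL^*$ is of the same type as $\cL$ so that \Cref{lem:Standard_Inequality_Fredholm} applies to it, use the resulting inequality to bootstrap $L^2$-kernel elements into $L^2_1$, and then invoke \Cref{cor:ClosedRange} for $\cL^*$.

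One small point of over-specialization: in \Cref{ss:Callias} the setup is deliberately abstract---$\cL$ is only assumed to be modeled on a conical operator $\cL_C = \D_C + q_C$ satisfying the hypotheses of \Cref{prop:ModelInequality}, and the Weitzenb\"ock formula with decaying remainder is taken as a \emph{hypothesis}, not derived. So you do not need to invoke \Cref{lem:2nd_order_linearized} or the quadratic curvature decay from \Cref{thm:Main_Theorem_2}; it suffices to note that if $\cL_C = \D_C + q_C$ satisfies the hypotheses of \Cref{prop:ModelInequality} then so does $\cL_C^* = \D_C + q_C^*$ (same parallel potential bounded below, and the Weitzenb\"ock formula for $\cL_C\cL_C^*$ has the same structure). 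The paper's proof stays at this abstract level, which is what makes the result reusable in the later subsections.
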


\begin{proof}
	We have shown in \Cref{cor:ClosedRange} that $\cL$ has finite dimensional kernel and closed image. Hence, the only thing missing to prove that it is Fredholm is that the cokernel is finite dimensional. As $\coker (\cL) \cong \ker (\cL^*) \cap L^2 (X)$  one just needs to prove that this later one is finite dimensional. Since $\cL^* = \D + q^*$, it is also modeled on an operator as in the hypothesis of \Cref{prop:ModelInequality} and therefore satisfies an inequality as in \eqref{ineq:NotModelInequality}. Using such an inequality, one concludes that 
	\begin{equation}
		\Vert f \Vert_{L^2_1 (X)} \lesssim \Vert \epsilon f \Vert_{L^2 (X)}  \lesssim \Vert  f \Vert_{L^2 (X)},
	\end{equation}
	for all $f \in \ker (\cL^*) \cap L^2 (X)$, and so $\ker (\cL^*) \cap L^2 (X) \hookrightarrow L^2_1 (X)$ and applying \Cref{cor:ClosedRange} to $\cL^*$ proves that its kernel in $L^2_1 (X)$ is finite dimensional.
\end{proof}

For completeness we now see that any such $\cL$ is also a Fredholm operator when considered as an operator on higher derivative Sobolev spaces.

\begin{proposition}\label{prop:Callias}
	Let $k \in \mathbb{N}$ and $\cL : \Omega^0 (X, \mathcal{S}_E) \rightarrow \Omega^0 (X, \mathcal{S}_E)$ be modeled on a conical operator $\cL_C$ as in \Cref{prop:ModelInequality}. Then $\cL : L^2_{k+1} (X) \rightarrow L^2_k (X)$ is a Fredholm operator.
\end{proposition}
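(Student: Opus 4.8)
The plan is to bootstrap the $k = 0$ case, \Cref{cor:Callias}, up the Sobolev scale by induction on $k$. The engine of the induction will be a global elliptic regularity statement: if $f \in L^2_1 (X)$ and $\cL f \in L^2_k (X)$, then $f \in L^2_{k + 1} (X)$, with
\[
	\| f \|_{L^2_{k + 1} (X)}^2 \lesssim \| \cL f \|_{L^2_k (X)}^2 + \| f \|_{L^2 (X)}^2 .
\]
To prove this I would combine the ordinary interior estimate for the first-order elliptic operator $\cL$ (with smooth coefficients) on a ball $B_{R + 1} (x_0)$ relatively compact in $B_{R + 2} (x_0)$ with the dyadic rescaling argument already used in \Cref{lem:Standard_Inequality_Fredholm} and in the proofs in \Cref{sec:AC}: covering $X - B_R$ by annuli $A_j = \{ 2^j R \leqslant r \leqslant 2^{j + 1} R \}$ and rescaling $A_j$ by $2^{- 2 j} R^{- 2}$, the rescaled metrics converge in $C^\infty$ to the cylindrical model $\rd t^2 + g_\Sigma$ and the rescaled operators form a $C^\infty$-bounded family of uniformly elliptic operators on the fixed annulus $\{ 1 \leqslant r \leqslant 2 \}$; the elliptic estimate there then holds with $j$-independent constants, and undoing the rescaling and summing over $j$ (using that $X$ has bounded geometry, \Cref{rem:AC_vol_growth}) gives the bound on the end, which a cutoff glues to the interior bound. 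Applying this statement repeatedly to $\cL f = g \in L^2_k (X)$ with $f \in L^2_1 (X)$ upgrades $f$ to $L^2_{k + 1} (X)$.

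Next I would combine the new estimate with \cref{ineq:NotModelInequality} of \Cref{lem:Standard_Inequality_Fredholm}, which is valid for any $f \in L^2_{k + 1} (X) \subseteq L^2_1 (X)$, and use $\| f \|_{L^2 (X)} \leqslant \| f \|_{L^2_1 (X)}$ together with $\| \cL f \|_{L^2 (X)} \leqslant \| \cL f \|_{L^2_k (X)}$ to obtain
\[
	\| f \|_{L^2_{k + 1} (X)}^2 \lesssim \| \cL f \|_{L^2_k (X)}^2 + \| f \|_{\epsilon^{- 1} L^2 (X)}^2 , \qquad f \in L^2_{k + 1} (X) .
\]
Since $L^2_{k + 1} (X) \hookrightarrow L^2_1 (X)$ is bounded and $L^2_1 (X) \hookrightarrow \epsilon^{- 1} L^2 (X)$ is compact by \Cref{lem:CompactEmbedding}, the inclusion $L^2_{k + 1} (X) \hookrightarrow \epsilon^{- 1} L^2 (X)$ is compact; I would then rerun the argument of \Cref{cor:ClosedRange} verbatim with $\cL : L^2_1 (X) \to L^2 (X)$ replaced by $\cL : L^2_{k + 1} (X) \to L^2_k (X)$, concluding that the latter has finite-dimensional kernel and closed range. (By the regularity statement this kernel equals $\ker \big( \cL : L^2_1 (X) \to L^2 (X) \big)$.)

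For the cokernel I would argue formally. The regularity statement gives $\im \big( \cL : L^2_{k + 1} (X) \to L^2_k (X) \big) = \im \big( \cL : L^2_1 (X) \to L^2 (X) \big) \cap L^2_k (X)$: the inclusion $\subseteq$ is clear, and conversely any $f \in L^2_1 (X)$ with $\cL f \in L^2_k (X)$ already lies in $L^2_{k + 1} (X)$. Hence the inclusion $L^2_k (X) \hookrightarrow L^2 (X)$ descends to a well-defined injection
\[
	L^2_k (X) \big/ \im \big( \cL : L^2_{k + 1} (X) \to L^2_k (X) \big) \ \hookrightarrow \ L^2 (X) \big/ \im \big( \cL : L^2_1 (X) \to L^2 (X) \big) ,
\]
whose target is finite-dimensional by \Cref{cor:Callias}. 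Therefore $\coker \big( \cL : L^2_{k + 1} (X) \to L^2_k (X) \big)$ is finite-dimensional, and $\cL : L^2_{k + 1} (X) \to L^2_k (X)$ is Fredholm.

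The only genuinely analytic input is the uniform elliptic estimate on the conical end in the first step; everything after is soft functional analysis. The point to be careful about there is that the rescaled operators are $C^\infty$-close to $\cL_C$ (not merely $C^0$-close), i.e. that the decay $\cL - \cL_C = O(r^{- 1 - \delta})$ is meant with all derivatives --- this is implicit in ``$\cL$ modeled on $\cL_C$'' and is exactly what makes the elliptic constants on the rescaled annuli uniform in $j$.
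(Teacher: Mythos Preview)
Your argument is correct and takes a genuinely different route from the paper. The paper proves the key estimate
\[
	\|f\|_{L^2_{k+1}(X)}^2 \lesssim \|\cL f\|_{L^2_k(X)}^2 + \|\epsilon f\|_{L^2_k(X)}^2
\]
by induction on $k$ using the commutator identity $\cL(\nabla_j f) = \nabla_j(\cL f) + \sum_i e_i F_\nabla(e_i,e_j)f - (\nabla_j q)(f)$: since $F_\nabla$ and $\nabla q$ decay, the commutator is absorbed into the $\epsilon$-term, and one applies the inductive hypothesis to $\nabla f$ (after extending $\cL$ to $T^*X\otimes\cS_E$). Then the whole chain \Cref{lem:CompactEmbedding}--\Cref{cor:Callias} is rerun with $L^2_k$ in place of $L^2$, for both $\cL$ and $\cL^*$. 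Your approach instead treats the $k=0$ case as a black box and obtains the regularity estimate $\|f\|_{L^2_{k+1}}^2\lesssim\|\cL f\|_{L^2_k}^2+\|f\|_{L^2}^2$ by dyadic rescaling on the end, exploiting only the uniform ellipticity of the rescaled family; the cokernel is then handled by the soft observation that $\im\big(\cL|_{L^2_{k+1}}\big)=\im\big(\cL|_{L^2_1}\big)\cap L^2_k$, which injects the higher cokernel into the $k=0$ one. The paper's route uses the specific Callias structure (Weitzenb\"ock, decay of $\nabla q$) and yields the slightly sharper inequality with $\|\epsilon f\|_{L^2_k}$ on the right; yours is more agnostic---it would work for any first-order elliptic operator with $C^\infty$-bounded coefficients asymptotic to a conical model---and avoids redoing the $\cL^*$ analysis separately. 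Your caveat about needing $\cL-\cL_C = O(r^{-1-\delta})$ with derivatives is exactly right and is what makes the rescaled constants uniform.
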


\begin{proof}
	If one can prove an inequality of the form
	\begin{equation}\label{eq:ModelInequalityk}
		\Vert f \Vert_{L^2_{k+1} (X)}^2 \lesssim \Vert \cL f \Vert_{L^2_k (X)}^2 + \Vert \epsilon f \Vert_{L^2_k (X)}^2,
	\end{equation}
	for both $\cL$ and $\cL^*$ and some $\epsilon$ as in \cref{eq:FunctionEps}, then the result follows. Indeed, one can repeat all of the steps done before with $L^2 (X)$ replaced by $L^2_k (X)$ and $L^2_1 (X)$ replaced by $L^2_{k+1} (X)$. We start by noticing that the operator $\cL$ can be extended to act on sections of $T^*X \otimes \mathcal{S}_E$. Then, the Weitzenb\"ock formulas for $\cL^* \cL$ and $\cL \cL^*$ have a further contribution coming from the Riemannian curvature, which actually vanishes in the Ricci flat case. In general, the manifold is AC and this algebraic term decays and it can be bounded from above by an $\epsilon$ function as in \cref{eq:FunctionEps}, so one can assume these Weitzenb\"ock formulas are as in \Cref{prop:ModelInequality}. To establish the inequality, notice that
	\begin{equation}\label[ineq]{ineq:L2k_Inequality}
		\Vert f \Vert_{L^2_{k+1} (X)}^2 \leqslant \Vert f \Vert_{L^2_1 (X)}^2 + \Vert \nabla f \Vert_{L^2_k (X)}^2 
	\end{equation}
	and arguing by induction one can assume \cref{eq:ModelInequalityk} to be true for $k$ replaced by $j < k$, and we now prove the case $j = k$. Then, using the induction hypothesis and \cref{ineq:L2k_Inequality}
	\begin{equation}\label[ineq]{ineq:IntK}
		\Vert f \Vert_{L^2_{k+1} (X)}^2 \lesssim \left( \Vert \cL f \Vert_{L^2_1 (X)}^2 + \Vert \cL \nabla f \Vert_{L^2_{k-1} (X)}^2 \right) + \left( \Vert \epsilon f \Vert_{L^2_1 (X)}^2 + \Vert \epsilon \nabla f \Vert^2_{L^2_{k-1}} \right).
	\end{equation}
	Notice that $\epsilon \nabla f = \nabla (\epsilon f) - (\rd \epsilon) \otimes f$. Moreover, since $\epsilon$ satisfies \cref{eq:FunctionEps}, there is some other function $\epsilon_1$ still decaying as in \cref{eq:FunctionEps} and so that $\vert \epsilon \vert + \vert \rd \epsilon \vert \leqslant \epsilon_1$. So one can bound the above terms in the second pair of parentheses by $\Vert \epsilon_1 f \Vert^2_{L^2_k (X)}$. To bound from above the terms in the first bracket in \cref{ineq:IntK}, let $\lbrace e_i \rbrace$ be an orthonormal frame at $p \in X$ such that $\nabla e_i = 0$ at $p$. Then at $p$
	\begin{align}
		L(\nabla_j f) &= \D \nabla_j f + q(\nabla_j f) = \sum_i e_i \nabla_i \nabla_j f + q(\nabla_j f)\\
		&= \sum_i \left(\nabla_j ( e_i \nabla_i f )  + e_iF_\nabla(e_i, e_j)(f) \right)+ \nabla_j (q( f)) - (\nabla_j q)(f)\\
		&= \nabla_j (\cL f) + \sum_i  e_iF_\nabla(e_i, e_j)(f)- (\nabla_j q)(f),
	\end{align}
	where $F_\nabla$ is the curvature of $\nabla$. As $\cL$ is modeled in a conical operator as in \Cref{prop:ModelInequality} for which $q$ is parallel, we must have that $\nabla q$ and $F_\nabla$ decay and are therefore bounded above by some $\epsilon_2$ as in \cref{eq:FunctionEps}. From this it immediately follows that
	\begin{equation}
		\Vert \cL \nabla f \Vert_{L^2_{k-1} (X)}^2 \lesssim \Vert \nabla \cL f \Vert_{L^2_{k-1} (X)}^2 + \Vert \epsilon_2 f \Vert_{L^2_{k-1} (X)}^2,
	\end{equation}
	which together with the previous bound $\Vert \epsilon f \Vert_{L^2_1 (X)}^2 + \Vert \epsilon \nabla f \Vert^2_{L^2_{k-1}} \leqslant \Vert \epsilon_1 f \Vert_{L^2_k (X)}^2$, gives the \cref{ineq:IntK} for some $\epsilon \geqslant \epsilon_1 + \epsilon_2$.
\end{proof}

\smallskip

\subsection{Vanishing potential and weighted Sobolev spaces}\label{ss:Dirac}

A Dirac-type operator $\D$ on an AC manifold is not Fredholm for the standard Sobolev spaces. Lockhart--McOwen \cite{Lockhart1985} and Marshall \cite{Marshall02} have both constructed suitable weighted versions of the standard Sobolev spaces on which the Dirac operator is Fredholm. In this subsection we recall the definition of these spaces and state the resulting Fredholm property.\\
Let $\alpha \in \mathbb{R}$ and $p,k \in \mathbb{N}_+$, the Lockhart--McOwen \cite{Lockhart1985} weighted norms $\Vert \cdot \Vert_{L^p_{k,\alpha} (X)}$ of a smooth compactly supported twisted Dirac spinor $f \in \Gamma(X, \mathcal{S}_E)$ are inductively defined by
\begin{equation}\label{eq:Lockhart}
	\Vert f \Vert_{L^p_{k,\alpha}} = \Vert \nabla f \Vert_{L^p_{k-1,\alpha-1} (X)} + \Vert f \Vert_{L^p_{0, \alpha} (X)},
\end{equation}
and $\Vert f \Vert_{L^p_{0, \alpha} (X)}^p = \int_X \vert r^{-\alpha } f \vert^p r^{-n} \vol_X$.

\begin{definition}[Lockhart--McOwen weighted Sobolev spaces; cf. \cite{Lockhart1985}]\label{def:Lockhart}
	The \emph{Lockhart--McOwen weighted Sobolev spaces with weight $\alpha \in \mathbb{R}$}, $L^p_{k,\alpha} (X)$, are the norm-completion of the smooth compactly supported functions with respect to the norm in \cref{eq:Lockhart}.
\end{definition}

The next result states that the twisted Dirac operator $\D$ is Fredholm for these Sobolev spaces. This is a standard result, stated for example in \cites{Marshall02,Karigiannis2012}. Alternatively, this theorem follows by translating all the setup into the cylindrical setting and using the results in \cites{Donaldson2002,Lockhart1985}. In fact, the results in \cite{Donaldson2002} also prove that the model operator on a cone admits a right inverse in this case.

\begin{theorem}\label{thm:DiracOperator}
	Let $\D$ be a Dirac type operator on an AC manifold as above. Then, there is a discrete set of weights $\mathcal{K}(\D)$ such that for all $\alpha \not\in \mathcal{K}(\D)$ and $k \in \mathbb{N}$, the operator 
	\begin{equation}
		\D : L^2_{k+1, \alpha+1} (X) \rightarrow L^2_{k, \alpha} (X),
	\end{equation} 
	is Fredholm, and
	\begin{equation}\label{FredholmAlternative}
		L^2_{k, \alpha} = \D ( L^2_{k+1, \alpha+1} (X) ) \oplus W_\alpha (X),
	\end{equation}
	where $W_\alpha \cong \ker (\D)_{-\alpha - n}$. Moreover, if $\alpha \geqslant - \tfrac{n}{2}$ equality holds as $\ker(\D)_{-\alpha - n} \subseteq L^2_{k,\alpha} (X)$.
\end{theorem}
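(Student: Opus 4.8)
The plan is to reduce the statement to the known Fredholm theory for translation-invariant Dirac-type operators on cylinders, following the dictionary between asymptotically conical and asymptotically cylindrical geometries. First I would perform the conformal change $g_C = r^2 h$ with $h = r^{-2}g_C = \rd t^2 + g_\Sigma$ and $t = \log r$, as was already used in the proof of \Cref{lem:Existence_of_A_infinity}. Under this change, the Dirac operator transforms conformally: if $\D$ is the Dirac-type operator for $g$ on $\cS_E$, then there is an explicit conjugation $\D = r^{-(n+1)/2}\,\wtilde\D\, r^{(n-1)/2}$ relating it to a Dirac-type operator $\wtilde\D$ for the cylindrical metric $h$ (the shift in powers accounting for the weight of spinors under conformal rescaling). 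Correspondingly, the weighted spaces $L^2_{k,\alpha}(X)$ with the measure $r^{-n}\vol_X$ are isometrically identified, after multiplying sections by an appropriate power of $r$, with the standard exponentially-weighted cylindrical Sobolev spaces $e^{\delta t}L^2_k(C)$ where $\delta = \delta(\alpha,n)$ is an affine function of $\alpha$. Since $(X,\varphi)$ is AC with rate $\nu<0$, the difference $\D - \D_C$ between $\D$ and its conical model decays like $O(r^{-1-|\nu|})$ with derivatives, which translates into an exponentially decaying perturbation of the cylindrical model $\wtilde\D_C$; such perturbations preserve the Fredholm property and the index.

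The heart of the argument is then the cylindrical statement: a translation-invariant Dirac-type operator $\wtilde\D_C = \partial_t + D_\Sigma$ on $C = \mathbb R_t \times \Sigma$ (with $D_\Sigma$ a self-adjoint elliptic operator on the compact manifold $\Sigma$) is Fredholm on $e^{\delta t}L^2_{k+1}\to e^{\delta t}L^2_k$ precisely when $\delta$ avoids the discrete set $\spec(D_\Sigma)$ of ``indicial roots,'' and for such $\delta$ one has the $L^2$-orthogonal decomposition of the target into the image plus the (finite-dimensional) space of bounded harmonic sections with the complementary weight. This is exactly \cite{Lockhart1985} (see also the cylindrical treatment in \cite{Donaldson2002}), and I would simply invoke it, defining $\mathcal K(\D)$ to be the preimage under the affine map $\alpha\mapsto\delta$ of $\spec(D_\Sigma)$, which is discrete since $\Sigma$ is compact. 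The decomposition \eqref{FredholmAlternative} and the identification $W_\alpha \cong \ker(\D)_{-\alpha-n}$ then follow by unwinding the conformal identification: the ``dual'' weight $-\alpha-n$ is the image under the weight-duality of the cylindrical $L^2$-pairing (which sends $\delta\mapsto-\delta$) back through the affine change of variables, and the cokernel of $\D$ at weight $\alpha$ is identified with the kernel of the formal adjoint $\D^*$, which for a Dirac-type operator is again Dirac-type, at weight $-\alpha-n$. Finally, the assertion that for $\alpha\geqslant-\tfrac n2$ one has $\ker(\D)_{-\alpha-n}\subseteq L^2_{k,\alpha}(X)$ is the statement that when the dual weight is $\leqslant-\tfrac n2$, i.e. lies in the $L^2$-regime, bounded harmonic spinors are automatically in the space $L^2_{k,\alpha}$; this is checked directly from the weighted norm definition together with elliptic regularity (bounded harmonic sections are smooth and, by the cylindrical analysis, decay at a rate governed by the nearest indicial root, hence lie in every $L^2_{k,\alpha}$ with $\alpha$ above the critical value).

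The main obstacle I anticipate is bookkeeping rather than conceptual: getting the conformal weight shifts and the affine correspondence $\alpha\leftrightarrow\delta$ exactly right, so that the critical set $\mathcal K(\D)$, the duality $\alpha\mapsto-\alpha-n$, and the threshold $-\tfrac n2$ all land where the statement claims. One must be careful that the measure used in \cref{eq:Lockhart} is $r^{-n}\vol_X$ (not $\vol_X$), which is precisely what makes the conical $L^2$ with weight $-\tfrac n2$ correspond to the cylindrical $L^2$ with weight zero, and that spinors pick up an extra conformal factor beyond the naive one for functions. Once this is pinned down, everything reduces to citing \cite{Lockhart1985,Marshall02,Donaldson2002}; accordingly I would keep the proof short, essentially recording the conformal reduction and then quoting the cylindrical Fredholm theorem, as the paragraph following the theorem in the excerpt already signals.
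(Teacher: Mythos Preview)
Your proposal is correct and follows exactly the route the paper itself indicates: the paper does not give a proof of this theorem but simply states it as a standard result, citing \cites{Marshall02,Karigiannis2012} and remarking that it follows by translating to the cylindrical setting and invoking \cites{Donaldson2002,Lockhart1985}. Your write-up is a faithful (and more detailed) execution of precisely that reduction, so there is nothing to add beyond the bookkeeping caveats you already flag.
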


\smallskip

\subsection{Mixed situation and split Sobolev spaces}

This subsection analyses the case when $q$ is asymptotically degenerate but does not vanish identically. More precisely, we consider the following situation.

\begin{hypoth}\label{hypoth:asymp}
	Suppose that outside a compact set $K$, there is a splitting 
	\begin{equation}\label{eq:Splitting_S}
		\mathcal{S}_E = \mathcal{S}_E^{||} \oplus \mathcal{S}_E^\perp
	\end{equation}
	such that:
	\begin{itemize}
		\item $q$ vanishes on $\mathcal{S}_E^{||}$ and is nondegenerate on $\mathcal{S}_E^\perp$. 
		\item $q$ is asymptotically parallel, meaning that $|r^j \nabla^j q|$ is a function as $\epsilon$ in \cref{eq:FunctionEps}. In particular, there is $q_C$ on some $\cS_{E_\infty}$ such that $\Upsilon^*\cS_E \cong \cS_{E_\infty}$.
		\item $\nabla$ is modeled at infinity on a connection $\nabla_\infty$ on $\cS_{E_\infty}$ pulled back from the link of the asymptotic cone. In particular, $\D$ is modeled on a conical operator $\D_C$ as in \Cref{prop:ModelInequality}.
		\item $\nabla_\infty$ vanishes on $\cS_{E_\infty}^{||}$ and is irreducible on $S_{E_\infty}^\perp$.
	\end{itemize}
\end{hypoth}

From now on, we always assume that these hypotheses are satisfied.

\begin{remark}
	This is the relevant case for finite mass, irreducible, $\rG_2$-monopoles with gauge group $\rG = \SU (2)$ as we are considering. Indeed, in that case a $\rG_2$-monopole $(\nabla,\Phi)$ satisfies $|\Phi| \to m >0$ at infinite and $q = \ad_{\Phi}$. Therefore, as already mentioned in the discussion opening \Cref{sec:BW_formulas}, outside a large compact set, we can split
	\begin{equation}
		\mathfrak{g}_P = \mathfrak{g}_P^{||} \oplus \mathfrak{g}_P^\perp,
	\end{equation} 
	where $\mathfrak{g}_P^{||} = \ker (\ad_\Phi)$. This in turn induces a splitting on $\cS_{\g_P}$ as in \cref{eq:Splitting_S}.
\end{remark}

In this situation we consider a mixed type family of Sobolev spaces which along the end agree with the standard ones on $\cS_E^\perp$ and with the weighted ones on $\cS_E^{||}$. Given $s \in \Omega^0(X, \mathcal{S}_E)$ be supported along the end where \cref{eq:Splitting_S} is valid, then the respective components of $s$ be referred as $s^\Vert, s^\perp$.\\ 
With this discussion in mind, we now define the relevant function spaces.

\begin{definition}\label{def:FunctionSpaces}
	Let $\alpha \in \mathbb{R}$ and $k \in \mathbb{N}$. Define the $H_{k, \alpha}$-norm of a compactly supported $s \in \Omega^0(X,\cS_E)$ as
	\begin{equation}
		\Vert s \Vert_{H_{k, \alpha} (X)}^2 = \Vert s \Vert_{L^2_k(K)}^2 + \Vert s^\Vert \Vert_{L^2_{k,\alpha} (X - K)}^2 + \Vert s^\perp \Vert_{L^2_k (X - K)}^2,
	\end{equation}
	and the spaces $H_{k,\alpha} (X)$ as the completion of the smooth compactly supported sections in this norm.
\end{definition}

\begin{theorem}\label{thm:Main_Fredholm_Theorem}
	Let $k \in \mathbb{N}$ and $\alpha \in \mathbb{R}$. Then, there is a discrete set $\mathcal{K}(\D) \subseteq \mathbb{R}$ such that for $\alpha \notin \mathcal{K}(\D)$, the operator 
	\begin{equation}
		\cL :  H_{k+1,\alpha+1} (X) \rightarrow H_{k,\alpha} (X),
	\end{equation} 
	is Fredholm. In particular, there exist parametrices $P_L,P_R :  H_{k,\alpha} \rightarrow H_{k+1, \alpha+1}$, such that
	\begin{equation}\label{eq:Parametrix}
		\cL P_R = \Id + S_R \ , \ P_L \cL = \Id + S_L,
	\end{equation}
	with $S_R: H_{k+1,\alpha+1} (X) \rightarrow H_{k+1, \alpha+1} (X)$ and $S_L : H_{k,\alpha} (X) \rightarrow H_{k, \alpha} (X)$ compact operators.
\end{theorem}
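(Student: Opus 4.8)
The plan is to combine the two extreme cases already analyzed—the nondegenerate-potential case of \Cref{ss:Callias} (where standard Sobolev spaces work) and the vanishing-potential case of \Cref{ss:Dirac} (where Lockhart--McOwen weighted spaces work)—by a cut-off/gluing argument adapted to the splitting $\cS_E = \cS_E^{||} \oplus \cS_E^\perp$ along the end. Write $\cL = \D + q$ and recall that by \Cref{hypoth:asymp}, $q$ is asymptotically parallel, vanishes on $\cS_E^{||}$, and is nondegenerate on $\cS_E^\perp$, while $\D$ is modeled on a conical Dirac operator $\D_C$ as in \Cref{prop:ModelInequality}. The first step is to establish the fundamental elliptic estimate
\begin{equation}\label[ineq]{ineq:fredholm_estimate}
	\Vert s \Vert_{H_{k+1,\alpha+1} (X)} \lesssim \Vert \cL s \Vert_{H_{k,\alpha} (X)} + \Vert s \Vert_{H_{k,\alpha-\delta} (X)}
\end{equation}
for some $\delta > 0$, valid for $s$ in $H_{k+1,\alpha+1}(X)$ and for $\cL^*$ as well; here $H_{k,\alpha-\delta}$ is a strictly weaker (more decaying-weight) space. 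As in \Cref{lem:Standard_Inequality_Fredholm}, this is proved by patching an interior estimate on $B_R$ (ordinary interior elliptic regularity, contributing the $\Vert s \Vert_{L^2_k(K)}$ term) with two estimates on the end: on $\cS_E^\perp$ the argument of \Cref{prop:ModelInequality} gives the standard-Sobolev bound via the Weitzenb\"ock formula $\cL_C^*\cL_C = \nabla^*\nabla + W + q_C^*q_C$ with $q_C^*q_C \geqslant c\,\mathrm{Id}$; on $\cS_E^{||}$, where $q_C$ vanishes, one is left with a pure conical Dirac operator and the model mapping properties of $\D_C$ from \cite{Donaldson2002}, \cite{Lockhart1985} (which underpin \Cref{thm:DiracOperator}) supply the weighted estimate, valid precisely when $\alpha$ avoids the discrete set $\mathcal{K}(\D)$ of indicial roots of $\D_C$ on the $\cS_{E_\infty}^{||}$ block. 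The off-diagonal terms of $\cL - \cL_C$ decay like $r^{-1-\delta}$ and can be absorbed, producing the lower-order remainder $\Vert s\Vert_{H_{k,\alpha-\delta}}$.

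The second step is the compactness of the embedding $H_{k+1,\alpha+1}(X) \hookrightarrow H_{k,\alpha-\delta}(X)$. This follows the pattern of \Cref{lem:CompactEmbedding}: on a large ball the Rellich theorem applies, and on the end both the $\cS_E^\perp$ component (which sits in a standard Sobolev space) and the $\cS_E^{||}$ component (in a weighted space with a strictly better weight on the target) gain decay so that tails are uniformly small. With \cref{ineq:fredholm_estimate} and this compactness in hand, the formal arguments of \Cref{cor:ClosedRange} and \Cref{cor:Callias} go through verbatim: $\cL$ has finite-dimensional kernel and closed range, and applying the same reasoning to $\cL^* = \D + q^*$ (which satisfies \Cref{hypoth:asymp} with the same $\cS_E^{||}\oplus\cS_E^\perp$ splitting, since $q$ is self-adjoint) shows the cokernel is finite-dimensional. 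Higher $k$ is handled by the inductive bootstrap of \Cref{prop:Callias}, commuting $\nabla$ past $\cL$ and controlling the commutator by curvature and $\nabla q$ terms that decay like functions $\epsilon$ as in \cref{eq:FunctionEps}. Once $\cL$ is Fredholm, the existence of left and right parametrices $P_L, P_R$ with compact error terms $S_L, S_R$ is the standard abstract consequence (choose $P_R$ to be a right inverse on a complement of the cokernel extended by zero, $P_L$ a left inverse modulo the kernel projection), giving \cref{eq:Parametrix}.

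The main obstacle is the bookkeeping of the \emph{mixed} function spaces near the end: one must check that the cut-off functions interpolating between $\cS_E^{||}$ and $\cS_E^\perp$ (and between the interior and the end) do not destroy the estimates, i.e. that commutators of $\cL$ with these cut-offs are genuinely lower-order in the $H_{k,\alpha}$-scale. A subtler point is that the splitting \cref{eq:Splitting_S} is only defined outside $K$ and is only \emph{asymptotically} $\nabla$-parallel, so the two Weitzenb\"ock-type estimates on the two sub-bundles couple through terms measuring the failure of the splitting to be parallel; one needs \Cref{hypoth:asymp}'s quantitative decay $|r^j\nabla^j q| = \epsilon$-type to guarantee these coupling terms are absorbable remainders. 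Finally, identifying the exceptional set as exactly $\mathcal{K}(\D)$ requires that the conical model of $\cL$ on $\cS_E^{||}$ is precisely $\D_C$ (since $q_C$ vanishes there), so the indicial roots are those of $\D_C$ restricted to the $\cS_{E_\infty}^{||}$ summand—this is where \Cref{thm:DiracOperator} is invoked as a black box.
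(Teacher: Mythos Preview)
Your overall strategy is sound and is a legitimate alternative to the paper's argument, but the estimate \cref{ineq:fredholm_estimate} as written has a gap on the $\cS_E^\perp$-component. By \Cref{def:FunctionSpaces}, the weight $\alpha$ only enters the $\cS_E^{||}$-part of the norm; on $\cS_E^\perp$ the space $H_{k,\alpha-\delta}$ is just $L^2_k(X-K)$ regardless of $\delta$. Hence the embedding $H_{k+1,\alpha+1}(X)\hookrightarrow H_{k,\alpha-\delta}(X)$ restricted to $\cS_E^\perp$ is $L^2_{k+1}\hookrightarrow L^2_k$, which is \emph{not} compact on a noncompact manifold, and your ``tails are uniformly small'' claim fails there. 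The fix is to use a genuinely mixed remainder: on $\cS_E^{||}$ the Lockhart--McOwen remainder $\|s^{||}\|_{L^2_{k,\alpha-\delta}}$ is correct, but on $\cS_E^\perp$ you need the $\epsilon^{-1}L^2_k$-type remainder of \Cref{lem:CompactEmbedding}, i.e.\ $\|\epsilon\, s^\perp\|_{L^2_k}$ with $\epsilon$ as in \cref{eq:FunctionEps}. This is exactly what \Cref{prop:ModelInequality} produces on the $\perp$-block, so once you track the two remainders separately the argument goes through.

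For comparison, the paper takes a different route: rather than proving an a~priori estimate and invoking the abstract machinery of \Cref{cor:ClosedRange}--\Cref{cor:Callias}, it observes that on the exact cone $\cL_C$ is a genuine \emph{direct sum} of a Lockhart--McOwen Dirac operator on $\cS_{E_\infty}^{||}$ and a Callias operator on $\cS_{E_\infty}^\perp$, each Fredholm on its own block by \Cref{thm:DiracOperator} and \Cref{prop:Callias} respectively, hence Fredholm on the sum. It then constructs a global right parametrix directly by gluing the conical parametrix $P_0$ on $X-K$ with local elliptic inverses $P_i$ on a finite cover of $K$ via the formula $P_R=\sqrt{\beta_0}P_0\sqrt{\beta_0}+\sum_i\sqrt{\beta_i}P_i\sqrt{\beta_i}$, checking that the error $\cL P_R-\Id$ is compact because the commutators with $\sqrt{\beta_i}$ have compact support and $\cL-\cL_C$ is a decaying (hence compact) perturbation on the end. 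Your estimate-based approach avoids the explicit gluing and is closer in spirit to how the paper handled the pure Callias case; the paper's parametrix construction is more hands-on but has the advantage that the resulting $P_R,P_L$ are built in a form immediately reusable for the $p>2$ extension in \Cref{prop:p2}.
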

\begin{proof}
	The statement is true if $X$ is replaced by a cone, as in that case $\cL$ consists on the direct sum of two operators as in the previous two subsections which are therefore Fredholm. As the direct sum of Fredholm operators is Fredholm the result holds on a model cone.\\
	The general case follows from a standard procedure, which constructs global parametrices by gluing those obtained for the model operators. This is illustrated below, in the construction of a global right parametrix $P_R$.\\
	Let $V_0 = X - K$ and $K \subseteq \cup_{i = 1}^N V_i$ be an open cover of $X$, such that there are local right inverses $P_i$ to the operator $\cL$, defined on some slightly larger open sets $U_i$ containing $V_i$. Moreover, suppose $K$ is big enough, so that on $U$, the operator $\cL$ is modeled on some conical operator $\cL_C = \D_C +q_C$ with $\nabla q_C = 0$. Let $\lbrace \beta_0 , \ldots , \beta_N \rbrace$ be a partition of unity subordinate to this cover.\\
	First, notice that one can change the operator $\cL$ over $V_0$ so that it is exactly conical as $\cL_C$. In fact, this amounts to subtract to $\cL$ the operator $T(s) = \beta_0 (\cL s - \cL_C (\beta_0 s)))$, which is a compact operator 
	\begin{equation}
		T: H_{k+1,\alpha+1} (X) \rightarrow H_{k,\alpha} (X),
	\end{equation} 
	and the Fredholm property is not affected by perturbations by compact operators.
	Then there is a parametrix $P_0$ constructed for $\cL_C$ satisfying the conditions in \cref{eq:Parametrix} with $\cL$ replaced by $\cL_C$. We now glue $P_0$ with the local inverses $P_i$. Following the approach in page 95 of \cite{Donaldson2002} we define the candidate for a global parametrix as 
	\begin{equation}
		P_R = \sqrt{\beta_0} P_0 \sqrt{\beta_0} + \sum\limits_{i \in I} \sqrt{\beta_i} P_i \sqrt{\beta_i},
	\end{equation} 
	and notice that even though $P_0$ and the $P_i$'s are not globally defined the expression above is. To check that $P_R$ is indeed a right parametrix for $\cL$ we compute
	\begin{equation}
		\cL P_R(s) = \sigma(\rd \sqrt{\beta_0} ) P_0 \sqrt{\beta_0} s + \sum_{i \in I} \sigma(\rd \sqrt{\beta_i}) P_i \sqrt{\beta_i} s + \sqrt{\beta_0} \cL P_0 \sqrt{\beta_0} s + \sum_{i \in I} \sqrt{\beta_i} \cL P_i \sqrt{\beta_i}s,
	\end{equation}
	where $\sigma$ denotes the higher order symbol of $\cL$ (which coincides with that of $\D$). The term in the first line is a compact operator $T': H_{k,\alpha} \rightarrow H_{k,\alpha}$ and, again, does not affect the Fredholm property. This follows from the fact that it is supported on a compact set where the derivatives of the $\beta$'s are non vanishing. Moreover, over this compact set, by elliptic regularity one can control the $L^2 (X)$ norms of the derivatives of $P_0 s$ and $P_is$ in terms of the $L^2 (X)$ norms of $s$. For the last two terms one can use $\cL P_0 = I+S_0 + T''$ for some compact operators $S_0$ and $T''$ over $V_0$; and $\cL P_i = I$ over $V_i$ to obtain
	\begin{align}
		\cL P_R(s) &= T'''(s) + \sqrt{\beta_0} (I+S_0) \sqrt{\beta_0}s + \sum\limits_{i \in I}\beta_i s\\
		&= s + T'''(s) + \sqrt{\beta_0} S_0 (\sqrt{\beta_0} s),
	\end{align}
	for $T''' = T'+T''$. Moreover since the last term is supported on the conical end where it agrees with $S_0$, which is a compact operator on these function spaces, the operator $T''' + \sqrt{\beta_0} S_0 \sqrt{\beta}$ is compact and this proves that $P_R$ is a right parametrix for $\cL$.
\end{proof}

\smallskip

\subsection{The case when $p>2$}

This subsection extends \Cref{thm:Main_Fredholm_Theorem} from $p = 2$ to $p >2$. The upshot is \Cref{thm:Main_Fredholm_Theorem_2} below. The relevant function spaces for the general situation are the ones in \Cref{def:FunctionSpaces} but constructed with $p >2$.

\begin{definition}\label{def:FunctionSpacesP}
	For $\alpha \in \mathbb{R}$, $k \in \mathbb{N}_+$ and $p \geqslant 2$ define the spaces $H^p_{k , \alpha} (X)$ to be the completion of the smooth compactly supported sections with respect to the norm given by
	\begin{equation}
		\Vert s \Vert_{H^p_{k, \alpha} (X)}^p = \Vert s \Vert^p_{L^p_k (K)} + \Vert s^\Vert \Vert_{L^p_{k,\alpha} (X - K)}^p + \Vert s^\perp \Vert_{L^p_k (X - K)}^p,
	\end{equation}
	where $K \subseteq X$ is a large compact set outside of which the splitting $\mathcal{S}_E = \mathcal{S}_E^\Vert \oplus \mathcal{S}_E^\perp$ is well defined. 
\end{definition}

\begin{remark}
	Notice that $H^2_{k, \alpha} (X) = H_{k,\alpha} (X)$ in the notation from the previous section. 
\end{remark}

The main result of this section is the following.

\begin{theorem}\label{thm:Main_Fredholm_Theorem_2}
	Let $k \in \mathbb{N}$ and $p \geqslant 2$, there is a discrete set $\mathcal{K}(\D) \subseteq \mathbb{R}$ such that for $\alpha \notin \mathcal{K}(\D)$ and $\alpha \geqslant -n/2$
	\begin{equation}\label{eq:Operatorp}
		\cL :H^p_{k+1,\alpha+1} (X) \rightarrow H^p_{k,\alpha} (X),
	\end{equation}
	is a Fredholm operator.
\end{theorem}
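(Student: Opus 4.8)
The plan is to derive \Cref{thm:Main_Fredholm_Theorem_2} from the $p=2$ case, \Cref{thm:Main_Fredholm_Theorem}, by a duality-plus-interpolation argument, exactly in the spirit of the Lockhart--McOwen theory for weighted Sobolev spaces on asymptotically conical (equivalently, asymptotically cylindrical) manifolds. The key structural fact we exploit is that $\cL = \D + q$ is, outside a compact set, the direct sum of a genuine Callias-type operator (nondegenerate potential) on $\cS_E^\perp$ and a Dirac-type operator (vanishing potential) on $\cS_E^{\Vert}$, by \Cref{hypoth:asymp}; the mixed Sobolev space $H^p_{k,\alpha}$ in \Cref{def:FunctionSpacesP} was built precisely so that on each summand the relevant theory applies.

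First I would observe that the $p=2$ statement already gives, via \Cref{thm:Main_Fredholm_Theorem}, two-sided parametrices $P_L, P_R : H_{k,\alpha}(X) \to H_{k+1,\alpha+1}(X)$ modulo compact operators. The strategy is to show these same parametrices (or parametrices built by the same gluing construction) are bounded on the $L^p$-scale for $p \geqslant 2$, and that the error terms remain compact. For the model conical operator this is classical: after translating to the cylindrical picture (where $H^p_{k,\alpha}$ becomes a standard weighted $W^{k,p}$ space on $\mathbb{R}_+ \times \Sigma$ with exponential weight $e^{\beta t}$, $\beta$ determined by $\alpha$), the operator $\D_C$ and its invertible-at-infinity perturbation $\D_C + q_C$ admit right inverses on $L^p$ for every $p \in (1,\infty)$ and every weight not in the discrete exceptional set $\mathcal{K}(\D)$; this exceptional set is the same for all $p$ because it is determined by the spectrum of the associated operator on the cross-section $\Sigma$ (the "indicial roots"), which is $p$-independent. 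Concretely I would invoke the results of Lockhart--McOwen \cite{Lockhart1985} and Donaldson \cite{Donaldson2002}*{\S3} for the Dirac summand, and the Callias-operator mapping properties (e.g. \cites{Kottke10,Kottke15}) for the nondegenerate summand; both give $L^p$-bounded parametrices on the cone with $p$-independent exceptional weights. The global parametrix is then assembled by the identical partition-of-unity gluing used in the proof of \Cref{thm:Main_Fredholm_Theorem}: $P_R = \sqrt{\beta_0}\, P_0 \sqrt{\beta_0} + \sum_{i} \sqrt{\beta_i}\, P_i \sqrt{\beta_i}$, where $P_0$ is the conical $L^p$-parametrix and the $P_i$ are local inverses on relatively compact sets. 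The commutator (zeroth-order-symbol) terms are supported on a fixed compact set, hence map into $H^p_{k,\alpha}$ compactly by the Rellich--Kondrachov theorem on bounded domains, so the Fredholm property survives unchanged. The hypothesis $\alpha \geqslant -n/2$ enters exactly as in \Cref{thm:DiracOperator}: it guarantees that the cokernel can be identified with an honest finite-dimensional space of (sufficiently decaying) solutions of the formal adjoint $\cL^* = \D + q^*$, rather than an abstract quotient, and that $\cL^*$ itself is covered by the same theory since it has the same leading symbol and an asymptotically-parallel potential.

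The main obstacle — and the part that requires genuine care rather than bookkeeping — is verifying that the exceptional set $\mathcal{K}(\D)$ is genuinely $p$-independent and that no new critical weights appear on the $\cS_E^\perp$ summand when $p \neq 2$. On the nondegenerate part this is where one uses that $q_C$ is pointwise invertible with $|q_C(f)|^2 \geqslant c|f|^2$: the operator $\D_C + q_C$ has no indicial roots at all (it is invertible on every reasonable weighted space at infinity, since the zeroth-order term dominates the decaying behaviour), so that summand contributes nothing to $\mathcal{K}(\D)$ for any $p$ and its parametrix is bounded $L^p \to L^p_1$ with norm independent of $p$ via the Weitzenböck estimate of \Cref{lem:2nd_order_linearized} combined with $L^p$-Calderón--Zygmund / Gårding-type estimates. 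On the degenerate part, $\mathcal{K}(\D)$ is the set of weights $\alpha$ for which the indicial operator (the constant-coefficient operator $\partial_t + A_\Sigma$ in cylindrical coordinates, with $A_\Sigma$ the Dirac operator on the cross-section twisted by the asymptotic connection $\nabla_\infty$) fails to be invertible; this is precisely the set of $-\alpha - n/2$ equal to an eigenvalue of $A_\Sigma$, manifestly independent of $p$. I would therefore state a lemma recording this $p$-independence and its consequence that the conical parametrix $P_0$ is $L^p$-bounded for $p \geqslant 2$ and $\alpha \notin \mathcal{K}(\D)$, $\alpha \geqslant -n/2$, cite the standard references for its proof, and then the global Fredholmness of \cref{eq:Operatorp} follows by the gluing argument above verbatim. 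A secondary technical point to check is that the cutoff functions $\beta_i$ and the splitting $\mathcal{S}_E = \mathcal{S}_E^{\Vert} \oplus \mathcal{S}_E^\perp$ are compatible with multiplication on $H^p_{k,\alpha}$ — but this is immediate since $\beta_i$ are smooth with bounded derivatives and the splitting is smooth and asymptotically parallel, so multiplication by them is bounded on every $H^p_{k,\alpha}$.
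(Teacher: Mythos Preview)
Your high-level strategy matches the paper's: take the $p=2$ parametrices from \Cref{thm:Main_Fredholm_Theorem} and show they extend to bounded operators $H^p_{0,\alpha}\to H^p_{1,\alpha+1}$ with compact remainders, then glue via the same partition of unity. Where you diverge --- and where there is a genuine gap --- is in the key step of establishing $L^p$-boundedness of the conical-model parametrix on the $\mathcal{S}_E^\perp$ summand.

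For the $\mathcal{S}_E^{\Vert}$ (Dirac) component your invocation of Lockhart--McOwen is fine: their theory is genuinely $L^p$-based with $p$-independent exceptional weights. But for the $\mathcal{S}_E^\perp$ (Callias) component you claim the parametrix is $L^p\to L^p_1$ bounded ``via the Weitzenb\"ock estimate of \Cref{lem:2nd_order_linearized} combined with $L^p$-Calder\'on--Zygmund / G{\aa}rding-type estimates'' and cite \cites{Kottke10,Kottke15}. Those references are $L^2$-based index theory and do not supply $L^p$ mapping properties of Callias parametrices; and the Weitzenb\"ock identity gives coercivity only in $L^2$ via integration by parts, which does not transfer to $L^p$ without a new argument. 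Saying ``$\D_C+q_C$ has no indicial roots'' explains why no new critical weights appear, but it does not by itself produce an $L^p$-bounded inverse. So this part of your proposal is a plausibility statement, not a proof.

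The paper fills exactly this hole, and does so without invoking any external $L^p$-Callias theory. It introduces an intermediate mixed norm $H^{(p,2)}_{0,\alpha}$ that is $L^p$ in the radial direction and $L^2$ along the cross-section $\Sigma$, then proves a chain of lemmas: first $H^p_{0,\alpha}\hookrightarrow H^{(p,2)}_{0,\alpha}$ (\Cref{lem:IntermediateNormInequality}); next, the $L^2$-bounded parametrix $P_R$ is reinterpreted in cylindrical coordinates $t=\log r$ as an operator-valued family $T\mapsto M_\alpha(T)$ with \emph{integrable} operator norm, so the convolution bound $L^1\times L^p\hookrightarrow L^p$ in the radial variable yields $\|P_Rf\|_{H^{(p,2)}_{0,\alpha+1}}\lesssim \|f\|_{H^{(p,2)}_{0,\alpha}}$ (\Cref{lem:TwoIntInequalities}); finally, rescaled Calder\'on--Zygmund estimates on dyadic annuli, together with the equivalent norm of \Cref{lem:EquivalentNorm}, upgrade this back to the full $H^p$ bound (\Cref{cor:LptoLp}). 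This integrable-operator-norm trick, bootstrapping $L^p$ bounds from the $L^2$ parametrix, is the substantive content of the proof and is what your proposal is missing.
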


This is proven by showing that the parametrices $P_R,P_L$ obtained for $p = 2$ in \Cref{thm:Main_Fredholm_Theorem} extend to bounded operators with $S_R,S_L$ compact operators when regarded as operators on the relevant $H^p_{k,\alpha}$-spaces with $p>2$.

\begin{proposition}\label{prop:p2}
	Let $\alpha \geqslant -n/2$ be such that $\alpha \not\in \mathcal{K} (\D)$ and $-n-\alpha \not\in \mathcal{K}(\D)$. Then, the parametrices $P_R$ and $P_L$ for $\cL$ obtained in \Cref{thm:Main_Fredholm_Theorem}, extend to bounded operators
	\begin{equation}
		P_R, P_L : H^p_{0,\alpha} (X) \rightarrow H^p_{1,\alpha+1} (X),
	\end{equation}
	such that 
	\begin{equation}
		\cL P_R = \Id + S_R, \ \text{and} \  \ P_L \cL = \Id + S_L,
	\end{equation} 
	with $S_R: H^p_{0,\alpha} (X) \rightarrow H^p_{0,\alpha} (X)$ and $S_L : H^p_{1,\alpha+1} (X) \rightarrow H^p_{1,\alpha+1} (X)$ compact operators.
\end{proposition}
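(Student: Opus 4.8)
The plan is to bootstrap the $p=2$ parametrix of \Cref{thm:Main_Fredholm_Theorem} up to $p > 2$ by combining the conical model analysis with elliptic regularity and Sobolev embedding, exactly in the style of the argument for Dirac-type operators on AC manifolds (cf. \cite{Lockhart1985,Marshall02}). First I would establish the $L^p$-boundedness of the parametrices. Since $P_R, P_L$ are built by gluing the local inverses $P_i$ (defined on precompact sets, where they are pseudodifferential operators of order $-1$ and hence bounded $L^p_k \to L^p_{k+1}$ on the relevant compact regions by Calderón--Zygmund theory) with the conical model parametrix $P_0$ on $V_0 = X - K$, it suffices to prove that $P_0$ extends to a bounded operator $H^p_{0,\alpha}(X) \to H^p_{1,\alpha+1}(X)$. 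On the cone, $\cL_C$ splits as $\cL_C = \cL_C^{||} \oplus \cL_C^\perp$, where $\cL_C^{||}$ is a Dirac-type operator (vanishing potential) and $\cL_C^\perp$ is a Callias-type operator with invertible potential; for the first we invoke the $L^p$ mapping properties of the weighted parametrix from \cite{Lockhart1985,Marshall02} on the range $\alpha \geqslant -n/2$, $\alpha \notin \mathcal{K}(\D)$, and for the second we use the fact (from \Cref{prop:ModelInequality} and its $L^p$ analogue obtained via the Weitzenb\"ock inequality together with the nondegeneracy $|q_C(f)|^2 \gtrsim |f|^2$) that the resolvent decays like $r^{-1}$ against the weight, which is what makes the standard (unweighted) $L^p_k$ norm the correct one on $\cS_E^\perp$. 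The hypotheses $-n-\alpha \notin \mathcal{K}(\D)$ and $\alpha \notin \mathcal{K}(\D)$ are precisely the conditions ensuring invertibility of the indicial operator at the two ends of the Mellin transform for the $\cS_E^{||}$-component.

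Next I would verify that the error terms $S_R, S_L$ remain compact as operators between the $H^p$-spaces. These errors are, by construction (see the proof of \Cref{thm:Main_Fredholm_Theorem}), supported either on a fixed compact set $K'$ (the region where the cutoff functions $\beta_i$ have nonzero derivative) or arise from the compact operator $S_0$ on the conical end. For the compactly supported pieces, compactness $H^p_{k+1,\alpha+1} \to H^p_{k,\alpha}$ follows from the Rellich--Kondrachov theorem on $K'$ combined with elliptic regularity (which upgrades the apparent loss of a derivative in the symbol terms $\sigma(\rd\sqrt{\beta_i})P_i\sqrt{\beta_i}$). For the conical-end piece $\sqrt{\beta_0} S_0 \sqrt{\beta_0}$, I would show $S_0$ is compact on $H^p_{1,\alpha+1}(X-K)$ by the usual argument: $S_0$ improves the weight (it gains a factor $r^{-\delta}$ coming from $\cL - \cL_C = O(r^{-1-\delta})$), and on AC manifolds the inclusion of a weighted Sobolev space into one with a larger (i.e. more rapidly decaying) weight is compact, by a diagonal/exhaustion argument identical to that in \Cref{lem:CompactEmbedding}. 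One must be careful that this last argument respects the split structure — but on $\cS_E^{||}$ it is the standard weighted-space compactness, and on $\cS_E^\perp$ it is the compactness $L^p_1(X) \hookrightarrow \epsilon^{-1}L^p(X)$, which is the obvious $L^p$ analogue of \Cref{lem:CompactEmbedding} (whose proof only used interpolation-free interior $L^p_1 \hookrightarrow L^p$ compactness on balls plus the smallness of $\epsilon$ at infinity).

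With $P_R, P_L$ bounded and $S_R, S_L$ compact on the $H^p$-spaces, the operator $\cL : H^p_{k+1,\alpha+1}(X) \to H^p_{k,\alpha}(X)$ admits two-sided parametrices modulo compacts, hence is Fredholm; this completes \Cref{prop:p2} for $k=0$, and the general $k$ follows by the same inductive derivative-commutation argument used in \Cref{prop:Callias} (commuting $\cL$ past $\nabla$ produces only curvature and $\nabla q$ terms, which decay and are absorbed into a weight function $\epsilon$ as in \cref{eq:FunctionEps}). The main obstacle I expect is the bookkeeping on the conical model: one must know that the $L^p$-mapping theory for the weighted Dirac parametrix is valid on the same discrete complement $\mathbb{R} \setminus \mathcal{K}(\D)$ of critical weights as in the $L^2$ case (this is true because the critical weights are the real parts of the indicial roots, which are $p$-independent), and one must check that the Callias component genuinely needs \emph{no} weight — i.e. that $\|r^{-1}f\|_{L^p} \lesssim \|\nabla f\|_{L^p} + \|f\|_{L^p}$ type control, coming from the invertible potential, is strong enough to close the estimate in $L^p$ and not merely in $L^2$. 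This is where the restriction $\alpha \geqslant -n/2$ enters: it guarantees that the weighted space on $\cS_E^{||}$ actually contains the relevant kernel $\ker(\D)_{-\alpha-n}$, matching the Fredholm alternative of \Cref{thm:DiracOperator} and making the index count consistent across $p$.
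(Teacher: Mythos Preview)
Your outline captures the correct architecture (split into compact interior plus conical end, treat $\cS_E^{||}$ and $\cS_E^\perp$ separately), but it diverges from the paper's route at two key points, and one of these is a genuine gap.

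\textbf{The compactness of $S_R,S_L$.} You argue via weight gain: $S_0$ picks up a factor $r^{-\delta}$ from $\cL-\cL_C$, and weighted embeddings with improved weight are compact. The paper instead observes that one may take $P_R$ to be an actual right inverse onto $(\ker(\cL^*)\cap H_{0,-n-\alpha})^{\perp_{L^2}}$, so that $S_R$ is minus the $L^2$-orthogonal projection onto the finite-dimensional space $\ker(\cL^*)\cap H_{0,-n-\alpha}$ (and similarly $S_L$ projects onto $\ker(\cL)$). Finite rank implies compact on any Banach space. This is where $\alpha\geqslant -n/2$ is actually used: it gives $H_{0,-n-\alpha}\subseteq H_{0,\alpha}$, so the cokernel sits inside the target space. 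Your weight-gain argument could presumably be made to work, but it is more delicate, and your reading of the role of $\alpha\geqslant -n/2$ (as controlling which kernel lies in which weighted space for the Fredholm alternative) is on the right track but not quite how the paper invokes it.

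\textbf{The boundedness of $P_R$ on the $\cS_E^\perp$ component.} This is the real gap. You write that the $L^p$ mapping property follows from ``\Cref{prop:ModelInequality} and its $L^p$ analogue obtained via the Weitzenb\"ock inequality''. But \Cref{prop:ModelInequality} is proved by integrating the Weitzenb\"ock identity against $f$, which is an intrinsically $L^2$ argument; there is no direct $L^p$ analogue. The paper does \emph{not} assert one. Instead it introduces an intermediate mixed norm $H^{(p,2)}_{0,\alpha}$ (radially $L^p$, $L^2$ along the link $\Sigma_r$), proves $\Vert P_R f\Vert_{H^{(p,2)}_{0,\alpha+1}}\lesssim \Vert f\Vert_{H^{(p,2)}_{0,\alpha}}$ by reinterpreting the $L^2$-boundedness of $P_R$ as an $L^1$-in-radius family of bounded operators $L^2(\Sigma)\to L^2(\Sigma)$ and then using $L^1\ast L^p\hookrightarrow L^p$ in the radial variable, and finally upgrades $H^{(p,2)}$ to $H^p$ via rescaled Calder\'on--Zygmund on dyadic annuli together with the lower bound $|q(f)|\gtrsim|f^\perp|$ (\Cref{lem:EquivalentNorm,cor:LptoLp}). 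This mixed-norm interpolation is the technical heart of the proof, and your proposal does not supply a substitute for it. Simply citing Lockhart--McOwen covers only the $\cS_E^{||}$ component; for $\cS_E^\perp$ with nondegenerate potential you need an argument, and the paper's is the one on record.
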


\begin{proof}
	Consider the restricted operator 
	\begin{equation}
		\cL \vert_{ (\ker (\cL) \cap H_{0, \alpha+1} (X))^{\perp_{L^2 (X)}}}: (\ker (\cL) \cap H_{0, \alpha+1} (X))^{\perp_{L^2 (X)}} \rightarrow \im (\cL \vert_{ (\ker (\cL) \cap H_{0, \alpha+1} (X))^{\perp_{L^2 (X)}}}).
	\end{equation}
	Any complement to $\im (\cL \vert_{ (\ker (\cL) \cap H_{0, \alpha+1} (X))^{\perp_{L^2 (X)}}})$ in $H_{0,\alpha} (X)$ is isomorphic to the kernel of the adjoint operator $\cL^*$ in the dual space $H_{0,-n-\alpha} (X)$. In the range when $\alpha \geqslant -n/2$ we have $H_{0,-n-\alpha} (X) \subseteq H_{0,\alpha} (X)$ and so we have
	\begin{equation}
		\im (\cL \vert_{ (\ker (\cL) \cap H_{0, \alpha+1} (X))^{\perp_{L^2 (X)}}}) = (\ker(\cL^*) \cap H_{0,-n-\alpha} (X))^{\perp_{L^2 (X)}}.
	\end{equation}
	Having this said, we now lay out how the operators $S_L$ and $S_R$ are constructed in order to then prove that they are compact. The parametrix $P_L$ in the statement is obtained by constructing a left inverse to $\cL \vert_{ (\ker(\D) \cap H_{0, \alpha+1} (X))^{\perp_{L^2 (X)}}}$, then $S_L$ is minus the projection onto $\ker (\cL) \cap H_{0, \alpha+1} (X)$, which is finite dimensional as $\cL$ is Fredholm for $p = 2$ due to \Cref{thm:Main_Fredholm_Theorem}. In the same way, $P_R$ is obtained by constructing a right inverse to $\cL$ as an operator onto $(\ker(\cL^*) \cap H_{0,-n-\alpha})^{\perp_{L^2 (X)}}$ and so $S_R$ is minus the projection onto $\ker(\cL^*) \cap H_{0,-n-\alpha} (X)$ which is finite dimensional as $\cL^* = \D - q$ is Fredholm for $p = 2$. As projections into finite dimensional subspaces are compact operators, both $S_R$ and $S_L$ are compact.
	
	Next, we turn to the proof that the parametrices $P_R, P_L$ do extend to bounded operators from $H^p_{0, \alpha} (X)$ to $H^p_{0, \alpha+1} (X)$. Let us chose a big compact geodesic ball $K$, and then we have:
	\begin{enumerate}
		\item Over the big compact set $K \subseteq X$, the spaces $H^p_{k, \alpha} (X)$ can be taken to agree with the usual $L^p_k (X)$ ones. We then fix a finite open cover $\lbrace V_i \rbrace_{i \in I}$, where the standard Calderon--Zygmund inequalities hold. These are
		\begin{align}
			\Vert \nabla g \Vert^p_{L^p (V_i)} &\lesssim \Vert \cL g \Vert^p_{L^p (V_i')}  + \Vert g \Vert^p_{L^p (V_i')}  \\ 
			\Vert \ g \Vert^p_{L^p (V_i')} &\lesssim \Vert \cL g \Vert^p_{L^p (V_i'')}  + \Vert  g \Vert^p_{L^2 (V_i'')} ,
		\end{align}
		where $V_i'' \supset V_i' \supset V_i$ are open. The reason why we chose to arrange them in this manner is that they can now be combined into
		\begin{equation}
			\Vert  g \Vert^p_{L^p_1(V_i)}  \lesssim \Vert \cL g \Vert^p_{L^p (V_i'')}  + \Vert g \Vert^p_{L^2 (V_i'')}.
		\end{equation}
		Inserting $g = P_R f$ into the above inequality and using $\cL P_R = \Id + S_R$, gives
		\begin{align}
			\Vert P_R f \Vert_{L^p (V_i)}^p &\lesssim \Vert \cL P_R f \Vert_{L^p (V_i'')}^p + \Vert P_R f \Vert_{L^2 (V_i'')}^p  \\
			&\lesssim \Vert f \Vert_{L^p (V_i'')}^p + \Vert S_R f \Vert_{L^p (V_i'')}^p + \Vert P_R f \Vert_{L^2 (V_i'')}^p  .
		\end{align}
		Then the fact that $P_R$ is bounded for $p = 2$, $L^p (V_i'') \subseteq L^2 (V_i'')$ for $p \geqslant 2$, and $S_R$ is compact and hence bounded for $p \geqslant 2$ combine to further give 
		\begin{equation}
			\Vert P_R f \Vert_{L^p (V_i)}^p \lesssim \Vert f \Vert_{L^p (V_i'')}^p,
		\end{equation}
		thus showing that $P_R:L^p (K) \to L^p (K)$ is bounded.
		
		\item In the noncompact end $X - K$, the operator $\cL = \D_C + q$ is modeled on a conical operator $\cL_C = \D_C+q_C$ as in \Cref{hypoth:asymp}. The rest of the proof requires \Cref{lem:EquivalentNorm,cor:LptoLp} below. For now assume these hold, then from \Cref{cor:LptoLp} one can use the alternative form of the $H^p_{1, \alpha+1} (X)$ norm
		\begin{equation}
			\Vert g \Vert^p_{H^p_{1, \alpha+1} (X)} = \Vert \cL g \Vert^p_{H^p_{0, \alpha} (X)} + \Vert g \Vert^p_{H^p_{0 , \alpha +1} (X)}.
		\end{equation}
		Insert into this $g = P_Rf$ with $f \in H^p_{0, \alpha} (X)$ and use $\cL P_R = \Id + S_R$, gives
		\begin{equation}
			\Vert P_R f  \Vert^p_{H^p_{1, \alpha+1} (X)} = \Vert f + S_R f \Vert^p_{H^p_{0, \alpha} (X)} + \Vert P_R f \Vert^p_{H^p_{0,\alpha+1} (X)}.
		\end{equation}
		By using the generalized Young inequality and the fact that $S_R: H^p_{0, \alpha} (X) \rightarrow H^p_{0, \alpha} (X)$ is compact, the first term can be bounded above by a term of the form $\Vert f \Vert^p_{H^p_{0, \alpha} (X)}$. As for the second term, it is guaranteed by \Cref{cor:LptoLp} that it can equally be bounded above by $\Vert f \Vert^p_{H^p_{0, \alpha} (X)}$ thus showing that the parametrix $P_R: H^p_{0, \alpha} (X) \rightarrow H^p_{1, \alpha+1} (X)$ is bounded.
	\end{enumerate}
	
	Combining these two pieces finishes the proof of \Cref{prop:p2}.
\end{proof}

The rest of this section focuses on proving \Cref{lem:EquivalentNorm,cor:LptoLp} used in the proof of \Cref{prop:p2}.

\begin{lemma}\label{lem:EquivalentNorm}
	The norm $H^p_{k+1,\alpha+1} (X)$ is equivalent to the norm $\Vert \cdot \Vert$ defined by
	\begin{equation}
		\Vert f \Vert^p = \Vert \cL f \Vert_{H^p_{k,\alpha} (X)}^p +\Vert  f \Vert_{H^p_{0,\alpha+1} (X)}^p.
	\end{equation}
\end{lemma}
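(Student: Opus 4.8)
The plan is to prove the two inequalities giving norm equivalence. The harder direction is bounding $\Vert f \Vert_{H^p_{k+1,\alpha+1}(X)}$ by $\Vert f \Vert$; the reverse direction is the easy continuity estimate for $\cL$, which I address first.

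\textbf{The easy direction.} First I would show $\Vert f\Vert \lesssim \Vert f\Vert_{H^p_{k+1,\alpha+1}(X)}$. This amounts to proving that $\cL : H^p_{k+1,\alpha+1}(X) \to H^p_{k,\alpha}(X)$ is bounded. Write $\cL = \D + q$. Since the splitting $\cS_E = \cS_E^\Vert \oplus \cS_E^\perp$ holds outside $K$ and $q$ is asymptotically parallel (\Cref{hypoth:asymp}), the operator $\D$ shifts the weight by $-1$ on each component, so it maps $H^p_{k+1,\alpha+1}$ boundedly to $H^p_{k,\alpha}$; the zeroth-order term $q$ vanishes on $\cS_E^\Vert$ (so on that component it only produces a weight-$\alpha+1$ contribution, which embeds into weight $\alpha$ on $\cS_E^\perp$ after noting $q$ maps $\cS_E^\Vert$-derivative terms into $\cS_E^\perp$) and is bounded with bounded derivatives on $\cS_E^\perp$, where the $H^p_{k}$-norms are the unweighted ones; on the compact piece $K$ this is just the standard boundedness of a first-order operator $L^p_{k+1}(K)\to L^p_k(K)$. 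Combining the pieces using the definition of the $H^p$-norms in \Cref{def:FunctionSpacesP} gives the claim, with the only subtlety being the bookkeeping of how the $\cS_E^\Vert$ and $\cS_E^\perp$ components interact under $q$ near the interface $\partial K$, handled by a cutoff.

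\textbf{The hard direction.} For $\Vert f\Vert_{H^p_{k+1,\alpha+1}(X)} \lesssim \Vert f\Vert$ the strategy is to localize and use elliptic regularity for $\cL$ (equivalently for the Dirac-type operator $\D$, which has the same symbol). On $K$, the standard interior Calder\'on--Zygmund/G\aa rding estimate $\Vert f\Vert_{L^p_{k+1}(V_i)} \lesssim \Vert \cL f\Vert_{L^p_{k}(V_i')} + \Vert f\Vert_{L^p(V_i'')}$ on a finite cover (as already used in the proof of \Cref{prop:p2}) does it, after absorbing $\Vert f\Vert_{L^p(V_i'')} \leqslant \Vert f\Vert_{H^p_{0,\alpha+1}(X)}$. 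On the end $X - K$, I would pass to the cylindrical picture via $t = \log r$ and the weight conjugation $f \mapsto r^{-\alpha-1} f$ (which is the standard device for weighted Sobolev spaces on AC manifolds), under which $\cL$ becomes, on $\cS_E^\perp$, a translation-invariant-at-infinity Callias-type operator with nondegenerate potential, and on $\cS_E^\Vert$ a (shifted) Dirac operator on the cylinder. In both cases the relevant model operators satisfy uniform-scale elliptic estimates on unit cylinders $[t-1,t+1]\times\Sigma$: this is essentially \cref{eq:ModelInequalityk}-type estimate for $p=2$ extended to $p > 2$ by the $L^p$ Calder\'on--Zygmund inequality on the fixed cylinder, using that on $\cS_E^\perp$ the potential term is coercive so no kernel appears, and on $\cS_E^\Vert$ the estimate is the usual one for $\D$ on $[t-1,t+1]\times\Sigma$. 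Summing these local estimates over the cylinders (in $\ell^p$, with the weight absorbed into the translation) and gluing with the $K$-estimate via a partition of unity yields $\Vert f\Vert_{H^p_{k+1,\alpha+1}(X)} \lesssim \Vert \cL f\Vert_{H^p_{k,\alpha}(X)} + \Vert f\Vert_{H^p_{0,\alpha+1}(X)} = \Vert f\Vert$.

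\textbf{Main obstacle.} I expect the main difficulty to be the $p>2$ elliptic estimate uniformly across the infinitely many cylinders on the end together with the weight bookkeeping: one must be careful that the constant in the local $L^p_{k+1} \lesssim L^p_k + L^p$ estimate on $[t-1,t+1]\times\Sigma$ is independent of $t$ (which follows because, after conjugation, the operator converges to a fixed translation-invariant model with error decaying like $\epsilon(r)$ as in \cref{eq:FunctionEps}, so for $t$ large the local operator is a small perturbation of the model and the constant can be taken uniform), and that the lower-order term $\Vert f\Vert_{L^p(\text{cylinder})}$ reassembles correctly into the weighted $H^p_{0,\alpha+1}$-norm rather than a stronger norm. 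Once uniformity of constants is secured, the remaining steps are routine partition-of-unity gluing and summation.
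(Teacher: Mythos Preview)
Your proposal is correct and rests on the same underlying idea as the paper (boundedness of $\cL$ for the easy direction, elliptic regularity for the hard one), but the paper's execution of the hard direction is more economical. Rather than localizing to $K$ versus the end and summing uniform local estimates over unit cylinders, the paper splits $f = f^\Vert + f^\perp$ according to $\cS_E = \cS_E^\Vert \oplus \cS_E^\perp$ on the end. On $\cS_E^\Vert$ one has $q = 0$, so $\cL = \D$ and $H^p_{k,\alpha} = L^p_{k,\alpha}$; the paper then simply invokes the Lockhart--McOwen inequality $\|f\|_{L^p_{1,\alpha+1}} \lesssim \|\D f\|_{L^p_{0,\alpha}} + \|f\|_{L^p_{0,\alpha+1}}$ for asymptotically conical Dirac operators, which is precisely the output of the cylinder-summing argument you propose to carry out from scratch. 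On $\cS_E^\perp$ the $H^p_{k,\alpha}$-norm is the unweighted $L^p_k$-norm, and the paper observes $L^p = L^p_{0,-n/p}$, applies the \emph{same} Lockhart--McOwen inequality at the specific weight $-n/p$ to get $\|\nabla f^\perp\|_{L^p} \lesssim \|\D f^\perp\|_{L^p} + \|r^{-1} f^\perp\|_{L^p}$, and then uses $\D f = \cL f - q(f)$ with $q$ bounded and $\|r^{-1}f^\perp\|_{L^p} \lesssim \|f^\perp\|_{L^p}$. So your uniform-cylinder argument is what underpins the cited Lockhart--McOwen estimate; your route buys self-containedness at the cost of reproving a standard result, while the paper's route buys brevity by citing it.
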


\begin{proof}
	The result follows from induction in $k$. We only do the first step with $k = 1$ as the general one is similar. In this $k = 1$ case it is enough to show that $\Vert f \Vert^p$ can be bounded from above and below by fixed multiples of $\Vert f \Vert^p_{H^p_{1,\alpha+1} (X)}$.
	
	To prove the upper bound, we use $\cL f = \D f + q(f)$ and the generalized version of Young's inequality
	\begin{equation}\label[ineq]{ineq:Inter0}
		\Vert f \Vert^p \lesssim \Vert \D f \Vert^p_{H^p_{0,\alpha} (X)} +  \Vert q(f) \Vert_{H^{p}_{0,\alpha} (X)}^{p} + \Vert f \Vert_{H^p_{0,\alpha+1} (X)}^p.
	\end{equation}
	As $\vert \D \vert \lesssim \vert \nabla f \vert$, the first term above has an upper bound of the form $ \Vert \nabla f \Vert_{H^p_{0,\alpha} (X)}^p$. Furthermore, $ \vert q(f) \vert \lesssim \vert f^\perp \vert$ and given the the weights do not affect the $f^\perp$ component we also have $\Vert f^\perp \Vert_{H^{p}_{0,\alpha} (X)}^{p} = \Vert f^\perp \Vert_{H^{p}_{0,\alpha+1} (X)}^p$. Combining these, we find
	\begin{equation}
		\Vert f \Vert^p \lesssim \Vert \nabla f \Vert_{H^p_{0,\alpha} (X)}^p  + \Vert  f \Vert_{H^p_{0,\alpha+1} (X)}^p = \Vert  f \Vert_{H^p_{1,\alpha+1} (X)}^p .
	\end{equation}

	To prove the lower bound on $\Vert f \Vert^p$, we need to establish an inequality as
	\begin{equation}\label[ineq]{ineq:CZwithH}
		\Vert f \Vert^p_{H^p_{1, \alpha+1} (X)} \lesssim \Vert \cL f \Vert^p_{H^p_{0,\alpha} (X)} + \Vert f \Vert^p_{H^p_{0,\alpha+1} (X)} ,
	\end{equation}
	We prove this by splitting it into two cases when $f$ is either in $\mathcal{S}_E^\Vert$ or $\mathcal{S}_E^\perp$. In the first case, i.e. $f \in \mathcal{S}_E^\Vert$, we have $\cL f = \D f$ and $\Vert f \Vert_{H^p_{k,\alpha} (X)} = \Vert f \Vert_{L^p_{k,\alpha} (X)}$ and given that $\D$ is an elliptic asymptotically conical operator modeled on $\D_C$, there is an inequality
	\begin{equation}\label[ineq]{ineq:Inter1}
		\Vert f \Vert^p_{L^p_{1, \alpha+1} (X)} \lesssim \Vert \D f \Vert^p_{L^p_{0,\alpha} (X)} + \Vert f \Vert^p_{L^p_{0,\alpha+1} (X)} .
	\end{equation}
	This is immediate from a change of coordinates to the cylindrical setting and the result of Lockhart--McOwen in \cite{Lockhart1985}.\\
	On the other hand, if $f \in \mathcal{S}_{E}^\perp$, $\Vert f \Vert_{H^p_{k, \alpha} (X)} = \Vert f \Vert_{L^p_k (X)}$ and to bound $\Vert \nabla f \Vert^p_{L^p (X)}$ by above we use the fact that $L^p (X) = L^p_{0, -n/p} (X)$. This gives 
	\begin{equation}
		\Vert \nabla f \Vert^p_{L^p (X)} \leqslant \Vert \nabla f \Vert^p_{L^p (X)} + \Vert r^{-1} f \Vert^p_{L^p (X)} = \Vert  f \Vert^p_{L^p_{1, - n/p +1} (X)}.
	\end{equation}
	Then, using the weighted \cref{ineq:Inter1}, for the case $\alpha = - \tfrac{n}{p}$, gives
	\begin{equation}
		\Vert \nabla f \Vert^p_{L^p (X)} \lesssim \Vert \D f \Vert_{L^p (X)}^p + \Vert r^{-1} f \Vert_{L^p (X)}^p  \lesssim \Vert \cL f \Vert_{L^p (X)}^p +  \Vert f \Vert_{L^p (X)}^p + \Vert r^{-1} f \Vert_{L^p (X)}^p  \lesssim \Vert \cL f \Vert_{L^p (X)}^p + \Vert f \Vert_{L^p (X)}^p ,
	\end{equation}
	where in the second inequality we used $\D f = \cL f - q(f)$ and the fact that $q$ is bounded. The \cref{ineq:CZwithH} is now immediate from summing these two components.
\end{proof}

We use the above result in the analysis of a mixed norm designed to interpolate between the $L^p$-norm along the radial direction and the $L^2$-norm along the ``closed'' directions.

\begin{definition}
	Let $R \gg 1$ be such that the splitting $\cS_E = \cS_E^{||} \oplus \cS_E^\perp$ is well defined in the complement of $B_R$. We define the intermediate norm $\Vert \cdot \Vert_{H^{(p,2)}_{0, \alpha} (X)}$ by
	\begin{equation}
		\Vert f \Vert_{H^{(p,2)}_{0, \alpha} (X)}^p = || f||^p_{L^2 (K)} + \int\limits_R^\infty \left( r^{-\alpha p -n} \Vert f^\Vert \Vert_{L^2 (\Sigma_r)}^p + \Vert f^\perp \Vert_{L^2 (\Sigma_r)}^p \right) r^{-(n-1) \frac{p-2}{2}} \rd r,
	\end{equation}
	where the $L^2$-norms on the right hand side are with respect to the induced metric on $\Sigma_r$ defined as $\Upsilon (\lbrace r \rbrace \times \Sigma)$.
\end{definition}

\begin{lemma}\label{lem:IntermediateNormInequality}
	Let $p \geqslant 2$ and $\alpha \in \mathbb{R}$, then
	\begin{equation}
		\Vert f \Vert_{H^{(p,2)}_{0, \alpha} (X)} \lesssim \Vert f \Vert_{H^p_{0,\alpha} (X)},
	\end{equation}
	for all $f \in H^p_{0,\alpha} (X)$.
\end{lemma}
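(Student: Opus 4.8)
The plan is to compare the two norms directly on the compact piece and the conical end, and on the end to reduce the estimate to a one–dimensional inequality in the radial variable by means of H\"older's inequality along the links $\Sigma_r$. Over the compact core $K$ the two norms are each equivalent to $\|\cdot\|_{L^p(K)}$ (for $\|f\|_{H^p_{0,\alpha}}$) while the first term $\|f\|_{L^2(K)}$ of the intermediate norm is bounded by $\|f\|_{L^p(K)}$ since $L^p(K)\hookrightarrow L^2(K)$ for $p\geqslant 2$ on a bounded set; so that part costs nothing and I would dispense with it in one line.

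On the end $X-B_R$, identify the metric with the conical one $g_C=\rd r^2+r^2 g_\Sigma$ up to bounded factors. First I would write, for each fixed $r$, a pointwise-in-$r$ H\"older inequality along the $6$–dimensional link: since $\mathrm{Vol}(\Sigma_r)\sim r^{n-1}$,
\begin{equation}
	\|h\|_{L^2(\Sigma_r)}\leqslant \mathrm{Vol}(\Sigma_r)^{\frac12-\frac1p}\,\|h\|_{L^p(\Sigma_r)}\lesssim r^{(n-1)\left(\frac12-\frac1p\right)}\|h\|_{L^p(\Sigma_r)},
\end{equation}
applied separately to $h=f^{\|}$ and $h=f^{\perp}$. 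Raising this to the $p$-th power gives
\begin{equation}
	\|h\|_{L^2(\Sigma_r)}^p\lesssim r^{(n-1)\frac{p-2}{2}}\,\|h\|_{L^p(\Sigma_r)}^p,
\end{equation}
so the radial weight $r^{-(n-1)\frac{p-2}{2}}$ in the definition of $\|\cdot\|_{H^{(p,2)}_{0,\alpha}}$ exactly cancels this factor. Then for the parallel component, using $\exp_x^*(\vol_X)\sim r^{n-1}\,\rd r\wedge\vol_{\Sigma}$ along the end,
\begin{equation}
	\int\limits_R^\infty r^{-\alpha p-n}\,\|f^{\|}\|_{L^2(\Sigma_r)}^p\,r^{-(n-1)\frac{p-2}{2}}\,\rd r
	\lesssim \int\limits_R^\infty r^{-\alpha p-n}\,\|f^{\|}\|_{L^p(\Sigma_r)}^p\,\rd r
	\lesssim \|f^{\|}\|_{L^p_{0,\alpha}(X-B_R)}^p,
\end{equation}
and similarly $\int_R^\infty \|f^{\perp}\|_{L^2(\Sigma_r)}^p\,r^{-(n-1)\frac{p-2}{2}}\,\rd r\lesssim \int_R^\infty\|f^{\perp}\|_{L^p(\Sigma_r)}^p\,r^{n-1}\,\rd r\lesssim\|f^{\perp}\|_{L^p(X-B_R)}^p$, since for $f^\perp$ the weighted norm coincides with the unweighted $L^p$ norm (the weights only act on the parallel part). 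Summing the two components and the compact contribution, and recalling $\|f\|_{H^p_{0,\alpha}}^p=\|f\|_{L^p_k(K)}^p+\|f^{\|}\|_{L^p_{k,\alpha}(X-K)}^p+\|f^{\perp}\|_{L^p_k(X-K)}^p$ with $k=0$ here, yields $\|f\|_{H^{(p,2)}_{0,\alpha}(X)}\lesssim\|f\|_{H^p_{0,\alpha}(X)}$ as claimed. A density argument from smooth compactly supported sections then extends the inequality to all of $H^p_{0,\alpha}(X)$.

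\textbf{Main obstacle.} None of the steps is deep; the one place to be careful is matching conventions. The exponents of $r$ in the definitions of $\|\cdot\|_{H^{(p,2)}_{0,\alpha}}$ and $\|\cdot\|_{L^p_{0,\alpha}}$ must be tracked precisely against the volume asymptotics $\mathrm{Vol}(\Sigma_r)\sim r^{n-1}$ and $\vol_X\sim r^{n-1}\rd r\,\vol_\Sigma$, so that the radial weight $r^{-(n-1)\frac{p-2}{2}}$ cancels the H\"older factor $\mathrm{Vol}(\Sigma_r)^{p(\frac12-\frac1p)}=r^{(n-1)\frac{p-2}{2}}$ \emph{exactly} rather than up to a stray power of $r$; and one must use that the $H^p$-weight $r^{-\alpha}$ is applied only to the $\cS_E^{\|}$ component in both norms, so the $\cS_E^\perp$ estimate is weight-free on both sides. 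Since the AC metric is only asymptotic to $g_C$, all these identifications are up to bounded multiplicative constants (absorbed in $\lesssim$), which is harmless because we only claim a one-sided inequality.
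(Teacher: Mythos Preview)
Your proposal is correct and follows essentially the same route as the paper: reduce to the end, apply the slice-wise inequality $\|h\|_{L^2(\Sigma_r)}^p \lesssim r^{(n-1)\frac{p-2}{2}}\|h\|_{L^p(\Sigma_r)}^p$ (the paper phrases this as a scaling argument rather than H\"older, but it is the same estimate), and observe that the radial weight $r^{-(n-1)\frac{p-2}{2}}$ cancels it exactly. One small slip: in your $f^\perp$ display the intermediate term should read $\int_R^\infty\|f^{\perp}\|_{L^p(\Sigma_r)}^p\,\rd r$ without the stray $r^{n-1}$, since $\vol_X \sim \rd r\wedge \vol_{\Sigma_r}$ already absorbs that factor into $\|\cdot\|_{L^p(\Sigma_r)}$; your final bound is nonetheless correct.
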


\begin{proof}
	When $f$ is supported in $K$ the inequality follows from the standard embedding $L^p (K) \hookrightarrow L^2 (K)$ and so we need only prove the case when $f$ is supported in $X - K$. Using again that for $p \geqslant 2$ and over compact sets, the $L^p$-norm is stronger than the $L^2$-norm. In fact, over $B_1 \subseteq \mathbb{R}^k$ we have $\Vert f \Vert_{L^2 (B_1)} \lesssim \Vert f \Vert_{L^p (B_1)}$, and then, by scaling, we get
	\begin{equation}
		\Vert f \Vert_{L^2 (B_r)} \lesssim r^{k \frac{p-2}{2p}} \Vert f \Vert_{L^p (B_r)},
	\end{equation} 
	for all positive $r$. Applying this scaling behavior of the $L^p$ norms, we find that 
	\begin{equation}
		\Vert f \Vert_{L^2 (\Sigma_r)}^p \lesssim r^{(n-1) \frac{p-2}{2}} \Vert f \Vert_{L^p (\Sigma_r)}^p.
	\end{equation} 
	Inserting this into the definition of the $H^{(p,2)}_{0, \alpha}$-norm above gives an upper bound with respect to the $H^{p}_{0, \alpha}$-norm.
\end{proof}

\begin{lemma}\label{lem:TwoIntInequalities}
	Let $p \geqslant 2$ and $\alpha \in \mathbb{R}$, then there is an inequality
	\begin{equation}
		\Vert P_R f \Vert_{H^{(p,2)}_{0, \alpha+1} (X)} \lesssim \Vert f \Vert_{H^{(p,2)}_{0,\alpha} (X)},
	\end{equation} 
	for all $f \in H^p_{0,\alpha} (X)$. Moreover, combining this with \Cref{lem:IntermediateNormInequality} we find
	\begin{equation}
		\Vert P_R f \Vert_{H^{(p,2)}_{0, \alpha+1} (X)} \lesssim \Vert f \Vert_{H^{p}_{0,\alpha} (X)}.
	\end{equation}
\end{lemma}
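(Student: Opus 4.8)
The plan is to reduce the estimate to a model convolution inequality on the cylinder. Over the compact core $K$ the $H^{(p,2)}_{0,\alpha}$-norm reduces to the $L^2(K)$-norm, and the contribution of $(P_Rf)|_K$ is controlled exactly as in step (1) of the proof of \Cref{prop:p2}: the gluing construction of $P_R$ from \Cref{thm:Main_Fredholm_Theorem} realizes $P_Rf$ over $K$ through the local inverses $P_i$ and the conical parametrix composed with cut-offs, all bounded on bounded sets where $L^p\hookrightarrow L^2$, while the $p=2$ boundedness $P_R\colon H^2_{0,\alpha}(X)\to H^2_{0,\alpha+1}(X)$ from \Cref{thm:Main_Fredholm_Theorem} controls the tail. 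So it suffices to bound $P_0f$ on the conical end $X-K$, where $\cL$ is modeled on the conical operator $\cL_C=\D_C+q_C$ of \Cref{hypoth:asymp}.

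Next I would pass to the cylinder $[\log R,\infty)_t\times\Sigma$ with $t=\log r$ and $h=r^{-2}g_C=\rd t^2+g_\Sigma$. Tracking the factors $\vol_X\sim r^{n-1}\rd r\,\vol_{g_\Sigma}=e^{nt}\rd t\,\vol_{g_\Sigma}$ and $\|u\|_{L^2(\Sigma_r)}^2=r^{n-1}\|u\|_{L^2(\Sigma,g_\Sigma)}^2$, the $H^{(p,2)}_{0,\alpha}$-norm of the end becomes a weighted $L^p$-norm in $t$ of the $L^2(\Sigma)$-valued function $t\mapsto f(t,\cdot)$:
\[
	\|f\|_{H^{(p,2)}_{0,\alpha}(X-K)}^p\;\sim\;\int\limits_{\log R}^\infty\Bigl(e^{-\alpha p t}\,\|f^{||}(t)\|_{L^2(\Sigma)}^p+e^{nt}\,\|f^\perp(t)\|_{L^2(\Sigma)}^p\Bigr)\,\rd t,
\]
and the target norm $H^{(p,2)}_{0,\alpha+1}$ merely replaces $e^{-\alpha p t}$ by $e^{-(\alpha+1)pt}$ on the $\cS_E^{||}$-summand, reflecting that $P_0$ gains one power of $r$ there and is weight-neutral (in fact super-polynomially smoothing) on $\cS_E^\perp$. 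The model $\cL_C$ respects the splitting $\cS_E=\cS_E^{||}\oplus\cS_E^\perp$ because $q_C$ and $\nabla_\infty$ do (\Cref{hypoth:asymp}), and on each summand the weight-conjugated model is $t$-translation invariant, so its right inverse $P_0$ (available on the model by \Cref{thm:DiracOperator}) is an operator-valued convolution $P_0g(t)=\int_{\mathbb R}\mathsf k(t-s)\,g(s)\,\rd s$, where on $\cS_E^{||}$ the model is $\partial_t+\mathsf B$ with $\mathsf B$ a $t$-independent elliptic operator on $\Sigma$, so $\mathsf k(\tau)$ is $\pm e^{-\tau\mathsf B}$ composed with the spectral projections of $\mathsf B$ onto its positive and negative parts.

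The key point is then that $\alpha\notin\mathcal K(\D)$ forces $0$ into the resolvent set of $\mathsf B$, hence a spectral gap, so $\|\mathsf k(\tau)\|_{L^2(\Sigma)\to L^2(\Sigma)}\lesssim e^{-\varepsilon|\tau|}$ for some $\varepsilon>0$; on $\cS_E^\perp$, nondegeneracy of $q_C$ forces an even larger gap and the same bound holds with no weight shift. This is the familiar statement that the Lockhart--McOwen/Donaldson cylindrical parametrix has an exponentially localized operator-valued kernel, cf. \cite{Lockhart1985,Donaldson2002}; in the range $-1<\nu<0$ one first reaches the desired weight by a finite iteration, exactly as at the end of the proof of \Cref{lem:First_time_Hardy}. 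Granting this, absorbing the exponential weights $e^{-\alpha t}$ and $e^{-(\alpha+1)t}$ into $\mathsf k$ on the $\cS_E^{||}$-summand produces $\widetilde{\mathsf k}(\tau)$ that is still exponentially decaying in operator norm --- here one uses that $\varepsilon$ exceeds the one-unit weight discrepancy, again guaranteed by $\alpha,-n-\alpha\notin\mathcal K(\D)$ --- so $\int_{\mathbb R}\|\widetilde{\mathsf k}(\tau)\|_{L^2(\Sigma)\to L^2(\Sigma)}\,\rd\tau<\infty$, and Minkowski's integral inequality gives
\[
	\|\widetilde{\mathsf k}*g\|_{L^p_t(L^2(\Sigma))}\;\leqslant\;\Bigl(\int\limits_{\mathbb R}\|\widetilde{\mathsf k}(\tau)\|_{L^2(\Sigma)\to L^2(\Sigma)}\,\rd\tau\Bigr)\,\|g\|_{L^p_t(L^2(\Sigma))}.
\]
Translated back, this is $\|P_0f\|_{H^{(p,2)}_{0,\alpha+1}(X-K)}\lesssim\|f\|_{H^{(p,2)}_{0,\alpha}(X-K)}$; combined with the compact-core estimate it yields the first displayed inequality of the lemma, and the second follows immediately upon inserting \Cref{lem:IntermediateNormInequality}. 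The main obstacle --- indeed the only nontrivial point --- is the kernel bound $\|\mathsf k(\tau)\|_{\mathrm{op}}\lesssim e^{-\varepsilon|\tau|}$ with the correct unit weight shift, which rests on the indicial-root hypothesis and on treating the conically homogeneous Dirac summand $\cS_E^{||}$ and the nondegenerate Callias summand $\cS_E^\perp$ uniformly; everything else is translation-invariance bookkeeping.
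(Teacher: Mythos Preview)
Your approach is essentially the same as the paper's: both pass to cylindrical coordinates $t=\log r$, split into the $\cS_E^{||}$ and $\cS_E^\perp$ summands, and reduce the estimate to the Young/Minkowski inequality $L^1_t\ast L^p_t(L^2(\Sigma))\hookrightarrow L^p_t(L^2(\Sigma))$ applied to the weight-conjugated parametrix on the model. The paper phrases the kernel bound abstractly, asserting that the $p=2$ boundedness of $P_R$ ``can equivalently be formulated'' as the family $T\mapsto M_\alpha(T)$ (respectively $T\mapsto P_R(T)$) having integrable operator norm; you instead spell out why this holds---translation invariance of the weight-conjugated model makes $P_0$ a convolution with kernel $\mathsf k(\tau)$, and the spectral gap coming from $\alpha\notin\mathcal K(\D)$ (or from nondegeneracy of $q_C$ on $\cS_E^\perp$) forces $\|\mathsf k(\tau)\|_{\mathrm{op}}\lesssim e^{-\varepsilon|\tau|}$. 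Your version is arguably more transparent on this point, since $L^2$-boundedness alone does not literally imply an $L^1$ kernel; the exponential decay is the actual mechanism, and it is good that you name it.

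One small remark: the aside about the AC rate $\nu$ and the finite iteration ``exactly as at the end of the proof of \Cref{lem:First_time_Hardy}'' is out of place here---that device was used to bootstrap weighted $L^2$ integrability of $\nabla\Phi$ against the error terms of order $r^\nu$ in Hardy's inequality, whereas in the present lemma the only input is the spectral gap of the indicial operator, and no iteration in $\nu$ is needed.
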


\begin{proof}
	As asserted in the statement, it is enough to prove the first inequality. Recall that $P_R$ is a bounded operator for $p = 2$, i.e. from $H_{0,\alpha} (X)$ to $H_{1, \alpha+1} (X)$ and so we need only prove the inequality for $f$ supported along the conical end. As in the previous proof, it is convenient to separately prove inequalities for the components of $f$ in $\mathcal{S}_E^\Vert$ and $\mathcal{S}_E^\perp$.
	
	We start with the case when $f \in \mathcal{S}_E^\Vert$, then the $H^p_{k, \alpha}$-norm is the standard Lockhart--McOwen weighted $L^p_{k, \alpha}$-norm, and by changing coordinates to $t = \log(r)$, the statement that $P_R$ is bounded from $H_{0, \alpha} (X) = L^2_{0,\alpha} (X)$ into $H_{0, \alpha+1} (X) = L^2_{0, \alpha+1} (X)$ reads
	\begin{equation}
		\int\limits_{\log(R)}^\infty \Vert e^{-t} P_R f \Vert^2_{L^2 (\Sigma, g_\Sigma)} e^{-2 \alpha t} \rd t \lesssim \int\limits_{\log(R)}^\infty  \Vert f \Vert^2_{L^2 (\Sigma, g_\Sigma)} e^{-2 \alpha t} \rd t.
	\end{equation}
	Equivalently, this statement can be formulated as saying that for all $T > \log(R)$, the assignment $e^{-\alpha t}f \mapsto e^{-(\alpha +1)T} (P_R f)(T)$ gives rise to a bounded map 
	\begin{equation}
		M_\alpha(T): L^2 ((\log(R), \infty) , L^2 (\Sigma, g_\Sigma)) \rightarrow L^2 (\Sigma, g_\Sigma),
	\end{equation}
	whose operator norm integrable in the parameter $T \in (\log(R), \infty)$.\\
	Still in the cylindrical setting, the fact that $f \in H^{(p,2)}_{0, \alpha} (X)$ means that 
	\begin{equation}
		e^{-\alpha t -(n-1)\frac{p-2}{2p}t}f(t) \in L^p ((\log(R), \infty), L^2 (\Sigma, g_\Sigma)).
	\end{equation} 
	Hence, we can use the fact that the family $M_\alpha(\cdot)$ has integrable operator norm and the map $L^1 \times L^p \hookrightarrow L^p$ is bounded along $ (\log(R), \infty) \times L^2 (\Sigma, g_\Sigma)$ to prove that
	\begin{equation}
		\Vert e^{-(n-1)\frac{p-2}{2p}T}( M_\alpha e^{-\alpha t} f(t))(T) \Vert^p_{L^p (\Sigma)}  \leqslant \Vert M_\alpha(T) \Vert^p \Vert  e^{-\alpha t -(n-1)\frac{p-2}{2p}t}f(t) \Vert^p_{L^p (\Sigma)}.
	\end{equation}
	Since $\Vert M_\alpha(T) \Vert_{L^1} \lesssim 1$ is bounded, we can change coordinates back to $r$ and obtain
	\begin{equation}
		\Vert P_R f \Vert^2_{H^{(p,2)}_{0, \alpha+1} (X)} \leqslant C \Vert  f \Vert^2_{H^{(p,2)}_{0, \alpha} (X)}.
	\end{equation}
	This proves that $P_R: H^{(p,2)}_{0, \alpha} (X) \rightarrow H^{(p,2)}_{0, \alpha+1} (X)$ is bounded for those components in $\mathcal{S}_E^\Vert$.
	
	We now turn to the situation when $f \in \mathcal{S}_E^\perp$ in which case the $H^p_{k, \alpha}$ norm is the standard $L^p_k$ one. The statement that $P_R$ is bounded from and into $L^2 (X)$ can equivalently be stated in the cylindrical setting, as follows. Using the measure $e^{nt}\rd t$ on $(\log(R), \infty)$, and all $T> \log(R)$, the assignment $f \mapsto (P_Rf)(T)$ gives a bounded map
	\begin{equation}
		P_R (T): L^2 ((\log(R), \infty), L^2 (\Sigma, g_\Sigma)) \rightarrow L^2 (\Sigma, g_\Sigma),
	\end{equation}
	and this $T$-parametrized family has integrable operator norm. Then, given $f \in H^{(p,2)}_{0, \alpha}$ which in the cylindrical setting means 
	\begin{equation}
		e^{ -(n-1)\frac{p-2}{2p}t}f(t) \in L^p ((\log(R), \infty), L^2 (\Sigma, g_\Sigma)),
	\end{equation} 
	using the measure $e^{nt}\rd t$ on $(\log(R) , \infty)$. Proceeding as before and combining the map $L^1 \times L^p \hookrightarrow L^p$ with the fact that the family $P_R(T)$ has integrable operator norm gives
	\begin{equation}
		\Vert e^{-(n-1)\frac{p-2}{2p}T}( P_R f(t))(T) \Vert^p_{L^p_{e^{nt}\rd t}}  \leqslant \Vert P_R(T) \Vert^p_{L^1} \Vert  e^{-(n-1)\frac{p-2}{2p}t} e^{-\alpha t}f(t) \Vert^p_{L^p_{e^{nt}\rd t}},
	\end{equation}
	with $\Vert P_R(T) \Vert^p_{L^1} \lesssim 1$. Back to the conical world this statement gets translated into
	\begin{equation}
		\Vert P_R f \Vert^2_{H^{(p,2)}_{0, \alpha+1}} \lesssim \Vert  f \Vert^2_{H^{(p,2)}_{0, \alpha}},
	\end{equation}
	proving the statement for those components in $\mathcal{S}_E^\perp$.
	
	The complete statement is obtained by putting together these two separate cases.
\end{proof}

\begin{lemma}\label{cor:LptoLp}
	There is an inequality of the form
	\begin{equation}
		\Vert P_R f \Vert_{H^{p}_{0, \alpha+1} (X)} \lesssim \Vert f \Vert_{H^p_{0,\alpha} (X)},
	\end{equation}
	which holds for all $f \in H^p_{0,\alpha} (X)$.
\end{lemma}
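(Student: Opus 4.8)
The idea is to bootstrap the mixed--norm bound
\begin{equation}
\Vert P_R f\Vert_{H^{(p,2)}_{0,\alpha+1}(X)}\lesssim \Vert f\Vert_{H^p_{0,\alpha}(X)}
\end{equation}
of \Cref{lem:TwoIntInequalities} (combined with \Cref{lem:IntermediateNormInequality}) to a genuine $L^p$ bound by a Calder\'on--Zygmund type estimate, using that $u\coloneqq P_R f$ solves the elliptic equation $\cL u=f+S_R f$. Since, as observed in the proof of \Cref{prop:p2}, $S_R$ is a finite rank (hence bounded) operator on $H^p_{0,\alpha}(X)$, we have $\Vert\cL u\Vert_{H^p_{0,\alpha}(X)}\lesssim\Vert f\Vert_{H^p_{0,\alpha}(X)}$, so it suffices to prove the a priori estimate
\begin{equation}\label{eq:apriori_LptoLp}
\Vert u\Vert_{H^p_{1,\alpha+1}(X)}\lesssim\Vert\cL u\Vert_{H^p_{0,\alpha}(X)}+\Vert u\Vert_{H^{(p,2)}_{0,\alpha+1}(X)}
\end{equation}
for compactly supported $u$ (the general case following by density); the conclusion then follows from the trivial inclusion $\Vert u\Vert_{H^p_{0,\alpha+1}(X)}\leqslant\Vert u\Vert_{H^p_{1,\alpha+1}(X)}$.

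To prove \eqref{eq:apriori_LptoLp} I would argue locally. Over the compact core $K$ the $H^p_{k,\cdot}$-- and $H^{(p,2)}_{0,\cdot}$--norms reduce to $L^p_k$ and $L^2$, and the local elliptic estimates already used in part (1) of the proof of \Cref{prop:p2}, namely $\Vert u\Vert_{L^p_1(V_i)}^p\lesssim\Vert\cL u\Vert_{L^p(V_i'')}^p+\Vert u\Vert_{L^2(V_i'')}^p$, give \eqref{eq:apriori_LptoLp} there after summing over a finite cover. Along the end I would pass, as in the proof of \Cref{lem:TwoIntInequalities}, to the cylindrical picture $t=\log r$, $h=r^{-2}g_C$, in which $\cL$ differs from a translation invariant conical model $\cL_C$ (of the kind in \Cref{prop:ModelInequality}) by an operator whose coefficients tend to zero as $t\to\infty$, and cover the end by unit cylinders $C_k=\{t\in[k,k+1]\}$. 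On the enlarged cylinders $C_k^-=\{t\in[k-1,k+2]\}$ the interior $L^p$ elliptic estimate for $\cL$, together with the same Calder\'on--Zygmund plus Sobolev iteration step used in \Cref{prop:p2} to replace the $L^p$ lower order term by an $L^2$ one, gives --- uniformly in $k$, thanks to the AC hypothesis and \Cref{hypoth:asymp}, so that the rescaled geometries and operators converge to the fixed model ---
\begin{equation}
\Vert u\Vert_{L^p_1(C_k,h)}^p\lesssim\Vert\cL u\Vert_{L^p(C_k^-,h)}^p+\Vert u\Vert_{L^2(C_k^-,h)}^p .
\end{equation}

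H\"older's inequality on the bounded $t$--interval then yields $\Vert u\Vert_{L^2(C_k^-,h)}^p\lesssim\int_{k-1}^{k+2}\Vert u(t)\Vert_{L^2(\Sigma,g_\Sigma)}^p\,\rd t$, and for $p\geqslant2$ the splitting $\cS_E=\cS_E^{\Vert}\oplus\cS_E^{\perp}$ (on which the weights act, respectively, nontrivially and trivially) lets one dominate the right side by the contribution of $C_k^-$ to $\Vert u\Vert_{H^{(p,2)}_{0,\alpha+1}(X)}^p$ once the weight $r^{\alpha+1}$, constant up to bounded factors on each $C_k^-$, is inserted; the conformal Jacobian factor $r^{-(n-1)(p-2)/2}$ produced by returning from $h$ to the conical metric is precisely the one built into the definition of the intermediate norm. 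Summing over $k$ (the finite overlaps of the $C_k^-$ costing only a fixed constant), restoring the weights, and combining with the estimate over $K$ yields \eqref{eq:apriori_LptoLp}, and hence, via \Cref{lem:TwoIntInequalities}, the lemma. The step I expect to be the main obstacle is making the local elliptic estimates on the $C_k$ uniform in $k$ and arranging the bookkeeping of the weight, the conformal rescaling and the splitting $\cS_E=\cS_E^{\Vert}\oplus\cS_E^{\perp}$ so that the $L^2$ lower order term is controlled by $H^{(p,2)}_{0,\alpha+1}$ rather than by some strictly larger quantity.
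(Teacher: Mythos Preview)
Your overall strategy---prove an a priori estimate of the form $\Vert u\Vert_{H^p_{\bullet,\alpha+1}}\lesssim\Vert\cL u\Vert_{H^p_{0,\alpha}}+\Vert u\Vert_{H^{(p,2)}_{0,\alpha+1}}$ by summing local Calder\'on--Zygmund estimates over annuli, then substitute $u=P_Rf$ and invoke \Cref{lem:TwoIntInequalities}---is exactly the paper's approach. The gap is the step you yourself flag as the main obstacle: the claimed uniform-in-$k$ estimate
\[
\Vert u\Vert_{L^p_1(C_k,h)}^p\lesssim\Vert\cL u\Vert_{L^p(C_k^-,h)}^p+\Vert u\Vert_{L^2(C_k^-,h)}^p
\]
does not hold as stated, because $\cL=\D+q$ is \emph{not} translation-invariant (nor close to a translation-invariant model) in the cylindrical picture. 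Under the conformal change $g_C=r^2h$, the Dirac part $\D$ scales like $r^{-1}=e^{-t}$ relative to the potential $q$, so on $C_k$ the operator looks like $e^{-k}\D_h+q$; the ellipticity constant for the first-order part degenerates as $k\to\infty$, while the zeroth-order part dominates on $\cS_E^\perp$ and vanishes on $\cS_E^\Vert$. No single uniform CZ estimate for $\cL$ captures both behaviors.

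The paper resolves this precisely by separating the components and exploiting the inhomogeneity rather than fighting it. On $\cS_E^\Vert$ one has $\cL=\D$, which \emph{is} homogeneous of degree $-1$; the rescaled CZ inequality on $C_k$ then carries a factor $R^{kp}$ in front of $\Vert\cL g\Vert_{L^p}^p$, which is exactly absorbed by the weight $r^{-(\alpha+1)p}$ when one reassembles the $H^p_{0,\alpha+1}$ norm. On $\cS_E^\perp$ one uses instead the algebraic lower bound $|g|\lesssim|q(g)|\leqslant|\cL g|+|\D g|$ and treats the $\D g$ contribution perturbatively: after rescaling, it acquires a factor $R^{-pk}$ and can be absorbed back into the left-hand side upon summation. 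Your sketch invokes the splitting only at the very end, to match the $L^2$ lower-order term with the $H^{(p,2)}$ norm; it needs to enter already at the level of the local elliptic estimate, and the two halves require genuinely different arguments.
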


\begin{proof}
	Again, in this case it is enough to prove the statement for the case when $f$ is supported along the conical end. Recall the $H^p_{0, \alpha} (X)$ norm in \Cref{def:FunctionSpacesP}. It is convenient to have it rewritten as a sum of $L^p$-norms on conical annuli going off along end as follows
	\begin{align}
		\Vert g\Vert_{H^p_{0,\alpha+1} (U)}^p &= \int\limits_R^\infty \left( r^{-(\alpha +1) p -n} \Vert g^\Vert \Vert_{L^p (\Sigma_r)}^p +  \Vert g^\perp \Vert_{L^p (\Sigma_r)}^p \right) \rd r \\  \label{eq:SumNorm}
		&\cong \sum_{k \geqslant 1} \left( R^{-k((\alpha +1) p +n )} \Vert g^\Vert \Vert^p_{L^p (C_k)} + \Vert g^\perp \Vert^p_{L^p (C_k)} \right),
	\end{align}
	where $ \cong $ above denotes an equivalence of norms (which is straightforward to check) and $C_k = (R^k , R^{k+1}) \times \Sigma$ equipped with the conical metric $g_C = \rd r^2 + r^2 g_\Sigma = r^2 (\frac{\rd r^2}{r^2} + g_\Sigma)$. Notice that the conical annulus $C_{k+1}$ is obtained from $C_k$ by scaling with a factor of $R>1$. As usual, it is convenient to separate into components.
	
	First, one focuses on the components in $\mathcal{S}_E^\Vert$. Over the bounded annulus $C_1$, the standard Calderon--Zygmund inequalities give $ \Vert g \Vert_{L^p (C_1)}^p \lesssim \Vert \D g \Vert_{L^p (C_1')}^p + \Vert g \Vert_{L^2 (C_1')}^p  $, where $C_1' \supset C_1$ is a slightly larger annulus in the cone. This inequality is not scale invariant and scaling it gives
	\begin{equation}
		\Vert g \Vert_{L^p (C_k)}^p \lesssim R^{kp} \Vert \D g \Vert_{L^p (C_k')}^p + R^{-nk \frac{p-2}{2}}\Vert g \Vert_{L^2 (C_k')}^p,
	\end{equation}
	and in this component $\D = \cL$. Moreover, since $p>2$, $R^{-nk \frac{p-2}{2}} \leqslant R^{-(n-1)k \frac{p-2}{2}}$ and 
	\begin{equation}
		\Vert g \Vert_{L^2 (C_k)}^p \lesssim \int\limits_{R^k}^{R^{k+1}} \Vert g \Vert_{L^2 (\Sigma_r)}^p \rd r.
	\end{equation} 
	Then, the inequality above further gives  
	\begin{align}
		\Vert g \Vert_{L^p (C_k)}^p &\lesssim R^{kp} \Vert \cL g \Vert_{L^p (C_k')}^p + R^{-(n-1)k \frac{p-2}{2}}  \int\limits_{R^k-1}^{2R^{k+1}+1} \Vert g \Vert_{L^2 (\Sigma_r)}^p \rd r \\
		&\lesssim R^{kp} \Vert \cL g \Vert_{L^p (C_k')}^p +  \int\limits_{R^k-1}^{2R^{k+1}+1} r^{-(n-1) \frac{p-2}{2}} \Vert g \Vert_{L^2 (\Sigma_r)}^p \rd r.
	\end{align}
	Inserting this into \cref{eq:SumNorm} and recalling so that $g \in \cS_E^{||}$, gives 
	\begin{align}
		\Vert g \Vert_{H^p_{0,\alpha+1} (X)}^p &\lesssim \sum_{k \geqslant 1} R^{-k( (\alpha+1) p +n )} R^{pk} \Vert \cL g \Vert^p_{L^p (C_k)} + \int\limits_{R}^\infty r^{-(\alpha+1)p -n} \Vert g \Vert_{L^2 (\Sigma_r)}^p r^{-(n-1) \frac{p-2}{2}} \rd r \\
		&\lesssim \Vert \cL g \Vert_{H^p_{0,\alpha} (X)}^p + \Vert g \Vert_{H^{(p,2)}_{0,\alpha+1} (X)}^p .
	\end{align}
	Insert into this inequality $g = P_R f$, then by using $\cL P_R = I+ S_R$, the fact that $S_R$ is bounded from and into $H^p_{0, \alpha} (X)$ and \Cref{lem:TwoIntInequalities}, gives
	\begin{equation}
		\Vert P_R f \Vert_{H^p_{0,\alpha+1} (X)}^p \lesssim \Vert f + S_R f \Vert^p_{H^p_{0,\alpha+1} (X)} + \Vert P_R f \Vert^p_{H^{(p,2)}_{0,\alpha+1} (X)} \lesssim \Vert f \Vert^p_{H^{p}_{0,\alpha+1} (X)}.
	\end{equation}
	
	Next, we turn to those components in $\mathcal{S}_E^\perp$, recall that for these the map $q \in \Omega^0(\End(\mathcal{S}_E))$ is bounded below, i.e. $\vert q(g) \vert \geqslant c \vert g \vert$, for some $c>0$ independent of $g \in \mathcal{S}_E^\perp$. Then in any $C_k$, we have the inequalities
	\begin{equation}\label[ineq]{ineq:InterBoundedBelow}
		\Vert g \Vert^p_{L^p (C_k)} \lesssim \Vert q(g) \Vert^p_{L^p (C_k)} \lesssim \Vert \cL g \Vert^p_{L^p (C_k)} + \Vert \D g \Vert^p_{L^p (C_k)}.
	\end{equation}
	Moreover, as $\D:L^p_1(C_1) \rightarrow L^p (C_1)$ is bounded we find, after rescaling, 
	\begin{equation}
		\Vert \D g \Vert^p_{L^p (C_k)} \leqslant c_1 (\Vert \nabla g \Vert^p_{L^p (C_k')} + R^{-pk}\Vert g \Vert^p_{L^p (C_k')})
	\end{equation} 
	and given that $\D$ and $\cL$ are elliptic, the (rescaled) standard Calderon--Zygmund inequality gives
	\begin{equation}
		\Vert \nabla g \Vert^p_{L^p (C_k')} \lesssim \Vert  \D g \Vert^p_{L^p (C_k'')} + R^{-nk \frac{p-2}{2}} \Vert  g \Vert^p_{L^2 (C_k'')},
	\end{equation} 
	where $C_k'' \subseteq C_k' \supset C_k$ are nested annuli. Combining these yield
	\begin{equation}
		\Vert \D g \Vert^p_{L^p (C_k)} \lesssim \Vert \cL g \Vert^p_{L^p (C_k'')} + R^{-nk \frac{p-2}{2}} \Vert  g \Vert^p_{L^2 (C_k'')} + R^{-pk}\Vert g \Vert^p_{L^p (C_k')},
	\end{equation}
	and inserting this back into \cref{ineq:InterBoundedBelow} gives
	\begin{equation}\label[ineq]{ineq:InterBoundedBelow1}
		\Vert g \Vert^p_{L^p (C_k)} \lesssim \| \cL g \|^p_{L^p (C_k'')} +  R^{-nk \frac{p-2}{2}} \Vert  g \Vert^p_{L^2 (C_k'')} + R^{-pk}\Vert g \Vert^p_{L^p (C_k')} .
	\end{equation}
	Moreover since $p>2$ also in this case $R^{-nk \frac{p-2}{2}} \leqslant R^{-(n-1)k \frac{p-2}{2}} $ and one can dominate the second term in the right above by 
	\begin{equation}
		\int\limits_{R^k-1}^{R^{k+1}+1}  \Vert g \Vert_{L^2 (\Sigma_r)}^p r^{-(n-1) \frac{p-2}{2}} \rd r,
	\end{equation} 
	which is for components in $\mathcal{S}_E^\perp$ the $H^{(p,2)}_{0, \alpha+1}$ norm. Then, inserting \cref{ineq:InterBoundedBelow1} into the norm in \eqref{eq:SumNorm} for $g \in \mathcal{S}_E^\perp$ gives
	\begin{align}
		\Vert g \Vert^p_{H^p_{0, \alpha+1}} &\lesssim \sum_{k \geqslant 1} \Vert \cL g \Vert^p_{L^p (C_k'')} + \int\limits_R^\infty  \Vert g \Vert_{L^2 (\Sigma_r)}^p r^{-(n-1) \frac{p-2}{2}} \rd r + R^{-p} \int\limits_R^\infty \| g\|_{L^p (\Sigma_r)}^p \rd r \\
		&\lesssim \Vert \cL g \Vert^p_{H^{p}_{0, \alpha} (X)} + \Vert g \Vert^p_{H^{(p,2)}_{0, \alpha +1} (X)} + R^{-p} \|g\|_{H^p_{0,\alpha+1} (X)}^p ,
	\end{align}
	and for $R \gg 1$ we can reabsorb the last term into the first yielding
	\begin{equation}
		\Vert g \Vert^p_{H^p_{0, \alpha+1} (X)} \lesssim \Vert \cL g \Vert^p_{H^{p}_{0, \alpha} (X)} + \Vert g \Vert^p_{H^{(p,2)}_{0, \alpha +1} (X)},
	\end{equation}
	and notice that the weights $\alpha$ here are irrelevant but are introduced in order to use the appropriate notation. Following a similar strategy as in the previous case let $g = P_Rf$ in the inequality above. Then, using $\cL P_R = \Id + S_R$, that $S_R$ is bounded on $H^p_{0, \alpha} (X)$ and \Cref{lem:TwoIntInequalities} gives $\Vert P_R f \Vert^p_{H^p_{0, \alpha+1} (X)} \leqslant \Vert f \Vert^p_{H^p_{0, \alpha} (X)}$. The general result follows immediately from combining the inequalities in the two components.
\end{proof}

\begin{remark}
	Recall that restricted to the components in $\mathcal{S}_E^\Vert$, the operator $\cL$ coincides with the Dirac operator $\D$. Furthermore, along this component $H^p_{k, \alpha} (X) = L^p_{k, \alpha} (X)$ are the Lockhart--McOwen spaces. The results obtained in this subsection, when restricted to these components, also follow from the standard Lockhart--McOwen theory which could have been used instead; cf. \cite{Lockhart1985}. In fact, that was done when using \cref{ineq:Inter1} in the proof of \Cref{lem:EquivalentNorm}. However, such an inequality follows from the standard Calderon--Zygmund inequality: 
	\begin{equation}
		\Vert \nabla g \Vert^p_{L^p (C_1)} \lesssim \Vert L g \Vert^p_{L^p (C_1')} +  \Vert  g \Vert^p_{L^p (C_1')},
	\end{equation} 
	after rescaling the annulus $C_1$ to any other annulus, $C_k$, in a similar fashion to what we did in \Cref{cor:LptoLp}.
\end{remark}

\bigskip

\section{Weighted split Sobolev spaces: Sobolev Embeddings and Multiplication Maps}\label{sec:Sobolev_2}

Denote by $L^p_{k, \alpha} (X)$ the weighted spaces introduced in \Cref{def:Lockhart}. In order to develop a moduli theory for $\rG_2$-monopoles, we need to understand these spaces, in particular, to be able to handle the nonlinearities of the equations. The most relevant of these properties is the one stated in \Cref{prop:Multiplication} below. Its proof requires a combination of \Cref{lema1,lema3,lema4}.

\begin{lemma}\label{lema1}(Weighted H\"older Inequality)
	Let $\beta, \gamma \in \mathbb{R}$ and $\frac{1}{s_1} + \frac{1}{s_2} = \frac{1}{q}$, then the multiplication property $L^{s_1}_{0, \beta} (X) \times L^{s_2}_{0, \gamma} (X) \hookrightarrow L^q_{0, \gamma + \beta} (X)$ holds. In particular, if $\gamma \leqslant 0$, then $L^{s_1}_{0, \beta} (X) \times L^{s_2}_{0, \gamma} (X) \hookrightarrow L^q_{0,  \beta} (X)$.
\end{lemma}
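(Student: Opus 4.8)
The statement to prove is the weighted Hölder inequality: given $\beta, \gamma \in \mathbb{R}$ and $\tfrac{1}{s_1} + \tfrac{1}{s_2} = \tfrac{1}{q}$, the pointwise product induces a bounded multiplication $L^{s_1}_{0,\beta}(X) \times L^{s_2}_{0,\gamma}(X) \to L^q_{0,\gamma+\beta}(X)$, with the refinement that when $\gamma \leqslant 0$ one may replace the target weight $\gamma+\beta$ by $\beta$.

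My plan is to reduce this directly to the classical (unweighted) Hölder inequality by factoring out the radial weight. Recall from the excerpt that $\|f\|_{L^p_{0,\alpha}(X)}^p = \int_X |r^{-\alpha} f|^p r^{-n}\,\vol_X$, so the weighted norm is just the $L^p$ norm of $r^{-\alpha}f$ taken against the conformally rescaled measure $r^{-n}\vol_X$. Thus I would write, for $f \in L^{s_1}_{0,\beta}$ and $g \in L^{s_2}_{0,\gamma}$,
\begin{equation}
	r^{-(\gamma+\beta)}(fg) \cdot r^{-n/q} = \left(r^{-\beta} f \cdot r^{-n/s_1}\right)\left(r^{-\gamma} g \cdot r^{-n/s_2}\right),
\end{equation}
using $\tfrac{1}{s_1}+\tfrac{1}{s_2}=\tfrac1q$ so that the powers of $r$ coming from the measure split correctly. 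Taking $L^q$ norms with respect to $\vol_X$ on both sides and applying the ordinary Hölder inequality with exponents $s_1/q$ and $s_2/q$ (whose reciprocals sum to $1$) to the product on the right gives
\begin{equation}
	\|fg\|_{L^q_{0,\gamma+\beta}(X)} \leqslant \|f\|_{L^{s_1}_{0,\beta}(X)}\,\|g\|_{L^{s_2}_{0,\gamma}(X)},
\end{equation}
which is the first assertion. For the refinement: when $\gamma \leqslant 0$, along the end $r \geqslant 1$ we have $r^{-\gamma} \geqslant 1$, hence $r^{-(\gamma+\beta)} = r^{-\gamma}\cdot r^{-\beta} \geqslant r^{-\beta}$ there, so $|r^{-\beta}(fg)| \leqslant |r^{-(\gamma+\beta)}(fg)|$ on $X - K$; on the compact core $K$ the weight is harmless (all weights give equivalent norms on a compact set, by continuity and positivity of $r$), so $\|fg\|_{L^q_{0,\beta}(X)} \lesssim \|fg\|_{L^q_{0,\gamma+\beta}(X)}$, and combining with the first bound finishes the proof.

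I anticipate no genuine obstacle here; the only points requiring a little care are bookkeeping the exponents of $r$ so that the measure factor $r^{-n}$ distributes as $r^{-n/s_1}\cdot r^{-n/s_2}$ (which is exactly why the hypothesis $\tfrac1{s_1}+\tfrac1{s_2}=\tfrac1q$ is needed), and being slightly careful in the refinement about whether the radius function $r$ is genuinely $\geqslant 1$ everywhere or only along the end — one handles the compact region separately by the equivalence of weighted norms over compacta. If one prefers to avoid that case distinction, one can instead just note that a smooth positive $r$ on $X$ which grows at infinity is bounded below by a positive constant, rescale $r$ so that $r \geqslant 1$ globally, and then $r^{-\gamma} \geqslant 1$ everywhere when $\gamma \leqslant 0$, giving the inequality directly on all of $X$.
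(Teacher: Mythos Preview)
Your proof is correct and follows essentially the same approach as the paper: both rewrite $\|fg\|_{L^q_{0,\gamma+\beta}}$ as an unweighted $L^q$-norm of $r^{-n/q-\gamma-\beta}fg$, factor this as $(r^{-n/s_1-\beta}f)(r^{-n/s_2-\gamma}g)$ using $\tfrac{1}{s_1}+\tfrac{1}{s_2}=\tfrac{1}{q}$, and apply the ordinary H\"older inequality. For the refinement the paper simply notes the inclusion $L^q_{0,\gamma+\beta}(X)\hookrightarrow L^q_{0,\beta}(X)$ when $\gamma\leqslant 0$; your slightly more explicit treatment of the compact core is fine but not strictly necessary.
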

\begin{proof}
	Let $f \in L^{s_1}_{0,\beta} (X), g \in L^{s_2}_{0,\gamma} (X)$, then using the definition of the weighted norms, rearranging the exponents and the usual H\"older inequality shows
	\begin{align}
		\Vert fg \Vert_{L^q_{0, \gamma + \beta} (X)} &= \Vert r^{-\frac{n}{q}-\gamma -\beta} f g  \Vert_{L^q (X)} \\
		&= \| ( r^{-\frac{n}{s_1}-\beta} f ) \ ( r^{-\frac{n}{s_2}-\gamma} g ) |_{L^q (X)} \\
		&\leqslant \| r^{-\frac{n}{s_1}-\beta} f  ||_{L^{s_1} (X)} \ \| r^{-\frac{n}{s_2}-\gamma} g \|_{L^q (X)} \\
		&= \Vert f \Vert_{L^{s_1}_{0, \beta} (X)} \Vert g \Vert_{L^{s_2}_{0, \gamma} (X)},
	\end{align}
	thus showing the first statement. In particular, when $\gamma \leqslant 0$, then $L^q_{0, \gamma + \beta} (X) \hookrightarrow L^q_{0, \beta} (X)$.
\end{proof}

\smallskip

\begin{lemma}\label{lema4}
	Let $p_2 > p_1$ and $\gamma_2 > \gamma_1$, then for all $s \in [p_1,p_2]$ and $\gamma \geqslant \max_{i = 1,2} \lbrace \frac{n}{p_i} - \frac{n}{s} + \gamma_i \rbrace$, there is an inclusion $L^{p_1}_{0, \gamma_1} (X) \cap L^{p_2}_{0, \gamma_2} (X) \hookrightarrow L^{s}_{0, \gamma} (X)$.
\end{lemma}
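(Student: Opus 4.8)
The statement is a weighted interpolation inequality of the Littlewood--Paley type, and the natural approach is to reduce it to an ordinary H\"older interpolation on each dyadic annulus and then sum. First I would recall that the weighted $L^s_{0,\gamma}$-norm decouples over the dyadic decomposition $X - K = \bigcup_{k \geqslant 0} A_k$, where $A_k = \{ x : 2^k \leqslant r(x) \leqslant 2^{k+1}\}$ (together with a fixed compact piece inside $K$, where all the spaces coincide up to constants since there the weights are bounded above and below). On each annulus $A_k$ one has $r \sim 2^k$, so that for any exponent $q$ and weight $\delta$,
\begin{equation}
	\| f \|_{L^q_{0,\delta}(A_k)}^q \sim 2^{-k(\delta q + n)} \int_{A_k} |f|^q \, \vol_X,
\end{equation}
and hence $\| f \|_{L^q_{0,\delta}(A_k)} \sim 2^{-k\delta} \big( 2^{-kn} \int_{A_k} |f|^q \vol_X \big)^{1/q}$. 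Writing $\mu_k$ for the (volume-normalized) probability measure $2^{-kn} \vol_X|_{A_k}$ — legitimate because $\Vol(A_k) \sim 2^{kn}$ by the maximal volume growth of an AC $\rG_2$-manifold, cf. \Cref{rem:AC_vol_growth} — the key point is that on a \emph{fixed}-total-mass measure space the $L^{p_1}(\mu_k)$ and $L^{p_2}(\mu_k)$ norms interpolate: for $s \in [p_1,p_2]$ with $\tfrac1s = \tfrac{\theta}{p_1} + \tfrac{1-\theta}{p_2}$, H\"older gives $\| g \|_{L^s(\mu_k)} \leqslant \| g \|_{L^{p_1}(\mu_k)}^\theta \| g \|_{L^{p_2}(\mu_k)}^{1-\theta}$.

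Translating this back through the weights, on the $k$-th annulus one obtains
\begin{equation}
	\| f \|_{L^s_{0,\gamma_s}(A_k)} \lesssim 2^{-k\gamma_s} \, 2^{k\theta \gamma_1} \, 2^{k(1-\theta)\gamma_2} \, \| f \|_{L^{p_1}_{0,\gamma_1}(A_k)}^{\theta} \, \| f \|_{L^{p_2}_{0,\gamma_2}(A_k)}^{1-\theta},
\end{equation}
where $\gamma_s := \theta \gamma_1 + (1-\theta)\gamma_2$ is the \emph{interpolated} weight. So for this particular choice of weight the inequality on each annulus carries no extra power of $2^k$, and summing over $k$ via the (discrete) H\"older inequality $\sum_k a_k^\theta b_k^{1-\theta} \leqslant (\sum_k a_k)^\theta (\sum_k b_k)^{1-\theta}$ yields
\begin{equation}
	\| f \|_{L^s_{0,\gamma_s}(X-K)} \lesssim \| f \|_{L^{p_1}_{0,\gamma_1}(X-K)}^{\theta} \, \| f \|_{L^{p_2}_{0,\gamma_2}(X-K)}^{1-\theta} \lesssim \| f \|_{L^{p_1}_{0,\gamma_1}(X)} + \| f \|_{L^{p_2}_{0,\gamma_2}(X)}.
\end{equation}
Together with the trivial estimate on the compact core $K$ (where $L^{p_2} \hookrightarrow L^s \hookrightarrow L^{p_1}$ in the unweighted sense, and weights are irrelevant), this proves the claim for the specific weight $\gamma = \gamma_s$. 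To finish, I would observe that $\gamma_s = \theta\gamma_1 + (1-\theta)\gamma_2$ with $\theta = \tfrac{p_1(p_2 - s)}{s(p_2-p_1)}$ equals precisely $\max_i\{ \tfrac{n}{p_i} - \tfrac{n}{s} + \gamma_i \}$ when $s, \gamma_i$ are in the stated range — actually one checks $\tfrac{n}{p_1} - \tfrac ns + \gamma_1 \leqslant \gamma_s$ and $\tfrac{n}{p_2} - \tfrac ns + \gamma_2 \leqslant \gamma_s$ hold simultaneously — and then for any \emph{larger} weight $\gamma \geqslant \gamma_s$ one has the monotonicity $L^s_{0,\gamma_s}(X-K) \hookrightarrow L^s_{0,\gamma}(X-K)$ along the end (since $r^{-\gamma} \leqslant r^{-\gamma_s}$ for $r \geqslant 1$), which upgrades the inequality to all admissible $\gamma$.

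\textbf{Main obstacle.} The genuinely nontrivial bookkeeping is verifying that the two constraints $\tfrac{n}{p_i} - \tfrac ns + \gamma_i \leqslant \theta\gamma_1 + (1-\theta)\gamma_2$ for $i = 1,2$ are in fact both automatically satisfied under the hypotheses $p_1 < p_2$, $\gamma_1 < \gamma_2$, $s \in [p_1,p_2]$ — i.e. that the threshold $\max_i\{\tfrac n{p_i} - \tfrac ns + \gamma_i\}$ is exactly the interpolated weight $\gamma_s$ and not something strictly larger — so that the monotonicity step actually reaches the stated range of $\gamma$. This is an elementary but slightly fiddly convexity computation with the exponents (it amounts to the fact that $\tfrac np - \tfrac ns$ and $\gamma$ are affine in the interpolation parameter, so their maximum over the endpoints is controlled by the value at the interior point $s$). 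Everything else — the dyadic decomposition, the finite-measure H\"older interpolation, and the discrete H\"older summation — is routine once the AC volume growth $\Vol(A_k) \sim 2^{kn}$ is invoked.
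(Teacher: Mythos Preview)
Your dyadic approach is sound and in fact yields the sharper threshold $\gamma_s = \theta\gamma_1 + (1-\theta)\gamma_2$ rather than $\max_i\{n/p_i - n/s + \gamma_i\}$, but the step you flag as the ``main obstacle'' is precisely where the bookkeeping goes wrong: the claim that both inequalities $\tfrac{n}{p_i} - \tfrac{n}{s} + \gamma_i \leqslant \gamma_s$ hold simultaneously is false. Using $\tfrac{1}{p_1} - \tfrac{1}{s} = (1-\theta)(\tfrac{1}{p_1} - \tfrac{1}{p_2})$ and $\tfrac{1}{p_2} - \tfrac{1}{s} = -\theta(\tfrac{1}{p_1} - \tfrac{1}{p_2})$, the $i=1$ inequality is equivalent to $\gamma_2 - \gamma_1 \geqslant n(\tfrac{1}{p_1} - \tfrac{1}{p_2})$ while the $i=2$ inequality is equivalent to the reverse; so generically exactly one holds, and hence $\gamma_s \leqslant \max_i\{n/p_i - n/s + \gamma_i\}$, not $\geqslant$. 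Fortunately this is the direction that saves you: your proven range $\gamma \geqslant \gamma_s$ \emph{contains} the lemma's range $\gamma \geqslant \max$, so the argument survives --- you simply had the comparison backwards.

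For comparison, the paper's proof bypasses the dyadic decomposition entirely. It applies the unweighted interpolation $\|g\|_{L^s(X)} \lesssim \|g\|_{L^{p_1}(X)} + \|g\|_{L^{p_2}(X)}$ directly to $g = r^{-\gamma - n/s} f$, recognizes each term on the right as $\|f\|_{L^{p_i}_{0,\,\gamma + n/s - n/p_i}}$, and then uses monotonicity of the weighted norm in the weight (valid since $r\geqslant 1$ along the end) together with the hypothesis $\gamma + n/s - n/p_i \geqslant \gamma_i$. This is a three-line argument and needs no volume normalization --- note that the Lyapunov inequality holds for an arbitrary measure, so your appeal to $\Vol(A_k) \sim 2^{kn}$ is unnecessary. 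Your route is longer but does buy a genuinely better threshold $\gamma_s$.
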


\begin{proof}
	First one notices that since $p_1 < s < p_2$, then for all $g \in L^{p_1} (X) \cap L^{p_2} (X)$, it holds that $\Vert g \Vert_{L^s (X)} \lesssim \Vert g \Vert_{L^{p_1} (X)} + \Vert g \Vert_{L^{p_2} (X)}$. Let $f \in L^s_{0, \gamma} (X)$, then
	\begin{align}
		\Vert f \Vert_{L^s_{0, \gamma} (X)} &= \Vert r^{-\gamma-\frac{n}{s}} f \Vert_{L^s (X)} \\
		&\lesssim \Vert r^{-\gamma-\frac{n}{s}} f  \Vert_{L^{p_1} (X)} + \Vert r^{-\gamma-\frac{n}{s}} f \Vert_{L^{p_2} (X)} \\
		&= \Vert f  \Vert_{L^{p_1}_{0,\gamma + \frac{n}{s}-\frac{n}{p_1}} (X)} + \Vert f  \Vert_{L^{p_2}_{0,\gamma + \frac{n}{s}-\frac{n}{p_2}} (X)} .
	\end{align}
	Since $\gamma \geqslant \max_i \lbrace \frac{n}{p_i} - \frac{n}{s} + \gamma_i \rbrace$, one has $\gamma + \frac{n}{s}-\frac{n}{p_i} \geqslant \gamma_i$ for $i = 1,2$ and 
	\begin{equation}
		\Vert f \Vert_{L^s_{0, \gamma} (X)} \lesssim \Vert f  \Vert_{L^{p_1}_{0,\gamma_1} (X)} + \Vert f  \Vert_{L^{p_2}_{0,\gamma_2} (X)}.
	\end{equation}
\end{proof}

\begin{lemma}\label{lema3}
	Let $\beta \in \mathbb{R}$, $p \in \left[ \frac{n}{2}, n \right)$ and $k \in \mathbb{N}_+$. Then, the following hold
	\begin{itemize}
		\item $L^p_k (X) = \bigcap\limits_{i = 0}^k L^p_{i, -\frac{n}{p}+i} (X)$
		\item $L^p_{k+1,\beta} (X) \hookrightarrow L^q_{k , \beta} (X)$, for $q \leqslant \frac{np}{n-p}$.
		\item $ L^p_{k+1, \loc} (X) \hookrightarrow C^{k-1}_{\loc} (X)$ and $ L^p_{k+1,\beta} (X) \hookrightarrow C^{k-1}_{\beta} (X)$,
	\end{itemize}
\end{lemma}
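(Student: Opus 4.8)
The plan is to treat the three assertions separately. The first is a purely formal manipulation of the definition of the weighted norms; the second and third are the weighted analogues, on an asymptotically conical manifold, of the ordinary Sobolev and Morrey embeddings on $\mathbb{R}^n$, and I would reduce them to their flat counterparts by a dyadic decomposition of the end into annuli followed by rescaling.

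For the identity $L^p_k(X)=\bigcap_{i=0}^k L^p_{i,-n/p+i}(X)$, I would iterate the recursion $\|f\|_{L^p_{i,\alpha}}=\|\nabla f\|_{L^p_{i-1,\alpha-1}(X)}+\|f\|_{L^p_{0,\alpha}(X)}$ from \Cref{def:Lockhart} to obtain the equivalence $\|f\|_{L^p_{i,\alpha}}^p\cong\sum_{j=0}^i\int_X\big|r^{\,j-\alpha}\nabla^j f\big|^p\,r^{-n}\vol_X$. Taking $\alpha=-\tfrac np+i$, the $j=i$ summand is exactly $\|\nabla^i f\|_{L^p(X)}^p$, while for $j<i$ the summand equals $\int_X|r^{-(i-j)}\nabla^j f|^p\vol_X$, and since $r$ is a positive smooth function that tends to infinity along the end, $r^{-(i-j)}$ is bounded on $X$, so this term is $\lesssim\|\nabla^j f\|_{L^p(X)}^p$. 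This gives $L^p_k(X)\hookrightarrow L^p_{i,-n/p+i}(X)$ for every $i\le k$; conversely, $\|\nabla^i f\|_{L^p(X)}$ appears with no weight at all as a summand of $\|f\|_{L^p_{i,-n/p+i}}$, so the intersection over $i=0,\dots,k$ is contained in $L^p_k(X)$, and the two norms are equivalent.

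For the second and third bullets I would fix $R>1$ and, outside the compact core $K$, cover the end by the annuli $C_j=\{x:R^j\le r(x)\le R^{j+1}\}$ together with slightly larger annuli $C_j'\supset C_j$, and let $\sigma_j$ realize $C_j$ as the $R^j$-dilation of $C_0$ in the cone coordinate. The essential book-keeping is that the density $r^{-n}\vol_{g_C}$ is invariant under this dilation, and that the AC condition of \Cref{def:AC} (rate $\nu<0$) makes the rescaled metrics $R^{-2j}\sigma_j^*g$ converge in $C^\infty(C_0)$ to a fixed cylindrical metric $g_{\mathrm{cyl}}$ on $C_0$, with all bounds uniform in $j$. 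Tracking the dilation weights through \Cref{def:Lockhart} and through the corresponding weighted H\"older norm $C^{k-1}_\beta$, one finds, with constants independent of $j$, that $\|f\|_{L^p_{k+1,\beta}(C_j)}\cong R^{-j\beta}\|f\circ\sigma_j\|_{L^p_{k+1}(C_0)}$, $\|f\|_{L^q_{k,\beta}(C_j)}\cong R^{-j\beta}\|f\circ\sigma_j\|_{L^q_k(C_0)}$, and $\|f\|_{C^{k-1}_\beta(C_j)}\cong R^{-j\beta}\|f\circ\sigma_j\|_{C^{k-1}(C_0)}$. On the fixed domain $C_0$ the unweighted Sobolev embedding $L^p_{k+1}(C_0)\hookrightarrow L^q_k(C_0)$ for $q\le\tfrac{np}{n-p}$ and Morrey's embedding $L^p_{k+1}(C_0)\hookrightarrow C^{k-1}(C_0)$ (the latter because $p\ge\tfrac n2$ forces $(k+1)-\tfrac np\ge k-1$) hold with $j$-independent constants; transporting these back to $C_j$ and cancelling the common factor $R^{-j\beta}$ yields $\|f\|_{L^q_{k,\beta}(C_j)}\lesssim\|f\|_{L^p_{k+1,\beta}(C_j')}$ and $\|f\|_{C^{k-1}_\beta(C_j)}\lesssim\|f\|_{L^p_{k+1,\beta}(C_j')}$ uniformly in $j$. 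Summing the $q$-th powers over $j$ and using $\ell^p\hookrightarrow\ell^q$ (valid since $q\ge p$) together with the bounded overlap of the $C_j'$ gives $\|f\|_{L^q_{k,\beta}(X)}\lesssim\|f\|_{L^p_{k+1,\beta}(X)}$; taking the supremum over $j$ gives $\|f\|_{C^{k-1}_\beta(X)}\lesssim\|f\|_{L^p_{k+1,\beta}(X)}$. The contribution of the compact core $K$, as well as the purely local statement $L^p_{k+1,\loc}(X)\hookrightarrow C^{k-1}_{\loc}(X)$, follows at once from the ordinary Sobolev and Morrey embeddings on coordinate balls.

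The only step that requires genuine care---and the one I would expect to be the main obstacle to a fully rigorous write-up---is the uniformity in $j$ of the constants in the flat Sobolev and Morrey inequalities on the rescaled annuli $C_0$; this is precisely what the $C^\infty$-convergence $R^{-2j}\sigma_j^*g\to g_{\mathrm{cyl}}$, i.e. the decay rate $\nu<0$ in the AC hypothesis, provides, but it must be invoked explicitly. A secondary subtlety is the borderline exponent $p=\tfrac n2$, at which Morrey's embedding is critical; for $p>\tfrac n2$ one in fact obtains the stronger H\"older control $C^{k-1,\gamma}_\beta$ directly, which is the regime relevant for the applications in the sequel.
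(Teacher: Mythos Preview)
Your argument is correct. For the first bullet you do exactly what the paper does---unwind the recursive definition of the Lockhart--McOwen norm and observe that at weight $\alpha=-\tfrac{n}{p}+i$ the top-order term is the unweighted $L^p$-norm of $\nabla^i f$ while the lower-order terms carry a bounded factor $r^{-(i-j)}$; the paper writes this out only for $k=1$ and leaves the rest to induction.

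For the second and third bullets the two treatments diverge in style but not in substance. The paper simply invokes the weighted Sobolev embedding theorem of Marshall \cite{Marshall02}*{Theorem~4.17}, checking only the numerical conditions $1-\tfrac{n}{p}>-\tfrac{n}{q}$ and $k+1-\tfrac{n}{p}\geqslant k-1$. Your dyadic-annulus-and-rescale argument is precisely the standard mechanism behind that theorem: one transports the problem to a fixed reference annulus via the conical dilation, applies the ordinary Sobolev and Morrey embeddings there, and the AC hypothesis (rate $\nu<0$) guarantees that the rescaled metrics converge and hence that the constants are uniform in the annulus index. So what you have written is a self-contained proof of the cited result in the case at hand, whereas the paper defers to the literature; both are acceptable, and your version has the advantage of making the role of the asymptotically conical geometry explicit.
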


\begin{proof}
	The first bullet is a consequence of the definition of the $L^p_{k,\beta}$-norms in \eqref{eq:Lockhart}. The case $k = 0$ amounts to $\Vert f \Vert_{L^p_{0,-n/p} (X)} = \Vert r^{-n/p+n/p} f \Vert_{L^p (X)}$ and the general case follows from an induction argument. Here we simply do the case $k = 1$ with the remaining general following from a similar case. Write for the norm in the right hand side
	\begin{align}
		\Vert f \Vert^p_{L^p_{0,-n/p} (X)} + \Vert f \Vert^p_{L^p_{1,-n/p+1} (X)} &= \Vert f \Vert^{p}_{L^p} + \Vert r^{-1} f \Vert^{p}_{L^p} + \Vert \nabla f \Vert^{p}_{L^p} \\
		&= \Vert f \Vert^p_{L^p_1 (X)} + \Vert r^{-1} f \Vert^p_{L^p (X)} \\
		&\lesssim 2 \Vert f \Vert^p_{L^p_1 (X)},
	\end{align}
	while clearly 
	\begin{equation}
		\Vert f \Vert^p_{L^p_{0,-n/p} (X)} + \Vert f \Vert^p_{L^p_{1,-n/p+1} (X)} \gtrsim \Vert f \Vert^p_{L^p_1 (X)}.
	\end{equation} 
	The remaining two bullets are particular instances of the standard weighted Sobolev embedding theorems (Theorem $4.17$ in \cite{Marshall02}). To apply them one just needs to check that $1 - \frac{n}{p} > - \frac{n}{q}$ and $k+1 - \frac{n}{p} \geqslant k-1$.
\end{proof}

\begin{lemma}\label{lem:ThereIsALimit}
	Let $p \geqslant n/2$, $\beta \leqslant -1 $ and $\xi \in \Omega^0 (X, V)$ with $\nabla \xi \in L^p_{1, \beta} (X)$. Then $\lim_{r \rightarrow \infty }\xi(r)$ exists and is equal to a $\nabla_\infty$-parallel continuous section $\xi_\infty$ of $\cS_{E_\infty}$.
\end{lemma}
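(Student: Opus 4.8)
The plan is to combine the weighted Sobolev embedding $L^p_{k+1,\beta}(X)\hookrightarrow C^{k-1}_\beta(X)$ from \Cref{lema3} with the cylindrical change of coordinates $t=\log r$ that has already been used repeatedly along the end. First I would note that $\nabla\xi\in L^p_{1,\beta}(X)$ with $\beta\leqslant -1$ and $p\geqslant n/2$ implies, by \Cref{lema3}, that $\nabla\xi\in C^0_\beta(X)$; in particular $|\nabla\xi|=O(r^\beta)$ with $\beta\leqslant -1$, so that $\int_1^\infty|\nabla_{\partial_r}\xi|\,\rd r<\infty$. Writing $\xi$ in a radial gauge along the end (using that $\nabla$ is asymptotic to a connection $\nabla_\infty$ pulled back from $\Sigma$, as in \Cref{hypoth:asymp}), this radial integrability shows that for each fixed point of $\Sigma$ the curve $r\mapsto\xi(r)$ is Cauchy as $r\to\infty$, hence converges; the convergence is moreover uniform on $\Sigma$ because $|\nabla^\Sigma\xi|=O(r^{\beta+1})=O(1)$ together with the pointwise convergence and Arzel\`a--Ascoli upgrades it to a uniform $C^0$ limit $\xi_\infty$, which is therefore a continuous section of $\cS_{E_\infty}$ over $\Sigma$.

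Next I would identify $\xi_\infty$ as $\nabla_\infty$-parallel. Since $\nabla=\nabla_{\partial_r}\otimes\rd r+\nabla^\Sigma$ along the end, and $|\nabla^\Sigma\xi|_{g}\lesssim r^{-1}|r\nabla^\Sigma\xi|_g\lesssim r^{-1}|\nabla\xi|_g=O(r^{\beta-1})$, rescaling to the cylindrical metric $h=r^{-2}g_C$ gives $|\nabla^{\Sigma}\xi|_h\to 0$ uniformly as $t\to\infty$. Passing to the limit in the cylinder $C=[0,1]_t\times\Sigma$ (exactly as in the proof of \Cref{lem:Existence_of_A_infinity} and the existence lemma for $\Phi_\infty$), the uniform limit $\xi_\infty$ satisfies $\nabla^\Sigma_\infty\xi_\infty=0$ and $\partial_t\xi_\infty=0$; hence $\xi_\infty$ descends to a $\nabla_\infty$-parallel section of $\cS_{E_\infty}$ over $\Sigma$. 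One should also observe that this limit is independent of the chosen exhausting sequence $r_i\to\infty$, again by the radial integrability $\int_{r_i}^{r_j}|\nabla_{\partial_r}\xi|\,\rd r\to 0$.

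The only genuinely delicate point is the passage from $L^p$-control of $\nabla\xi$ to a \emph{pointwise} radial decay estimate good enough to integrate; this is precisely where the hypotheses $p\geqslant n/2$ and $\beta\leqslant -1$ are used, via the weighted Sobolev embedding into $C^0_\beta$. Once $|\nabla\xi|=O(r^{-1})$ is available pointwise, everything else is the same radial-gauge and Uhlenbeck-type limiting argument already carried out in \Cref{sec:Boundary_data}; I would simply cite \Cref{lema3} for the embedding, \Cref{lem:Existence_of_A_infinity} for the radial-gauge bookkeeping, and \Cref{hypoth:asymp} for the model connection $\nabla_\infty$, and spell out only the elementary Cauchy/Arzel\`a--Ascoli step.
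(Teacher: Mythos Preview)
There is a genuine gap in your use of the Sobolev embedding. You claim that $\nabla\xi\in L^p_{1,\beta}(X)$ together with \Cref{lema3} gives $\nabla\xi\in C^0_\beta(X)$, hence the pointwise bound $|\nabla\xi|=O(r^\beta)$. But the embedding in \Cref{lema3} reads $L^p_{k+1,\beta}\hookrightarrow C^{k-1}_\beta$ for $k\in\mathbb{N}_+$, so the minimal instance is $L^p_{2,\beta}\hookrightarrow C^0_\beta$; the hypothesis only gives $\nabla\xi\in L^p_{1,\beta}$, which is one derivative short. For $p\in[n/2,n)$ the embedding $L^p_1\hookrightarrow C^0$ is simply false---it requires $p>n$. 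Hence the pointwise bound $|\nabla\xi|=O(r^\beta)$ is unavailable, and with it the radial integrability $\int_1^\infty|\nabla_{\partial_r}\xi|\,\rd r<\infty$ on which your entire argument rests. (Even granting the pointwise bound, at the endpoint $\beta=-1$ the integral $\int_1^\infty r^{-1}\,\rd r$ diverges, so you would still lose that case.)

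The paper's proof never passes to pointwise control of $\nabla\xi$. It splits $\xi=\xi^\Vert+\xi^\perp$ along the end and handles the two pieces by different mechanisms. For $\xi^\perp$ the key input---absent from your argument---is the irreducibility of $\nabla_\infty$ on $\cS_{E_\infty}^\perp$ from \Cref{hypoth:asymp}: this yields a slice-wise Poincar\'e inequality $\Vert\xi^\perp\Vert_{L^p(\Sigma_r)}\lesssim r\,\Vert\nabla\xi^\perp\Vert_{L^p(\Sigma_r)}$, which upgrades $\nabla\xi\in L^p_{1,\beta}$ to $L^p_2$-control of $\xi^\perp$ on unit cylindrical pieces, and \emph{only then} is the Sobolev embedding $L^p_2\hookrightarrow C^0$ invoked (this is where $p\geqslant n/2$ genuinely enters). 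For $\xi^\Vert$ the paper uses a one-dimensional H\"older inequality in the radial variable, which works directly from the $L^p$ hypothesis and is what makes $\beta\leqslant -1$ (rather than $\beta<-1$) sufficient. The Poincar\'e step is precisely what compensates for the missing derivative that your Sobolev-embedding shortcut would have needed.
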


\begin{proof}
	Along the conical end $X - K$ we write
	\begin{equation}
		\nabla \xi = \frac{\partial \xi^\Vert}{\partial r} \otimes \rd r + \nabla \xi^\Vert +\frac{\partial \xi^\perp}{\partial r } \otimes \rd r + \nabla \xi^\perp,
	\end{equation}
	and as the summands are linearly independent as sections of $\Lambda^1 \otimes \cS_E$, each of them has its norm bounded by that of $\nabla \xi$. Since $\nabla_\infty$ is irreducible on $\cS_{E_\infty}^\perp$ we find that along the conical end so is $\nabla$ on $\cS_{E}^\perp$ component. Hence, there is a Poincar\'e type inequality on the slices $\Sigma_1 = \Upsilon(\lbrace 1 \rbrace \times \Sigma)$, which can be written as $\Vert \xi^\perp \Vert_{L^p (\Sigma_1)} \lesssim \Vert \nabla \xi^\perp \Vert_{L^p (\Sigma_1)}$. Scaling this inequality gives 
	\begin{equation}
		\Vert \xi^\perp \Vert_{L^p (\Sigma_r)} \lesssim r \Vert \nabla \xi^\perp \Vert_{L^p (\Sigma_r)} \lesssim r \Vert \nabla \xi \Vert_{L^p (\Sigma_r)},
	\end{equation}
	on each $\Sigma_r = \Upsilon ( \lbrace r \rbrace \times \Sigma )$. This together with the hypothesis that $\nabla \xi \in L^p_{k, \beta}$ shows that
	\begin{equation}
		\int\limits_1^\infty r^{-(\beta +1) p} \Vert \xi^\perp \Vert_{L^p (\Sigma_r)}^p \frac{\rd r}{r^n} \leqslant \int\limits_1^\infty c r^{- \beta p} \Vert \nabla \xi \Vert_{L^p (\Sigma_r)}^p \frac{\rd r}{r^n} < \infty.
	\end{equation}
	Scaling the metric on $(1, \infty)_r \times \Sigma$ to the cylindrical metric $r^{-2}g = \rd t^2 + g_\Sigma$, where $t = \log (r)$. This implies that as $t \rightarrow \infty$, all three $e^{-tp (\beta+1)}\xi^\perp$, $e^{-tp (\beta+1)}\nabla \xi^\perp$ and $e^{-tp (\beta+1)} \nabla \nabla \xi^\perp$ converge in the $L^p$-norm to zero, over the intervals $(t, t+1) \times \Sigma$, equipped with the cylindrical metric $\rd t^2 + g_\Sigma$. Since $-(\beta+1) \geqslant 0$, one concludes that as $t \rightarrow \infty$, $\xi$ converges to zero in $L^p_2 (X)$ over these intervals equipped with the fixed cylindrical metric. Using, the Sobolev embedding $L^p_2 (X) \hookrightarrow C^0 (X)$, which holds for $p\geqslant n/2$, one concludes that $\xi^\perp$ converges uniformly to zero.\\
	For the other component, i.e. $\xi^\Vert$ one has $\vert \frac{\partial \xi^\Vert}{\partial r} \vert \leqslant \vert \nabla \xi \vert$ and using this together with the H\"older inequality into
	\begin{equation}
		\int\limits_1^\infty \Big\vert \frac{\partial \xi^\Vert}{\partial r } \Big\vert \ \rd r \leqslant \int\limits_1^\infty r^{-\beta p}\vert \nabla \xi  \vert^p \rd r \int\limits_1^\infty r^{\beta p'} \rd r,
	\end{equation}
	where $p' = p/(p-1)$ is the conjugate exponent. The first integral is bounded above by $\Vert \nabla \xi \Vert_{L^p_{0, \beta} (X)}^p$. The second one is $\int_1^\infty r^{\frac{\beta p}{p-1}} \rd r$ and since $\beta \leqslant -1 < 1/p-1 = (1-p)/p$ one concludes this integral is finite. It follows that there is a limit $\xi_\infty$ to which $\xi^\Vert$ converges.
\end{proof}

\begin{remark}\label{rem:ThereIsALimit2}
	Using the same proof as in \Cref{lem:ThereIsALimit} but replacing the Sobolev embedding $L^p_2 (X) \hookrightarrow C^0 (X)$ over $\Sigma^{n-1}$ by the Sobolev embedding $L^2_k (X) \hookrightarrow C^0 (X)$, which holds for $k > \tfrac{n}{2}$, leads to the following conclusion. If $k > \frac{n}{2}$ and $\xi \in \Omega^0 (X, V)$ with $r^{j-1} \nabla^j \xi \in L^2 (X)$ for all $1 \leqslant j \leqslant k$. Then $\xi$ converges to a $\nabla_\infty$ parallel section $\xi_\infty$ of $\cS_{E_\infty}$.
\end{remark}\label{prop:ThereIsALimit2}

\begin{corollary}\label{cor:lema3}
	Let $k \in \mathbb{N}$ and $p \geqslant \frac{n}{2}$. Then, for any $\xi \in \Omega^k(X, \mathfrak{g}_P)$ with $r \nabla_0 \xi \in H^p_{k, \beta+1} (X)$ we have  $\xi^\perp \in L^p_{k+1} (X)$. Furthermore, in case $\beta<-1+ \frac{1}{p}$, then $\xi$ converges to a $\nabla_\infty$ parallel section $\xi_\infty$ of $\cS_{E_\infty}$.
\end{corollary}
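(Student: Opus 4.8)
The plan is to split $\xi=\xi^{\Vert}+\xi^{\perp}$ along the conical end, where by \Cref{hypoth:asymp} the reference connection $\nabla_0$ is modelled on $\nabla_\infty$ and the splitting $\cS_E=\cS_E^{\Vert}\oplus\cS_E^{\perp}$ is $\nabla_\infty$-parallel; in particular $\nabla_0$ commutes with the projections, so $(\nabla_0\xi)^{\Vert}=\nabla_0(\xi^{\Vert})$ and $(\nabla_0\xi)^{\perp}=\nabla_0(\xi^{\perp})$. Unwinding \Cref{def:FunctionSpacesP}, the hypothesis $r\nabla_0\xi\in H^p_{k,\beta+1}(X)$ says precisely that $r\nabla_0(\xi^{\Vert})\in L^p_{k,\beta+1}(X-K)$ --- equivalently $\nabla_0(\xi^{\Vert})\in L^p_{k,\beta}(X-K)$, since multiplication by $r$ (a symbol of order $1$, with $r\geqslant 1$) shifts the Lockhart--McOwen weight by $+1$ --- while $r\nabla_0(\xi^{\perp})\in L^p_k(X-K)$ in the \emph{unweighted} sense; over the compact core $K$ everything is automatic by smoothness of $\xi$.

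For the $\perp$-component, the key input is that $\nabla_\infty$ is irreducible on $\cS_{E_\infty}^{\perp}$, hence has no parallel sections there and so satisfies a Poincar\'e inequality $\|\eta\|_{L^p(\Sigma)}\lesssim\|\nabla_\infty^\Sigma\eta\|_{L^p(\Sigma)}$ for $\cS_{E_\infty}^{\perp}$-valued tensors on $\Sigma$. Rescaling to the link $\Sigma_r$ of radius $r$ gives $\|\xi^{\perp}\|_{L^p(\Sigma_r)}\lesssim r\,\|\nabla_0\xi^{\perp}\|_{L^p(\Sigma_r)}$ exactly as in the proof of \Cref{lem:ThereIsALimit}; integrating in $r$ against $\vol_X\sim r^{n-1}\,\rd r\,\vol_\Sigma$ yields $\|\xi^{\perp}\|_{L^p(X-K)}\lesssim\|r\nabla_0\xi^{\perp}\|_{L^p(X-K)}<\infty$, so the extra $r$ carried by the hypothesis is precisely what the scaling of the Poincar\'e constant consumes. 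For the higher derivatives I would induct on $j=0,\dots,k$: expanding $\nabla_0^j(r\nabla_0\xi^{\perp})$ by the Leibniz rule and using $|\nabla^i r|=O(r^{1-i})\leqslant C$ for $i\geqslant 1$ shows that $r\,|\nabla_0^{j+1}\xi^{\perp}|$ is bounded in $L^p$ by $|\nabla_0^{j}(r\nabla_0\xi^{\perp})|$ (controlled since $r\nabla_0\xi^{\perp}\in L^p_k$) plus lower-order terms $\nabla_0^{m}\xi^{\perp}$, $1\leqslant m\leqslant j$, controlled by the inductive hypothesis with base case the Poincar\'e estimate above; since $r\geqslant 1$ this gives $\nabla_0^{j+1}\xi^{\perp}\in L^p$. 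Hence $\xi^{\perp}\in L^p_{k+1}(X-K)$, and together with the compact piece $\xi^{\perp}\in L^p_{k+1}(X)$.

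For the convergence claim, note that $\nabla_0(\xi^{\Vert})\in L^p_{1,\beta}(X-K)$ with $\beta<-1+\tfrac1p$ is exactly the situation of the $\Vert$-part of the proof of \Cref{lem:ThereIsALimit}: writing $|\partial_r\xi^{\Vert}|=(r^{-\beta}|\partial_r\xi^{\Vert}|)\cdot r^{\beta}$ and applying H\"older with conjugate exponent $p'=\tfrac{p}{p-1}$, the radial factor $\int_1^\infty r^{\beta p'}\,\rd r$ is finite precisely because $\beta p'<-1\iff\beta<-1+\tfrac1p$, so $\partial_r\xi^{\Vert}$ is integrable along rays and $\xi^{\Vert}$ converges to a limit $\xi_\infty$; the same weighted bound applied to the $\Sigma$-directional part of $\nabla_0(\xi^{\Vert})$ forces $\nabla_\infty\xi_\infty=0$ and $\partial_r\xi_\infty=0$, so $\xi_\infty$ is a $\nabla_\infty$-parallel section pulled back from $\Sigma$. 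Meanwhile the slice estimate from the previous paragraph shows $\|\xi^{\perp}\|_{L^p(\Sigma_r)}\to 0$, which with $\xi^{\perp}\in L^p_{k+1}(X)$ gives $\xi^{\perp}\to 0$ (for $k>\tfrac n2$ one may instead quote \Cref{rem:ThereIsALimit2} after passing to $L^2$ via the local Sobolev embeddings). Hence $\xi=\xi^{\Vert}+\xi^{\perp}\to\xi_\infty$ in $\cS_{E_\infty}$.

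\textbf{Main obstacle.} I expect the only real care to lie in the weight bookkeeping of the first paragraph: correctly identifying that the mixed norm $H^p_{k,\beta+1}$ places the $\Vert$-part in the weighted space $L^p_{k,\beta+1}$ and the $\perp$-part in the unweighted $L^p_k$, and that the factor $r$ in $r\nabla_0\xi$ shifts the weight by one unit on the former while being exactly absorbed by the Poincar\'e scaling on the latter --- being off by one unit of weight would break both the $L^p_{k+1}$ conclusion and the threshold $\beta<-1+\tfrac1p$. The analytic ingredients (rescaled Poincar\'e on $\Sigma_r$, H\"older in $r$, the Leibniz-rule induction) are all routine and already appear, in slightly different guises, in \Cref{lem:ThereIsALimit} and in the proofs of \Cref{lem:IntermediateNormInequality,cor:LptoLp}.
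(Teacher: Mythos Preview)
Your proposal is correct and follows essentially the same route as the paper: split into $\xi^{\Vert}$ and $\xi^{\perp}$, reduce the $\perp$-part to the rescaled Poincar\'e argument at the start of \Cref{lem:ThereIsALimit} (together with the observation that $r\nabla_0\xi^\perp\in L^p_k$ from the definition of $H^p_{k,\beta+1}$), and handle the $\Vert$-part by the H\"older-in-$r$ argument at the end of \Cref{lem:ThereIsALimit}, noting that $\int_1^\infty r^{\beta p'}\,\rd r<\infty$ holds precisely for $\beta<-1+\tfrac1p$. The paper's proof says exactly this in two sentences; your write-up simply fills in the higher-derivative induction and the weight bookkeeping that the paper leaves implicit. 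One small overstatement: $\nabla_0$ only \emph{asymptotically} preserves the splitting (it is modelled on, not equal to, $\nabla_\infty$), so $(\nabla_0\xi)^\perp$ and $\nabla_0(\xi^\perp)$ differ by a decaying error; this is harmless for the argument and the paper equally glosses over it.
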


\begin{proof}
	Since $r \nabla_0 \xi \in H^p_{k, \beta+1} (X)$, one knows $r \nabla_0 \xi^\perp \in L^p_k (X)$ and the same proof as that of the beginning of \Cref{lem:ThereIsALimit} shows that $\xi^\perp \in L^p_{k+1} (X)$ and converges to zero as $r \rightarrow \infty$. The other component follows from the fact that $r \nabla_0 \xi^\Vert \in L^p_{k,\beta+1} (X)$ is equivalent to $\nabla_0 \xi^\Vert \in L^p_{k, \beta} (X)$. Then, one can repeat the final part of the proof of \Cref{lem:ThereIsALimit} and notice that the argument there using H\"older's inequality works for $\beta < -1 + \frac{1}{p}$.
\end{proof}

As the conclusion of the previous lemmata we finally arrive at the main result of this subsection.

\begin{proposition}\label{prop:Multiplication}
	Let $p \in [\frac{n}{2}, n)$, $\alpha = 1-n/p$, and $N(\cdot , \cdot)$ a bilinear map satisfying
	\begin{equation}
		N( \cS_{E}^{||} , \cS_{E}^{||} ) = 0, \ \ N( \cS_{E}^{||} , \cS_{E}^\perp ) \subseteq \cS_{E}^\perp , \ \text{and} \ N( \cS_{E}^\perp , \cS_{E}^\perp ) \subseteq \cS_{E}^{||}.
	\end{equation}
	Then, $[\cdot , \cdot ]$ gives rise to a continuous multiplication map
	\begin{equation}
		N(\cdot , \cdot): H^p_{1, \alpha} (X) \times H^p_{1, \alpha} (X) \hookrightarrow H^p_{0, \alpha-1} (X).
	\end{equation}
\end{proposition}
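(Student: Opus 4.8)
The plan is first to reduce the target norm. Since $\alpha=1-n/p$, we have $\alpha-1=-n/p$, hence $L^p_{0,\alpha-1}(X-K)=L^p(X-K)$; thus, by \Cref{def:FunctionSpacesP}, proving the proposition amounts to showing that $N$ defines a bounded bilinear map $H^p_{1,\alpha}(X)\times H^p_{1,\alpha}(X)\to L^p(X)$. Over the compact core $K$ this is immediate: since $p\geqslant n/2$ the Sobolev embedding $L^p_1(K)\hookrightarrow L^{2p}(K)$ holds, so $L^p_1(K)\times L^p_1(K)\to L^p(K)$ is continuous and $\Vert N(u,v)\Vert_{L^p(K)}\lesssim\Vert u\Vert_{L^p_1(K)}\Vert v\Vert_{L^p_1(K)}$. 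All the work is therefore over the end $X-K$, where I would use the splitting $u=u^\Vert+u^\perp$, $v=v^\Vert+v^\perp$ together with the three algebraic hypotheses on $N$ to write, on $X-K$, $N(u,v)=N(u^\perp,v^\perp)+N(u^\Vert,v^\perp)+N(u^\perp,v^\Vert)$, the $\Vert\times\Vert$ term being zero; here the first summand takes values in $\cS_E^\Vert$ and the last two in $\cS_E^\perp$, so in the target norm the first is measured in $L^p_{0,\alpha-1}(X-K)$ and the latter two in $L^p_0(X-K)$, both of which equal $L^p(X-K)$.

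For the $\perp\times\perp$ term I would again use $L^p_1(X-K)\hookrightarrow L^{2p}(X-K)$ (valid since $p\geqslant n/2$) and plain H\"older to get $\Vert N(u^\perp,v^\perp)\Vert_{L^p(X-K)}\lesssim\Vert u^\perp\Vert_{L^p_1(X-K)}\Vert v^\perp\Vert_{L^p_1(X-K)}$. For the two mixed terms, which is the only part that uses the weighted structure, I would exploit the faster decay carried by the $\Vert$-factor: apply the weighted Sobolev embedding $L^p_{1,\alpha}(X-K)\hookrightarrow L^{np/(n-p)}_{0,\alpha}(X-K)$ of \Cref{lema3} to $u^\Vert$, the ordinary Sobolev embedding $L^p_1(X-K)\hookrightarrow L^n(X-K)$ to $v^\perp$ (this holds exactly because $p\in[\tfrac n2,n)$ places $n$ between $p$ and the Sobolev exponent $\tfrac{np}{n-p}$), and then the weighted H\"older inequality of \Cref{lema1} with the conjugate pair $\tfrac1{np/(n-p)}+\tfrac1n=\tfrac1p$. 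Writing $L^n(X-K)=L^n_{0,-1}(X-K)$, the resulting product lies in $L^p_{0,\alpha+(-1)}(X-K)=L^p_{0,\alpha-1}(X-K)=L^p(X-K)$, giving $\Vert N(u^\Vert,v^\perp)\Vert_{L^p(X-K)}\lesssim\Vert u\Vert_{H^p_{1,\alpha}(X)}\Vert v\Vert_{H^p_{1,\alpha}(X)}$; the term $N(u^\perp,v^\Vert)$ is estimated symmetrically. Summing the three contributions over $X-K$ and combining with the bound over $K$ then yields the proposition.

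The single delicate point is the exponent and weight book-keeping in the mixed terms: one must check that the H\"older pair $(s_1,s_2)=(\tfrac{np}{n-p},n)$ is simultaneously admissible for the two Sobolev embeddings used — which is precisely where $p\geqslant n/2$ enters — and is tuned so that the weighted H\"older product lands in exactly the weight $\alpha-1=-n/p$ (and not in a larger, weaker weight). This is where the specific value $\alpha=1-n/p$ is essential; for instance, splitting both factors symmetrically into $L^{2p}$ only works when $p=n/2$. Everything else is routine: bilinearity of $N$ and the pointwise bound $|N(a,b)|\lesssim|a||b|$, the symmetry between the two mixed terms, and the observation that both components of the target norm collapse to the ordinary $L^p$ norm on the end.
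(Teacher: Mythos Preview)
Your proof is correct and, in fact, a bit more streamlined than the paper's. Both arguments begin by decomposing $N(u,v)$ into the $\perp\times\perp$ piece (landing in $\cS_E^{\Vert}$) and the two mixed pieces (landing in $\cS_E^{\perp}$), and both use the key observation that $\alpha-1=-n/p$ collapses the target to the unweighted $L^p$. The difference lies in how each piece is bounded. The paper places each factor in \emph{two} weighted Lebesgue spaces (at exponents $p$ and $q=\tfrac{np}{n-p}$), applies the weighted H\"older inequality of \Cref{lema1} twice, and then invokes the interpolation \Cref{lema4} to land in $L^p_{0,-n/p}$. You instead choose Sobolev exponents tailored so that a \emph{single} H\"older step suffices: $L^p_1\hookrightarrow L^{2p}$ for the $\perp\times\perp$ term, and the conjugate pair $(\tfrac{np}{n-p},n)$ for the mixed terms, with $L^n=L^n_{0,-1}$ so that the weighted H\"older lands exactly at weight $\alpha-1$. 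This avoids \Cref{lema4} entirely. The paper's route is more mechanical and would adapt more readily to other weights; yours exploits the specific value $\alpha=1-n/p$ to shorten the argument. One small point worth making explicit: the unweighted embeddings $L^p_1(X-K)\hookrightarrow L^{2p}(X-K)$ and $L^p_1(X-K)\hookrightarrow L^n(X-K)$ that you use are not stated verbatim in the paper, but they follow from \Cref{lema3} (which gives $L^p_1\hookrightarrow L^{p^*}_{0,-n/p+1}=L^{p^*}$ since $-n/p+1=-n/p^*$) together with the elementary interpolation $L^p\cap L^{p^*}\hookrightarrow L^q$ for $p\leqslant q\leqslant p^*$.
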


\begin{proof}
	Let $\chi , \xi \in H^p_{1, \alpha} (X)$ and $q = \frac{np}{n-p}$, then by definition $\chi^\Vert , \xi^\Vert \in L^p_{1, \alpha} (X)$, which using the embedding in the second bullet of \Cref{lema3} lie in $L^q_{0, \alpha} (X)$. On the other hand, by the first bullet in that same Lemma we find that $\chi^\perp , \xi^\perp \in L^p_1 (X) = L^p_{0, -n/p} (X) \cap L^p_{1, -n/p+1} (X)$, and again by the second bullet $L^p_{1, -n/p+1} (X) \subseteq L^q_{0,-n/p+1} (X)$. In conclusion,
	\begin{equation}
		\chi^\Vert , \xi^\Vert \in  L^p_{0,\alpha} \cap L^q_{0, \alpha} (X) \ , \ \chi^\perp , \xi^\perp \in  L^p_{0, -n/p} (X) \cap L^q_{0,-n/p +1} (X).
	\end{equation}
	By the hypothesis, the term $N(\chi^\Vert , \xi^\Vert)$ vanishes and
	\begin{equation}
		N(\chi, \xi ) = N(\chi^\perp , \xi^\perp) + ( N(\chi^\Vert , \xi^\perp) + N(\chi^\perp , \xi^\Vert) ),
	\end{equation}
	with the first term lying in $\cS_{E}^{||}$ while both the second and third lie in $\cS_{E}^\perp$. So it is enough to show that $N(\chi^\perp , \xi^\perp) \in L^p_{0, \alpha -1} (X)$ and $N(\chi^\Vert , \xi^\perp) , N(\chi^\perp , \xi^\Vert) \in L^p (X) = L^p_{0, -n/p} (X)$.\\
	We start by analyzing the term $N(\chi^\perp , \xi^\perp)$, by using \Cref{lema1} twice in the forms 
	\begin{equation}
		L^p_{0,-n/p} (X) \times L^p_{0,-n/p} (X) \subseteq L^{p/2}_{0,-2n/p} (X), \ \text{and} \ \ L^q_{0,-n/p +1} (X) \times L^q_{0,-n/p +1} (X) \subseteq L^{q/2}_{0,-2n/p +2} (X).
	\end{equation} 
	Then, $N(\chi^\perp , \xi^\perp) \in L^{p/2}_{0,-2n/p} (X) \cap L^{q/2}_{0,-2n/p +2} (X)$ and using \Cref{lema4} with $p_1 = p/2$, $\gamma_1 = -2n/p$, $p_2 = q/2$, $\gamma_2 = -2n/p + 2$ and $s = p$ gives that $N(\chi^\perp , \xi^\perp) \in L^p_{0, \alpha-1} (X)$ for all $\alpha$ such that
	\begin{equation}
		\alpha -1 \geqslant \max \Big\lbrace \frac{2n}{p} - \frac{n}{p} - \frac{2n}{p} , \frac{2n}{q} - \frac{n}{p} - \frac{2n}{p} +2  \Big\rbrace = -\frac{n}{p}.
	\end{equation}
	Next, we turn to the terms in $\cS_{E}^\perp$. For this and apply again \Cref{lema1} twice, now in the form 
	\begin{equation}
		L^q_{0, \alpha} (X) \times L^q_{0, - n/p +1} (X) \subseteq L^{q/2}_{0, \alpha-n/p+1} (X), \ \text{and} \ \  L^q_{0, \alpha} (X) \times L^{p}_{0, - n/p} (X) \subseteq L^{\frac{np}{2n-p}}_{0, \alpha-n/p} (X).
	\end{equation} 
	Then $N (\chi^\Vert , \xi^\perp), N (\chi^\perp , \xi^\Vert) \in  L^{q/2}_{0, \alpha-n/p+1} (X) \cap L^{\frac{np}{2n-p}}_{0, \alpha-n/p} (X)$ and now we use \Cref{lema4} with $p_1 = np/(2n-p)$, $\gamma_1 = \alpha-n/p$, $p_2 = q/2$, $\gamma_2 = \alpha-n/p+1$ and $s = p$, which gives that $[\chi^\Vert , \xi^\perp], [\chi^\perp , \xi^\Vert] \in L^p (X) = L^p_{0, -n/p} (X)$. At this point we mention that the condition to apply \Cref{lema4} is that
	\begin{equation}
		\max \Big\lbrace \frac{2n-p}{np} - \frac{n}{p} + \alpha - \frac{n}{p} , \frac{2n}{q} - \frac{n}{p} + \alpha - \frac{n}{p} + 1  \Big\rbrace = -\frac{n}{p} \leqslant -\frac{n}{q},
	\end{equation}
	which holds by the condition that $p \geqslant n/2$ arises. One must remark that this condition is further required for the Sobolev embeddings in \Cref{lema3} to hold and the condition that $p < n$ is required in order for $p_1 = np/(2n-p) < p$ and lemma \Cref{lema4} to apply in the second case above.
\end{proof}

\bigskip

\section{Moduli theory}\label{sec:Moduli}

\subsection{A discussion of possible moduli spaces of $\rG_2$-monopoles}

In this section we discuss a few possible possibilities for the moduli space of $\rG_2$-monopoles on an asymptotically conical $\rG_2$-manifolds with finite mass and fixed monopole class.

Recall that a finite mass monopole $(\nabla,\Phi)$ with monopole class $\alpha \in H^2(\Sigma,\mathbb{Z})$ is modeled at infinity on a reducible pair $(\nabla_\infty,\Phi_\infty)$ on $P_\infty \rightarrow \Sigma$ as in \Cref{thm:main3} or \Cref{thm:Main_Theorem_2}. We fix a framing at infinity
\begin{equation}\label{eq:framing}
	\eta: \Upsilon^* P \vert_{X - K} \rightarrow \pi^* P_\infty,
\end{equation}
where $\Upsilon$ is as in \Cref{def:AC} and $\pi: C(\Sigma) \rightarrow \Sigma$ denotes the projection to the second factor. Let $[(\nabla_\infty, \Phi_\infty)]$ denote the gauge equivalence class of this pair and define
\begin{align}
	\Gamma_\infty &= \lbrace g \in \Aut(P_\infty) \ \ \vert \ \ g\cdot (\nabla_\infty, \Phi_\infty) = (\nabla_\infty, \Phi_\infty) \rbrace, \\ 
	\gamma_\infty &= \lbrace \xi \in \Gamma( \mathfrak{g}_{P_\infty} ) \ \ \vert \ \ \nabla_\infty \xi = 0 = [\xi , \Phi_\infty] \rbrace.
\end{align}
Then $\Gamma_\infty$ are the gauge transformations of $P_\infty$ which preserve the boundary data and $\gamma_\infty$ its Lie algebra. Furthermore, we define $\mathcal{G}$ to be the group of continuous gauge transformations, which have a limit $g_\infty = \lim_{r \rightarrow \infty} g(r) \in \mathcal{G}_\infty$. It comes equipped with an evaluation map $\ev: \mathcal{G} \rightarrow \mathcal{G}_\infty$ which associates to $g \in \cG$ its limit at infinity. Using the framing \eqref{eq:framing}, we can define 
\begin{equation}
	\mathcal{G}(0) \coloneqq \ker(\ev) \subseteq \Gamma \coloneqq \ev^{-1}(\Gamma_\infty) \subseteq \cG.
\end{equation}

There are two possible approaches to setting up the moduli theory:

\begin{enumerate}
	\item Consider gauge equivalence classes of pairs $(\nabla, \Phi)$ on $P$ that are asymptotic to a pair in $[(\nabla_\infty, \Phi_\infty)]$.
	\item Fix the representative $(\nabla_\infty, \Phi_\infty) \in [(\nabla_\infty, \Phi_\infty)]$ and consider pairs $(\nabla,\Phi)$ asymptotic to this representative modulo the action of $\Gamma \subseteq \mathcal{G}$. 
\end{enumerate}

The automorphism group of the boundary data $\Gamma_\infty $ is isomorphic to a subgroup $ H \subseteq \SU (2)$, i.e. it is either trivial or isomorphic to $\rU (1)$.

\begin{remark}
	Recall that $\mathfrak{g}_{P_\infty}\cong \underline{\mathbb{R}} \oplus L^2 (X)$, where $L$ is a line bundle over $\Sigma$. However, if $(\nabla,\Phi)$ is irreducible, then $||\nabla \Phi ||_{ L^2} \neq 0$  and the energy formula in \Cref{thm:Energy_Formula} shows that $\alpha = -2 \pi i c_1(L)$ must be nontrivial in which case $H \cong \rU (1)$. Tracing through the definitions, this isomorphism can be seen more explicitly as follows.
	\begin{itemize}
		\item If $ g \in \Aut(P_\infty)$ and $ g\cdot \Phi_\infty = \Phi_\infty $, then one can write $ g = e^{if\Phi_\infty}$, for some $f \in C^\infty(\Sigma, \mathbb{R/Z})$. Moreover, if $g$ is further supposed to preserve the connection then it must be constant, this gives an isomorphism $\Gamma_\infty \cong \rU (1)$.
		\item If $\xi \in \mathfrak{g}_{P_\infty}$ and $[\xi, \Phi_\infty] = 0$, then $ \xi = f \Phi_\infty $ for $f \in C^\infty(\Sigma, \mathbb{R})$ and if $\nabla_\infty\xi = 0$ then $f$ must be constant. This gives an isomorphism $\gamma_\infty \cong \mathbb{R}$.
	\end{itemize}
	
\end{remark}

It is also useful to consider a slightly larger moduli space which fibers over these ones with fibre $\Gamma_\infty$. Then, consider the moduli space of configurations to be those pairs $(\nabla, \Phi)$ which are asymptotic to $(\nabla_\infty, \Phi_\infty)$ modulo the action of $\mathcal{G}(0)$. Any implementation of this idea gives a moduli space of configurations, which fibers over the previous ones with fiber $\Gamma_\infty \cong H$.

\begin{remark}
	There is also one other way of constructing such a moduli space which comes with the framing $\eta$ incorporated in the definition at the expense of considering a slightly larger gauge group. Consider triples $(\nabla,\Phi,\eta)$ of configurations and a framing $\eta$ modulo the action of $\Gamma$. Here $\Gamma$ acts on the framing in a nontrivial way and this is what accounts for increasing the gauge group from $\mathcal{G}(0)$ to $\Gamma$.
\end{remark}

Let $(\nabla_0, \Phi_0)$ be a connection and an Higgs field on $P$ which along the conical end, $X - K$, is asymptotic to pullbacks of $(\nabla_\infty, \Phi_\infty)$ via the framing $\eta$ as in \eqref{eq:framing}. The adjoint action of $\Phi_0$ induces a splitting of $\mathfrak{g}_P$ along the conical end as in the beginning of \Cref{sec:BW_formulas}
\begin{equation}\label{Adsplitting}
	\mathfrak{g}_{P} \vert_{X - K} \cong \mathfrak{g}_{P}^\Vert \oplus \mathfrak{g}_{P}^\perp,
\end{equation}
where $\mathfrak{g}_{P}^\Vert = \ker( \ad_{\Phi_0})$ and $\mathfrak{g}_{P}^\perp$ its orthogonal. So one can uniquely split sections $ \chi \in \Omega^k(X - K, \mathfrak{g}_P)$ as $\chi = \chi^\Vert + \chi^\perp$, for $\chi^\Vert \in \Omega^k(X - K , \mathfrak{g}_{P}^\Vert)$ and $\chi^\perp \in \Omega^k (X - K, \mathfrak{g}_{P}^\perp)$.

\smallskip

\subsection{Moduli of Configurations}

This subsection defines and constructs moduli spaces of configurations $(\nabla, \Phi)$ witha fixed mass and monopole class. So we fix $(\nabla_0,\Phi_0)$ and construct $H^p_{k,\alpha}$-spaces as in \Cref{def:FunctionSpaces} using this pair and
\begin{equation}
	\cS_{E} = (\Lambda^1 \oplus \Lambda^0) \otimes \mathfrak{g}_P,
\end{equation}
with the decomposition into $||$ and $\perp$ being that induced on $\mathfrak{g}_P$ along the conical end. The upshot of this subsection is \Cref{thm:ModuliConf} which gives the moduli space of configurations the structure of a smooth Banach manifold. Then the boundary conditions defined by a finite mass monopole are preserved in
\begin{equation}
	\mathcal{A}^p_{k, \alpha} = \lbrace \nabla = \nabla_0 + a \ \ \vert \ \ a \in H^{p}_{k, \alpha} (X) \rbrace \ , \ \mathcal{H}^p_{k, \alpha} = \lbrace \Phi = \Phi_0 + \phi \ \ \vert \ \ \phi \in H^{p}_{k, \alpha} (X) \rbrace.
\end{equation}

Now we define the configuration space of $\rG_2$-monopoles.

\begin{definition}\label{def:Configuration_Space}
	Let $p >0$, $k \in \mathbb{N}$ and $\alpha \in \mathbb{R}$. Then, we define
	\begin{equation}
		\mathcal{C}^p_{k, \alpha} \coloneqq \mathcal{A}^p_{k, \alpha} \times \mathcal{H}^p_{k, \alpha},
	\end{equation} 
	which we refer to as the space of configurations.
\end{definition}

The induced topology of these spaces, in principle, depends on the background configuration $(\nabla_0, \Phi_0)$ and on $p, k$, and $\alpha$. A gauge transformation $g \in \cG \cap L^p_{k+1, \loc} (X)$ acts on a configuration $(\nabla_0 + a , \Phi_0 + \phi)$ via
\begin{equation}\label{gaugecomplicated}
	\left( \nabla_0 + g (\nabla_0 g^{-1}) + g a g^{-1} , \Phi_0 + (g\Phi_0 g^{-1} - \Phi_0) + g\phi g^{-1} \right),
\end{equation}
and two configurations in $\mathcal{C}^p_{k, \alpha} $ be considered equivalent if related by such a $g \in \mathcal{G} \cap L^p_{k+1, \loc} (X)$. We now describe the necessary setup in order to view this equivalence relation as generated by the action of a Banach Lie Group. Set
\begin{align}
	\mathcal{G}^p_{k, \alpha} &\coloneqq \lbrace g \in L^p_{k+1, \loc} (X) \ \ \vert \ \  r \nabla_0 g \in H^p_{k, \alpha+1} (X) \rbrace, \\
	\Lie (\mathcal{G}^p_{k, \alpha}) &\coloneqq \lbrace \xi \in \Omega^0(X, \mathfrak{g}_P) \ \ \vert \ \ r \nabla_0 \xi \in H^p_{k, \alpha+1} (X) \rbrace,
\end{align}
which we topologize as follows: The pointwise exponential defines a map, $\exp$, is a now map from $\Lie (\mathcal{G}^p_{k, \alpha})$ to $\mathcal{G}^p_{k, \alpha}$. For each $\epsilon > 0$ define 
\begin{equation}
	V_\epsilon = \lbrace \xi \in \Lie (\mathcal{G}^p_{k, \alpha}) \ \ \vert \ \ \Vert r \nabla_0 \xi \Vert_{H^p_{k, \alpha+1} (X)} \leqslant \epsilon \rbrace,
\end{equation}
and let the topology on $\mathcal{G}^p_{k, \alpha}$ be generated by the image under the exponential of the open sets $V_\epsilon \subseteq \Lie (\mathcal{G}^p_{k, \alpha})$ together with their translations.

\begin{proposition}
	Let $p \in [\tfrac{n}{2},n)$, $\alpha = - \tfrac{n}{p} + 1$, then the following hold
	\begin{enumerate}
		\item With the topology defined above $\mathcal{G}^p_{1, \alpha} $ is a Banach Lie group with Lie algebra $\Lie (\mathcal{G}^p_{1, \alpha})$.
		
		\item If one further supposes that $p < \frac{n+1}{2}$, then there is a surjective evaluation homomorphism $\ev: \mathcal{G}^p_{1, \alpha} \rightarrow \Gamma_\infty$, with derivative $\rd \ev :  \Lie (\mathcal{G}^p_{1, \alpha}) \rightarrow \gamma_\infty$.
		
		\item $\mathcal{G}^p_{1, \alpha} $ acts smoothly in $\mathcal{C}^{p}_{1, \alpha}$.
	\end{enumerate}
\end{proposition}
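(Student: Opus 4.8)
The plan is to prove the three statements more or less in the order listed, since each relies on the previous one, with the bulk of the work going into the analytic content of statement (1).

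For statement (1), the first step is to verify that $\mathcal{G}^p_{1,\alpha}$ is a group under pointwise multiplication, i.e. that the defining condition $r\nabla_0 g \in H^p_{1,\alpha+1}(X)$ is stable under products and inverses. This is where \Cref{prop:Multiplication} does the heavy lifting: writing $\nabla_0(gh) = (\nabla_0 g)h + g(\nabla_0 h)$ and using that the multiplication $H^p_{1,\alpha}(X)\times H^p_{1,\alpha}(X)\hookrightarrow H^p_{0,\alpha-1}(X)$ is continuous (together with the boundedness of $g,h$ at infinity, which follows from \Cref{lem:ThereIsALimit} and \Cref{cor:lema3} since $\alpha = -n/p+1 \leqslant 0$), one gets that $r\nabla_0(gh) \in H^p_{1,\alpha+1}(X)$. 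Actually one needs the multiplication on $H^p_{1,\alpha}$ itself, which follows similarly: $H^p_{1,\alpha}$ is closed under multiplication because along the $\perp$-component one has $L^p_1(X)$, an algebra for $p>n$—wait, here $p\leqslant n$, so instead one uses that $\xi^\perp \in L^p_1(X) \hookrightarrow L^q_0(X)$ with $q = np/(n-p)$ by \Cref{lema3}, and combines with the weighted Hölder inequality \Cref{lema1} exactly as in the proof of \Cref{prop:Multiplication}. For the inverse, $g^{-1} = g^*$ for unitary $g$ (the structure group is compact, so gauge transformations are fiberwise unitary/orthogonal), and $\nabla_0 g^{-1} = -g^{-1}(\nabla_0 g)g^{-1}$, so the same multiplication estimates apply. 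The identification of the Lie algebra and the Banach manifold structure then comes from the standard argument: the exponential map $\exp: \Lie(\mathcal{G}^p_{1,\alpha}) \to \mathcal{G}^p_{1,\alpha}$ is a local homeomorphism near the identity onto the chosen neighborhood basis, its inverse (a logarithm) is well-defined on a small enough $H^p_{1,\alpha+1}$-ball by the implicit function theorem in Banach spaces applied to $\xi \mapsto \exp(\xi)$, and smoothness of group multiplication in these charts follows once more from the continuity of the multiplication maps. I expect verifying that $\exp$ and $\log$ give genuine charts—in particular that $\log$ maps back into $\Lie(\mathcal{G}^p_{1,\alpha})$ with the right regularity—to be the most delicate point, since one must show $r\nabla_0(\log g) \in H^p_{1,\alpha+1}$ given $r\nabla_0 g \in H^p_{1,\alpha+1}$, which again reduces to the multiplication/composition estimates but requires care with the power series for $\log$.

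For statement (2), the key input is \Cref{cor:lema3}: when $\beta = \alpha - 1 = -n/p$ satisfies $\beta < -1 + \tfrac1p$, i.e. $-n/p < -1 + 1/p$, i.e. $p < \tfrac{n+1}{2}$ (precisely the hypothesis imposed), any $\xi \in \Lie(\mathcal{G}^p_{1,\alpha})$ with $r\nabla_0\xi \in H^p_{1,\alpha+1}(X)$ converges to a $\nabla_\infty$-parallel section $\xi_\infty$ of $\mathfrak{g}_{P_\infty}$ over $\Sigma$; moreover, since $[\cdot,\Phi_0]$ annihilates the parallel component and the $\perp$-component decays (also by \Cref{cor:lema3}, $\xi^\perp \in L^p_2(X)$ hence $\to 0$), one gets $[\xi_\infty, \Phi_\infty] = 0$, so $\xi_\infty \in \gamma_\infty$. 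This defines $\rd\ev: \Lie(\mathcal{G}^p_{1,\alpha}) \to \gamma_\infty$, $\xi \mapsto \xi_\infty$, which is linear and bounded. Exponentiating, $g \in \mathcal{G}^p_{1,\alpha}$ converges to $g_\infty = \exp(\xi_\infty) \in \Gamma_\infty$ (using that $\Gamma_\infty \cong \rU(1)$ is connected so every element is an exponential, and that $\exp$ intertwines the two limits by continuity), giving a group homomorphism $\ev$ whose derivative at the identity is $\rd\ev$. Surjectivity onto $\Gamma_\infty \cong \rU(1)$ is then easy: pick a constant $\xi_\infty \in \gamma_\infty \cong \mathbb{R}$, extend it to a section of $\mathfrak{g}_P$ that is asymptotically parallel (using the framing $\eta$) and agrees with $\xi_\infty$ near infinity, cut off near the compact core so it lies in $\Lie(\mathcal{G}^p_{1,\alpha})$, and exponentiate; since any element of $\rU(1)$ is $\exp(c\xi_\infty)$ for suitable $c$, $\ev$ is onto.

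For statement (3), one must show the action map $\mathcal{G}^p_{1,\alpha} \times \mathcal{C}^p_{1,\alpha} \to \mathcal{C}^p_{1,\alpha}$ given by \eqref{gaugecomplicated} is smooth. Writing a configuration as $(\nabla_0 + a, \Phi_0 + \phi)$ with $(a,\phi) \in H^p_{1,\alpha}(X)$, the action produces new perturbations $g(\nabla_0 g^{-1}) + (gag^{-1} - a)$ and $(g\Phi_0 g^{-1} - \Phi_0) + (g\phi g^{-1} - \phi)$; the point is that each of these lies in $H^p_{1,\alpha}(X)$ and depends smoothly on $(g,a,\phi)$. The term $g\nabla_0 g^{-1} = -(\nabla_0 g)g^{-1}$ is handled by the multiplication estimates since $r\nabla_0 g \in H^p_{1,\alpha+1}$ means $\nabla_0 g \in H^p_{1,\alpha}$ (up to the weight shift absorbed by $\Cref{lema4}$-type inclusions) and $g^{-1}$ is bounded; the commutator-type terms $gag^{-1} - a = [g,a]g^{-1}$-style expressions vanish to the extent that $g \to g_\infty$ commutes with $\Phi_\infty$, and the difference terms again fall under \Cref{prop:Multiplication}. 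Smoothness (as opposed to mere continuity) follows because the action is polynomial in $g$ and $g^{-1}$ with coefficients $a,\phi$, and each monomial is a bounded multilinear map by the Sobolev multiplication properties, so the whole action is a convergent series of bounded multilinear maps, hence smooth; one also uses that $\exp$ is smooth from $\Lie(\mathcal{G}^p_{1,\alpha})$ to $\mathcal{G}^p_{1,\alpha}$ to transfer smoothness to the group charts. The main obstacle throughout is bookkeeping the weights: one must repeatedly check that the exponents produced by \Cref{lema1} and \Cref{lema4} land in the allowed range, and this is exactly where the numerical hypotheses $p \in [\tfrac n2, n)$ and $p < \tfrac{n+1}{2}$ (for part (2)) are used; I expect no conceptual difficulty beyond this, but the estimates must be assembled carefully.
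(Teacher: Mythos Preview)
Your approach is essentially the same as the paper's, and the overall structure is sound. Two points are worth tightening.

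First, for part (1) you lean on \Cref{prop:Multiplication}, but that proposition only gives $H^p_{1,\alpha}\times H^p_{1,\alpha}\hookrightarrow H^p_{0,\alpha-1}$, which loses a derivative and a weight; it does not directly show $r\nabla_0(gh)\in H^p_{1,\alpha+1}$. The paper's key simplification, which you only gesture at, is that $g\in L^p_{2,\loc}\hookrightarrow C^0_{\loc}$ for $p\geqslant n/2$ (third bullet of \Cref{lema3}), so gauge transformations are continuous and pointwise bounded. This lets you handle terms like $rg\nabla_0 h$ by simply pulling $g$ out as a bounded factor; the only genuinely bilinear term is $r(\nabla_0 g)(\nabla_0 h)$, for which the paper uses \Cref{lema1} and \Cref{lema3} directly (not \Cref{prop:Multiplication}) to land in $L^p\subseteq H^p_{0,\alpha}$. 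Invoking the $C^0$ embedding up front makes the whole argument cleaner and avoids the regularity bookkeeping you flag as delicate.

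Second, your numerology in part (2) has a slip: in applying \Cref{cor:lema3} you should take $\beta+1=\alpha+1$, i.e.\ $\beta=\alpha=1-n/p$, not $\beta=\alpha-1=-n/p$. With your value the inequality $-n/p<-1+1/p$ gives $p<n+1$, not $p<(n+1)/2$. The correct computation is $1-n/p<-1+1/p\iff 2<(n+1)/p\iff p<(n+1)/2$, which is the stated hypothesis. The paper in fact rederives this directly via H\"older rather than quoting \Cref{cor:lema3}, but your route works once the index is fixed.
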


\begin{proof}
	Start by noticing that if $g \in \mathcal{G}^p_{1,\alpha}$, then $g \in L^p_{2, \loc} (X)$ and since one is working in a range where $p \geqslant \tfrac{n}{2}$, the third bullet in \Cref{lema3} applies and $g \in C^0_{\loc} (X)$, i.e. these gauge transformations are continuous.
	\begin{enumerate}
		
		\item To prove that $\mathcal{G}^p_{1, \alpha}$ is a Banach Lie group we must show that pointwise multiplication and inversion are well defined. Then, with the above topology $\Lie (\mathcal{G}^p_{1, \alpha})$ is its Lie algebra.
		
		\begin{enumerate}
			
			\item To prove that multiplication is well defined let $g,h \in  \mathcal{G}^{p}_{1, \alpha}$, i.e. $r \nabla_0 g, r \nabla_0 h \in H^p_{1, \alpha+1} (X)$ and one needs to show that 
			\begin{equation}
				r \nabla_0 (gh) = r (\nabla_0 g ) h + r g \nabla_0 h \in H^p_{1, \alpha +1} (X),
			\end{equation}
			for all $l \leqslant k$. The gauge transformations are continuous and $r \nabla_0 h \in  H^p_{1, \alpha +1} (X)$, so it follows that  $r g \nabla_0 h \in H^p_{1, \alpha +1} (X)$ and the same applies to $r (\nabla_0 g ) h$. Alternatively one uses the Sobolev embedding in the second bullet of \Cref{lema3}, which gives
			\begin{align}
				r \nabla_0 h^\Vert, \: r \nabla_0 g^\Vert &\in L^p_{1, \alpha+1} (X) \subseteq L^q_{0, \alpha+1} (X), \\
				r \nabla_0 h^\perp, \: r \nabla_0 g^\perp &\in L^p_{1, -n/p+1} (X) \subseteq L^q_{0, -n/p+1} (X),
			\end{align}
			and using $\alpha = 1-n/p$, 
			\begin{equation}
				r \nabla_0 h,  r \nabla_0 g \in L^p_{0, - n/p+1} (X) \cap L^q_{0,-n/p+1} (X).
			\end{equation} 
			and the multiplication map in \Cref{lema1} guarantees that 
			\begin{equation}
				r \nabla_0 g \nabla_0 h \in  L^p (X) \subseteq H^p_{0,\alpha} (X).
			\end{equation}
			
			\item To prove $g^{-1} \in \mathcal{G}^p_{1, \alpha}$ notice that $\nabla_0 g^{-1} = - g^{-1} (\nabla_0 g) g^{-1}$. Then proceeding as before, separating terms and using $g,g^{-1}\in C^0_{\loc} (X)$ and \Cref{lema1,lema4} yield $r \nabla_0 g^{-1} \in H^p_{1, \alpha+1} (X)$.
			
		\end{enumerate}
		
		\item Let $g \in \mathcal{G}^p_{1, \alpha}$, then $r \nabla_0 g \in H^p_{1, \alpha+1} (X)$, i.e. $(\nabla_0 g)^\Vert \in L^p_{1, \alpha} (X)$ and $L^p_{0,1-n/p} (X)$ and $(\nabla_0 g)^\perp \in L^p_{1, -n/p} (X)$. Then, \Cref{lem:ThereIsALimit} shows that $(\nabla_0 g)^\perp \rightarrow 0$, but the same does not apply to the other component.  However, the last part of the argument in that Lemma can be used and we now repeat it here. Notice that $\nabla_0 g \in L^p_{0,-n/p+1} (X)$, then H\"older's inequality gives
		\begin{equation}
			\int\limits_1^\infty \big\vert \frac{\partial g}{\partial r} \big\vert \rd r \leqslant \int\limits_1^\infty \vert r^{n/p-1} \nabla_0 g \vert^p \rd r \int\limits_1^\infty r^{\frac{p}{p-1}(1-n/p)} \rd r.
		\end{equation}
		The first integral is bounded above by $\Vert \nabla_0 g \Vert^p_{L^p_{0,-n/p+1} (X)}$ while the second is finite if and only if $p < \frac{n+1}{2}$. Hence in this case this proves the existence of $g_\infty \in \mathcal{G}_\infty$ such that $g \rightarrow g_\infty$ and $\nabla_\infty g_\infty = 0$ (i.e. $g_\infty \in \Gamma_\infty$). Using a bump function it is straightforward to check that the evaluation maps, given by taking the limits, are surjective.
		
		\item To check the action of $\mathcal{G}^p_{1,\alpha}$ on $\mathcal{C}^p_{1,\alpha}$ is well defined, one needs to prove that
		\begin{equation}
			g (\nabla_0 g^{-1}) + g a g^{-1}, \qandq (g\Phi_0 g^{-1} - \Phi_0) + g\phi g^{-1}
		\end{equation}
		are in $H^p_{1, \alpha} (X)$. For the terms $gag^{-1}$, $g \phi g^{-1}$ and $g \nabla_0 g^{-1} = -(\nabla_0 g) g^{-1}$ notice that $(a, \phi) \in H^p_{1 , \alpha} (X)$, $g \in C^0$ as it is in $L^p_{2, \loc} (X)$, and $r \nabla_0 g \in H^p_{1, \alpha+1} (X)$. Then, repeating the arguments in the proof of the first item proves that these are $H^p_{1, \alpha} (X)$. One is now left with analyzing $(g\Phi_0 g^{-1} - \Phi_0)$, for which one requires again the conclusion of the second item, above. Namely that if $g \in \cG$ and $\xi \in L(\cG)$ are such that $r \nabla_0 g, r \nabla_0 \xi \in H^p_{1, \alpha+1} (X)$, then $g, \xi$ converge to limits $g_\infty \in \Gamma_\infty$ and $\xi_\infty \in \gamma_\infty$. Moreover, one has $\xi^\perp \in L^p_2 (X)$ by \Cref{cor:lema3} and writing $g = e^{\xi}$ gives
		\begin{equation}
			g\Phi_0 g^{-1} - \Phi_0 = \sum\limits_{k = 1}^\infty \frac{1}{k!} \ad (\xi)^k (\Phi_0),
		\end{equation}
		and the multiplication map in \Cref{lema3}, used in the same way as before, show that the $k \geqslant 2$ terms are in $H^p_{1, \alpha} (X)$ if and only if $[\xi, \Phi_0] \in H^p_{1, \alpha} (X)$. 
		Along the conical end, we can write $[\xi, \Phi_0] = [\xi^\perp, \Phi_0]$ and since $\Phi_0$ is smooth and bounded and $\xi^\perp \in L^p_2 (X)$ it is indeed true that $ [\xi, \Phi_0] \in H^p_{1, \alpha} (X)$. First, the convergence of the series above is follows from $\vert [\xi, \Phi_0] \vert \leqslant \vert \xi^\perp \vert$ which converges to zero as $r \rightarrow \infty$. It is therefore bounded and the convergence of the series is guaranteed by the term $\tfrac{1}{k!}$.
		
		Conversely, if $(\nabla,\Phi)$ and $g \cdot (\nabla, \Phi)$ are both in $\mathcal{C}^p_{1, \alpha}$ are related by an $L^p_{2, \loc} (X)$ gauge transformation $g = e^{\xi}$, then actually $e^{\xi} \in \mathcal{G}^p_{1, \alpha}$ one rewinds the previous arguments. First, the fact that $[\xi, \Phi] = [\xi^\perp, \Phi_0] + \ldots \in L^p_2 (X) \subseteq L^p_1 (X)$ implies $r \nabla_0 \xi^\perp \in L^p_1 (X)$. Second, the fact that $g^{-1} (\nabla_0 g) = \nabla_0 \xi \in H^p_{1,\alpha} (X)$ implies that $r \nabla_0 \xi^\Vert \in L^p_{1, \alpha +1} (X)$. Put these two together to conclude that $r \nabla_0 \xi \in H^p_{1, \alpha+1} (X)$ and so $g \in \mathcal{G}^p_{1, \alpha}$.
	\end{enumerate}
\end{proof}

Using the second item in this proposition, we define
\begin{equation}\label{def:LieGroup}
	\mathcal{G}^p_{k, \alpha} (0) \coloneqq \ker (\ev),
\end{equation}
which is a Banach Lie subgroup of $\mathcal{G}^p_{k, \alpha}$ consisting of the gauge transformations converging to the identity along the end. For $p \in [n/2, n)$ and $\alpha = 1-n/p$ its Lie algebra is the Lie subalgebra of $\Lie (\mathcal{G}^p_{k, \alpha} )$ consisting of those sections which decay, i.e 
\begin{equation}
	\Lie (\mathcal{G}^p_{k, \alpha} (0)) = H^p_{k+1,\alpha +1} (X , \mathfrak{g}_P).
\end{equation}

\begin{lemma}\label{lem:HodgeLemma}
	Let $\beta \in \mathbb{R} - \lbrace -1 \rbrace$ and $(\nabla, \Phi) \in \mathcal{C}^p_{k, \beta}$. Denote by $\rd_{\nabla}^*$ the formal $L^2 (X)$ adjoint of the operator $\nabla$ and for all $\beta$ extend $\nabla,\nabla^\ast$ to operators
	\begin{equation}
		\nabla, \: (\nabla^\beta)^* : L^p_{k+1, \beta+1}(X, \mathfrak{g}_P) \rightarrow L^p_{k, \beta}(X, T^*X \otimes \mathfrak{g}_P).
	\end{equation}
	Then, for $\beta \neq -1$, there is a constant $c>0$ and an inequality $\Vert \nabla \eta \Vert_{L^p_{0,\beta} (X)} \geqslant c \Vert \eta \Vert_{L^p_{0,\beta+1} (X)}$, and so a decomposition
		\begin{equation}\label{eq:DiractSumDecomposition}
			L^p_{k, \beta} (X, T^*X \otimes \mathfrak{g}_P) = \ker(\rd_\nabla^*) \cap L^p_{k,\beta} (X) \oplus \im(\nabla).
		\end{equation}
\end{lemma}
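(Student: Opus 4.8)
The statement has two parts: a weighted coercivity estimate $\Vert \nabla \eta \Vert_{L^p_{0,\beta}(X)} \geqslant c \Vert \eta \Vert_{L^p_{0,\beta+1}(X)}$ for $\beta \neq -1$, and the resulting topological direct sum decomposition \eqref{eq:DiractSumDecomposition}. I would derive the decomposition from the estimate together with the Fredholm theory of \Cref{thm:Main_Fredholm_Theorem_2} (applied to the Dirac-type operator, or rather to the relevant weighted setup), so the heart of the matter is the coercivity estimate.

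\textbf{Step 1: the model estimate on the cone.} First I would establish the inequality on the exact metric cone $C(\Sigma) = (1,\infty)_r \times \Sigma$ with the conical connection $\nabla_\infty$ pulled back from $\Sigma$. Passing to the cylindrical metric via $t = \log r$, the weight $r^{\beta+1}$ becomes an exponential weight $e^{(\beta+1)t}$, and for a section supported in $t \gg 1$ one has $\nabla = \nabla_{\partial_t}\otimes \mathrm{d}t + \nabla^\Sigma$ up to zeroth-order terms coming from the $r^2$-rescaling. The point is that $\Vert \partial_t \eta \Vert$ controls $\Vert \eta\Vert$ in the exponentially-weighted $L^p$-norm \emph{precisely when the weight is non-resonant}, i.e. when $\beta+1 \neq 0$; this is a one-dimensional Hardy-type inequality $\Vert e^{s t}\partial_t u\Vert_{L^p(\mathbb{R}_+)} \geqslant |s|\,\Vert e^{st} u\Vert_{L^p(\mathbb{R}_+)}$ applied fibrewise over $\Sigma$, which fails exactly at $s = 0$. (For the $\mathfrak{g}_P^\perp$-component one has the stronger Poincaré inequality on $\Sigma$ from irreducibility of $\nabla_\infty$, as used in \Cref{lem:ThereIsALimit}, so the estimate there is immediate and weight-independent; the genuine content is the $\mathfrak{g}_P^{||}$-component.) This is the standard Lockhart--McOwen picture and I would cite \cite{Lockhart1985} or reproduce the short argument.

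\textbf{Step 2: transfer to the AC manifold.} Since $(X,\varphi)$ is AC and $(\nabla,\Phi) \in \mathcal{C}^p_{k,\beta}$ is asymptotic to the conical model, outside a large compact set $K$ the operator $\nabla$ differs from its conical model $\nabla_\infty$ by a term decaying like $r^{-1-\delta}$ (an $\epsilon$-function in the sense of \eqref{eq:FunctionEps}). Thus for $\eta$ supported in $X - B_R$ with $R$ large the model estimate of Step 1 survives with the same constant up to a harmless perturbation that can be absorbed. On the compact part I would combine the interior elliptic (Calderón--Zygmund / Gårding) estimate for the elliptic operator $\nabla^*\nabla$ with the fact that on $K$ one only needs to rule out $\nabla\eta = 0$, $\eta \not\equiv 0$; a unique-continuation / connectedness argument (or simply noting $\ker\nabla$ on $X$ is trivial for a generic irreducible background, and in any case finite-dimensional) together with a standard partition-of-unity gluing of the interior and end estimates, exactly as in the proof of \Cref{lem:Standard_Inequality_Fredholm}, yields the global bound $\Vert\nabla\eta\Vert_{L^p_{0,\beta}(X)} \gtrsim \Vert\eta\Vert_{L^p_{0,\beta+1}(X)}$ for all $\eta \in L^p_{k+1,\beta+1}$.

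\textbf{Step 3: the decomposition.} With the coercivity estimate in hand, $\nabla : L^p_{k+1,\beta+1}(X) \to L^p_{k,\beta}(X)$ is injective with closed range; by \Cref{thm:Main_Fredholm_Theorem_2} (or \Cref{thm:DiracOperator}) it is Fredholm for $\beta \notin \mathcal{K}$, and one checks $\beta = -1$ is the only excluded weight in the relevant window (or handles the general case by the usual weight-shifting argument). The $L^p$-orthogonal complement of $\im(\nabla)$ in $L^p_{k,\beta}$ is, by integration by parts against the formal adjoint, exactly $\ker((\nabla^\beta)^*) \cap L^p_{k,\beta}(X)$, where $(\nabla^\beta)^*$ denotes the adjoint acting between the appropriate dual weighted spaces; the sum is direct because $\im(\nabla)$ is closed and its intersection with $\ker((\nabla^\beta)^*)$ is trivial (a section in both is $L^2$-orthogonal to itself after the usual cutoff/approximation argument justified by the decay afforded by the weights). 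This gives \eqref{eq:DiractSumDecomposition}.

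\textbf{Main obstacle.} The delicate point is Step 1 at the boundary of the admissible weight range and, relatedly, making precise what $(\nabla^\beta)^*$ means and why its kernel is the correct complement: the adjoint of $\nabla$ between weighted spaces is \emph{not} the $L^2$-formal adjoint $\nabla^*$ but a conjugated version, and one must track the weight $r^{-n-2\beta}$ appearing in the duality pairing $L^p_{k,\beta}{}^* \cong L^{p'}_{-k,-n-\beta}$. Getting the bookkeeping of weights consistent — so that ``$\ker \rd_\nabla^*$'' in the statement really refers to the kernel in the dual space and the decomposition is genuinely topological — is where care is needed; the analytic inequalities themselves are routine once the cylindrical reduction is set up.
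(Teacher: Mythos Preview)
Your proposal is correct and follows essentially the same architecture as the paper's proof: establish a Poincar\'e-type coercivity on the cone (splitting into $\mathfrak{g}_P^{||}$ and $\mathfrak{g}_P^\perp$, with the latter handled by irreducibility of $\nabla_\infty$), combine with an interior estimate coming from irreducibility on $K$, and deduce the decomposition from the closed-range property. The differences are stylistic rather than substantive. First, where you pass to cylindrical coordinates and quote a one-dimensional Hardy inequality, the paper instead conjugates by $r^{-(\beta+1)}$ to obtain the operator $\nabla^\beta = (\beta+1)\tfrac{\rd r}{r} + \nabla$ acting between \emph{unweighted} spaces, then expands $\|\nabla^\beta\eta\|^2$ for $p=2$ directly on the cone: the cross term $2(\beta+1)\langle\eta,\partial_r\eta\rangle$ integrates by parts to zero for compactly supported $\eta$, and one reads off the explicit constant $c = |\beta+1|$ (whence the excluded weight $\beta = -1$). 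This is exactly your Hardy step in disguise, but avoids the cylindrical detour and yields the sharp constant. Second, for Step~3 the paper does \emph{not} invoke the Fredholm theorems: the coercivity estimate alone gives injectivity and closed range, and that already yields a topological direct sum; the complement is then identified with the kernel of the weighted adjoint $(\nabla^\beta)^* = r^{2(\beta+1)+n}\circ\nabla^*\circ r^{-2\beta-n}$ via the $L^2_{0,\beta}$ inner product. Your appeal to \Cref{thm:Main_Fredholm_Theorem_2} would import the extra hypothesis $\beta\notin\mathcal{K}(\D)$, which the lemma does not require, so the paper's more elementary route is slightly sharper. Your identification of the weighted-adjoint bookkeeping as the delicate point is accurate and matches what the paper addresses explicitly at the end of its proof.
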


\begin{proof}
	For all $p,k,\beta$ the map $r^{-\beta-1} : L^p_{k, \beta+1+l} (X) \rightarrow L^p_{k,l} (X)$, which multiplies a section by $r^{-\beta-1}$, is an isomorphism of Banach spaces. Conjugation with it gives then an equivalence of linear operators
	\begin{equation}
		\begin{matrix}
			L^p_{k+1, \beta+1} & \xrightarrow{\nabla} & L^p_{k, \beta} \\
			\downarrow & & \downarrow \\
			L^p_{k+1,0} & \xrightarrow{\nabla^\beta} & L^p_{k,-1}
		\end{matrix},
	\end{equation}
	with 
	\begin{equation}
		\nabla^{\beta} = r^{-(\beta+1)} \circ \nabla \circ r^{\beta+1} = (\beta +1 )\frac{\rd r}{r} + \nabla.
	\end{equation}
	For simplicity we only present the proof of the $p = 2$ case, as in this case it is easy to complete squares. As $K$ is compact and $\nabla$ is irreducible on $K$, one can combine inequalities of Kato and Poincar\'e to get 
	\begin{equation}
		\Vert \nabla \eta \Vert_{L^2 (K)} \geqslant c_1 \Vert \eta \Vert_{L^2 (K)},
	\end{equation} 
	for some $c_1 > 0$ and all $\eta$ compactly supported in the interior of $K$. Moreover, as $r$ is bounded on $K$, this holds equally well for $\nabla^\beta = \nabla$. Then one needs to prove a similar inequality for a section $\eta$ which is  supported on the conical end $X - B_R$ one writes $\eta = \eta^\Vert + \eta^\perp \in L^2_{1,0} (X)$ and splitting $\nabla^{\beta} \eta$ into orthogonal components we compute
	\begin{align}
		\Vert \nabla^{\beta} \eta \Vert_{L^2_{0,-1}(X - K)}^2 &= \int\limits_R^\infty \frac{\rd r}{r} \int\limits_\Sigma \vert r \nabla^{\beta} \eta \vert^2 \ \vol_\Sigma \\
		&= \int\limits_R^\infty \rd r \int\limits_\Sigma \left( r \big\vert \frac{ \partial \eta}{\partial r} + \frac{\beta +1}{r} \eta \big\vert^2  + r \vert \nabla_0 \eta^\Vert \vert^2 + r \vert \nabla_0 \eta^\perp \vert^2 \right)  \ \vol_\Sigma 
	\end{align}
	In computing a lower bound for this we ignore the terms $ r \vert \nabla_0 \eta^\Vert \vert^2$ and the term $r \vert \frac{\partial \eta}{\partial r} \vert^2$ which appears when one expands the square. Also, when one expand the square there is a mixed term appearing, however as this is $2(\beta+1) \langle \eta , \frac{\partial \eta}{\partial \rho} \rangle = (\beta+1)\frac{\partial \vert \eta \vert^2}{\partial \rho}$ and since $\eta$ is compactly supported on $X - K$, one can integrate by parts and this term vanishes. One is left with
	\begin{equation}
		\Vert \nabla^{\beta} \eta \Vert_{L^2_{0,-1}(X - K)}^2 \geqslant \int\limits_1^\infty \rd r \int\limits_\Sigma \left( \frac{(\beta +1)^2}{r} \vert \eta  \vert^2 + r \vert \nabla_0 \eta^\perp \vert^2 \right) \ \vol_\Sigma.
	\end{equation}
	To handle this let $\Sigma_r = \Upsilon (\lbrace r \rbrace \times \Sigma)$, then the irreducibility of the connection $\nabla_\infty$ on $\mathfrak{g}_P^\perp$, gives a Poincar\'e type inequality, which after scaling is $\Vert \nabla_\infty \eta^\perp \Vert_{L^2 (\Sigma_{\rho})}^2 \geqslant c_2 r^{-2} \Vert \eta^\perp \Vert_{L^2 (\Sigma_{\rho})}^2$ for some constant $c_2 >0$. Moreover, as the connection $\nabla_0$ is asymptotic to $\nabla_\infty$ one can assume the same inequality holds for $\nabla_0$ and large $r$. Inserting this into the previous inequality yields
	\begin{equation}
		\Vert \nabla^{\beta} \eta \Vert_{L^2_{0,-1}(X - K)}^2 \geqslant \int\limits_1^\infty \rd r \int\limits_\Sigma \left( \frac{(\beta +1)^2}{r} \vert \eta^\Vert \vert^2 + \frac{c_2 + (\beta +1)^2}{r} \vert \eta ^\perp \vert^2 \right) \ \vol_\Sigma \gtrsim (1+ \beta)^2 \Vert \eta \Vert^2_{L^2_{0,0}(X - K)}.
	\end{equation}
	Combining this with the similar inequality one has on $K$, gives the inequality in the first item of the statement.
	
	As a consequence of this Poincar\'e type inequality $\nabla^\beta$ has closed image and the decomposition in the theorem follows. Recall that the operator $\nabla^\beta$ above is equivalent to $\nabla : L^2_{1,\beta+1} (X) \rightarrow L^2_{0,\beta} (X)$, so this one has  closed image. Then the same is true for $\nabla : L^p_{k+1,\beta+1} (X) \rightarrow L^p_{k,\beta} (X)$, which gives the decomposition \eqref{eq:DiractSumDecomposition}. Using the weighted inner product $\langle \cdot , \cdot \rangle_{L^2_{0,\beta} (X)}$ one can identify a copy of cokernel of $\rd_A$ with the orthogonal complement, i.e. the kernel of its adjoint 
	\begin{equation}
		(\nabla^\beta)^* = r^{2(\beta+1)+n} \circ \nabla^* \circ r^{-2\beta -n} = (2\beta +n ) \iota_{r \frac{\partial}{\partial r}} + \nabla^*.
	\end{equation}
\end{proof}

\begin{remark}
	The proof above gives a bound $\Vert \nabla \eta \Vert_{L^p_{0,\beta} (X)} \geqslant c \Vert \eta \Vert_{L^p_{0,\beta+1} (X)}$ with an explicit constant $c = \vert 1 + \beta \vert$. For $\beta = -n/2$ this gives back Hardy's inequality
	\begin{equation}
		\Vert \nabla \eta \Vert^2_{L^2 (X)} \geqslant \left( \frac{n-2}{2} \right)^2 \Vert r^{-1} \eta \Vert^2_{L^2 (X)}.
	\end{equation}
	Actually this gives the best possible constant on any asymptotically Euclidean manifold.
\end{remark}

\begin{corollary}\label{cor:HodgeLemma2}
	For $\beta \neq -1$, the operator 
	\begin{align}\label{eq:da}
		\cL_0 :H^p_{k+1,\beta+1} (X,\mathfrak{g}_P) & \rightarrow H^p_{k,\beta} (X, (\Lambda^0 \oplus \Lambda^1) \otimes \mathfrak{g}_P) . \\
		\xi & \mapsto (-\nabla \xi , [\xi, \Phi]),
	\end{align}
	has closed image. Using the notation $H^p_{k, \beta} (X)$ for the right hand side in \cref{eq:da}, there is an orthogonal decomposition
	\begin{equation}
		H^p_{k, \beta} (X) = \ker(\cL_0^{*}) \oplus \im(\cL_0).
	\end{equation}
	Where $\cL_0^{*}(a, \phi) = -\nabla^{*} a + [ \Phi, \phi]$.
\end{corollary}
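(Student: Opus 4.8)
The plan is to treat $\cL_0$ as a zeroth-order perturbation of $-\nabla$ and to run the argument of \Cref{lem:HodgeLemma} with essentially no changes, the point being that the extra term $\ad_\Phi$ only \emph{helps}. The first step is the coercive lower bound
\begin{equation}
	\Vert \cL_0 \xi \Vert_{H^p_{0, \beta} (X)} \gtrsim |1 + \beta| \, \Vert \xi \Vert_{H^p_{0, \beta + 1} (X)},
\end{equation}
valid for all $\xi$ and all $\beta \neq -1$. On the compact core $K$ and on the $\mathfrak{g}_P^{||}$-part of the conical end this is exactly the estimate proved in \Cref{lem:HodgeLemma}, since there $\ad_\Phi$ contributes nothing to $\cL_0 \xi$ and $\Vert \cL_0 \xi \Vert \geqslant \Vert \nabla \xi \Vert$; on the $\mathfrak{g}_P^\perp$-part along the end, where the $H^p$-norm carries the \emph{unweighted} $L^p$-norm of $\xi^\perp$, one uses in addition that $\Phi \to \Phi_\infty$ with $\ad_{\Phi_\infty}$ nondegenerate on $\mathfrak{g}_P^\perp$ (\Cref{hypoth:asymp}), so that $|[\xi^\perp, \Phi]| \gtrsim |\xi^\perp|$ sufficiently far out --- one may equally invoke the irreducibility of $\nabla_\infty$ on $\mathfrak{g}_P^\perp$, as in the proof of \Cref{lem:HodgeLemma}.

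Next I would upgrade this to the full estimate $\Vert \xi \Vert_{H^p_{k + 1, \beta + 1} (X)} \lesssim \Vert \cL_0 \xi \Vert_{H^p_{k, \beta} (X)}$. The operator $\cL_0$ is overdetermined elliptic: its principal symbol at a nonzero covector $v$ is the injective map $\xi \mapsto (- v \otimes \xi, 0)$, so $\cL_0^* \cL_0$ is a Laplace-type operator and the standard interior elliptic estimates --- rescaled on conical annuli and re-weighted exactly as in \Cref{prop:Callias} and \Cref{cor:LptoLp} --- give $\Vert \xi \Vert_{H^p_{k + 1, \beta + 1}} \lesssim \Vert \cL_0 \xi \Vert_{H^p_{k, \beta}} + \Vert \xi \Vert_{H^p_{0, \beta + 1}}$; the last term is absorbed by the coercive bound above. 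In particular $\ker (\cL_0) = 0$ for $\beta \neq -1$ and $\im (\cL_0) \subseteq H^p_{k, \beta} (X)$ is closed, which is the first assertion of the corollary.

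For the splitting I would follow the end of the proof of \Cref{lem:HodgeLemma}. Conjugation by $r^{- \beta - 1}$ identifies $\cL_0$ with $\cL_0 + (\beta + 1) \tfrac{\rd r}{r} \otimes (\cdot{})$ acting between weight-$0$ and weight-$(-1)$ spaces; for $p = 2$ the space $H_{k, \beta} (X)$ is Hilbert, so the closed-range statement already yields the orthogonal decomposition $H_{k, \beta}^2 (X) = \ker (\cL_0^*) \oplus \im (\cL_0)$, with $\cL_0^* (a, \phi) = - \nabla^* a + [\Phi, \phi]$ the formal adjoint, understood up to the zeroth-order weight term $(2 \beta + n) \iota_{r \partial_r}$ exactly as for $(\nabla^\beta)^*$ in \Cref{lem:HodgeLemma}; elliptic regularity, using this time that $\cL_0^*$ has surjective symbol so that $\cL_0 \cL_0^*$ is elliptic, places $\ker (\cL_0^*)$ inside $H^2_{k, \beta} (X)$. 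The case $p \in [\tfrac{n}{2}, n)$ then follows because the associated $L^2$-projections are bounded in the $H^p_{k, \beta}$-norms, by the same annulus-rescaling argument used throughout \Cref{sec:Sobolev_Fredholm}.

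The one point needing genuine care --- hence the main obstacle --- is the coercive estimate on the $\mathfrak{g}_P^\perp$-component along the end: one must ensure that the presence of $\ad_\Phi$ (and the fact that the splitting of $\mathfrak{g}_P$ is only \emph{asymptotically} parallel) does not spoil the weight bookkeeping in the split Sobolev norm. This reduces to the uniform nondegeneracy of $\ad_{\Phi_\infty}$ on $\mathfrak{g}_P^\perp$ together with the convergence $\Phi \to \Phi_\infty$, both guaranteed by \Cref{hypoth:asymp}; everything else is a verbatim repetition of \Cref{lem:HodgeLemma} with the harmless addition of a bounded, asymptotically parallel zeroth-order term.
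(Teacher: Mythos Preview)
Your proposal is correct and follows essentially the same route as the paper: combine the coercive inequality for $\nabla$ from \Cref{lem:HodgeLemma} with the pointwise bound $|[\Phi,\xi]| \gtrsim |\xi^\perp|$ along the end, observe that $\Vert \cL_0 \xi \Vert^2 = \Vert \nabla \xi \Vert^2 + \Vert [\Phi,\xi] \Vert^2$ so the extra term only helps, and conclude closed image (hence the decomposition) as in \Cref{lem:HodgeLemma}. The paper's own proof is two sentences and leaves the higher-$k$ and $p\neq 2$ upgrades implicit; your version is more detailed but not different in substance.
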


\begin{proof}
	This proof combines the inequality in the previous \Cref{lem:HodgeLemma} with $|[\Phi,\xi]| \geqslant c |\xi^\perp|$, which holds sufficiently far out along the end. As $\Vert \cL_0 (\xi) \Vert_{H^2_{0,\beta} (X)} = \Vert \nabla \xi \Vert_{H^2_{0,\beta} (X)}^2 + \Vert [\Phi, \xi] \Vert_{H^2_{0,\alpha} (X)}^2$ we immediately conclude that $\cL_0$ has closed image.
\end{proof}

\begin{definition}
	A configuration $(\nabla, \Phi)$ is said to be irreducible if $\ker(\cL_0) = 0$.
\end{definition}

\begin{theorem}\label{thm:ModuliConf}
	Let $p \in [n/2, n)$ and $\alpha = 1-n/p$. Then, the quotient spaces 
	\begin{equation}
		\tilde{\mathcal{B}}^p_{1,\alpha} = \mathcal{C}^p_{1, \alpha} / \mathcal{G}^p_{1,\alpha}(0), \ \text{and} \ \ \mathcal{B}^p_{1,\alpha} = \mathcal{C}^p_{1, \alpha} / \mathcal{G}^p_{1,\alpha},
	\end{equation}
	inherit the structure of Banach manifolds with the property that
	\begin{equation}
		\mathcal{B}^p_{1,\alpha} = \tilde{\mathcal{B}}^p_{1,\alpha} / \Gamma_\infty.
	\end{equation}
	Moreover, the subset $\left( \mathcal{B}^p_{1,\alpha} \right)^* \subseteq \mathcal{B}^p_{1,\alpha}$ consisting of the image of the irreducible configurations is a smooth Banach manifold.
\end{theorem}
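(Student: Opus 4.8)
The plan is to follow the standard paradigm for constructing gauge-theoretic moduli spaces as quotients of Banach manifolds, adapted to the weighted split Sobolev setting of the previous sections. I would organize the argument into three stages: first, a local slice theorem for the $\mathcal{G}^p_{1,\alpha}(0)$-action, giving $\tilde{\mathcal{B}}^p_{1,\alpha}$ its Banach manifold structure; second, descending along the residual $\Gamma_\infty$-action; third, identifying the smooth locus $\left(\mathcal{B}^p_{1,\alpha}\right)^*$.

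First I would establish the slice theorem. Fix a configuration $(\nabla,\Phi)\in\mathcal{C}^p_{1,\alpha}$ and consider the infinitesimal action $\cL_0:H^p_{2,\alpha+1}(X,\mathfrak{g}_P)\to H^p_{1,\alpha}(X)$, $\xi\mapsto(-\nabla\xi,[\xi,\Phi])$, which by \Cref{cor:HodgeLemma2} has closed image and yields the orthogonal decomposition $H^p_{1,\alpha}(X)=\ker(\cL_0^*)\oplus\im(\cL_0)$. The candidate slice through $(\nabla,\Phi)$ is
\begin{equation}
	\cS_{(\nabla,\Phi)}=\left\{(\nabla+a,\Phi+\phi)\in\mathcal{C}^p_{1,\alpha}\ :\ \cL_0^*(a,\phi)=-\nabla^*a+[\Phi,\phi]=0\right\},
\end{equation}
i.e. the Coulomb-type gauge fixing condition. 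One shows $\cS_{(\nabla,\Phi)}$ is a smooth Banach submanifold of $\mathcal{C}^p_{1,\alpha}$ (it is the zero set of a submersion onto $\ker(\cL_0)^\perp\subseteq H^p_{0,\alpha-1}(X)$, using \Cref{prop:Multiplication} to control the quadratic term $[a,\phi]$ as a map into $H^p_{0,\alpha-1}(X)$), and that the map $\mathcal{G}^p_{1,\alpha}(0)\times\cS_{(\nabla,\Phi)}\to\mathcal{C}^p_{1,\alpha}$ is a local diffeomorphism onto a $\mathcal{G}^p_{1,\alpha}(0)$-invariant neighbourhood when the stabilizer is trivial in $\mathcal{G}^p_{1,\alpha}(0)$. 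The inverse function theorem argument is the technical core here: one must verify that the derivative $(\xi,(a,\phi))\mapsto\cL_0\xi+(a,\phi)$ is an isomorphism, which is exactly the content of \Cref{cor:HodgeLemma2} together with the fact that $\cL_0$ is injective on $H^p_{2,\alpha+1}$ for the relevant $\alpha\ne-1$ (from the Poincar\'e-type inequality in \Cref{lem:HodgeLemma}), and that the multiplication estimates of \Cref{sec:Sobolev_2} make the nonlinear remainder terms smooth maps of the Banach spaces involved. Since $\mathcal{G}^p_{1,\alpha}(0)$ acts freely on $\mathcal{C}^p_{1,\alpha}$ modulo constant stabilizers --- and constants in $\mathcal{G}^p_{1,\alpha}(0)$ reduce to the identity by the evaluation condition --- the quotient $\tilde{\mathcal{B}}^p_{1,\alpha}$ is Hausdorff and the slices provide charts, so it is a Banach manifold.

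Next, for $\mathcal{B}^p_{1,\alpha}=\mathcal{C}^p_{1,\alpha}/\mathcal{G}^p_{1,\alpha}$: since $\mathcal{G}^p_{1,\alpha}(0)$ is normal in $\mathcal{G}^p_{1,\alpha}$ with quotient $\Gamma_\infty\cong\rU(1)$ via the evaluation homomorphism $\ev$, we have $\mathcal{B}^p_{1,\alpha}=\tilde{\mathcal{B}}^p_{1,\alpha}/\Gamma_\infty$. One then notes that $\Gamma_\infty$ is a compact Lie group acting continuously (indeed smoothly) on the Banach manifold $\tilde{\mathcal{B}}^p_{1,\alpha}$, and the quotient statement follows; $\mathcal{B}^p_{1,\alpha}$ need not be a manifold at fixed points of the $\Gamma_\infty$-action, which is precisely why one passes to the smooth locus. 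Finally, for $\left(\mathcal{B}^p_{1,\alpha}\right)^*$: a configuration $(\nabla,\Phi)$ is irreducible iff $\ker(\cL_0)=0$, which I claim implies the $\Gamma_\infty$-action on the corresponding point of $\tilde{\mathcal{B}}^p_{1,\alpha}$ is free --- a nontrivial $\Gamma_\infty$-stabilizer at $(\nabla,\Phi)$ would, by differentiating the fixed-point equation, produce a nonzero element of $\ker(\cL_0)$ (an infinitesimal automorphism of $(\nabla,\Phi)$ asymptotic to an element of $\gamma_\infty$), contradicting irreducibility. Since irreducibility is an open condition (the estimate $\Vert\cL_0\xi\Vert\gtrsim\Vert\xi\Vert$ persists under small perturbations), $\left(\mathcal{B}^p_{1,\alpha}\right)^*$ is open in $\mathcal{B}^p_{1,\alpha}$; being the quotient of an open subset of $\tilde{\mathcal{B}}^p_{1,\alpha}$ by a free, proper, smooth action of the compact group $\Gamma_\infty$, it is a smooth Banach manifold.

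\textbf{Main obstacle.} The hardest step is the local slice theorem --- specifically, verifying that all the nonlinear terms appearing both in the gauge group action (the exponential series $\sum\frac{1}{k!}\ad(\xi)^k(\Phi_0)$ and the conjugations $gag^{-1}$) and in the slice condition are smooth maps between the weighted split spaces $H^p_{k,\alpha}$, so that the inverse function theorem genuinely applies. This is where \Cref{prop:Multiplication} and the embedding lemmata of \Cref{sec:Sobolev_2} do the essential work: one needs the quadratic interaction $N(\cdot,\cdot)$ respecting the $\Vert$/$\perp$ decomposition to land in the correct weighted space $H^p_{0,\alpha-1}$, and one must track carefully that the weight $\alpha=1-n/p$ is chosen compatibly (and $\alpha\notin\mathcal{K}(\D)$, $\alpha\ne-1$) so that $\cL_0$ and its adjoint have the mapping properties used. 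Subtleties about whether one should use $\mathcal{G}^p_{1,\alpha}(0)$ or its completion, and the precise regularity of the $\Gamma_\infty$-action near reducibles, are the points requiring most care.
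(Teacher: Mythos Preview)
Your proposal is correct and follows essentially the same approach as the paper: construct Coulomb-type slices $\{\cL_0^*(a,\phi)=0\}$, apply the inverse function theorem with derivative $\id\oplus\cL_0$ using the closed-image decomposition of \Cref{cor:HodgeLemma2}, then descend along the residual $\Gamma_\infty$-action. Your write-up is in fact more detailed than the paper's on the freeness of the $\Gamma_\infty$-action at irreducibles and the role of the multiplication maps, but the underlying strategy is identical.
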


\begin{proof}
	To prove that $\tilde{\mathcal{B}}^p_{1,\alpha} = \mathcal{C}^p_{1, \alpha} / \mathcal{G}^p_{1,\alpha}(0)$ is a Banach manifold one constructs local slices to the action of $\mathcal{G}^p_{1,\alpha}(0)$ using the inverse function theorem. Then these slices can be used as charts for $\tilde{\mathcal{B}}^p_{1,\alpha}$. Let $\epsilon > 0$ and define the slice candidates as
	\begin{equation}
		T_{(\nabla, \Phi), \epsilon} = \lbrace (a,\phi) \in H^p_{1,\alpha} (X) \ \ \vert \ \  \nabla^* a -[\Phi_0, \phi] = 0 \ , \  \Vert (a, \phi) \Vert_{H^p_{1,\alpha} (X)} < \epsilon \rbrace.
	\end{equation}
	Then, in order to prove that these are actual slices one needs to show that the map
	\begin{equation}
		h : T_{(\nabla, \Phi), \epsilon} \times \mathcal{G}^p_{1,\alpha}(0) \rightarrow \mathcal{C}^p_{1, \alpha},
	\end{equation}
	which for $g = e^{\xi}$ sufficiently close to the identity, sends $\left( (a, \phi), g \right)$ to the gauge equivalent configuration 
	\begin{equation}
		h ((a,\phi),g) = g \cdot (\nabla + a , \Phi + \phi),
	\end{equation} 
	is an isomorphism onto an open set around $(A,\Phi)$. This can be proved using the inverse function theorem, by simply showing that the derivative
	\begin{align}
		\rd h = \id \oplus \cL_0 : \left( \ker(\cL_0^{ *}) \cap H^{p}_{1,\alpha} (X) \right) \oplus H^p_{1, \alpha} (X) & \rightarrow H^p_{1,\alpha} (X) \\
		\left( (a, \phi), \xi \right) & \mapsto  (- \nabla \xi + a , \left[ \xi , \Phi \right] + \phi),
	\end{align}
	is an isomorphism. This is a direct consequence of \Cref{cor:HodgeLemma2}. There is still the extra action of $\Gamma_\infty$ on $\mathcal{C}^p_{1, \alpha}$ and one can quotient out by its action to obtain the full quotient $\mathcal{B}^p_{1,\alpha} = \mathcal{C}^p_{1,\alpha}/ \mathcal{G}^p_{1, \alpha}$. Moreover, away from reducible configurations the action of $\mathcal{G}^p_{1 , \alpha}$ is free and so $\left(\tilde{\mathcal{B}}^p_{1,\alpha}\right)^*$ is smooth.
\end{proof}

\smallskip

\subsection{Moduli of Monopoles}

In this short subsection we finally prove the theorem establishing the main Fredholm setup describing the moduli space of $\rG_2$-monopoles; see \Cref{thm:Moduli}.\\
Fix $p \in [n/2, n)$ and $\alpha = 1-n/p \not\in \mathcal{K}(D)$, then \Cref{thm:Main_Fredholm_Theorem_2} applies to the linear map $\cL = \D + q$ defined in \cref{eq:linG2mono} as the gauge fixed linearized monopole equation. Using this, we now show that the moduli space of $\rG_2$-monopoles can be described as a quotient of the zero set of a $\Gamma_\infty$-invariant Fredholm section of the Banach space bundle 
\begin{equation}\label{BundleFredholm}
	\mathcal{F}^p_{1, \alpha} = \mathcal{C}^p_{1, \alpha} \times_{\mathcal{G}^p_{1, \alpha}(0)} H^p_{0, \alpha-1}(X, \Lambda^*X \otimes \mathfrak{g}_P),
\end{equation}
over the Banach manifold $\tilde{\mathcal{B}}^p_{1,\alpha}$. Notice that sections of this bundle are in one-to-one correspondence with $\mathcal{G}^p_{1, \alpha}(0)$-equivariant maps from $\mathcal{C}^p_{1, \alpha} \rightarrow H^p_{0, \alpha-1}(X, \Lambda^*X \otimes \mathfrak{g}_P)$ and, as we see in the proof of the next result, the monopole equation is precisely given by the map 
\begin{equation}
	\mon : \mathcal{C}^p_{1, \alpha} \rightarrow H^p_{0, \alpha-1} (X, \Lambda^*X \otimes \mathfrak{g}_P),
\end{equation} 
defined by
\begin{equation}
	\mon (\nabla,\Phi) = \ast (F_{\nabla} \wedge \psi)  - \nabla \Phi,
\end{equation}
which is invariant by the action of the gauge transformations $\mathcal{G}^p_{1, \alpha} \supset \mathcal{G}^p_{1, \alpha}(0)$. We have now everything in place to prove the main theorem in this section.

\begin{theorem}\label{thm:Moduli}
	Let $\rG = \SU (2)$ and $p \in [n/2, n)$ such that $\alpha = 1 - n/p \not\in \mathcal{K} (D)$. Then, there is a $\Gamma_\infty$-invariant Fredholm section 
	\begin{equation}
		\mon:  \tilde{\mathcal{B}}^p_{1,\alpha} \to \mathcal{F}^p_{1, \alpha},
	\end{equation} 
	of the bundle $\mathcal{F}^p_{1, \alpha} \rightarrow \tilde{\mathcal{B}}^p_{1,\alpha}$ such that the moduli space of $\rG_2$-monopoles is in bijection with
	\begin{equation}
		\mon^{-1} (0) / \Gamma_\infty \subseteq \mathcal{B}^p_{1,\alpha}.
	\end{equation}
\end{theorem}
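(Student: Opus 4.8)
The plan is to identify the abstract monopole map $\mon$ with a section of $\mathcal{F}^p_{1,\alpha}$ over $\tilde{\mathcal{B}}^p_{1,\alpha}$, verify that this section is well-defined, $\Gamma_\infty$-equivariant, and Fredholm, and finally match its zero set modulo $\Gamma_\infty$ with the moduli space of $\rG_2$-monopoles. First I would check that $\mon(\nabla,\Phi) = \ast(F_\nabla\wedge\psi) - \nabla\Phi$ actually lands in $H^p_{0,\alpha-1}(X)$ when $(\nabla,\Phi) = (\nabla_0 + a, \Phi_0 + \phi) \in \mathcal{C}^p_{1,\alpha}$: expanding, $\mon(\nabla,\Phi) = \mon(\nabla_0,\Phi_0) + \mathrm{T}_{(\nabla_0,\Phi_0)}\mon(a,\phi) + (\text{quadratic in }(a,\phi))$, the linear term is continuous $H^p_{1,\alpha}\to H^p_{0,\alpha-1}$ by construction of the weighted spaces, and the quadratic term is exactly controlled by the multiplication map of \Cref{prop:Multiplication} (with $N$ the bracket, which satisfies the required vanishing on the $\parallel$ components). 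That $\mon$ is equivariant for the action of $\mathcal{G}^p_{1,\alpha} \supseteq \mathcal{G}^p_{1,\alpha}(0)$ follows from gauge covariance of $F_\nabla$ and $\nabla\Phi$, so $\mon$ descends to a section over $\tilde{\mathcal{B}}^p_{1,\alpha}$ and is $\Gamma_\infty = \mathcal{G}^p_{1,\alpha}/\mathcal{G}^p_{1,\alpha}(0)$-invariant.

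\textbf{The Fredholm property.} The linearization of $\mon$ at a configuration $(\nabla,\Phi)$, restricted to the slice $T_{(\nabla,\Phi),\epsilon} = \ker(\cL_0^\ast) \cap H^p_{1,\alpha}(X)$ from \Cref{thm:ModuliConf}, together with the slice equation itself, assembles into precisely the operator $\cL = \D + q$ of \cref{eq:linG2mono}; this is the content of the computation in \Cref{ss:Linearized_Eq}. Here I would invoke \Cref{thm:Main_Fredholm_Theorem_2}: since $\alpha = 1 - n/p \geqslant -n/2$ (as $p \geqslant n/2$) and $\alpha \notin \mathcal{K}(\D)$ by hypothesis, the operator $\cL : H^p_{1,\alpha}(X) \to H^p_{0,\alpha-1}(X)$ is Fredholm. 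To pass from the linearization to the nonlinear section being Fredholm, I would note that $\mon - \mathrm{T}_{(\nabla,\Phi)}\mon$ is, near a zero, a map whose differential is built from the multiplication maps of \Cref{prop:Multiplication} composed with the (bounded) inclusion $H^p_{1,\alpha}\hookrightarrow H^p_{0,\alpha}$, hence the difference between the differential of the nonlinear section and the Fredholm operator $\cL$ is a compact operator $H^p_{1,\alpha}\to H^p_{0,\alpha-1}$ — by the compact Sobolev embeddings of \Cref{lema3} combined with the weighted H\"older inequality of \Cref{lema1}. A compact perturbation of a Fredholm operator is Fredholm, so the section $\mon$ is Fredholm along its zero locus, which is the relevant statement for a Fredholm section.

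\textbf{Identifying the moduli space.} Finally I would unwind the definitions: a $\rG_2$-monopole with the prescribed mass $m > 0$ and monopole class $\beta$ is, by \Cref{thm:main3} and the surrounding discussion, asymptotic (in the appropriate gauge, after fixing the framing $\eta$) to the unique reducible pair $(\nabla_\infty,\Phi_\infty)$ determined by $\beta$, and by the decay results of \Cref{thm:First_part_of_Main_Theorem_2}, \Cref{cor:refined_AC_asymp_Phi} and \Cref{cor:higher_order_der_bounds}, the difference $(\nabla - \nabla_0, \Phi - \Phi_0)$ lies in $H^p_{1,\alpha}(X)$ for $p \in [n/2,n)$, $\alpha = 1-n/p$; conversely any element of $\mon^{-1}(0)$ solves \cref{eq:Monopole}. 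Thus the set of such monopoles modulo the full gauge group $\mathcal{G}^p_{1,\alpha}$ is in bijection with $\mon^{-1}(0)/\mathcal{G}^p_{1,\alpha} = (\mon^{-1}(0)/\mathcal{G}^p_{1,\alpha}(0))/\Gamma_\infty = \mon^{-1}(0)/\Gamma_\infty \subseteq \mathcal{B}^p_{1,\alpha}$, where $\mon^{-1}(0) \subseteq \tilde{\mathcal{B}}^p_{1,\alpha}$ now denotes the zero set of the descended section. \textbf{The main obstacle} I expect is not any single step but the careful bookkeeping needed to confirm that all decay estimates proved earlier for monopoles are \emph{exactly} sharp enough to place $(\nabla-\nabla_0,\Phi-\Phi_0)$ in $H^p_{1,\alpha}$ with $\alpha = 1-n/p$ — in particular checking the $\parallel$ and $\perp$ components separately against \Cref{def:FunctionSpaces}, since the $\perp$ component decays exponentially (trivially in any weighted space) while the $\parallel$ component, governed by $m^2 - |\Phi|^2 \sim r^{2-n}$ and $|\nabla\Phi| \lesssim r^{-(n-1)}$, must be cross-checked against the weight $\alpha+1 = 2-n/p$ — together with ensuring the regularity (membership in $L^p_1$, not merely $L^2_1$) is supplied by \Cref{cor:higher_order_der_bounds} via the Sobolev embeddings of \Cref{lema3}.
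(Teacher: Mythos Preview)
Your proposal is correct and follows essentially the same route as the paper: expand $\mon(\nabla_0+a,\Phi_0+\phi)$ into linear plus quadratic parts, control the quadratic term via \Cref{prop:Multiplication} so that $\mon$ lands in $H^p_{0,\alpha-1}$, identify the gauge-fixed linearization with $\cL=\D+q$, and invoke \Cref{thm:Main_Fredholm_Theorem_2} for Fredholmness.

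One point to clean up: your ``compact perturbation'' paragraph is unnecessary and slightly muddled. A section being \emph{Fredholm} means by definition that its vertical linearization at each point is a Fredholm operator, so there is no separate passage from ``linearization Fredholm'' to ``nonlinear section Fredholm''. What you may actually be addressing is that the linearization $\cL_{(\nabla,\Phi)}$ depends on the basepoint; but the paper handles this simply by observing (implicitly) that for any $(\nabla,\Phi)\in\mathcal{C}^p_{1,\alpha}$ the operator $\cL_{(\nabla,\Phi)}$ is still modeled on the same conical $\cL_C$ (the perturbation $(a,\phi)$ decays, so \Cref{hypoth:asymp} remains satisfied), and hence \Cref{thm:Main_Fredholm_Theorem_2} applies directly at every configuration --- no compact-perturbation argument is needed. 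Your appeal to ``compact Sobolev embeddings of \Cref{lema3}'' would in any case be incomplete, since that lemma only records bounded embeddings. Your final paragraph, verifying that monopoles with the given asymptotics actually lie in $\mathcal{C}^p_{1,\alpha}$, is more thorough than the paper's own proof, which leaves that bijection essentially implicit.
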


\begin{proof}
	The monopole moduli space is the zero locus of the section $\mon$ and we must now show this is a section of $\mathcal{F}^p_{1, \alpha} \rightarrow \tilde{\mathcal{B}}^p_{1,\alpha}$. To achieve this we write $(\nabla,\Phi) \in \cC^p_{1,\alpha}$ as $(\nabla_0 + a, \Phi_0 + \phi)$ with $(a,\phi) \in H^p_{1,\alpha} (X)$. Then, using $\mon (\nabla_0,\Phi_0) = 0$
	\begin{equation}\label{eq:mon_map_Right_Space}
		\mon (\nabla,\Phi) = \mon (\nabla_0,\Phi_0) + \left( \mathrm{T}_{(\nabla, \Phi)} \mon \right) (a, \upphi) + N((a,\phi),(a,\phi))
	\end{equation}
	where $	\left( \mathrm{T}_{(\nabla, \Phi)} \mon \right) (a, \upphi)$ is as in \cref{eq:Tmon-} and $N((a,\phi),(a,\phi))$ is a quadratic term in $(a,\phi)$ which can be written as
	\begin{equation}
		N((a,\phi),(a,\phi)) = \frac{1}{2} \ast \left( [a \wedge a] \wedge \psi \right) - [a, \phi].
	\end{equation}
	This satisfies the conditions of \Cref{prop:Multiplication} and so the right hand side of \cref{eq:mon_map_Right_Space} lies in $H^p_{0,\alpha-1} (X)$. We can define $\mon^{-1} (0) $ inside $\tilde{\mathcal{B}}^p_{1,\alpha}$ using the local slices constructed in \Cref{thm:ModuliConf} which modeled on the kernel of $\cL_0^*$. Equivalently, we may instead construct such a local model for $\mon^{-1} (0)$ in the quotient $\tilde{\mathcal{B}}^p_{1,\alpha}$ by considering instead the zero locus of the joint map $\mon + \cL_0^*$. Its linearization
	\begin{equation}
		\left( \mathrm{T}_{(\nabla, \Phi)} \mon \right) \oplus \cL_0^*,
	\end{equation}
	is precisely the map $\cL = \D + q$ defined in \cref{eq:linG2mono}. For these $p,k,\alpha$ \Cref{thm:Main_Fredholm_Theorem_2} applies and the map $\cL : H^p_{1,\alpha} (X) \rightarrow H^p_{0,\alpha-1} (X)$ is therefore Fredholm.
\end{proof}

\bibliography{references}
\bibliographystyle{abstract}

\end{document}